\documentclass[letterpaper]{amsart}

\usepackage{amssymb, mathrsfs, hyperref, enumitem, mathtools, graphicx, tikz}
\usetikzlibrary{decorations.markings}
\usepackage[alphabetic,initials,nobysame]{amsrefs}

\BibSpec{collection.article}{%
	+{}  {\PrintAuthors}				{author}
	+{,} { \textit}                     {title}
	+{.} { }                            {part}
	+{:} { \textit}                     {subtitle}
	+{,} { \PrintContributions}         {contribution}
	+{,} { \PrintConference}            {conference}
	+{}  {\PrintBook}                   {book}
	+{,} { }                            {booktitle}
	+{,} { }							{series}
	+{,} { }							{publisher}
	+{,} { }							{address}
	+{,} { \PrintDateB}                 {date}
	+{,} { pp.~}                        {pages}
	+{,} { }                            {status}
	+{,} { \PrintDOI}                   {doi}
	+{,} { available at \eprint}        {eprint}
	+{}  { \parenthesize}               {language}
	+{}  { \PrintTranslation}           {translation}
	+{;} { \PrintReprint}               {reprint}
	+{.} { }                            {note}
	+{.} {}                             {transition}
	+{}  {\SentenceSpace \PrintReviews} {review}
}

\newtheorem{theorem}{Theorem}[section]
\newtheorem{lemma}[theorem]{Lemma}
\newtheorem{corollary}[theorem]{Corollary}
\newtheorem{proposition}[theorem]{Proposition}

\theoremstyle{remark}
\newtheorem{remark}[theorem]{Remark}

\theoremstyle{definition}

\newcommand{\R}{\mathbb{R}}
\newcommand{\Z}{\mathbb{Z}}
\newcommand{\C}{\mathbb{C}}

\newcommand{\N}{\mathbb{N}}
\newcommand{\Hyp}{\mathbb{H}}
\newcommand{\D}{\mathbb{D}}
\newcommand{\loc}{\mathrm{loc}}
\renewcommand{\mod}{\operatorname{Mod}}
\renewcommand{\bar}[1]{\overline{#1}}

\DeclareMathOperator{\dist}{dist}

\DeclareMathOperator{\diam}{diam}

\renewcommand{\Re}{\operatorname{Re}}
\renewcommand{\Im}{\operatorname{Im}}
\DeclareMathOperator{\id}{id}

\numberwithin{equation}{section}
\numberwithin{figure}{section}

\title{Uniformization of Julia sets of geometrically finite rational maps}

\author{Yusheng Luo}
\address[Yusheng Luo]{Department of Mathematics, Cornell University, 212 Garden Ave, Ithaca, NY 14853, USA}
\email{yusheng.s.luo@gmail.com}
\thanks{Y.~L.\ is partially supported by NSF Grant DMS-2349929}

\author{Dimitrios Ntalampekos}
\address[Dimitrios Ntalampekos]{Department of Mathematics, Aristotle University of Thessaloniki, Thessaloniki, 54152, Greece.}
\email{dntalam@math.auth.gr}
\thanks{D.~N.\ is supported by the European Union (ERC, GRComPaS, Grant Agreement no.\ 101214615). Views and opinions expressed are however those of the author(s) only and do not necessarily reflect those of the European Union or the European Research Council. Neither the European Union nor the granting authority can be held responsible for them.}

\keywords{Julia set, Schottky set, uniformization, geometrically finite, parabolic, rational map, tangent quasidisks}
\subjclass[2020]{Primary 37F10, 37F31; Secondary 30C62, 30C20.}

\begin{document}
\begin{abstract}
    A Schottky set is the complement of a collection of disjoint open disks in the Riemann sphere. We completely characterize geometrically finite rational maps whose Julia sets are quasiconformally equivalent to Schottky sets. We also prove that the closure of the basin of infinity of a geometrically finite unicritical polynomial is quasiconformally equivalent to a Schottky set if and only if either there exists an attracting cycle and all bounded Fatou components have disjoint closures or there exists a parabolic cycle of multiplicity three. An ingredient of the proof is a new dynamical characterization of pairs of arbitrary quasicircles that are quasiconformally equivalent to pairs of tangent circles. 
\end{abstract}

\maketitle
\setcounter{tocdepth}{1}
\tableofcontents

\section{Introduction}

\subsection{Background}
Uniformization, whose origins go back to Koebe and Poincar\'e, is a central theme in complex analysis. Broadly speaking, uniformization seeks to replace an analytically or geometrically complicated object with a canonical, geometrically simple model through a map with controlled distortion.

A compact set $X\subset\widehat{\C}$ is called a Schottky set if $\widehat{\C}\setminus X$ is the union of at least two pairwise disjoint open geometric disks with distinct boundaries. A natural uniformization problem is to determine when a given fractal set $E\subset \widehat \C$ is quasiconformally equivalent to a Schottky set. Such round models are particularly useful in conformal dynamics. For example, the rigidity of Schottky sets of measure zero allows one to identify the quasisymmetries of $E$ with M\"obius symmetries of its round model \cite{BonkKleinerMerenkov:schottky}. Uniformization can therefore be used to determine quasisymmetry groups and to distinguish fractal sets arising from different conformal dynamical systems \cites{BonkLyubichMerenkov:carpet-julia, merenkov2026quasiregularmapssierpinskicarpet}.

In the setting of Kleinian groups, a characterization of geometrically finite limit sets that are quasiconformally equivalent to Schottky sets follows from work of Thurston and McMullen \cites{Thurston:hyperbolic-structures-I, McMullen:teichmuller-space}. They showed that such an equivalence exists if and only if the associated pared $3$-manifold is acylindrical. In particular, if a geometrically finite limit set is homeomorphic to a Schottky set, then it is quasiconformally equivalent to one.

In rational dynamics, Bonk--Lyubich--Merenkov proved that every Sierpi\'nski carpet Julia set of a subhyperbolic rational map is quasiconformally equivalent to a round Sierpi\'nski carpet \cite{BonkLyubichMerenkov:carpet-julia}*{Theorem 1.10}. Their proof proceeds by applying Bonk's quasiconformal uniformization criterion for Sierpi\'nski carpets \cite{Bonk:uniformization}*{Theorem 1.1}. There are two hypotheses in this criterion: first, the Fatou components are required to be uniform quasidisks, and second, they must be uniformly relatively separated. More recently, He--Xiao--Yang used Bonk's criterion to obtain a sufficient condition for a Cantor-bubble Julia set to be quasisymmetrically equivalent to a Cantor set with round bubbles \cite{HeXiaoYang:buble}*{Theorem~1.4}. 

In this work, we characterize geometrically finite rational maps whose Julia sets are quasiconformally equivalent to Schottky sets. The principal difficulty is the possible presence of contact points between distinct Fatou components. If the boundaries of two Fatou components meet, their relative distance is zero, so the uniform separation hypothesis in Bonk's criterion fails. More fundamentally, even if every Fatou component is individually a quasidisk, this does not in general imply that all Fatou components can be mapped simultaneously onto geometric disks by a single quasiconformal homeomorphism of the sphere. 

In our recent work \cite{LN24b} we studied the uniformization problem under the following assumptions:
\begin{itemize}
    \item the Julia set contains no critical points;
    \item the nerve of the Julia set is connected, where the nerve is the graph whose vertices correspond to the Fatou components and in which two vertices are joined by an edge whenever the closures of the corresponding Fatou components intersect.
\end{itemize}
In fact, we assumed that the Julia set is a \textit{gasket}, a stronger restriction than the second assumption above; see \cite{LN24b}*{Definition 1.1} for the precise definition. We proved in \cite{LN24b}*{Theorem~1.2} that a gasket Julia set containing no critical points is quasiconformally equivalent to a Schottky set if and only if any two Fatou components that meet do so tangentially.

The present work removes both of the above restrictions. We give a complete characterization of geometrically finite rational maps whose Julia sets are quasiconformally equivalent to Schottky sets, or, equivalently, whose Fatou components can be mapped simultaneously onto geometric disks by a quasiconformal homeomorphism of the sphere. 

We remark that, in contrast to \cite{LN24b}, the arguments in the present work are analytic rather than combinatorial. Our approach is based primarily on the study of the local dynamics near repelling and parabolic points. We use criteria recently established by the second author in \cites{Ntalampekos:schottky, Ntalampekos:tangent} to quasiconformally map a pair of Fatou components onto a pair of disks with controlled distortion, and then to promote this pairwise uniformization to a simultaneous quasiconformal uniformization of all Fatou components by geometric disks.

\subsection{Main results}
We state our main results. The proofs are given in Section \ref{section:proofs}.

\begin{theorem}\label{theorem:rational_uniformization}
    Let $f$ be a geometrically finite rational map whose Julia set is not homeomorphic to $\mathbb S^1$ or $\mathbb S^2$. There exists a quasiconformal map $g\colon \widehat \C\to \widehat \C$ that maps each Fatou component of $f$ onto a disk if and only if
    \begin{enumerate}[label=\normalfont(\arabic*)]
        \item\label{r:jordan} each Fatou component is a Jordan region,
        \item\label{r:parabolic3} all parabolic points have multiplicity $3$,
        \item\label{r:parabolic_basins} parabolic points do not lie on boundaries of periodic Fatou components other than their immediate basins, and
        \item\label{r:parabolic_tangent} for every pair of distinct Fatou components $U,V$ with $\bar U\cap \bar V\neq \emptyset$  the intersection $\bar U\cap \bar V$ consists of a single point $a_0$ that is not critical and is eventually mapped to a parabolic periodic cycle.
    \end{enumerate}
    Moreover, in this case the map $g|_{J(f)}$ is unique up to post-composition with conformal automorphisms of $\widehat\C$.
\end{theorem}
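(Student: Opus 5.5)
The proof splits into necessity and sufficiency, followed by a short uniqueness argument.

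\emph{Necessity.} Suppose $g$ is quasiconformal and maps every Fatou component onto a disk. Then each $g(U)$ is a round disk, and $g$ is a homeomorphism of $\widehat\C$, so each $U$ is a Jordan region and \ref{r:jordan} holds.

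For \ref{r:parabolic3}, \ref{r:parabolic_basins} and \ref{r:parabolic_tangent}, I will use the local geometry of pairs of disks. Two distinct round disks with intersecting closures touch in exactly one point, and they touch tangentially. Quasiconformal maps preserve the qualitative features this forces: a single contact point, a cusp-type contact of a fixed conformal type, and no touching at a positive angle.

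\begin{itemize}
\item Let $\bar U\cap\bar V$ contain a point $a_0$. If $a_0$ is eventually repelling (preperiodic to a repelling cycle), then linearizing near the periodic point shows that $U$ and $V$ meet at $a_0$ in a self-similar configuration. This configuration has a nonzero opening angle, or spirals, and is never tangent in the quasiconformal sense. I expect this to contradict the tangency criterion of \cite{Ntalampekos:tangent}.
\item If $a_0$ is a critical point, the local degree is at least $2$. Pulling back an image contact then forces at least four complementary "petals" to meet at $a_0$, which again contradicts tangency.
\item At a parabolic point of multiplicity $m+1$ there are $m$ attracting petals, so $m$ Fatou components, or $m$ accesses of a single component, meet there. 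Roundness forces exactly two tangent sides, so $m=2$ and the multiplicity is $3$.
\item Condition \ref{r:parabolic_basins} follows because any other periodic component touching the parabolic point would have to pass between the petals. Such a component cannot be tangent to both of them.
\end{itemize}

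\emph{Sufficiency.} I will first prove a pairwise statement: for any two components $U,V$ that touch, there is a $K$-quasiconformal map, with $K$ uniform, sending $U$ and $V$ to tangent disks. Pairs with disjoint closures will be handled by relative separation instead.

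\begin{itemize}
\item By \ref{r:parabolic_tangent}, the contact point $a_0$ maps under some iterate $f^n$, which is univalent near $a_0$ because no iterate of $a_0$ is critical, to a parabolic point $p$ of multiplicity $3$. In Fatou coordinates, the two petals at $p$ are separated by repelling directions. The boundaries of the two basins then form a pair of quasicircles that are invariant under the dynamics, so they satisfy the dynamical criterion for tangent quasicircles from \cite{Ntalampekos:tangent}. This is the ingredient mentioned in the abstract.
\item I will transport this back along the univalent branch with bounded distortion. Koebe distortion, together with geometric finiteness (the postcritical set stays away from $J$ except at parabolic points), keeps the constants uniform over all pairs.
\item Each component is a uniform quasidisk. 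Components with disjoint closures should be uniformly relatively separated away from contact points; this follows from the standard expansion argument for geometrically finite maps (see \cite{BonkLyubichMerenkov:carpet-julia}).
\end{itemize}

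Finally, I will feed the uniform quasidisk property, the uniform pairwise tangent-uniformization, and the separation into the Schottky-set uniformization criterion of \cite{Ntalampekos:schottky}. This promotes the pairwise maps to a single quasiconformal map $g$.

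\emph{Main obstacle.} The hardest step is the uniformity at contact points under arbitrarily deep pullbacks. In particular, it is not yet clear how to control pairs where one component is tiny and touches a large one near a parabolic point. Condition \ref{r:parabolic_basins} is what excludes the bad configurations here.

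\emph{Uniqueness.} The Julia set is a Schottky set image of measure zero; geometrically finite Julia sets that are not the whole sphere have measure zero. By the rigidity of \cite{BonkKleinerMerenkov:schottky}, any two such uniformizations differ on $J(f)$ by a Möbius map. Some care is needed because the exclusion of $\mathbb S^1$ only guarantees at least three complementary disks where rigidity is required.
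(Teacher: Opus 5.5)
Your architecture matches the paper's: local analysis at contact points for necessity, uniform pairwise circularization fed into Theorem~\ref{theorem:schottky} for sufficiency, and measure zero plus rigidity for uniqueness. However, several load-bearing steps are missing or wrong.

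\textbf{Necessity.} Your case split at a contact point $a_0$ (eventually repelling, critical, parabolic) tacitly assumes that $a_0$ is preperiodic. Without this you cannot conclude that $a_0$ is eventually mapped to any cycle, as \ref{r:parabolic_tangent} requires. The paper proves it in Lemma~\ref{lem:isolatedpre-periodic}. It uses McMullen's theorem that points of $J(f)$ whose orbits avoid critical and parabolic points are radial, plus a compactness argument on boundary correspondences showing that the orbit of $a_0$ is finite.

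Your critical-point bullet is false as stated. If $f(U)=f(V)$ and $f$ is locally $2$-to-$1$ at $a_0$, pulling back yields only two pieces at $a_0$. This configuration can genuinely be tangent: $z\mapsto z^2$ unfolds the cusp of a multiplicity-$2$ parabolic basin into two tangent regions, which is the second alternative of Theorem~\ref{theorem:uniformization_general}. Criticality can only be excluded once \ref{r:parabolic3} is known. Then the lifted germ of Lemma~\ref{lemma:lift} has multiplicity $nm+1$, and $n=2$, $m\geq 2$ gives a third attracting petal, contradicting Lemma~\ref{lemma:room3}. Your petal count also does not exclude multiplicity $2$ (a single attracting petal). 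That needs the cusp of the immediate basin (Lemma~\ref{lemma:avoid_petals}).

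\textbf{Sufficiency.}
\begin{itemize}
\item You assert that every Fatou component is a uniform quasidisk. The qualitative quasidisk property under \ref{r:jordan}--\ref{r:parabolic_basins} is a substantial result: Proposition~\ref{prop:quasidiskgf}, proved by conformal welding and a quasisymmetric conjugacy of Markov circle coverings. Both your tangency step and your separated-pairs step depend on it.
\item You also need that no point of the forward orbit of $a_0$ is critical. This follows by applying \ref{r:parabolic_tangent} to $f(U)\neq f(V)$ (distinct by Lemma~\ref{lemma:no_infinite}~\ref{lemma:no_infinite:00}) and inducting.
\item The dynamical criterion for tangent quasicircles is not in \cite{Ntalampekos:tangent}, which gives only non-dynamical criteria (Theorems~\ref{theorem:tangent} and~\ref{theorem:tangent:graph}). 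You would have to prove Theorem~\ref{theorem:germ_tangent_modification}: approximate Fatou coordinates put the invariant arcs in a horizontal strip, hence in a region $y\asymp(1+|x|)^{-n+1}$. You would then pass from local to global with Lemma~\ref{lemma:tangent_local}.
\item Most importantly, the uniformity step you leave open cannot come from Koebe distortion along a univalent branch. The blow-up $f^{\circ m}$ is conformal only near $\bar V$, not on $\widehat\C\setminus\bar U$. By Theorem~\ref{theorem:tangent}, circularizability is governed by the hyperbolic metric of $\widehat\C\setminus\bar U$ along $\partial V$.
\begin{itemize}
\item The missing idea is to replace $\widehat\C\setminus\bar U$ by $Q'=(f^{\circ m})^{-1}(Q)$. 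Here $Q\supset V_m$ is one of finitely many model quasidisks whose boundary contains arcs of $\partial U_m$ through $a_m$, with $Q\setminus\bar V_m$ a quadrilateral.
\item Since $f^{\circ m}$ preserves the modulus of that quadrilateral, Proposition~\ref{prop:comparisonhyperbolicmetric} makes $\rho_{Q'}$ and $\rho_{\widehat\C\setminus\bar U}$ uniformly comparable on $V$.
\item $f^{\circ m}$ is then an isometry for the relative metrics, and Lemma~\ref{lemma:koebe_qm} handles the chordal side (Proposition~\ref{proposition:circularizable_gf}).
\end{itemize}
Condition \ref{r:parabolic_basins} plays no role in this step; it is used only for the quasidisk property.
\item Relative separation of components with disjoint closures is not the argument of \cite{BonkLyubichMerenkov:carpet-julia}, because $f^{\circ m}(U)$ and $f^{\circ m}(V)$ may touch even when $\bar U\cap\bar V=\emptyset$. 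Proposition~\ref{proposition:separated_modulus} deals with this by choosing the annulus around $\bar V_m$ to avoid every boundary arc that does not meet $\bar V_m$.
\end{itemize}
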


If $f$ is a geometrically finite polynomial, then all bounded Fatou components are Jordan regions \cite{RoeschYin:boundary_fatou}. In this case, we can drop condition \ref{r:jordan} and we obtain the following result.

\begin{theorem}\label{theorem:polynomial_uniformization}
    Let $f$ be a geometrically finite polynomial whose Julia set is not homeomorphic to $\mathbb S^1$. There exists a quasiconformal map $g\colon \C\to \C$ that maps each bounded Fatou component of $f$ onto a disk and is conformal in the basin $\mathcal A(\infty)$ if and only if conditions \ref{r:parabolic3}, \ref{r:parabolic_basins}, \ref{r:parabolic_tangent} hold for bounded Fatou components. Moreover, in this case the map $g|_{\mathcal A(\infty)}$ is unique up to post-composition with conformal automorphisms of $\C$.
\end{theorem}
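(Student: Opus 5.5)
The plan is to deduce Theorem~\ref{theorem:polynomial_uniformization} from Theorem~\ref{theorem:rational_uniformization}, or more precisely from its proof, treating the basin $\mathcal A(\infty)$ separately. For a geometrically finite polynomial $f$, every bounded Fatou component is a Jordan region by \cite{RoeschYin:boundary_fatou}. So condition~\ref{r:jordan} holds automatically for bounded components. The point $\infty$ is superattracting, so no parabolic point lies on $\partial\mathcal A(\infty)$ as the boundary of its immediate basin in a way that matters. The key observation is that the requirement on $g$ differs from the rational case: on $\mathcal A(\infty)$ we ask for conformality, not roundness. The complement $K=\C\setminus\mathcal A(\infty)$ may be disconnected, and $\partial\mathcal A(\infty)$ need not be a Jordan curve.

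\textbf{Necessity.} Suppose $g$ exists. Restrict attention to pairs of bounded Fatou components, and to the local dynamics at parabolic and repelling points on their boundaries. The necessity arguments for conditions \ref{r:parabolic3}, \ref{r:parabolic_basins}, \ref{r:parabolic_tangent} in Theorem~\ref{theorem:rational_uniformization} are local. They concern only a pair $U,V$ of components whose images are disks, or a periodic component and the parabolic point on its boundary. I will therefore reuse them verbatim for bounded components.

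One subtlety concerns condition~\ref{r:parabolic3}. For a parabolic point whose attracting petals all belong to bounded components this is clear. If the parabolic point lies on $\partial\mathcal A(\infty)$, it also lies on the boundary of its immediate basin, which is bounded, so the same argument applies.

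\textbf{Sufficiency.} I would run the construction from the proof of Theorem~\ref{theorem:rational_uniformization} for the collection of bounded Fatou components only. The steps are:
\begin{enumerate}[label=(\roman*)]
\item Apply the pairwise tangent-quasicircle criteria of \cites{Ntalampekos:schottky, Ntalampekos:tangent} near parabolic contact points, where multiplicity $3$ yields two petals, hence tangency. This gives pairwise uniformization with uniform control.
\item Promote the pairwise statement to a quasiconformal map $h$ of $\C$ sending all bounded components to disks.
\end{enumerate}

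To make $h$ conformal on $\mathcal A(\infty)$, I would take the Beltrami coefficient $\mu$ of $h$ and set it to zero on $\mathcal A(\infty)$, keeping it on $K$. I would then solve the Beltrami equation to obtain $g$. Two things must be checked. First, the new map must still send each bounded Fatou component onto a disk. Second, the composition $g\circ h^{-1}$ must be conformal on each round disk $h(U)$. The cleanest way to arrange this is to build $h$ so that it is already conformal on each bounded component, by uniformizing each $h(U)$ via a Riemann map and correcting. Alternatively, I would construct the round model first, as a Schottky-type set obtained by quasiconformal welding, and then glue.

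\textbf{Main obstacle.} Making $g$ conformal on $\mathcal A(\infty)$ while keeping the bounded components round is where I expect the difficulty. My first attempt would be the standard trick: replace $\mu$ by zero on $\mathcal A(\infty)$ and by the pullback of the standard structure under Riemann maps on the bounded components. I would then use the measure-zero property of $J(f)$, which holds for geometrically finite maps, to argue that the resulting $g$ maps each bounded component conformally onto the image of a disk. For that image to be round, I need a rigidity statement. It follows from the uniqueness argument for round models, that is, Schottky-set rigidity \cite{BonkKleinerMerenkov:schottky}, applied to the pair $(g\circ h^{-1})$ restricted to the round model. Uniqueness of $g|_{\mathcal A(\infty)}$ then follows from the same rigidity, since $J(f)$ has measure zero.
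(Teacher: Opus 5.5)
Your overall plan matches the paper: run the general machinery on the collection of bounded Fatou components only, and reuse the local necessity arguments. The necessity part is fine. The gap is in how you get conformality on $\mathcal A(\infty)$, and in the uniqueness claim.

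\textbf{The step you call the main obstacle.} It is not an obstacle, because the needed property is already built into the tools. Theorem~\ref{theorem:uniformization_general} is stated for an arbitrary subcollection $\{U_i\}_{i\in I}$. Its conclusion \ref{theorem:uniformization_general:1} comes from Theorem~\ref{theorem:schottky}(ii), and gives a map that is $1$-quasiconformal on $\widehat\C\setminus\bigcup_i U_i=\mathcal A(\infty)\cup J(f)$. Such a map is therefore conformal on the open set $\mathcal A(\infty)$. Post-composing with a M\"obius map that sends $g(\infty)$ to $\infty$ keeps the bounded disks round and gives the required $g\colon\C\to\C$. This is exactly the paper's argument.

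\textbf{Why your repair does not work.} Suppose you zero out the Beltrami coefficient on $\mathcal A(\infty)$ and keep $\mu_h$ on the bounded components. Then $g\circ h^{-1}$ is conformal on the round disks $h(U)$ but carries a nontrivial Beltrami coefficient on the open set $h(\mathcal A(\infty))$. Nothing forces it to map those disks onto disks. Your other variant, pulling back the standard structure under Riemann maps, is just $\mu=0$ off a null set, so it produces an affine $g$. Bonk--Kleiner--Merenkov rigidity cannot rescue the argument, for two reasons:
\begin{itemize}
\item It concerns quasisymmetric maps \emph{between two Schottky sets}. It therefore presupposes that $g(\overline{\mathcal A(\infty)})$ is a Schottky set, which is what you are trying to prove.
\item It requires the source Schottky set to have measure zero. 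Here $h(\overline{\mathcal A(\infty)})$ contains the open set $h(\mathcal A(\infty))$.
\end{itemize}
Producing a map that is round on the bounded components and conformal on their complement is genuinely nontrivial. It is precisely the content of the $1$-quasiconformality clause in Theorem~\ref{theorem:schottky}(ii), and no soft after-the-fact correction replaces it.

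\textbf{Uniqueness.} Your argument fails for the same reason: $g(\overline{\mathcal A(\infty)})$ has positive area, so BKM does not apply. The measure-zero property of $J(f)$ enters differently. It shows that any $g$ as in the statement, being conformal on $\mathcal A(\infty)$, is $1$-quasiconformal on $S=\mathcal A(\infty)\cup J(f)$. Then the uniqueness clause of Theorem~\ref{theorem:schottky} applies; it holds for $S$ of positive area precisely because of the $1$-quasiconformality requirement. It gives $g|_{\mathcal A(\infty)}$ up to M\"obius maps fixing $\infty$, i.e.\ conformal automorphisms of $\C$.
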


For the sake of completeness, we also include the next elementary statement for Julia sets homeomorphic to $\mathbb S^1$. Note that in this case, the uniformizing map $g$ is not unique. 

\begin{proposition}\label{prop:julia_jordan}
    Let $f$ be a rational map whose Julia set is homeomorphic to $\mathbb S^1$. There exists a quasiconformal map $g\colon \widehat \C\to \widehat \C$ that maps each Fatou component of $f$ onto a disk if and only if
    \begin{itemize}
        \item $f$ has a parabolic periodic point of multiplicity $3$ or
        \item $f$ is hyperbolic.
    \end{itemize}
\end{proposition}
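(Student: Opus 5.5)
The proof will rest on the elementary observation that, since $J(f)$ is a Jordan curve, $F(f)=\widehat\C\setminus J(f)$ has exactly two components $\Omega_1,\Omega_2$. A quasiconformal $g$ maps both of them onto disks precisely when $g(J(f))$ is a circle, that is, precisely when $J(f)$ is a quasicircle. So the plan is to show that $J(f)$ is a quasicircle if and only if $f$ is hyperbolic or has a parabolic periodic point of multiplicity $3$. Throughout I assume $\deg f=d\ge 2$.

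\textbf{Step 1: dynamical classification.} Since $f$ maps Fatou components properly onto Fatou components, $f^2$ fixes $\Omega_1$ and $\Omega_2$ and has degree $d^2$ on each. Herman rings are excluded because the $\Omega_i$ are simply connected. Siegel disks are excluded because $f^2$ would then be injective on $\Omega_i$, which is impossible for $d\ge2$. Hence each $\Omega_i$ is an attracting, superattracting or parabolic basin of $f^2$.

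By Riemann--Hurwitz, $f^2|_{\Omega_i}$ has $d^2-1$ critical points. Consequently the $2d-2$ critical points of $f$ split as $d-1$ in each $\Omega_i$, and none lie on $J(f)$. Every critical orbit converges to an attracting or parabolic cycle, so $f$ is geometrically finite. If there is no parabolic cycle, then $f$ is hyperbolic.

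\textbf{Step 2: local analysis at a parabolic point.} Let $p\in J(f)$ be a parabolic point of some iterate, with multiplicity $m+1$. By the Leau--Fatou flower theorem, a punctured neighbourhood of $p$ consists of $m$ attracting petals, each contained in the Fatou set, alternating with $m$ repelling directions along which $J(f)$ accumulates at $p$. In particular $J(f)$ is asymptotic to these $m$ repelling directions.

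Because $J(f)$ is a Jordan curve, exactly two arcs of $J(f)$ emanate from $p$. The case $m=1$ needs care: the single petal then lies in one complementary component, and the other component is forced into a cusp of zero angle along the repelling direction. This violates bounded turning, so $J(f)$ is not a quasicircle. For $m\ge3$ there would be more than two local arcs of $J(f)$ at $p$, contradicting the Jordan property; I expect to justify this by showing that each repelling direction carries a genuine arc of $J(f)$, using repelling Fatou coordinates. Hence the quasicircle property forces $m=2$, i.e.\ multiplicity $3$. Moreover, in that case the two petals lie in different components, so both $\Omega_i$ are parabolic basins. This proves the ``only if'' direction.

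\textbf{Step 3: sufficiency.} If $f$ is hyperbolic, $J(f)$ is a quasicircle by the standard argument. One route is that $f$ is expanding on $J(f)$ and topologically conjugate to $z^{\pm d}$ on $\mathbb S^1$, which gives bounded turning via Koebe distortion on pullbacks of small balls. Another route is the classical result of Sullivan.

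In the parabolic case with all parabolic points of multiplicity $3$, I will verify Ahlfors' bounded turning condition on $J(f)$.
\begin{enumerate}
\item Near each parabolic point $p$, the two petals have opening angle $\pi$, and $J(f)$ is asymptotic to the repelling axis. In repelling Fatou coordinates, $J(f)$ corresponds to a curve within bounded distance of a line. This yields a uniform bounded turning constant for arcs of $J(f)$ inside a fixed neighbourhood of the parabolic cycle, including arcs passing through $p$.
\item Away from the parabolic cycles, $f$ is uniformly expanding in a suitable orbifold metric on compact parts of $J(f)$. A small arc of $J(f)$ can therefore be pushed forward univalently, with bounded distortion, until it either reaches definite size or enters the parabolic neighbourhood of item~1. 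In both cases bounded turning holds at that scale, and it pulls back with controlled distortion.
\end{enumerate}

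The main obstacle is the transition in item~2 between the expanding regime and the parabolic regime. Orbits that linger near $p$ are not uniformly expanded, so the distortion control must be combined with the Fatou coordinate estimates. I would handle this with the standard geometrically finite technique: choose a nice neighbourhood of the parabolic cycles and use first-return and first-exit decompositions of orbits, as in the Carleson--Jones--Yoccoz and Tan Lei--Yin Yongcheng treatments of geometrically finite maps, where the same interplay between expanding and parabolic behaviour is controlled.
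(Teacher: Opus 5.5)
Your proposal is correct in outline, but for sufficiency it takes a genuinely different route from the paper.

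\textbf{How the paper argues.} The paper's proof is only a few lines. It cites that both Fatou components are attracting or parabolic basins, so $J(f)$ contains no critical points. It then applies Proposition~\ref{prop:quasidiskgf}: each component is a quasidisk exactly when there is no parabolic point or the parabolic point has multiplicity at least $3$. Multiplicity at most $3$ follows because distinct attracting directions give distinct immediate basins and there are only two Fatou components. Finally, no parabolic points together with no critical points on $J(f)$ gives hyperbolicity. All the analysis is inside Proposition~\ref{prop:quasidiskgf}, which the appendix proves by conformal welding. There the conjugacy between the inner and outer circle models of the dynamics on $\partial U$ is shown to be quasisymmetric via Theorem~\ref{thm:appendix-circle-extension}, using that multiplicity $\ge 3$ makes the parabolic vertices symmetrically parabolic in both models.

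\textbf{Comparison with your route.}
\begin{itemize}
\item Your Step~1 (Riemann--Hurwitz) is a self-contained substitute for the citation.
\item Your cusp argument for $m=1$ is exactly Lemma~\ref{lemma:avoid_petals}.
\item For $m\ge 3$ your local-branch count can be made to work, but justifying it needs the basin count anyway. The direct argument is simpler: two of the $m$ petals would lie in one Fatou component, yet all orbits in a single component converge from one direction.
\item Your direct check of the Ahlfors condition is more elementary and tailored to this case. The paper's route gives the general quasidisk criterion at no extra cost here.
\item The hypothesis gives \emph{a} parabolic point of multiplicity $3$, whereas you assume \emph{all} parabolic points have multiplicity $3$. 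These agree: both components are then immediate basins of that cycle, so there is no room for another parabolic cycle.
\end{itemize}

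\textbf{A point in your sufficiency sketch that needs repair.} Each of the two branches of $J(f)$ at $p$ is mapped into itself by the local inverse branch, as in Lemma~\ref{lemma:preserve_parabolic}. So its Fatou-coordinate image is a union of integer translates of one compact fundamental arc. Lying within bounded distance of a line controls only pairs $z,w$ with $|z-w|$ comparable to their distance from $p$. Bounded turning at much smaller scales near $p$ transfers, by Koebe along the petal, to bounded turning on the fundamental arc, which is the very statement you are proving. Your stopping rule ``until it enters the parabolic neighbourhood of item~1'' relies on exactly that circular claim.

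\textbf{How to fix it.}
\begin{itemize}
\item Claim item~1 only at these macroscopic scales.
\item In item~2, keep pushing an arc $\gamma$ through the repelling petal, where it moves away from $p$. Stop only when $\diam f^n(\gamma)$ becomes a definite fraction of $\min\{\dist(f^n(\gamma),\mathcal C),\varepsilon\}$, where $\mathcal C$ is the parabolic cycle.
\item Distortion is controlled because $P(f)$ lies in the attracting petals. Hence $\dist(z,P(f))\gtrsim |z-p|$ for $z\in J(f)$ near $p$, and the relevant inverse branches are univalent on balls of radius $\asymp |z-p|$.
\item If an arc is about to land on $\mathcal C$ from a first preimage $q\notin\mathcal C$, stop one step earlier, since $q$ has positive distance from $P(f)$.
\end{itemize}
With these adjustments your argument closes.
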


In addition, if $f$ is a unicritical polynomial, then we can drop condition \ref{r:parabolic_basins} since there is only one periodic cycle of bounded Fatou components. Also, condition \ref{r:parabolic_tangent} can be described more explicitly.

\begin{corollary}\label{corollary:unicritical}
      Let $f(z)=z^d+c$, $d\geq 2$, be a geometrically finite unicritical polynomial with non-empty bounded Fatou set. There exists a quasiconformal map of $\C$ that maps each bounded Fatou component of $f$ onto a disk if and only if 
    \begin{itemize}
        \item $f$ has a parabolic periodic point of multiplicity $3$ or
        \item $f$ is hyperbolic and all bounded Fatou components have disjoint closures.
    \end{itemize} 
\end{corollary}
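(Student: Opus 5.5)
The plan is to deduce the corollary from Theorem~\ref{theorem:polynomial_uniformization}, together with Proposition~\ref{prop:julia_jordan} for the case where $J(f)$ is a Jordan curve. For a geometrically finite $f(z)=z^d+c$ with nonempty bounded Fatou set, the only critical point in $\C$ is $0$. Hence there is exactly one non-repelling cycle in $\C$: either an attracting cycle or a parabolic cycle. It has a single cycle of immediate basins $U_0\ni 0, U_1,\dots,U_{p-1}$, and every bounded Fatou component is an iterated preimage of one of them. If $f$ has a Siegel disk it is not geometrically finite. Thus $f$ is either hyperbolic or parabolic, with a single parabolic cycle.

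\textbf{Parabolic case.} A parabolic cycle of multiplicity $m+1$ has $m$ petal cycles, and each cycle of immediate basins needs a critical point. Since there is only one critical point in $\C$, the cycle has $m=1$ petal cycle. However, a single periodic point of the cycle may still carry several petals, permuted by the first return map (multiplier a primitive $q$-th root of unity, $m=q\nu$). Condition \ref{r:parabolic3} requires multiplicity exactly $3$ (two petals in the local model $z\mapsto z+z^3$).

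For necessity, we invoke Theorem~\ref{theorem:polynomial_uniformization}: if $f$ is uniformizable, then \ref{r:parabolic3} holds, so the multiplicity is $3$.

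For sufficiency, assume the multiplicity is $3$. Condition \ref{r:parabolic_basins} holds automatically, because there is only one cycle of periodic bounded Fatou components and it consists of the immediate basins. For \ref{r:parabolic_tangent}, two distinct bounded Fatou components $U,V$ with touching closures meet only at points eventually mapped onto the parabolic cycle. To see this, use the structure of unicritical polynomial Julia sets via external rays (Roesch--Yin): the closures of two Fatou components can meet only at a common landing point of rays in a periodic orbit portrait touching the cycle of basins. The contact point is non-critical, because $0$ lies in a Fatou component. The intersection is a single point, because both components are Jordan domains and a double contact would disconnect the filled Julia set inappropriately.

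\textbf{Hyperbolic case.} Condition \ref{r:parabolic3} is vacuous, and so is \ref{r:parabolic_basins}. Condition \ref{r:parabolic_tangent} requires that touching closures meet at a point eventually landing on a parabolic cycle, which is impossible when there are no parabolic points. So \ref{r:parabolic_tangent} is equivalent to the requirement that bounded Fatou components have pairwise disjoint closures, exactly as stated.

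\textbf{Edge case.} The degenerate case $J(f)\cong\mathbb S^1$ (e.g.\ $z^d$, or parabolic examples such as $c=1/4$ with $d=2$) is handled by Proposition~\ref{prop:julia_jordan}. It gives the same two alternatives: in the hyperbolic case the one bounded component trivially satisfies disjointness, and in the parabolic case the multiplicity must be $3$.

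\textbf{Main obstacle.} The hard part is the sufficiency direction in the parabolic case: verifying \ref{r:parabolic_tangent}, that is, that every contact point between bounded Fatou components is preperiodic to the parabolic cycle and that the contact is a single point. My approach is to pull back. If $\bar U\cap\bar V\ni a$, apply iterates until one of the components becomes $U_0$, using injectivity of $f$ near non-critical boundary points. This reduces the question to contact points on $\partial U_j$, which are periodic points of $\partial U_j$ under the first return map, conjugate to $z\mapsto z^d$ on the circle. Then use that the only point of $\partial U_j$ where distinct Fatou components can be attached is the root, i.e.\ the parabolic point. In the hyperbolic case this root is repelling, and attachments there are exactly what is excluded by disjointness.
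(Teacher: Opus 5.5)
Your overall reduction matches the paper's. Necessity, and both directions in the hyperbolic case, follow directly from Theorem~\ref{theorem:polynomial_uniformization}, with Proposition~\ref{prop:julia_jordan} when $J(f)\cong\mathbb S^1$. The real work is checking \ref{r:parabolic_tangent} when the parabolic cycle has multiplicity $3$, and that is where your proposal has a genuine gap.

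\emph{The reduction is wrong as stated.} Iterating until one of the two components becomes $U_0$ does not make the contact point periodic. Take $d=2$, $c=-3/4$: the fixed point $-1/2$ is parabolic with multiplier $-1$, and its immediate basins are $U_0\ni 0$ and $U_1=f(U_0)$. The preimage component $-U_1$ is attached to $U_0$ at $1/2$, which is strictly preperiodic since $f(1/2)=-1/2$. So your claim that ``the only point of $\partial U_j$ where distinct Fatou components can be attached is the root'' is false. You must push forward until \emph{both} components are periodic. They stay distinct and keep touching at the image point by Lemma~\ref{lemma:no_infinite}\ref{lemma:no_infinite:00}, because the critical point lies in the Fatou set. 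Only then is the contact point periodic, since it is the unique point of $\bar U\cap\bar V$ and the return map fixes both components.

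\emph{The corrected claim is asserted, not proved.} The needed statement is that the only periodic point of $\partial U_j$ at which another periodic component is attached is the parabolic point. The conjugacy of the first-return map on $\partial U_j$ with $z\mapsto z^d$ does not single out the root: there are $d-1$ fixed points and periodic points of every period. Your appeal to external rays and orbit portraits is never carried out. This statement is exactly Lemma~\ref{lem:evenpara}, the heart of the corollary.

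The missing argument is combinatorial. Form the bipartite graph whose vertices are the periodic bounded components and the contact points between them, with edges given by incidence.
\begin{itemize}
\item It is a forest because the filled Julia set is full. Fullness is also the correct reason two components meet in at most one point: a second contact point would make $\bar U\cup\bar V$ enclose points of $J(f)=\partial\mathcal A(\infty)$, hence points of $\mathcal A(\infty)$.
\item $f$ induces a graph automorphism that is transitive on the component vertices, because there is a single cycle of periodic components. So all component vertices have the same valence.
\item Contact vertices have valence at least $2$, so every leaf of a tree is a component vertex. Hence every component vertex has valence $1$, and each tree is a star.
\item Since the multiplicity is $3$, each parabolic point of the cycle is a contact point lying on the boundary of each of its immediate basins. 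It must therefore be the center of its star.
\end{itemize}
Thus every contact point between periodic components is parabolic, and pushing forward as above gives \ref{r:parabolic_tangent} for all bounded components.
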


Finally, we can more explicitly describe the above two conditions.

\begin{proposition}\label{corollary:degree2}
    The conditions of Corollary \ref{corollary:unicritical} are equivalent to the following two conditions, respectively:
    \begin{itemize}
        \item $c$ is a period-doubling bifurcation point or
        \item $c$ is in the main hyperbolic component of a primitive Multibrot copy.
    \end{itemize}
\end{proposition}

\subsection{Proof outline}
We derive our main theorems from a more general uniformization result for any prescribed collection of at least two Fatou components whose forward orbits consist of Jordan regions; see Theorem~\ref{theorem:uniformization_general}. Our starting point is a recent uniformization criterion for Schottky sets established by the second author; see Theorem~\ref{theorem:schottky}. This criterion reduces the problem of simultaneously mapping all Fatou components onto geometric disks by a quasiconformal homeomorphism to the corresponding problem for \textit{pairs} of Fatou components, provided that the quasiconformal distortion can be controlled uniformly. Accordingly, our proof has two main ingredients:
\begin{enumerate}[label=\normalfont(\Alph*)]
    \item\label{intro:a} a dynamical characterization of pairs of Fatou components that can be mapped onto pairs of disks by a quasiconformal map of the sphere;
    \item\label{intro:b} uniform bounds for the quasiconformal distortion of such maps.
\end{enumerate} 

We begin by studying when a Fatou component of a geometrically finite rational map is a quasidisk and when the quasidisk constant can be controlled uniformly. A standard blow-up argument based on Koebe's distortion theorem and the Ahlfors three-point condition shows that if a Fatou component is a quasidisk, then it is in fact a uniform quasidisk; see Proposition \ref{proposition:quasidisk_uniform}. We then give necessary and sufficient conditions for an individual Fatou component of a geometrically finite rational map to be a quasidisk (without keeping track of the distortion); see Proposition~\ref{prop:quasidiskgf}. The proof is given in Appendix \ref{appendix} and is based on conformal welding and quasisymmetric conjugacies of circle covering maps, building on our recent work \cite{LuoNtalampekos:piecewise_qc}. Sufficient dynamical conditions for quasidisks have also appeared in \cite{CarlesonJonesYoccoz:JuliaJohn} and in \cite{Zhang:siegel}.

We now turn to \ref{intro:a} and \ref{intro:b}. There are two cases to consider: either two Fatou components have disjoint closures, or their closures intersect. Suppose first that two Fatou components have disjoint closures. In this case, they can be mapped quasiconformally onto two geometric disks if and only if both Fatou components are quasidisks, which settles \ref{intro:a}. A further blow-up argument, carried out in Proposition \ref{proposition:separated_modulus} and involving a careful choice of separating annuli, yields uniform bounds for the quasiconformal distortion, as required in \ref{intro:b}. This settles the case of components with disjoint closures.

The more delicate case is that of Fatou components whose closures intersect. To address \ref{intro:a}, we establish a general result of independent interest that dynamically characterizes pairs of arbitrary quasicircles that are quasiconformally equivalent to pairs of tangent circles. The result reduces this global uniformization problem to a local dynamical condition involving parabolic germs. Roughly speaking, if $K_1,K_2$ are quasicircles meeting only at $0$, then a necessary and sufficient condition for the existence of a quasiconformal map of $\widehat \C$ that maps $K_1,K_2$ to circles is that $K_1,K_2$ are locally invariant under a ``quasiconformal parabolic germ" at $0$ and they approach the point $0$ within certain parabolic petals. See Figures \ref{fig:germ3} and \ref{fig:germ2} for an illustration. The precise statement is as follows.  

\begin{theorem}\label{theorem:qr_germ}
    Let $K_1,K_2\subset \widehat \C$ be quasicircles such that $K_1\cap K_2=\{0\}$. Then there exists a quasiconformal map $\Phi\colon \widehat \C\to \widehat \C$ that maps $K_1$ and $K_2$ onto two tangent circles if and only if there exists a map $f$ in a neighborhood of $0$ that  fixes $0$ and is quasiconformally conjugate to a parabolic germ such that
    \begin{enumerate}[label=\normalfont(\arabic*)]
        \item for $j=1,2$ there exists an attracting (resp.\ repelling) petal $P_j$, where $P_1$ and $P_2$ correspond to distinct attraction or repulsion vectors of the parabolic germ, and
        \item for $i=1,2$ the set $K_i\cap (P_j\cup \{0\})$ contains a Jordan arc $E_{ij}$ with an endpoint at $0$ that satisfies $f(E_{ij})\subset E_{ij}$ (resp.\ $f^{-1}(E_{ij})\subset E_{ij}$).
    \end{enumerate} 
\end{theorem}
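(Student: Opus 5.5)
The argument splits into the two implications, and in both we reduce to a normal form via a M\"obius change of coordinates. Send the tangency point $0$ to $\infty$, so that two tangent circles become two parallel lines, say $\R$ and $\R+i$. The model parabolic germ is then conjugate to a translation $\tau(z)=z+1$ near $\infty$. In this picture the germ $z\mapsto z/(1-z)$ at $0$ has multiplicity $2$, a single attracting and a single repelling direction, and leaves each circle through $0$ tangent to $\R$ invariant.

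\emph{Necessity.} Assume $\Phi$ maps $K_1,K_2$ onto tangent circles $C_1,C_2$ meeting at $\Phi(0)$. After a M\"obius map, $\Phi(0)=0$ and $C_1,C_2$ are the images of the lines $\R,\R+i$ under $z\mapsto 1/z$. Let $p(z)=z/(1-z)$, which is conjugate to $z\mapsto z+1$ near $\infty$. Then $p$ preserves every horocycle tangent to $\R$ at $0$, and in particular $C_1$ and $C_2$. Define $f=\Phi^{-1}\circ p\circ \Phi$; it is quasiconformally conjugate to a parabolic germ. Choose the attracting petal $P$ of $p$ to be the image of a right half plane $\{\Re w>R\}$, so that near $0$ it contains the right halves of both $C_i$. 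Now take $P_1=\Phi^{-1}(P)$. In the repelling form of the statement we take the repelling petal $P_2$ on the left instead. In the other form we use one attracting petal that contains both arcs; to get ``distinct vectors'' we need a germ with two attracting directions, namely the multiplicity-$3$ germ $p_2(z)=z/(1-z^2)^{1/2}$ conjugate to $w\mapsto w+1$ on a double cover. Either way, the arcs $E_{ij}=K_i\cap\Phi^{-1}(\overline{P})$ near $0$ are forward invariant. This direction is routine; the only care needed is matching the statement's attracting/repelling convention.

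\emph{Sufficiency.} This is the substantial direction. Conjugating by the quasiconformal map $h$ that linearizes $f$ to a parabolic germ, and then by Fatou coordinates on the petals $h(P_j)$, we may assume the following. The germ is $F(w)=w+1$ near $\infty$ on each of two petals, which are half-planes or sectors in distinct Fatou coordinates. Each $h(E_{ij})$ is a forward-invariant arc, hence a curve $\gamma_{ij}$ going to $\infty$ with $\gamma_{ij}+1\subset\gamma_{ij}$. Such a curve is periodic: it is determined by a fundamental segment from $w_0$ to $w_0+1$, and it stays within bounded distance of a horizontal line. Using a periodic quasiconformal map of a horizontal strip (built by straightening one fundamental domain and extending by $F$-equivariance), we straighten $\gamma_{1j},\gamma_{2j}$ to the parallel lines $\Im w=0$ and $\Im w=1$. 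This map is uniformly quasiconformal because it is equivariant under translation. Pulling back gives a quasiconformal map near $0$ that sends $K_1\cap P_j$ and $K_2\cap P_j$ to arcs of two tangent circles in each petal.

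It remains to globalize. The hypothesis that the petals correspond to distinct vectors ensures the two local pictures lie on opposite sides of the tangency, so together they give tangent circle arcs approaching $0$ from both directions. We then have a quasiconformal map defined near the union of the two petal pieces. We extend it across the complementary sectors near $0$ by a quasiconformal extension in the annular/sector region, and then over the rest of $\widehat\C$ using that $K_1$ and $K_2$ are quasicircles whose remaining arcs are disjoint. The extension away from $0$ can use the standard fact that disjoint quasiarcs can be simultaneously straightened. Alternatively, one can invoke the tangent-quasicircles criterion of \cite{Ntalampekos:tangent}: after the local normalization it suffices to check that the local images near $0$ are tangent circle arcs, or, in criterion form, that the two curves are ``uniformly parabolic'' at $0$.

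\textbf{Main obstacle.} I expect the main obstacle to be this gluing step: making the maps from the two petals and the extension through the gap sectors fit together into one global quasiconformal map with bounded dilatation near $0$. I would handle it by working at $\infty$ with dyadic scale annuli and using the periodicity of the arcs to get uniform control, reducing to finitely many model pieces.
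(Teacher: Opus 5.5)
Your necessity direction is the paper's argument. You send the tangency point to $\infty$ and conjugate $z\mapsto z+1$, using one attracting and one repelling petal, which the statement allows. The detour through $z(1-z^2)^{-1/2}$ is unnecessary, and as written it is wrong. In the coordinate $1/z^2$ that germ is a translation, so it preserves the curves $\{\Im(1/z^2)=c\}$, which have cubic contact with $\R$; it does not preserve the circle $1/(\R+i)$.

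The genuine gap is in sufficiency. Your remark that an invariant arc is, in the exact Fatou coordinate, a union of translates of a fundamental segment is a neat substitute for the paper's Koebe estimate. The equivariant straightening in each petal is also fine. But turning the lines $\Im w=0,1$ into tangent-circle arcs requires composing with a model inverse coordinate such as $w\mapsto -1/w$. After conjugating by $h$, the resulting map on $P_j$ behaves like $z\mapsto c_jz^{n}$, where $n+1$ is the multiplicity. For $n\ge 2$ the images of whole petals overlap. So you must restrict to thin sectors about $v_1,v_2$ and build the map on the complementary sectors. That is exactly the step you defer as the ``main obstacle'', and it is not a technicality. It works only because the two local maps have the same radial scaling $r\mapsto r^n$. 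Equivalently, the distance between $K_1$ and $K_2$ at radius $r$ is $\asymp r^{n+1}$ with the same $n$ at both ends, because both petals belong to one germ.

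Your construction never uses this, and it is the real content of the hypothesis, not that the two pictures lie ``on opposite sides''. Everything before your gluing step would go through verbatim for a pair invariant near one end under a germ of multiplicity $n+1$ and near the other under a germ of multiplicity $m+1\ne n+1$. For example, take $K_1=\R\cup\{\infty\}$, and let $K_2$ be given near $\infty$ by $\Im(\zeta^n)=n$ at one end and $\Im(\zeta^m)=\pm m$ at the other, with the sign chosen so that $K_2$ stays above $\R$. These arcs are invariant under $\zeta\mapsto(\zeta^n+n)^{1/n}$, resp.\ $(\zeta^m+m)^{1/m}$. Yet the antiderivative of the reciprocal of the gap is $\asymp x^n$ on one side and $\asymp -|x|^m$ on the other. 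That is not quasisymmetric, so by Theorem~\ref{theorem:tangent:graph} no quasiconformal map sends this pair to tangent circles. ``Dyadic annuli plus periodicity'' cannot supply the missing compatibility, since the periodicity lives in Fatou coordinates, where a dyadic annulus at radius $r$ contains $\sim r^{-n}$ fundamental domains.

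The paper avoids gluing altogether, in three steps.
\begin{enumerate}
\item It normalizes $K_1=\R\cup\{\infty\}$ by the Riemann map of the side of $K_1$ containing $K_2$ plus Schwarz reflection. It checks that the new germ is still parabolic and that the arcs still lie in petals (Lemma~\ref{lemma:germ_tangent_reduction}).
\item Using the coordinate $z^n/(-na)$ and Koebe distortion, it shows that after $z\mapsto 1/z$ the curve $K_2$ lies between the graphs of $L^{\pm1}(1+|x|)^{1-n}$ on both sides.
\item It applies Theorem~\ref{theorem:tangent:graph}, whose condition (3) is precisely this two-sided compatibility.
\end{enumerate}
Theorem~\ref{theorem:rickman} and Lemma~\ref{lemma:tangent_local} then handle the quasiconformal conjugacy and the passage from local to global.
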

The petals $P_1,P_2$ are allowed to be of the same type, or one can be attracting and the other repelling. This theorem builds on, and complements, the non-dynamical characterizations of tangent quasicircles given by the second author in \cite{Ntalampekos:tangent}. The proof is given in Section \ref{section:tangent_quasicircles}.

\begin{figure}
    \centering
    \begin{tikzpicture}
        \draw[rounded corners=8] (0,0)--(0.8,0.9)--(0.6,1.4)--(0,1.7)--(-0.6,1.4)--(-0.8,0.9)--(0,0);
        \draw[rounded corners=8] (0,0)--(0.8,-0.9)--(0.6,-1.4)--(0,-1.7)--(-0.6,-1.4)--(-0.8,-0.9)--(0,0);
        \draw[color=red,dashed] (2,0) circle (2cm); 
        \draw[color=blue,dashed] (-2,0) circle (2cm);
        \fill (0,0) circle (1.5pt) node[right] {$0$};
        \node[left] at (4,0) {$K_2$};
        \node[right] at (-4,0) {$K_1$};
        \path[postaction={decorate,decoration={markings, mark=at position 0.42 with {\arrow{>}}, mark=at position 0.44 with {\arrow{>}}, mark=at position 0.46 with {\arrow{>}}, mark=at position 0.58 with {\arrow{<}}, mark=at position 0.56 with {\arrow{<}}, mark=at position 0.54 with {\arrow{<}}
        }}](2,0) circle (2cm);
        \path[postaction={decorate,decoration={markings,mark=at position 0.04 with {\arrow{<}}, mark=at position 0.06 with {\arrow{<}}, mark=at position 0.08 with {\arrow{<}},mark=at position 0.92 with {\arrow{>}}, mark=at position 0.94 with {\arrow{>}}, mark=at position 0.96 with {\arrow{>}},
        }}](-2,0) circle (2cm);
    \end{tikzpicture}
    \caption{Two quasicircles approaching $0$ through two distinct attracting petals. Here $f$ has parabolic multiplicity greater than $2$.}
    \label{fig:germ3}

    \bigskip
    
    \begin{tikzpicture}
        \draw[thick, domain=0:360, samples=100, smooth, scale=2]
        plot ( {0.5*sin(\x) - 0.25*sin(2*\x)}, {0.25- 0.5*cos(\x) + 0.25*cos(2*\x)}  );
        \fill (0,0) circle (1.5pt);
        \draw[color=red,dashed] (2,0) circle (2cm); 
        \draw[color=blue,dashed] (-2,0) circle (2cm);
        \fill (0,0) circle (1.5pt) node[right] {$0$};
        \node[left] at (4,0) {$K_2$};
        \node[right] at (-4,0) {$K_1$};
        \path[postaction={decorate,decoration={markings, mark=at position 0.42 with {\arrow{>}}, mark=at position 0.44 with {\arrow{>}}, mark=at position 0.46 with {\arrow{>}}, mark=at position 0.58 with {\arrow{>}}, mark=at position 0.56 with {\arrow{>}}, mark=at position 0.54 with {\arrow{>}}
        }}](2,0) circle (2cm);
        \path[postaction={decorate,decoration={markings,mark=at position 0.04 with {\arrow{<}}, mark=at position 0.06 with {\arrow{<}}, mark=at position 0.08 with {\arrow{<}},mark=at position 0.92 with {\arrow{<}}, mark=at position 0.94 with {\arrow{<}}, mark=at position 0.96 with {\arrow{<}},
        }}](-2,0) circle (2cm);
    \end{tikzpicture}    
    \caption{Two quasicircles approaching $0$ through an attracting and a repelling petal of $f$. Here $f$ has parabolic multiplicity $2$.}
    \label{fig:germ2}
\end{figure}

It remains to establish the quantitative statement in \ref{intro:b}. We show that if two Fatou components have intersecting closures and can be quasiconformally mapped onto a pair of tangent disks, then such a map can be chosen with uniformly controlled quasiconformal distortion; see Proposition \ref{proposition:circularizable_gf}. The proof again uses a blow-up argument, but the main difficulty lies in precisely identifying the geometric and dynamical quantities on which the distortion bound depends.

It is useful to compare this argument with the familiar quasidisk setting. Suppose that $f$ is a rational map, $U$ is a Fatou component that is a Jordan region, $f(U)$ is a quasidisk, and $\bar U$ is disjoint from the critical set of $f$. Since $f(U)$ is a quasidisk, its boundary satisfies the Ahlfors three-point condition; see Theorem \ref{theorem:quasidisk_quasicircle}. Koebe's distortion theorem then implies that $\partial U$ also satisfies the Ahlfors three-point condition, with constants depending only on the corresponding constants for $f(U)$ and on the distance from $\partial U$ to the critical set. Applying Ahlfors' theorem once more, we conclude that $U$ is a quasidisk with distortion controlled in terms of the same quantities. 

Our situation is analogous, but involves pairs of touching components. Suppose $U,V$ are Fatou components with $\bar U\cap \bar V\neq \emptyset$ and that $f(U)$ and $f(V)$ can be quasiconformally mapped onto a pair of tangent disks. One must then determine whether the same is true for $U$ and $V$, and, crucially, whether the quasiconformal distortion can be controlled uniformly. The required quantitative dependence is provided by a theorem of the second author from \cite{Ntalampekos:tangent}; see Theorem \ref{theorem:tangent}. This result involves hyperbolic geometry and plays, in the setting of tangent pairs, the role of Ahlfors' theorem in the quasidisk argument above. The details are given in Proposition~\ref{proposition:circularizable_gf}.

\section{Preliminaries in quasiconformal geometry}

\subsection{Quasiconformal maps and related concepts}

Let $K\geq 1$. An orientation-preserving homeomorphism $f\colon U\to V$ between open subsets of $\C$ is \textit{quasiconformal} if $f$ lies in the Sobolev space $W^{1,2}_{\loc}(U)$ and
$$\|Df(z)\|^2\leq KJ_f(z)\quad \text{for a.e.\ $z\in U$;}$$ 
here $\|\cdot \|$ denotes the operator norm of the matrix $Df$ and $J_f$ is its Jacobian determinant. In this case we say that $f$ is $K$-quasiconformal. A homeomorphism $f\colon U\to V$ between open subsets of $\widehat \C$ is quasiconformal if $f|_{U\setminus \{\infty, f^{-1}(\infty)\}}$ is quasiconformal in the above sense. We will freely use the fundamental facts that the inverse of a quasiconformal map is quasiconformal, that the composition of two quasiconformal maps is quasiconformal, and that $1$-quasiconformal maps are conformal. We direct the reader to \cite{LehtoVirtanen:quasiconformal} for further background.

Let $(X,d_X)$ be a metric space. We define the (metric) cross ratio of a quadruple of distinct points $a,b,c,d\in X$ to be
$$[a,b,c,d]= \frac{d_X(a,c)d_X(b,d)}{d_X(a,d)d_X(b,c)}.$$
Observe that if $X=\widehat \C$, equipped with the chordal metric, and $a,b,c,d\neq \infty$, then 
$$[a,b,c,d]= \frac{|a-c||b-d|}{|a-d||b-c|}.$$
The factors containing the point $\infty$ are omitted. For example, 
$$[a,b,c,\infty]= \frac{|a-c|}{|b-c|}.$$

A homeomorphism $\eta\colon [0,\infty)\to [0,\infty)$ is called a \textit{distortion function}. A homeomorphism $f\colon (X,d_X)\to (Y,d_Y)$ between metric spaces is \textit{quasi-M\"obius} if there exists a distortion function $\eta$ such that 
$$[f(a),f(b),f(c),f(d)] \leq \eta ([a,b,c,d])$$
for every quadruple of distinct points $a,b,c,d\in X$. In this case, we say that $f$ is $\eta$-quasi-M\"obius. Suppose that $X,Y\subset \widehat \C$ and $f\colon X\to Y$ is $\eta$-quasi-M\"obius. Then the cross ratios in $X$ and $Y$ may be computed with the chordal or the Euclidean distance interchangeably. Thus, we do not need to specify which of the two metrics is used in $X$ and $Y$. Also, since cross ratios are invariant under M\"obius transformations, the composition of $f$ with a M\"obius transformation is $\eta$-quasi-M\"obius as well.  

A homeomorphism $f\colon (X,d_X)\to (Y,d_Y)$ between metric spaces is \textit{quasisymmetric} if there exists a distortion function $\eta$ such that
$$ \frac{d_Y(f(a),f(b))}{d_Y(f(a),f(c))} \leq \eta\left(\frac{d_X(a,b)}{d_X(a,c)}\right)$$
for every triple of distinct points $a,b,c\in X$. In this case, we say that $f$ is $\eta$-quasisymmetric. 

We note that every bi-Lipschitz map is quasisymmetric and every quasisymmetric map is quasi-M\"obius. Also, these three classes of maps are invariant under composition and under taking the inverse map. These statements are \textit{quantitative}. For example, if $f$ is $\eta$-quasi-M\"obius, then $f^{-1}$ is $\eta'$-quasi-M\"obius for some distortion function $\eta'$ depending only on $\eta$.

\begin{theorem}[\cite{Heinonen:metric}*{Theorem 11.4}]\label{q:qc_qs}
    Let $f\colon U\to V$ be a $K$-quasiconformal map between open subsets of $\C$. For each $a>1$ there exists a distortion function $\eta$, depending only on $K$ and $a$, such that if $B(z,r)\subset B(z,ar)\subset U$, then $f$ is $\eta$-quasisymmetric in $B(z,r)$.
\end{theorem}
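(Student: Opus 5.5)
This is the classical fact that $K$-quasiconformal maps are locally $\eta$-quasisymmetric. The plan is to derive it from the metric definition of quasiconformality together with a normal families argument.

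\emph{Step 1 (normalization).} By pre- and post-composing with similarities, which changes neither $K$ nor ratios of distances, we may assume $z=0$, $r=1$ and $B(0,a)\subset U$. We may also assume $f(0)=0$ and that $\diam f(B(0,1))=1$. It then suffices to find $\eta$, depending only on $K$ and $a$, such that every such normalized $f$ is $\eta$-quasisymmetric on $B(0,1)$.

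\emph{Step 2 (weak quasisymmetry).} First we show there is $H=H(K,a)$ such that $|x-y|\le|x-w|$ with $x,y,w\in B(0,1)$ implies $|f(x)-f(y)|\le H|f(x)-f(w)|$. The tool is the modulus of curve families. Suppose instead that $|f(x)-f(y)|\ge t|f(x)-f(w)|$ with $t$ large. Consider the ring in $f(B(0,a))$ separating $\{f(x),f(w)\}$ from $\{f(y)\}\cup \partial f(B(0,a))$. Teichmüller's extremal ring estimate makes the modulus of its family of separating curves small, namely $\lesssim 1/\log t$. Pulling back and using $K$-quasiinvariance of modulus gives a ring in $B(0,a)$ separating $\{x,w\}$ from $y$ and from $\partial B(0,a)$. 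Since $|x-y|\le |x-w|$ and $\dist(B(0,1),\partial B(0,a))\ge a-1$, a Loewner-type lower bound (again Teichmüller) bounds the modulus of that family below by a constant $c(a)>0$. This contradicts the smallness once $t$ exceeds some $H(K,a)$.

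\emph{Step 3 (upgrade to $\eta$-quasisymmetry).} We need $\eta(s)\to0$ as $s\to 0$, with a uniform bound. The cleanest route is compactness. The normalized family of $K$-quasiconformal maps of $B(0,a)$ fixing $0$ with $\diam f(B(0,1))=1$ is equicontinuous and normal on $\bar B(0,1)$. Its limits are again $K$-quasiconformal, hence injective: nonconstancy follows from the diameter normalization, and a Hurwitz-type argument for quasiconformal maps gives injectivity. If no $\eta$ worked, there would be maps $f_n$ and triples with $|x_n-y_n|/|x_n-w_n|\le s_0$ but distorted ratios bounded below by some $\epsilon>0$ (or the reverse for large $s$). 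Rescaling the triples by similarities, which preserves the normal family only after renormalizing, leads to a limit map violating injectivity or continuity.

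The main obstacle is this rescaling. When $|x_n-w_n|\to0$ the triple shrinks, and we must re-zoom around $x_n$, which requires room $B(x_n,\rho)\subset B(0,a)$. This is exactly where $a>1$ enters: the room is at least $a-1$. To avoid this difficulty I would instead argue directly via the standard iteration. Weak quasisymmetry with constant $H$ on a domain with a collar of definite size implies quasisymmetry with $\eta(s)=C\max(s^\alpha,s^{1/\alpha})$. One proves this by chaining annuli: each annulus doubling in scale around $x$ contracts the image distance by a definite factor. That factor comes from the modulus argument of Step 2, applied to the separating ring $\{\rho<|\zeta-x|<2\rho\}$, whose image ring has modulus at least $c/K$ and hence, by Teichmüller, gives a definite ratio. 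Both routes depend only on $K$ and $a$, which yields the claim.
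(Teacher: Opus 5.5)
The paper gives no proof of this statement; it quotes Heinonen. Your two-step scheme (weak quasisymmetry from modulus estimates, then an upgrade to $\eta$-quasisymmetry) is the standard one, but Step~2 has a genuine gap.

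Write $\Psi$ for Teichm\"uller's increasing function, so that a ring separating $\{a_1,a_2\}$ from $\{b,\infty\}$ has modulus at most $\Psi(|a_1-b|/|a_1-a_2|)$. This is only an \emph{upper} bound. In the image it is no restriction when $t$ is large, so it cannot make anything small there. Also, the separating family of a thick ring has modulus of order $\log t$, not $1/\log t$. What the contradiction really needs is the \emph{existence} of a thick ring: the round annulus $\{\ell<|\zeta-f(x)|<L\}$, with $\ell=|f(x)-f(w)|$ and $L=|f(x)-f(y)|$. Since you pull this annulus back, it must lie in $f(B(0,a))$. Your outline gives no reason why $\dist(f(x),\partial f(B(0,a)))$ is not much smaller than $L$. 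In that case every ring in $f(B(0,a))$ separating $\{f(x),f(w)\}$ from $f(y)$ could be thin, and there is no contradiction. On the domain side, the estimate you need is again Teichm\"uller's upper bound, $\mod f^{-1}(R')\le \Psi(|x-y|/|x-w|)\le \Psi(1)$ (equivalently, a lower bound for the joining family). It involves no dependence on $a$.

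The missing idea, and the place where $a>1$ actually enters, is room in the \emph{image}. The ring $f(B(0,a))\setminus f(\bar B(0,1))$ has modulus at least $\frac{\log a}{2\pi K}$. Apply Teichm\"uller to it with $p,q\in f(\bar B(0,1))$, $|p-q|\ge 1/2$, and $s$ the point of $\partial f(B(0,a))$ nearest to $p$. Under your normalization this gives $\dist(f(\bar B(0,1)),\partial f(B(0,a)))\ge c(K,a)$. Take $c\le 1$; since $L\le 1$, the annulus $\{\ell<|\zeta-f(x)|<cL\}$ then lies in $f(B(0,a))$. Its preimage separates $\{x,w\}$ from $\{y,\infty\}$, and the bound $\frac{1}{2\pi K}\log(ct)\le \Psi(1)$ controls $t$.

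Step~3 also needs repair.
\begin{itemize}
\item One doubling annulus, whose image has modulus at least $\frac{\log 2}{2\pi K}$, gives via Teichm\"uller only $|f(x)-f(w)|\ge \Psi^{-1}(\frac{\log 2}{2\pi K})\,|f(x)-f(y)|$. This factor is less than $1$ for large $K$, so it is not a contraction.
\item To fix this, group roughly $K$ consecutive annuli, or use superadditivity of moduli over the whole nest.
\item Keep the radii below about $a-1$ so the annuli stay in $B(0,a)$.
\item Treat the ratios $s>1$ separately.
\end{itemize}
The cleanest route is to invoke the Tukia--V\"ais\"al\"a theorem on $B(0,1)$: a weakly quasisymmetric homeomorphism between connected doubling spaces is quantitatively quasisymmetric. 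With that, Step~3 needs no collar at all.
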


We record an immediate consequence of Koebe's distortion theorem.
\begin{lemma}\label{lemma:koebe_qm}
    Let $U\subset \widehat\C$ be a connected open set, $E\subset U$ be a compact set and $f\colon U\to \widehat \C$ be a conformal embedding. Then $f\colon (E,\chi)\to (f(E),\chi)$ is $\eta$-quasi-M\"obius for a distortion function $\eta$ depending only on $U$ and $E$.
\end{lemma}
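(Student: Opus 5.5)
The plan is to reduce to a uniform estimate on derivative ratios and then compute cross ratios directly. First, after post-composing and pre-composing with rotations of the sphere, which are isometries of the chordal metric $\chi$ and preserve cross ratios, I assume that $E$ and $f(E)$ lie in a fixed compact subset of $\C$. On a compact subset of $\C$, $\chi$ and the Euclidean metric are bi-Lipschitz equivalent. The normalization must be made independent of $f$: rotate so that $\infty\notin U$, and in the target rotate so that $f(E)$ avoids a neighborhood of $\infty$.

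For the target, note that $f(U)$ omits at least one point of $\widehat\C$ when $U\neq\widehat\C$, since a conformal embedding of a proper subdomain cannot be onto. I will handle the target by using the fact that cross ratios are invariant under Möbius maps. The composition $Mf$ with a Möbius $M$ is again a conformal embedding, so I may choose $M$ with $Mf(E)$ bounded. This is still not uniform in $f$.

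A cleaner route is to work directly with the Euclidean cross ratio, which is Möbius invariant on all of $\widehat\C$, and to use the standard chain argument. Cover $E$ by finitely many disks $B(z_i,r_i)$ with $B(z_i,4r_i)\subset U$; this cover depends only on $U$ and $E$. On each $B(z_i,2r_i)$, the map $f$ is conformal on $B(z_i,4r_i)$. Koebe's distortion theorem, or Theorem~\ref{q:qc_qs} with $K=1$ after a Möbius normalization so that $f(B(z_i,4r_i))$ avoids $\infty$, gives that $f$ is $\eta_0$-quasisymmetric on $B(z_i,2r_i)$, with $\eta_0$ universal. Quasisymmetry then implies quasi-Möbius on each piece.

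To globalize, I plan to argue by normal families rather than chaining. Suppose the conclusion fails. Then there are conformal embeddings $f_n$ and quadruples in $E$ with $[a_n,b_n,c_n,d_n]\le t$ but $[f_n(a_n),\dots]\to\infty$. Post-compose each $f_n$ with a Möbius map so that three fixed points $p_1,p_2,p_3$ of $U$ are sent to $0,1,\infty$; this leaves cross ratios unchanged. The normalized family of injective meromorphic maps omitting nothing prescribed is still normal, because univalent maps normalized at three points form a normal family whose limits are univalent, being nonconstant by Hurwitz-type arguments.

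Passing to a subsequence, $f_n\to f$ locally uniformly in $\chi$ with $f$ univalent, and the points converge: $a_n\to a$, and so on. If the limit points are distinct, continuity of the cross ratio under uniform convergence gives a contradiction. If some of them coincide, the local quasisymmetry from the first step handles the degeneration. Because $[a,b,c,d]\le t$, the points cannot collapse in the pattern that forces the cross ratio to blow up without a corresponding collapse in the domain. Local uniform $\eta_0$-quasisymmetry near the collision points then bounds the image cross ratio.

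I expect this degenerate collision case to be the main obstacle. The work there is a case analysis of which pairs collide. The tool is the bi-Lipschitz or quasisymmetric control of $f$ near each limit point, combined with the fact that distances between separated clusters stay comparable under $f$ by uniform convergence to an injective limit.
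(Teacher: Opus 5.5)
Your compactness route is viable, but as written it has two genuine gaps. First, your contradiction hypothesis is only half of the negation of the statement. A distortion function is a homeomorphism of $[0,\infty)$, so you must rule out two things: $[a_n,b_n,c_n,d_n]\le t$ with $[f_n(a_n),f_n(b_n),f_n(c_n),f_n(d_n)]\to\infty$, and also $[a_n,b_n,c_n,d_n]\to 0$ with the image cross ratios bounded below by some $\epsilon>0$. The second is not a formal consequence of the first; it amounts to a bound of the same type for the inverse maps.

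Second, the collision case is where all the content lies once distinct limits are handled by continuity, and you flag it but do not carry it out. The sentence about the points being unable to ``collapse in the pattern that forces the cross ratio to blow up'' is not an argument.

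Both gaps close at once if you use $C^1$ rather than $C^0$ convergence. The normalized limit $f$ is univalent, so near each $x\in E$, after post-composing with a rotation of the sphere taking $f(x)$ to $0$, the derivatives converge uniformly to a nonvanishing limit. Combined with $\chi(f(x),f(y))>0$ for $x\neq y$, the same subsequence argument shows that the normalized maps are uniformly bi-Lipschitz on $(E,\chi)$. Every cross ratio is then distorted by at most a factor $L^4$, so you can take $\eta(t)=L^4t$ with no case analysis.

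The paper obtains this bi-Lipschitz estimate directly, without any compactness. The difficulty that made you abandon your first paragraph, namely that the target cannot be normalized uniformly in $f$, is not actually an obstacle. Fix a M\"obius map $\phi$ with $\phi(U)\subset\C$, and choose a M\"obius map $\psi$, depending on $f$, with $\psi(f(U))\subset\C$. Koebe's estimates for $g=\psi\circ f\circ\phi^{-1}$ are unaffected by affine post-composition. They therefore give $L^{-1}M|z-w|\le |g(z)-g(w)|\le LM|z-w|$ for $z,w\in\phi(E)$, where $L$ depends only on $\phi(E)$ and $\phi(U)$ and only the scale $M$ depends on $g$. For distant points, the upper bound comes from chaining balls along a path in the connected set $\phi(U)$, and the lower bound from the Koebe $1/4$ theorem together with injectivity. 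The factor $M$ and the M\"obius maps cancel in cross ratios, and Euclidean and chordal cross ratios agree. This yields the explicit distortion $\eta(t)=L^4t$.
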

\begin{proof}
    If $U=\widehat \C$, then $f$ is a M\"obius transformation, so it is $\eta$-quasi-M\"obius with $\eta(t)=t$. Suppose that $\widehat \C\setminus U\neq \emptyset$, so $\widehat \C\setminus f(U)\neq \emptyset$. Consider M\"obius transformations $\phi,\psi$ such that $\phi(U)\subset \C$ and $\psi(f(U))\subset \C$, and let $g=\psi \circ f\circ \phi^{-1}\colon \phi(U)\to \psi(f(U))$. By covering $\phi(E)$ with finitely many balls in $\phi(U)$ and applying Koebe's distortion theorem to each of them, we see that there exists a constant $M>0$ depending on $g$ and a constant $L\geq 1$ that depends only on $\phi(E)$ and $\phi(U)$ such that 
    $$L^{-1}M|z-w|\leq |g(z)-g(w)|\leq LM |z-w|\,\,\, \text{for all $z,w\in \phi(E)$}.$$
    Thus, $g\colon (\phi(E),|\cdot|)\to (\psi(f(E)), |\cdot|)$ is uniformly quasi-M\"obius, depending only on $U$ and $E$. The identity map from the Euclidean to the chordal metric preserves cross ratios, so $g\colon (\phi(E),\chi)\to (\psi(f(E)), \chi)$ is uniformly quasi-M\"obius, depending only on $U$ and $E$. Finally, since $\phi,\psi$ are M\"obius transformations, the map $f\colon (E,\chi)\to (f(E),\chi)$ is uniformly quasi-M\"obius depending only on $U$ and $E$, as desired. 
\end{proof}

\subsection{Quasidisks and quasicircles}

A set $U\subset \widehat \C$ is a \textit{quasidisk} if it is the image of the unit disk $\D$ under a quasiconformal homeomorphism $f\colon \widehat \C\to \widehat \C$. If $f$ is $K$-quasiconformal for some $K\geq1$, we say that $U$ is a $K$-quasidisk. Note that the complement of a quasidisk is also a quasidisk. 

Let $J\subset \widehat \C$ be a Jordan curve (resp.\ Jordan arc). We say that $J$ is a \textit{quasicircle} (resp.\ \textit{quasiarc}) if there exists a constant $L\geq 1$ such that for every pair of points $z,w\in J$ there exists an arc $E\subset J$ with endpoints $z,w$ such that 
\begin{align}\label{condition:ahlfors}
    \diam_\chi (E)\leq L\chi(z,w),
\end{align}
where $\diam_\chi(E)$ denotes the diameter of $E$ in the chordal metric. In this case we say that $J$ is an $L$-quasicircle (resp.\ $L$-quasiarc). We note that a set $J\subset \C$ is a quasicircle (resp.\ quasiarc) if and only if there exists a constant $L'\geq 1$ such that for every pair of points $z,w\in J$ there exists an arc $E\subset J$ with endpoints $z,w$ such that
$$\diam(E)\leq L'|z-w|,$$
where $\diam(E)$ denotes the diameter in the Euclidean metric. This equivalence is quantitative and follows from \cite{Bonk:uniformization}*{Proposition 4.4}. Moreover, the referenced result implies that the quasi-M\"obius image of a quasicircle in $\widehat \C$ is again a quasicircle, quantitatively.

The following deep result of Ahlfors provides the link between quasidisks and quasicircles. See \cite{Ahlfors:qc}*{Section IV.D} or \cite{LehtoVirtanen:quasiconformal}*{Theorem II.8.6} or \cite{Pommerenke:conformal}*{Section 5} for a proof.

\begin{theorem}[\cite{Ahlfors:reflections}]\label{theorem:quasidisk_quasicircle}
Let $U\subset \widehat \C$ be a Jordan region and $J=\partial U$. Then $U$ is a quasidisk if and only if $J$ is a quasicircle, quantitatively.
\end{theorem}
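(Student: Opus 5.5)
The plan is to prove the two implications separately, keeping track of constants at each step so that the result is quantitative.

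\emph{Quasidisk $\Rightarrow$ quasicircle.} Suppose $U=F(\D)$ for a $K$-quasiconformal homeomorphism $F\colon\widehat\C\to\widehat\C$.
\begin{enumerate}
\item First I would show that $F$ is $\eta$-quasi-M\"obius with respect to the chordal metric, with $\eta$ depending only on $K$. Post-compose with a M\"obius map so that $F(\infty)=\infty$; this does not change cross ratios. Then $F$ restricts to a $K$-quasiconformal self-map of $\C$. Applying Theorem~\ref{q:qc_qs} on arbitrarily large balls $B(0,r)\subset B(0,2r)\subset\C$ gives that $F|_{\C}$ is globally $\eta$-quasisymmetric. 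A quasisymmetric map is quasi-M\"obius, quantitatively, and the case of quadruples containing $\infty$ is handled by continuity.
\item The unit circle satisfies \eqref{condition:ahlfors} with a universal constant $L$.
\item Quasi-M\"obius images of quasicircles are quasicircles, quantitatively, by the consequence of \cite{Bonk:uniformization}*{Proposition 4.4} recorded above. Hence $J=F(\partial\D)$ is an $L'$-quasicircle with $L'=L'(K)$.
\end{enumerate}

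\emph{Quasicircle $\Rightarrow$ quasidisk.} Here I would follow Ahlfors' construction through conformal welding.
\begin{enumerate}
\item Normalize by a M\"obius map so that $\infty\in J$. The Euclidean three-point condition then holds with a constant depending only on $L$.
\item Let $\varphi\colon\Hyp\to U$ and $\psi\colon -\Hyp\to\widehat\C\setminus\bar U$ be Riemann maps. Both fix $\infty$ and extend homeomorphically to $\R$ by Carath\'eodory's theorem.
\item Consider the welding homeomorphism $h=\psi^{-1}\circ\varphi|_{\R}$. The key claim is that $h$ is $\rho$-quasisymmetric on $\R$, with $\rho$ depending only on $L$.
\item Given this, let $H$ be the Beurling--Ahlfors extension of $h$ to $-\Hyp$. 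It is $K(\rho)$-quasiconformal.
\item Define $G=\varphi$ on $\bar\Hyp$ and $G=\psi\circ H$ on $-\Hyp$. By construction $G$ is a homeomorphism of $\widehat\C$ that is quasiconformal off $\R$.
\item Since lines are removable for quasiconformal homeomorphisms, $G$ is $K(\rho)$-quasiconformal. It maps $\Hyp$, which is M\"obius equivalent to $\D$, onto $U$.
\end{enumerate}

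\emph{Main obstacle: quasisymmetry of the welding.} To prove that $h$ is quasisymmetric, fix adjacent intervals $I_1=[x-t,x]$ and $I_2=[x,x+t]$. I would compare the conformal modulus of the quadrilateral $(\Hyp;\,I_1,\,I_2\text{ and }\infty)$ with that of its image in $U$. For the images of these arcs in $J$, I would use the three-point condition. It says that the arc $\varphi(I_1\cup I_2)$ has diameter comparable to the chordal distance between its endpoints. Combined with a standard extremal length argument, comparing curve families in $U$ and in $\widehat\C\setminus\bar U$ with the same curve family in the plane, this shows that the extremal distances in $U$ and in the complementary domain are comparable with constants depending only on $L$. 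That comparison forces $|h(I_1)|\asymp|h(I_2)|$. This is the step I expect to require the most care: one must check that the modulus estimates on both sides are controlled by the single geometric constant $L$.
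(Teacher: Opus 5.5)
The paper does not prove this theorem; it quotes it and refers to Section IV.D of Ahlfors' lectures on quasiconformal mappings, Theorem II.8.6 of Lehto--Virtanen, and Section 5 of Pommerenke. Your outline is the classical argument found there. Most of it is fine as written:
\begin{itemize}
\item for ``quasidisk $\Rightarrow$ quasicircle'', global quasisymmetry of $F$ from Theorem~\ref{q:qc_qs} together with quasi-M\"obius invariance of the three-point condition;
\item for the converse, quasisymmetry of the welding $h=\psi^{-1}\circ\varphi$, the Beurling--Ahlfors extension, and removability of $\R$.
\end{itemize}

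The step you flag does not yet work as you phrase it. Comparing the curve families in $U$ and in $\widehat\C\setminus\bar U$ with a plane family only bounds their moduli from above, since they are subfamilies. By itself this cannot show the two moduli are comparable. Also, what you need from the three-point condition is not the size of $\varphi(I_1\cup I_2)$ but the relative separation of opposite sides of the quadrilateral. The missing ingredient is reciprocity with the conjugate family.

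Let $z_1,z_2,z_3$ be the images of $x-t,x,x+t$. Let $\alpha,b,\beta,d$ be the arcs $[z_1,z_2]$, $[z_2,z_3]$, $[z_3,\infty]$, $[\infty,z_1]$ of $J$, and put $r=|z_1-z_2|/|z_2-z_3|$. With Euclidean three-point constant $c$ one has
\begin{itemize}
\item $\diam\alpha\le c|z_1-z_2|$ and $\diam b\le c|z_2-z_3|$;
\item $\dist(\alpha,\beta)\ge|z_2-z_3|/c$ and $\dist(b,d)\ge|z_1-z_2|/c$.
\end{itemize}
Now use two facts. First, the elementary bound $\mod\Gamma(E,F)\le\Psi(\dist(E,F)/\diam E)$ for a bounded continuum $E$, where $\Psi$ is nonincreasing, finite on $(0,\infty)$, and tends to $0$ at $\infty$. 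Second, the reciprocity $\mod\Gamma_D(\alpha,\beta)\cdot\mod\Gamma_D(b,d)=1$ in each quadrilateral $D\in\{U,\widehat\C\setminus\bar U\}$. Together they give
\[
\frac{1}{\Psi(r/c^2)}\le\mod\Gamma_D(\alpha,\beta)\le\Psi\Big(\frac{1}{c^2r}\Big).
\]
For $D=U$ the middle term equals $1$, since the quadrilateral $\Hyp(x-t,x,x+t,\infty)$ is symmetric. This confines $r$ to a compact subinterval of $(0,\infty)$ depending only on $c$. The same inequality for $D=\widehat\C\setminus\bar U$ then bounds that modulus above and below. That modulus is a monotone function of $(h(x+t)-h(x))/(h(x)-h(x-t))$, so this yields the $\rho$-condition with $\rho=\rho(c)$, and the rest of your argument goes through.
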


Rickman established the corresponding result for quasiarcs.

\begin{theorem}[\cite{Rickman:quasiconformal_arcs}*{Theorem 1}]\label{theorem:rickman}
Let $J\subset \widehat \C$ be a Jordan arc. There exists a quasiconformal map of $\widehat{\C}$ that maps a line segment onto $J$ if and only if $J$ is a quasiarc, quantitatively.
\end{theorem}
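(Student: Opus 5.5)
The statement is Rickman's theorem, which we are allowed to quote, but here is how I would prove it from the tools already recorded (Theorem~\ref{q:qc_qs}, Theorem~\ref{theorem:quasidisk_quasicircle} and the quasi-M\"obius invariance of the three-point condition).

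\emph{Necessity.} Suppose $\Phi\colon\widehat\C\to\widehat\C$ is $K$-quasiconformal and $J=\Phi([0,1])$. A global quasiconformal map of the sphere is $\eta$-quasi-M\"obius with respect to the chordal metric, with $\eta$ depending only on $K$. One sees this by normalizing with M\"obius maps and applying Theorem~\ref{q:qc_qs} on a large ball. A segment trivially satisfies \eqref{condition:ahlfors} with $L=1$. The argument of \cite{Bonk:uniformization}*{Proposition 4.4} shows that quasi-M\"obius maps preserve the three-point condition quantitatively, and it applies verbatim to arcs. Hence $J$ is an $L$-quasiarc with $L=L(K)$.

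\emph{Sufficiency: reduction to a quasicircle.} Let $J$ be an $L$-quasiarc with endpoints $a\neq b$. After a M\"obius map $M$ sending $a\mapsto 0$ and $b\mapsto\infty$, the image $M(J)$ is a quasiarc from $0$ to $\infty$, quantitatively. Write $s(w)=w^2$ and let $J'=s^{-1}(M(J))$. Then $J'$ is a Jordan curve through $0$ and $\infty$, consisting of two arcs interchanged by $w\mapsto -w$.

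\emph{First claim.} I would show that $J'$ is an $L'$-quasicircle with $L'=L'(L)$, working in the Euclidean form of the three-point condition on $\C$ and normalizing so that $|M(J)\cap\{|z|=1\}|$-type quantities are comparable to $1$. For two points $w_1,w_2\in J'$ of comparable modulus, the map $s$ is bi-Lipschitz on the relevant annulus after rescaling. The key input is that the three-point condition for $M(J)$ is invariant under $z\mapsto\lambda z$. The subarc of $M(J)$ between $s(w_1)$ and $s(w_2)$ stays in a comparable annulus, because $M(J)$ satisfies the three-point condition with respect to $0$ and $\infty$. For points of very different modulus, or lying on opposite halves, the arc through $0$ (or through $\infty$) has diameter $\lesssim \max|w_i|\lesssim |w_1-w_2|$, up to constants. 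Having established the claim, Theorem~\ref{theorem:quasidisk_quasicircle} shows that the two complementary components $D$ and $-D$ of $J'$ are $K'$-quasidisks.

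\emph{Descending to the sphere.} Let $F\colon\Hyp\to D$ be a conformal map with $F(0)=0$ and $F(\infty)=\infty$. Then $G(z)=F(\sqrt z)^2$, with the branch of the square root onto $\Hyp$, is a conformal map from $\C\setminus[0,\infty)$ onto $\widehat\C\setminus M(J)$. It extends continuously to the two edges of the slit, each edge going homeomorphically onto $M(J)$. The two edges are identified by a welding homeomorphism $h$ of $[0,\infty)$. This $h$ is the conjugate, under $x\mapsto x^2$ and $x\mapsto -x$, of the boundary correspondence of $F$ on $\R$. That correspondence is quasisymmetric with constants depending only on $K'$, because $D$ is a quasidisk.

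I would then build a quasiconformal map $\Psi$ of the sphere that fixes $[0,\infty)$ setwise, preserves $0$ and $\infty$, and whose boundary values on the two edges realize $h$. In the square-root coordinates this is a Beurling--Ahlfors type extension on each half-plane, chosen odd-compatible. With this choice, $G\circ\Psi$ glues continuously across the slit. It is then a homeomorphism of $\widehat\C$, quasiconformal off the slit, and quasiconformal everywhere because a ray is removable for quasiconformality. It maps $[0,\infty]$ onto $M(J)$. Composing with $M^{-1}$ and with a M\"obius map taking a segment to $[0,\infty]$ gives the desired map, with distortion depending only on $L$.

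\emph{Main obstacle.} I expect the hardest step to be the first claim, that doubling by the square root preserves the three-point condition with uniform constants, especially the case analysis near $0$ and $\infty$. Constructing the gluing map $\Psi$ so that the welding is realized is the second delicate point. I would first try to avoid it entirely by showing directly that the quasicircle $J'$ admits an odd quasiconformal parametrization $\Phi(-w)=-\Phi(w)$, obtained by averaging or by reflecting the Ahlfors extension. Such a $\Phi$ descends through $s$ immediately.
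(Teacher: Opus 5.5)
The paper gives no proof of this statement. It quotes Rickman, and it only ever uses the ``if'' direction, qualitatively, to embed quasiarcs into quasicircles, so quoting it (as you note) is all that is needed.

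Your self-contained route is viable: double $M(J)$ by $s(w)=w^2$ to a symmetric quasicircle $J'$, apply Ahlfors, then descend through $s$. However, the descent step has a genuine gap. You only justify that the welding $h$ is quasisymmetric on $[0,\infty)$, and that is not enough.

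Put $\tau=F^{-1}\circ R\circ F$ on $\R$, where $R(w)=-w$. Then $\tau$ is decreasing, $\tau(0)=0$, and $h(x)=\tau(\sqrt x)^2$. Whatever edge values $\Psi^+$ you prescribe, you are forced to take $\Psi^-=h\circ\Psi^+$. In square-root coordinates, the boundary map that Beurling--Ahlfors must extend to $\Hyp$ is then $\beta(t)=\alpha(t)$ for $t\ge0$ and $\beta(t)=\tau(\alpha(-t))$ for $t<0$, where $\alpha(t)=\sqrt{\Psi^+(t^2)}$.

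The triple $-t,0,t$ gives the ratio $\alpha(t)/|\tau(\alpha(t))|$. So $\beta$ can be quasisymmetric only if $|\tau(u)|\asymp u$ for all $u>0$, i.e.\ $h(x)\asymp x$. This is a condition at the punctures $0,\infty$. It does not follow from quasisymmetry of $h$ on the half-line (think of $h(x)=x^2$), and no choice of $\Psi^+$ evades it. The same issue defeats ``reflecting the Ahlfors extension'', since $F(-t)\neq -F(t)$ in general.

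The missing idea is that $R$ preserves $|w|$ and fixes the two points $0,\infty\in J'$. A $K'$-quasiconformal extension $\widetilde F$ of $F$ to $\widehat\C$ is $\eta$-quasi-M\"obius, and
\[
[u,\tau(u),0,\infty]=\frac{u}{|\tau(u)|},\qquad [F(u),-F(u),0,\infty]=1,
\]
so $|\tau(u)|\asymp u$ with constants depending only on $K'$. Now take $\alpha=\id$. Then $\beta$ is quasisymmetric on each closed half-line: on $(-\infty,0]$ it equals $\tau\circ(-\id)$, and $\tau$ is the trace on $\R$ of the quasiconformal map $\widetilde F^{-1}\circ R\circ \widetilde F$, which fixes $0$ and $\infty$. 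The comparability across $0$ then makes $\beta$ quasisymmetric on all of $\R$.

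Let $E$ be the Beurling--Ahlfors extension of $\beta$. Then $\Theta=F\circ E$ satisfies $\Theta(-t)=-\Theta(t)$ on $\R$. Setting $\Theta(z)=-\Theta(-z)$ for $\Im z<0$ gives an odd quasiconformal map of $\widehat\C$ with $\Theta(\widehat\R)=J'$. Consequently $s\circ\Theta\circ s^{-1}$ is well defined and maps $[0,\infty]$ onto $M(J)$, with distortion depending only on $L$.

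There are two smaller points.

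\emph{The first claim.} The map $s$ is two-to-one on annuli, so it is not bi-Lipschitz there. What your same-half and opposite-halves cases actually need is $\dist(w,-\gamma_+)\gtrsim |w|$ for $w$ on one lift $\gamma_+$. This follows by lifting. If $\zeta_1,\zeta_2\in M(J)$ satisfy $|\zeta_1-\zeta_2|\ll|\zeta_1|$, then the subarc between them lies in a small disk away from $0$. Hence its lift starting at $w_1$ cannot end near $-w_1$.

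\emph{The necessity argument.} It is false as a quantitative statement. The three-point condition for \emph{arcs} is not quasi-M\"obius invariant. For closed curves, the freedom to pick the smaller of the two arcs is what rescues invariance, but an arc has only one subarc between two points. For instance, $J=[-R,R]$ is itself a segment, yet $\diam_\chi(J)/\chi(-R,R)\gtrsim R$. So ``$L=L(K)$'' fails. The ``only if'' direction holds only qualitatively, or after a normalization such as $\Phi(\infty)=\infty$ with the Euclidean condition. Only the ``if'' direction is genuinely quantitative.
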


In particular, quasiarcs can be embedded into quasicircles. The next result is elementary. See \cite{Ntalampekos:tangent}*{Proposition 2.6} for a proof.
\begin{proposition}\label{proposition:quasidisk_extension}
    Let $U,V\subset \widehat \C$ be quasidisks and $f\colon U\to V$ be a quasiconformal homeomorphism. Then there exists an extension of $f$ to a quasiconformal homeomorphism of $\widehat\C$, quantitatively.
\end{proposition}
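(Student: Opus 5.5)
The idea is to transport the problem to the unit disk, extend there by reflection across the unit circle, and transport back.

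\textbf{Reduction to the disk.} Suppose $f$ is $K$-quasiconformal, $U$ is a $K_1$-quasidisk and $V$ is a $K_2$-quasidisk. Fix quasiconformal homeomorphisms $\phi,\psi\colon\widehat\C\to\widehat\C$ with $\phi(\D)=U$ and $\psi(\D)=V$, where $\phi$ is $K_1$-quasiconformal and $\psi$ is $K_2$-quasiconformal. Set
$$g=\psi^{-1}\circ f\circ \phi|_{\D}\colon \D\to\D.$$
This is a $K_1K_2K$-quasiconformal self-homeomorphism of $\D$. It suffices to extend $g$ to a quasiconformal homeomorphism $G$ of $\widehat\C$ with distortion controlled by $K_1K_2K$. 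Then $F=\psi\circ G\circ\phi^{-1}$ extends $f$, and $F$ is quasiconformal with constant depending only on $K,K_1,K_2$.

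\textbf{Boundary extension and reflection.} A quasiconformal homeomorphism between Jordan domains extends to a homeomorphism of their closures; for $\D\to\D$ this also follows from the fact that quasiconformal maps preserve moduli of curve families up to bounded factors. So $g$ extends to a homeomorphism $\bar g\colon\bar\D\to\bar\D$ with $\bar g(\partial\D)=\partial\D$. Define
$$G(z)=\begin{cases} \bar g(z), & |z|\le 1,\\ 1/\overline{g(1/\bar z)}, & |z|>1,\end{cases}$$
with the usual convention $G(\infty)=1/\overline{g(0)}$. The two formulas agree on $\partial\D$, so $G$ is a homeomorphism of $\widehat\C$. Since $z\mapsto 1/\bar z$ is anticonformal, conjugating $g$ by this reflection preserves orientation and the dilatation. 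Hence $G$ is $K_1K_2K$-quasiconformal on $\widehat\C\setminus\partial\D$.

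\textbf{Removability (main step).} The one nontrivial point is that $\partial\D$ is removable: a homeomorphism of $\widehat\C$ that is $K$-quasiconformal off an analytic arc, or more generally off a closed set of $\sigma$-finite length, is $K$-quasiconformal everywhere. I would cite this from Lehto--Virtanen, where it is proved via the analytic definition using absolute continuity on lines. Alternatively, one can verify directly that the modulus of curve families is distorted by at most $K$, since curves crossing the circle can be handled by approximation. Given removability, $G$ is $K_1K_2K$-quasiconformal on $\widehat\C$, and $F=\psi\circ G\circ\phi^{-1}$ is the desired extension, with distortion at most $K_1^2K_2^2K$.
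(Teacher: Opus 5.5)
Your proof is correct. You transfer the problem to the disk by global quasiconformal maps $\phi,\psi$, extend $\psi^{-1}\circ f\circ\phi|_{\D}$ by reflection in $\partial\D$, use removability of the circle, and transfer back, which gives the bound $K_1^2K_2^2K$. The paper gives no proof of its own and only calls the statement elementary and cites the tangent-quasicircles paper, so I cannot compare with an argument in the text; your reflection route is the standard one that this description suggests.
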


\subsection{Modulus}\label{section:modulus}
Let $\Gamma$ be a family of curves in $\C$. We say that a Borel function $\rho\colon \C\to [0,\infty]$ is \textit{admissible} for $\Gamma$ if 
$$\int_\gamma \rho\, ds\geq 1$$
for every rectifiable curve $\gamma\in \Gamma$. The \textit{(conformal) modulus} of $\Gamma$ is defined as 
$$\mod\Gamma = \inf_\rho \int \rho^2,$$
where the infimum is over all admissible functions $\rho$ for $\Gamma$.

An annulus $A$ is a domain in $\widehat \C$ whose complement has exactly two components. There exists a conformal map from each annulus $A$ onto an annulus $A(0;r,R)=\{z\in \C: r<|z|<R\}$, where $0 \leq r<R\leq  \infty$. Moreover, this annulus is unique up to scaling $z\mapsto az$ for some $a>0$. The modulus of the annulus $A$ is defined as
$$\mod A= \frac{1}{2\pi} \log \left(\frac{R}{r}\right)$$
if $0<r<R<\infty$. If $r=0$ or $R=\infty$, then $\mod A=\infty$. The modulus of the annulus $A$ agrees with the modulus of the family of curves separating the complementary components of $A$. If $E,F\subset \widehat \C$ are disjoint continua that do not separate $\widehat \C$, we denote by $A(E,F)$ the annulus $\widehat \C\setminus (E\cup F)$. If $A\subset A(E,F)$ is any annulus separating $E$ and $F$ we have $\mod A\leq \mod A(E,F)$.

We define a \textit{quadrilateral} $Q\subset \C$ to be a simply connected region together with a conformal map $f$ from a rectangle $R=(0,a)\times (0,b)$ onto $Q$ such that $f$ extends continuously to the closures. The images of the vertical sides of $R$ are called the \textit{vertical sides} of $Q$. The modulus of $Q$ is defined to be 
$$\mod Q= \frac{b}{a}.$$
Equivalently, $\mod Q$ is the modulus of the curve family $f(\Gamma)$, where $\Gamma$ is the family of curves in $R$ whose endpoints lie in distinct vertical sides of $R$.  

Each rectangle $R=(0,a)\times (0,b)$ can be conformally mapped to a quadrilateral
$$S=\{z\in \C: r<|z|<R, \,\, \Im z >0\}$$
with vertical sides $[-R,-r]$ and $[r,R]$. Moreover, it can be shown that
$$\frac{b}{a}=\frac{1}{\pi}\log \left(\frac{R}{r}\right).$$
Thus, if we consider the full annulus $A=\{z\in \C: r<|z|<R\}$, then 
$$\mod A= \frac{1}{2}\cdot \mod S.$$

\subsection{Uniformization by Schottky sets}\label{section:uniformization}

If $U\subset \widehat\C$ is an open set, we set $U^*=\widehat{\C}\setminus \bar  U$. If $U\subset \widehat \C$ is a hyperbolic domain, we denote by $h_U$ the hyperbolic metric on $U$.

Let $U,V\subset \widehat \C$ be disjoint Jordan regions such that $\bar U\cap \bar V$ is either empty or contains only one point; in particular, $\widehat \C\setminus (\bar U\cup \bar V)$ is connected. The {relative hyperbolic metric} corresponding to the pair $(V,U)$ is defined as
$$d_{V,U}(z,w)= \inf_\gamma\{\ell_{h_{U^*}}(\gamma)\},\quad z,w\in \partial V\setminus \bar U, $$
where the infimum is over all rectifiable curves $\gamma\colon [a,b]\to U^*\setminus V$ such that $\gamma(a)=z$, $\gamma(b)=w$. If $V$ is a quasidisk, then $d_{V,U}$ is a metric on $\partial V\setminus \bar U$. We direct the reader to \cite{Ntalampekos:tangent} for further details and examples with explicit calculations of the metric. 

We say that a pair of two disjoint Jordan regions $U,V$ is \textit{quasiconformally circularizable} if there exists $K\geq 1$ and a $K$-quasiconformal homeomorphism of $\widehat{\C}$ that maps $U$ and $V$ to two disks with distinct boundaries. In particular,  $\bar U\cap \bar V$ is either empty or contains a single point. In this case we say that $U,V$ are $K$-quasiconformally circularizable. 

Using the relative hyperbolic metric we can characterize when two disjoint quasidisks that touch each other at a boundary point are quasiconformally circularizable.

\begin{theorem}[\cite{Ntalampekos:tangent}*{Theorem 1.3}]\label{theorem:tangent}
    Let $U,V\subset \widehat \C$ be disjoint quasidisks such that $\bar U\cap \bar V$ is a single point $a_0$. The pair $U,V$ is quasiconformally circularizable if and only if the identity map
    $$\id\colon (\partial V\setminus \{a_0\}, d_{V,U})\to (\partial V\setminus \{a_0\},\chi)$$
    is quasi-M\"obius, quantitatively.
\end{theorem}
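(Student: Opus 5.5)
Since this result is quoted from \cite{Ntalampekos:tangent}, here is how I would prove it. The plan is to first normalize by M\"obius transformations, which preserve both sides of the equivalence. Cross ratios, hence quasi-M\"obius maps, are M\"obius invariant. The relative hyperbolic metric $d_{V,U}$ is defined conformally invariantly, so it is transported isometrically by M\"obius maps. So we may assume $a_0=\infty$. Then $\partial V\setminus\{a_0\}$ and $\partial U\setminus\{a_0\}$ are quasilines in $\C$, and cross ratios on $\partial V$ may be computed in the Euclidean metric.

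\emph{Model computation.} If $U,V$ are tangent disks at $\infty$, they are half-planes $U=\{\Im z>1\}$ and $V=\{\Im z<0\}$, and $U^*\setminus V$ is the strip $0\le \Im z\le 1$. I expect a direct estimate to give $d_{V,U}(x,y)\simeq \log(1+|x-y|)$ for $x,y\in\R$. The density of $h_{U^*}$ is comparable to $1$ throughout the strip, at least away from the line $\Im z=1$, and curves are confined to the strip. I would then check by hand that the identity from $(\R,\log(1+|x-y|))$ to $(\R,|x-y|)$ is quasi-M\"obius, by splitting into cases according to which of the four mutual distances are $\lesssim1$ and which are $\gg1$. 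This step is elementary; it amounts to the fact that $t\mapsto\log(1+t)$ is comparable to $t$ for $t\le 1$ and changes slowly for $t\ge1$, with respect to ratios of distances.

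\emph{Necessity.} Let $\Phi$ be $K$-quasiconformal and map $U,V$ to the half-planes above. Then $\Phi|_{\partial V}$ is quasisymmetric in the Euclidean metric, quantitatively, by Theorem~\ref{q:qc_qs} applied at all scales together with the normalization at $\infty$. So it suffices to show that $\Phi\colon(\partial V\setminus\{a_0\},d_{V,U})\to(\R,d_{\Phi(V),\Phi(U)})$ is quasi-M\"obius. The difficulty is that quasiconformal maps are only rough quasi-isometries of hyperbolic metrics, so small scales are not controlled directly. To handle this, I would replace $d_{V,U}(z,w)$ by a conformally invariant surrogate. Concretely, I would take a function of the modulus of the curve family in $U^*\setminus \bar V$ that separates the arc $[z,w]\subset\partial V$ from $\partial U$, or equivalently a quadrilateral modulus. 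I would prove that $d_{V,U}$ is comparable to this quantity, with $\log$-type behavior at large values and linear behavior at small values. Since $K$-quasiconformal maps distort moduli by at most a factor $K$, this comparability yields the quasi-M\"obius property quantitatively.

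\emph{Sufficiency, which I expect to be the main obstacle.} Here one must build the circularizing map. First, I would take a Riemann map of $U^*$ onto the half-plane $\{\Im z<1\}$ sending $a_0\mapsto\infty$. Since $U$ is a quasidisk, this map extends quasiconformally to $\widehat\C$ by Proposition~\ref{proposition:quasidisk_extension}, so we may assume $U$ is already the half-plane $\{\Im z>1\}$. Now $V$ is a quasidisk inside the half-plane $\{\Im z<1\}$, touching $\partial U$ only at $\infty$, and $d_{V,U}$ is its hyperbolic-relative metric. By hypothesis and the model computation, there is a homeomorphism $\psi\colon\partial V\to\R$, normalized by parametrizing $\partial V$ along its hyperbolic geodesic direction, that is quasi-M\"obius from $(\partial V,\chi)$ to $(\R,|\cdot|)$ and sends $d_{V,U}$ to roughly $\log(1+|x-y|)$. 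I would extend $\psi$ quasiconformally in two pieces. Into $V$, the extension onto $\{\Im z<0\}$ comes from a quasidisk extension: conformally map $V$ to the lower half-plane, then apply a Beurling--Ahlfors extension of the induced quasisymmetric boundary map. Into the region $U^*\setminus\bar V$, the extension onto the strip $\{0<\Im z<1\}$ is harder. Here I would use a Whitney-type decomposition of the region adapted to $d_{V,U}$. The hypothesis forces the region to have bounded-geometry width at each point comparable to the strip model, and the Koebe-type control of $h_{U^*}$ lets me build the map piecewise, with identity on $\partial U$, and show it is uniformly quasiconformal on each piece. Gluing the three pieces gives the required $K$-quasiconformal homeomorphism of $\widehat\C$, with $K$ depending only on the data.
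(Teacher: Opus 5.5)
The paper does not prove this statement; it imports it from \cite{Ntalampekos:tangent}. Your outline therefore has to stand on its own, and its starting point is wrong.

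In the model $U=\{\Im z>1\}$, $V=\{\Im z<0\}$, the admissible curves for $d_{V,U}$ lie in $U^*\setminus V=\{0\le \Im z<1\}$. There the density $1/(1-\Im z)$ of $h_{U^*}$ is at least $1$, so every admissible curve from $x$ to $y$ has $h_{U^*}$-length at least $|x-y|$, and the segment $[x,y]\subset\R$ attains this. Thus $d_{V,U}(x,y)=|x-y|$ exactly. A logarithm would only come from the unrestricted hyperbolic distance of $U^*$, whose geodesics dip into $V$, and the definition forbids exactly that. (The same computation, with $U$ the lower half-plane and $V$ the region above the graph of a positive Lipschitz $f$, gives a relative metric comparable to $|\int_x^y dt/f(t)|$. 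This matches the role of the antiderivative of $1/f$ in Theorem~\ref{theorem:tangent:graph}.)

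Your second claim is false as well. For the quadruple $(0,L^2,L^2+L,L)$ the Euclidean cross ratio is $L^2-1\to\infty$, while the cross ratio for the metric $\log(1+|x-y|)$ tends to $4$. So the two metrics are not quasi-M\"obius equivalent. In particular, if your model computation were right, the theorem would already fail for two tangent half-planes.

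These errors propagate. In the necessity part, your surrogate is calibrated to the wrong large-scale behavior. Even when correctly calibrated, a factor-$K$ distortion of a two-point modulus only gives power-type two-point bounds at small scales, so reaching the four-point inequality needs a further argument. In the sufficiency part, the normalization $\psi$ cannot exist. A map that is quasi-M\"obius from $(\partial V,\chi)$, hence by hypothesis from $(\partial V,d_{V,U})$, to $(\R,|\cdot|)$ cannot make $d_{V,U}$ comparable to $\log(1+|x-y|)$.

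Independently of the model, the sufficiency direction is missing its core argument. The substance of the theorem is to convert the four-point hypothesis on $(\partial V,d_{V,U})$ into uniform control of the conformal geometry of $U^*\setminus\bar V$ near $a_0$, for instance of extremal distances between its crosscuts. That control must then be turned into a quasiconformal map of this region onto a strip with quasisymmetric boundary values on both sides. The phrases ``a Whitney-type decomposition adapted to $d_{V,U}$'' and ``the hypothesis forces bounded-geometry width'' assert this rather than prove it.

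Moreover, the boundary condition you impose is impossible in general: with $U^*=\{\Im z<1\}$ fixed, the map cannot be the identity on $\partial U$. For example, let $V$ be the region below the graph of $1-(1+|x|)^{-1}$. After the rotation $z\mapsto i-z$, Theorem~\ref{theorem:tangent:graph} with $f(x)=(1+|x|)^{-1}$ shows that the pair $U,V$ is circularizable. Suppose a $K$-quasiconformal map were the identity on $\partial U$ and sent $U^*\setminus\bar V$ onto $\{0<\Im z<1\}$. Consider the quadrilateral $(U^*\setminus\bar V)\cap\{0<\Re z<X\}$ with vertical sides on $\{\Re z=0\}$ and $\{\Re z=X\}$.
\begin{itemize}
\item Its modulus is at most $(X+X^2/2)^{-1}$. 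Indeed, the conjugate family contains the vertical segments of lengths $(1+x)^{-1}$, $0<x<X$, whose modulus is $\int_0^X(1+x)\,dx$.
\item The image quadrilateral has modulus at least $(X+2)^{-1}$. Indeed, its conjugate family joins $[i,X+i]$ to $\R$ inside the strip; test with the indicator of the $1$-neighborhood of $[i,X+i]$.
\end{itemize}
Hence $X+X^2/2\le K(X+2)$ for all $X>0$, which is absurd. The boundary correspondence on $\partial U$ must be a nontrivial quasisymmetry determined by $V$, and producing it is part of what has to be proved.
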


The next lemma gives a sufficient condition so that the identity map in the above theorem is quasi-M\"obius.

\begin{lemma}[\cite{Ntalampekos:tangent}*{Lemma 3.3}]\label{lemma:chordarc}
    Let $K\geq 1$. Let $U,V\subset \widehat \C$ be $K$-quasidisks such that $\bar U\cap \bar V=\{\infty\}$. Suppose that there exists a $K$-chord-arc curve $J\subset \widehat \C\setminus (\bar U\cup \bar V)$ that separates $\partial U\cap \C$ from $\partial V\cap \C$ and satisfies
    $$N_{1/K}(J)\subset \widehat \C\setminus (\bar U\cup \bar V)\subset N_K(J).$$
    Then the identity map
    $$\id\colon (\partial V\setminus \{\infty\}, d_{V,U})\to (\partial V\setminus \{\infty\},\chi)$$
    is quasi-M\"obius, quantitatively, depending only on $K$.
\end{lemma}

Here a set $J\subset \C$ is a \textit{chord-arc curve} if $J\cup \{\infty\}$ is a Jordan curve in $\widehat\C$ and for every $z,w\in J$ the length of the arc of $J$ between $z$ and $w$ is bounded above by $K|z-w|$. In this case we say that $J$ is a $K$-chord-arc curve. Also, $N_K(J)$ denotes the open $K$-neighborhood of $J$ in the Euclidean metric.

\begin{lemma}\label{lemma:tangent_local}
    Let $U_i,V_i\subset \widehat \C$ be quasidisks such that $\bar {U_i}\cap \bar {V_i}=\{z_i\}$ for $i=1,2$. Suppose that there exists a neighborhood $W_i$ of $z_i$ for $i=1,2$ and a quasiconformal map $\phi\colon W_1\to W_2$ such that $\phi(W_1\cap U_1)=W_2\cap U_2$ and $\phi(W_1\cap V_1)=W_2\cap V_2$. Then the pair $U_1,V_1$ is quasiconformally circularizable if and only if the pair $U_2,V_2$ is quasiconformally circularizable. 
\end{lemma}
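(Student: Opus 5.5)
By symmetry it suffices to show that if $U_1,V_1$ is quasiconformally circularizable, then so is $U_2,V_2$. The plan is to use Theorem~\ref{theorem:tangent}, which reduces circularizability of a touching pair to a property of the relative hyperbolic metric on $\partial V\setminus\{a_0\}$. Rather than comparing these metrics directly, I would construct an explicit global quasiconformal map.

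Let $\Phi_1\colon\widehat\C\to\widehat\C$ be quasiconformal with $\Phi_1(U_1)$ and $\Phi_1(V_1)$ tangent disks. The goal is a quasiconformal homeomorphism $\Psi\colon\widehat\C\to\widehat\C$ with $\Psi(U_2)=U_1$ and $\Psi(V_2)=V_1$. Then $\Phi_1\circ\Psi$ circularizes $U_2,V_2$.

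The first step is to shrink the neighborhoods. Choose a small Jordan domain $D_2\subset W_2$ around $z_2$ whose boundary is a quasicircle meeting $\partial U_2$ and $\partial V_2$ each in exactly two points, transversally in the quasiconformal sense. For instance, pass to the tangent-disk-like picture via $\phi$ and a local model, or simply take a small round disk after straightening $\partial U_2$ near $z_2$. Then $D_2$ is cut by $\partial U_2$ and $\partial V_2$ into three pieces: $D_2\cap U_2$, $D_2\cap V_2$, and the complementary region $D_2\setminus(\bar U_2\cup\bar V_2)$. Each piece is a Jordan domain, and I would arrange for each to be a quasidisk, using that $U_2,V_2$ are quasidisks. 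Set $D_1=\phi^{-1}(D_2)$, and note that $\phi|_{D_1}$ respects the three pieces.

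The second step is to handle the outer region. The sets $\widehat\C\setminus \bar D_i$ are decomposed by the arcs $\partial U_i\setminus D_i$ and $\partial V_i\setminus D_i$ into three Jordan regions: the outer parts of $U_i$, of $V_i$, and of the complement. These regions are quasidisks, being pieces of quasidisks cut by quasiarcs. The idea is to build $\Psi$ outside $D_2$ piece by piece. On the boundaries, use $\phi^{-1}$ on $\partial D_2$ and a quasisymmetric parametrization of the arcs $\partial U_2\setminus D_2\to\partial U_1\setminus D_1$ that matches at the endpoints. Then extend into each outer piece by the Beurling--Ahlfors extension, through conformal maps to the disk, obtaining quasiconformal homeomorphisms of the pieces. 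The restriction of $\phi$ to $\partial D_1$ is quasisymmetric because $\phi$ is quasiconformal on a neighborhood; this is Theorem~\ref{q:qc_qs} applied after a Möbius normalization. Piecewise-quasiconformal maps glued along quasiarcs are quasiconformal, since quasiarcs are removable. Together with $\phi^{-1}$ inside $D_2$, this gives the required $\Psi$.

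The main obstacle is ensuring that the boundary data on each outer piece is quasisymmetric at the junction points, where three arcs meet. Concatenating quasisymmetric maps along a quasicircle is quasisymmetric only if the pieces meet compatibly. Because these junctions lie away from $z_i$, where the boundaries are locally quasiarcs crossing $\partial D_i$, this can be arranged: near a junction one can first use $\phi$ itself on a slightly larger disk, and interpolate only in the region where all curves are separated.

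An alternative I would keep as a backup uses Theorem~\ref{theorem:tangent}. Away from $z_i$ the metric $d_{V_i,U_i}$ is locally comparable to the chordal metric, and near $z_i$ it is determined up to bi-Lipschitz distortion by the local geometry, hence preserved by $\phi$ up to quasi-Möbius distortion. One would then transport the quasi-Möbius property through $\phi$ near $z_i$ and a quasisymmetric map of the remaining compact arcs.
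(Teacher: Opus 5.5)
Your route is genuinely different from the paper's. The paper never builds a map between the two configurations. It extends $\phi$ to a global quasiconformal map (Theorem~\ref{q:qc_qs}, Theorem~\ref{theorem:quasidisk_quasicircle}, Proposition~\ref{proposition:quasidisk_extension}). It then composes with a map $\psi$ sending $U_2,V_2$ to half-planes tangent at $\psi(z_2)=\infty$, so that $g(\partial U_1),g(\partial V_1)$ are straight lines near $\infty$. Finally it places a chord-arc curve $J$ between them, straight near $\infty$; compactness gives the hypotheses of Lemma~\ref{lemma:chordarc}, and Theorem~\ref{theorem:tangent} does all the global work. Your approach would avoid the relative hyperbolic metric and would prove the stronger fact that locally equivalent touching pairs are globally quasiconformally equivalent. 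As written, however, it has a gap exactly at the step you flag as the main obstacle.

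Consider Beurling--Ahlfors on an outer piece such as $U_2\setminus\bar{D_2}$. The boundary correspondence is $\phi^{-1}$ on the crosscut $\partial D_2\cap U_2$ together with your homeomorphism $\sigma$ from the outer arc of $\partial U_2$ onto that of $\partial U_1$. This must be quasisymmetric on the whole boundary quasicircle, and the same $\sigma$ must also work for the neighbouring complementary piece. Quasisymmetric maps do not concatenate: on $[-1,1]$, take $h(t)=t$ for $t\geq 0$ and $h(t)=-|t|^3$ for $t\leq 0$; then $|h(t)-h(0)|/|h(-t)-h(0)|=t^{-2}$. Using $\phi^{-1}$ near the corners only moves the switch from $\phi^{-1}$ to some other parametrization into the interior of the outer arc, where the same failure can occur. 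So ``interpolate'' is precisely the missing argument. You need an interpolation lemma producing a quasisymmetric $\sigma$ that agrees with $\phi^{-1}$ on whole subarcs near the ends (overlaps, not single points), for instance by gluing the associated doubling measures on an overlap with a suitable cutoff. This is cleanest after the paper's own reduction (extend $\phi$ globally and straighten $U_2,V_2$), where all junction data become the identity on straight segments. Until such a lemma is supplied, the proposal is not a proof.

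Smaller points:
\begin{itemize}
\item $D_2$ is cut into four pieces, not three. The set $D_2\setminus(\bar{U_2}\cup\bar{V_2})$ has a component on each side of $z_2$, and in the circularizable case these have zero-angle cusps at $z_2$, so they cannot be quasidisks. This is harmless because you use $\phi^{-1}$ there.
\item The outer complementary piece is not a piece of a quasidisk, and cutting a quasidisk by a quasiarc can create cusps. The quasidisk property of the outer pieces must come from a good choice of $\partial D_2$. Your first suggestion works: take $\partial D_2$ to be $\phi\circ\Phi_1^{-1}$ of a small round circle centered at $\Phi_1(z_1)$.
\item Your backup via Theorem~\ref{theorem:tangent} does not work as stated. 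A quasiconformal map defined only near $z_i$ does not preserve $h_{U_i^*}$ up to bi-Lipschitz distortion. The globalization and the chord-arc curve are what make the paper's version of that idea go through.
\end{itemize}
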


\begin{proof}
    First, we extend $\phi$ to $\widehat \C$ as follows. By composing $\phi$ with isometries of $\C$ we assume that $z_1,z_2\in \C$. By shrinking $W_1$, we may assume that $W_1$ is a disk centered at $z_1$ and $\phi$ is a quasiconformal map from a neighborhood of $\bar{W_1}$ into $\C$. By Theorem \ref{q:qc_qs}, $\phi$ is quasisymmetric in $\bar {W_1}$. Thus, $\phi(\partial W_1)$ is a quasicircle (see \cite{Pommerenke:conformal}*{Proposition 5.10}) and $W_2\coloneqq \phi(W_1)$ is a quasidisk by Theorem \ref{theorem:quasidisk_quasicircle}. By Proposition \ref{proposition:quasidisk_extension}, $\phi|_{W_1}$ extends to a quasiconformal map of $\widehat \C$. We denote the extension by $\phi$ for simplicity.

    Without loss of generality, suppose that the pair $U_2,V_2$ is quasiconformally circularizable. Thus, there exists a quasiconformal map $\psi$ of $\widehat \C$ such that $\psi(U_2),\psi(V_2)$ are half-planes that are tangent at $\psi(z_2)=\infty$. Let $g=\psi\circ \phi$. Then $g(\partial U_1)$ and $g(\partial V_1)$ are Jordan curves in $\widehat \C$ that are straight lines in a neighborhood of $\infty$. It is elementary to find a chord-arc curve $J\subset \widehat \C\setminus (g(\bar {U_1})\cup g(\bar {V_1}))$ that is a straight line in a neighborhood of $\infty$ and otherwise it is smooth; see Figure \ref{figure:tangent_local}. By compactness, there exists a constant $K\geq 1$ such that $N_{1/K}(J)\subset \widehat \C\setminus (g(\bar {U_1})\cup g(\bar {V_1}))\subset N_K(J)$. Thus, the conclusion of Lemma \ref{lemma:chordarc} is true. By Theorem \ref{theorem:tangent} we conclude that the pair $g(U_1),g(V_1)$ is quasiconformally circularizable. Equivalently, the pair $U_1,V_1$ is quasiconformally circularizable. 
\end{proof}

\begin{figure}
    \centering
    \begin{tikzpicture}
        \draw[rounded corners=10pt] (-5,0)--(-2,0)--(-1,1)--(0,-0.5)--(1,0.6)--(2,0)--(5,0) node[right] {$g(\partial U_1)$};
        \draw[rounded corners=10pt] (-5,1)--(-2,1)--(-1,1.4)--(0,0.7)--(1,1.5)--(1.5,0.8)-- (2.2,1)--(5,1)node[right]{$g(\partial V_1)$};
        \draw[rounded corners=10pt,dashed] (-5,0.5)--(-2,0.5)--(-1,1.2)--(0,0)--(1,1)--(1.5,0.5)--(2.2,0.5)--(5,0.5) node[right] {$J$}; 

    \end{tikzpicture}
    \caption{A chord-arc curve between two Jordan curves that are straight lines in a neighborhood of $\infty$.}\label{figure:tangent_local}
    \label{fig:placeholder}
\end{figure}

An \textit{unbounded Jordan curve} is a set $J\subset \C$ such that $J\cup\{\infty\}$ is a Jordan curve in the topology of $\widehat \C$.
\begin{theorem}[\cite{Ntalampekos:tangent}*{Theorem 1.6}]\label{theorem:tangent:graph}
    Let $J\subset \mathbb H$ be an unbounded Jordan curve. There exists a quasiconformal homeomorphism of $\C$ that preserves the real line and maps $J$ onto a line if and only if
    \begin{enumerate}[label=\normalfont(\arabic*)]
        \item $J\cup \{\infty\}$ is a quasicircle,
        \item there exists a Lipschitz function $f\colon \R\to (0,\infty)$ and $a\geq 1$ such that $J\subset \{(x,y)\in \R^2: a^{-1}f(x)\leq y\leq af(x)\}$, and
        \item an antiderivative of $1/f$ is quasisymmetric.
    \end{enumerate}
\end{theorem}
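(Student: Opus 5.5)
The plan is to derive the statement from Theorem \ref{theorem:tangent}, applied to the pair
$$V=\{\Im z<0\},\qquad U=\text{the component of }\C\setminus J\text{ lying above }J .$$
The steps are: reduce the problem to circularizability of this pair; compare the relative hyperbolic metric $d_{V,U}$ on $\partial V\setminus\{\infty\}=\R$ with $|F(x)-F(x')|$, where $F$ is an antiderivative of $1/f$; then translate the quasi-M\"obius condition into quasisymmetry of $F$.

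\emph{Reduction.} Note first that $\bar U\cap\bar V=\{\infty\}$ in $\widehat\C$, since $J\subset\Hyp$. Suppose a quasiconformal map of $\C$ preserves $\R$ and sends $J$ onto a line. That line avoids $\R$, so it is parallel to $\R$, and the pair $U,V$ is circularizable as two tangent half-planes. Conversely, suppose $U,V$ is circularizable. After an affine change we get a quasiconformal map $\Phi$ of $\widehat\C$ fixing $\infty$, sending $V$ to the lower half-plane and $U$ to $\{\Im z>1\}$. Then $\Phi$ maps $\Hyp$ to $\Hyp$. Replacing $\Phi$ on the lower half-plane by $\bar{\Phi(\bar z)}$ produces a quasiconformal map of $\C$ that preserves $\R$ and maps $J$ onto a line.

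\emph{Necessity of (1)--(3).} By Theorem \ref{theorem:tangent}, condition (1) is necessary, and sufficient for $U,V$ to be quasidisks. For (2), take $f(x)=\dist(x,J)$, which is $1$-Lipschitz. The quasisymmetry of $\Phi$ on large balls (Theorem \ref{q:qc_qs}) shows that $J$ lies in a band $a^{-1}f(x)\le y\le af(x)$: the images of $x$, of the points of $J$ above $x$, and of the nearest point of $J$ all sit at height comparable to $1$ over a common region. So in both directions we may work with $f$ satisfying (2) and $J$ a quasicircle.

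\emph{Key comparison.} This is where I expect the main difficulty. The domain $U^*$ is the region below $J$, and $\Omega=U^*\setminus\bar V$ is the region between $\R$ and $J$. For $z=x+iy$ with $y\ge0$ in $\Omega$ or on $\R$, condition (2) together with the Lipschitz property gives $\dist(z,J)\simeq f(x)$. Since $U^*$ is a quasidisk, its hyperbolic density is comparable to $1/\dist(z,\partial U^*)$, so it is $\simeq 1/f(x)$ along $\R$ and in the lower part of $\Omega$. Taking the path along $\R$ gives the upper bound $d_{V,U}(x,x')\lesssim |F(x)-F(x')|$. For the lower bound I would project any curve in $\Omega$ vertically to $\R$. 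A vertical step at $x$ of size at most $af(x)$ costs hyperbolic length $\gtrsim$ its Euclidean length divided by $f(x)$, while the horizontal displacement costs at least its Euclidean length times $\inf 1/f$ over nearby points, which is comparable to $1/f(x)$ by the Lipschitz property. Summing, curve by curve, gives $d_{V,U}(x,x')\gtrsim|F(x)-F(x')|-C$. The additive constant has to be removed, or shown harmless for the quasi-M\"obius property. For that I would use that pairs at scale $|F(x)-F(x')|\lesssim1$ are controlled separately by a bounded-geometry (Koebe-type) argument, in the style of Lemma \ref{lemma:chordarc}. The relevant statement is that cross ratios are unaffected, up to a distortion function, by a rough isometry of $F(\R)=\R$. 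Here $F$ is unbounded in both directions because $f(x)\le f(0)+|x|$.

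\emph{Conclusion.} The identity $(\R,d_{V,U})\to(\R,\chi)$ is quasi-M\"obius iff $F\colon\R\to\R$ is quasi-M\"obius with respect to the Euclidean metric on the target. Since $F$ fixes $\infty$, this is equivalent to $F$ (equivalently $F^{-1}$) being quasisymmetric. Combined with Theorem \ref{theorem:tangent} and the reduction above, this yields the theorem. The main obstacle remains the two-sided comparison of $d_{V,U}$ with $|F(x)-F(x')|$, and in particular the treatment of small scales.
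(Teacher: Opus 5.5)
The paper does not prove this statement; it quotes it from \cite{Ntalampekos:tangent}, so there is no in-paper argument to compare with, and I am judging your proposal on its own. Three parts of your outline are sound: the reduction to quasiconformal circularizability of $U$ and $V=\{\Im z<0\}$, the appeal to Theorem~\ref{theorem:tangent}, and the final step. For the last one, a bi-Lipschitz comparison $d_{V,U}(x,x')\simeq |F(x)-F(x')|$ makes the identity quasi-M\"obius iff $F^{-1}$ is, and for an increasing homeomorphism of $\R$ onto $\R$ this is the same as quasisymmetry.

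The gap is the comparison itself, which you leave open.
\begin{itemize}
\item Your lower bound $d_{V,U}\gtrsim |F(x)-F(x')|-C$ is not enough.
\item The remedy you propose rests on the claim that a rough isometry of $\R$ preserves cross ratios up to a distortion function, and that claim is false. A homeomorphism with $\phi(s+1)=\phi(s)+1$ whose restriction to $[0,1]$ is not quasisymmetric stays within bounded distance of the identity, yet it is not quasi-M\"obius.
\item The separate small-scale ``Koebe-type'' control you would need instead is never supplied.
\item Your assertion that $\dist(z,J)\simeq f(x)$ for $z=x+iy\in\Omega$ is wrong, since $\dist(z,J)\to 0$ as $z\to J$.
\end{itemize}

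The missing observation is that you only need the one-sided bound $\dist(z,J)\le C f(\Re z)$, but on \emph{all} of $\Omega\cup\R$. Given this, Koebe's estimate $\rho_{U^*}\ge 1/(2\dist(\cdot,J))$ gives $\rho_{U^*}(z)\ge c/f(\Re z)$ there. This estimate holds in any simply connected domain, so the quasidisk property is irrelevant here. Hence for every admissible curve $\gamma$ from $x$ to $x'$,
\[
\ell_{h_{U^*}}(\gamma)\ \ge\ c\int_\gamma \frac{|d(\Re z)|}{f(\Re z)}\ \ge\ c\,|F(x)-F(x')| .
\]
There is no additive constant, so no small-scale issue arises.

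You still have to prove the one-sided bound, in both directions.

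\emph{Sufficiency.} It suffices to show that the region above the band lies in $U$. Then $\Omega\subset\{0<y\le af(x)\}$, and the vertical segment over $\Re z$ meets $J$ at height at most $af(\Re z)$, so $\dist(z,J)\le af(\Re z)$. This requires $J$ to run from $\Re z=-\infty$ to $\Re z=+\infty$, which you assume tacitly when you speak of ``the component above $J$''. It does follow from (1)--(3). Suppose both ends of $J$ escaped to the same side. Then the three-point condition at $\infty$, together with (2), forces $f(t)\gtrsim t$ for large $t$, so $F$ grows at most logarithmically there. But quasisymmetry gives $F(2x)-F(0)\ge (1+\eta(1)^{-1})(F(x)-F(0))$, which is power growth, a contradiction.

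\emph{Necessity.} Take $f=\dist(\cdot,J)$ and use the global quasisymmetry of $\Phi$.
\begin{itemize}
\item For $z=x+iy\in\Omega$, let $x_Z=\Phi^{-1}(\Re\Phi(z))$. Since $x$ is the point of $\R$ nearest to $z$, we have $|z-x|\le |z-x_Z|$. Hence $|\Phi(z)-\Phi(x)|\le \eta(1)\,|\Phi(z)-\Re\Phi(z)|<\eta(1)$.
\item Let $w'\in J$ be the point nearest to $x$, so $|\Phi(x)-\Phi(w')|\ge 1$. Applying quasisymmetry of $\Phi^{-1}$ gives $y\lesssim f(x)$, hence $\dist(z,J)\le y+f(x)\lesssim f(x)$.
\item The same device applied to $z\in J$ gives $|\Phi(z)-\Phi(x)|\le\eta(1)$ and then $y\lesssim f(x)$. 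This turns your vague sketch of the necessity of (2) into a proof.
\end{itemize}

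With these additions your argument goes through.
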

We will apply this result in the case that $f(x)=(1+|x|)^{-p}$, $x\in \R$, where $p\geq 1$. See \cite{Ntalampekos:tangent}*{Example 4.2} for details. The next result gives an obstruction for mapping quasiconformally two quasidisks onto two disks.

\begin{lemma}[\cite{Ntalampekos:schottky}*{Lemma 3.2}]\label{lemma:room3}
    Let $U_1,U_2,U_3\subset \widehat \C$ be disjoint quasidisks such that $\partial U_1\cap \partial U_2\cap \partial U_3\neq \emptyset$. Then no quasiconformal map of $\widehat \C$ can map $U_1$ and $U_2$ onto two disks.
\end{lemma}

Theorem \ref{theorem:tangent} can be combined with the next result, which gives a characterization of sets that are quasiconformally equivalent to Schottky sets.

\begin{theorem}[\cite{Ntalampekos:schottky}*{Theorem 1.1}]\label{theorem:schottky}
Let $\{U_i\}_{i\in I}$ be a collection of at least two disjoint Jordan regions in $\widehat \C$ such that $\bar{U_i}\cap \bar{U_j}$ contains at most one point for $i\neq j$. The following are quantitatively equivalent.
\begin{enumerate}[label=\normalfont(\roman*)]
\item For each $i,j\in I$ there exists a uniformly quasiconformal homeomorphism of $\widehat \C$ that maps $U_i$ and $U_j$ to disks.
\item There exists a quasiconformal homeomorphism $f$ of $\widehat \C$ that maps $U_i$ to a disk for each $i\in I$ and is $1$-quasiconformal on $S=\widehat \C\setminus \bigcup_{i\in I} U_i$. 
\end{enumerate}
In this case, the map $f|_S$ is unique up to post-composition with a conformal automorphism of $\widehat \C$. 
\end{theorem}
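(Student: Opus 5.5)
The implication (ii)$\Rightarrow$(i) is immediate: the single map $f$ sends every pair $U_i,U_j$ onto disks, with a constant independent of $i,j$. The content is (i)$\Rightarrow$(ii) together with uniqueness. I would prove (i)$\Rightarrow$(ii) in two stages. First, produce \emph{some} quasiconformal map of $\widehat\C$ sending all $U_i$ onto disks, with distortion controlled by the pairwise constant $K$. Second, upgrade it to a map that is conformal on $S$, using a reflection-group trick.

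\textbf{Stage 1: a single map with controlled distortion.} Enumerate $I$ (countable, since the $U_i$ are disjoint open sets). For each finite $F\subset I$ I would build a $K'$-quasiconformal map $g_F$, with $K'=K'(K)$, sending $U_i$ onto a disk for every $i\in F$. For finitely many regions I would combine three ingredients:
\begin{itemize}
\item the pairwise hypothesis (i);
\item Theorem \ref{theorem:quasidisk_quasicircle}, which gives that each $U_i$ is a uniform quasidisk;
\item Theorem \ref{theorem:tangent} in the tangent case, or a uniform lower bound on $\mod A(\bar U_i,\bar U_j)$ relative to diameters in the separated case.
\end{itemize}
From these I would aim to show that the configuration is uniformly relatively separated away from tangency points, and uniformly "tangent-like" near them. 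Then I would glue pairwise local models: use Lemma \ref{lemma:tangent_local} and Lemma \ref{lemma:chordarc} near contact points, and Bonk-type uniformly separated uniformization elsewhere. This yields $g_F$ with a bound independent of $|F|$.

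Next, normalize $g_F$ by three points and take $F\uparrow I$. The family of normalized $K'$-quasiconformal maps is compact, so a subsequence converges uniformly to a $K'$-quasiconformal map $g$. The disks $g_F(U_i)$ converge in the Hausdorff sense, and $g(U_i)$ is a limit of disks, hence a disk because $g$ is a homeomorphism.

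\textbf{Stage 2: making the map conformal on $S$.} Let $D_i=g(U_i)$, and let $\Gamma$ be the group generated by the reflections in the circles $\partial D_i$. Its fundamental set is $g(S)$, and the images $\gamma(g(S))$, $\gamma\in\Gamma$, have disjoint interiors.
\begin{itemize}
\item Let $\nu$ be the Beltrami coefficient of $g^{-1}$ on $g(S)$, transported to each $\gamma(g(S))$ by the anticonformal maps $\gamma$. This makes $\nu$ $\Gamma$-invariant with $\|\nu\|_\infty<1$ unchanged.
\item On the residual set of points lying in infinitely many nested disks, set $\nu=0$.
\item Solve the Beltrami equation to get a quasiconformal $\phi$ with coefficient $\nu$. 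Then $\phi\gamma\phi^{-1}$ is anticonformal for each generator, i.e.\ a reflection in a circle, so $\phi$ maps each $\partial D_i$ onto a circle.
\end{itemize}
Hence $f=\phi\circ g$ maps each $U_i$ onto a disk. It is $1$-quasiconformal on $S$, since the Beltrami coefficients of $g$ and of $\phi$ cancel on $g(S)$.

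\textbf{Uniqueness.} If $f_1,f_2$ are two such maps, then $h=f_2\circ f_1^{-1}$ is conformal on the Schottky set $f_1(S)$ and maps boundary circles onto boundary circles. Replace $h$ inside each disk by the $\Gamma$-equivariant extension obtained by successive reflections. The resulting map is conformal off the residual set of $\Gamma$. I would then argue that this residual set is removable for such quasiconformal maps, either because it has measure zero or via a modulus argument as in Bonk--Kleiner--Merenkov. It would follow that $h$ is Möbius, so $f_2|_S$ equals $f_1|_S$ post-composed with a conformal automorphism.

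\textbf{Main obstacle.} The real difficulty is Stage 1: gluing pairwise uniformizations into one map with a bound independent of the number of regions, especially near accumulations of tangency points. To do this I would first try to prove a transboundary-modulus comparison, showing that curve families in $S$ have moduli comparable to those of a round configuration using only pairwise data. I would then derive quasisymmetry of the uniformizing map of a finite approximating circle-domain model, obtained via Koebe's theorem applied after slightly separating tangencies and then passing to the limit.
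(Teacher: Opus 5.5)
The paper does not prove this statement; it is quoted from \cite{Ntalampekos:schottky}, so your proposal has to stand on its own. Your direction (ii)$\Rightarrow$(i), the compactness step, and Stage~2 are sound. (Stage~2 builds a $\Gamma$-invariant Beltrami coefficient from the reflection group, so that the corrected map is $1$-quasiconformal on $S$.)

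The gap is Stage~1. It carries the whole content of the theorem, and it is only described, not proved. Moreover, the mechanism you describe rests on a false premise. Hypothesis (i) gives no uniform relative separation for pairs with disjoint closures, and no uniform lower bound on $\mod A(\bar U_i,\bar U_j)$, even far from tangency points. For example, a round Schottky set may contain, at a sequence of scales, pairs of disjoint disks whose mutual distance is $o(1)$ times their radii. Then (i) and (ii) hold with $K=1$ (take the identity), while these moduli tend to $0$. So nearly tangent but non-touching pairs, possibly accumulating at all scales, must be handled by the same mechanism as genuine tangencies. Bonk-type uniformization and Herron's criterion (Proposition~\ref{proposition:separated_qc}) are unavailable, because they need exactly this missing separation. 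In this paper, separation of Fatou components with disjoint closures comes from the dynamics (Proposition~\ref{proposition:separated_modulus}), not from pairwise circularizability.

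The contact-point tools you invoke are all one-pair statements. Lemma~\ref{lemma:tangent_local} has no quantitative content at all, and Theorem~\ref{theorem:tangent} and Lemma~\ref{lemma:chordarc} control a single pair. Nothing in the proposal explains how to patch pairwise local models into one map with distortion independent of $|F|$, near points where many regions accumulate at many scales. Even the qualitative existence of $g_F$ for a finite family with tangencies is not addressed. The complement can be disconnected, and Koebe's theorem applied to a complementary component yields circles for boundary components that are unions of arcs of several $\partial U_i$, not round images of the individual $U_i$.

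Your ``main obstacle'' paragraph names a plausible target: uniform quasisymmetry of Koebe uniformizations of separated finite approximations, via transboundary modulus. But showing that pairwise constants control moduli of curve families meeting arbitrarily many $U_i$ is precisely the theorem. Also, the separating perturbation would itself have to preserve (i) uniformly, which is not automatic near a tangency.

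Finally, in the uniqueness argument, the option ``the residual set has measure zero'' fails exactly in the case the paper singles out. Let $\sigma_i$ denote the reflection in $\partial D_i$. If $S$ has area zero, each $D_i$ is covered up to a null set by the disks $\sigma_i(D_j)$, $j\neq i$. Inductively, the residual set of $\Gamma$ then has full measure. What is needed is a Bonk--Kleiner--Merenkov type proof that the equivariant extension is conformal almost everywhere on the residual set, not a removability-by-measure argument.
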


In the case that $S$ has area zero, the $1$-quasiconformality of $f$ in (ii) is vacuously true. Moreover, in this case the uniqueness of $f$ as in the last statement of the theorem follows immediately from \cite{BonkKleinerMerenkov:schottky}*{Theorem 1.1}.

\section{Tangent quasicircles in complex dynamics}
\subsection{Obstructions for tangent quasicircles}
The next lemma shows that tangent quasicircles cannot be locally invariant under hyperbolic dynamics. See also \cite{LN24b}*{Lemma 7.3} for a related statement. A \textit{repelling hyperbolic germ} at $0$ is a holomorphic map $f$ in a neighborhood of $0$ with $f(0)=0$ and $|f'(0)|>1$.

\begin{lemma}\label{lemma:repelling_invariant}
    Let $K_1,K_2\subset \widehat \C$ be Jordan curves such that $K_1\cap K_2=\{0\}$. Let $f$ be a repelling hyperbolic germ at $0$ such that each of $K_1,K_2$ is locally invariant under $f$ near $0$. Then no quasiconformal map of $\widehat \C$ can map $K_1$ and $K_2$ onto two circles. 
\end{lemma}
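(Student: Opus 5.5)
We argue by contradiction. Suppose $\Phi$ is a quasiconformal map of $\widehat\C$ with $\Phi(K_1),\Phi(K_2)$ circles; they meet only at $\Phi(0)$, so they are tangent there. Post-composing with a M\"obius map, we may assume $\Phi(0)=\infty$ and that $\Phi(K_1)\setminus\{\infty\}$ and $\Phi(K_2)\setminus\{\infty\}$ are the parallel lines $\R$ and $\R+i$. The curves $K_1,K_2$ bound disjoint Jordan regions $U$ and $V$, which are quasidisks. Hence the complementary region $\Omega=\widehat\C\setminus(\bar U\cup\bar V)$ near $0$ corresponds under $\Phi$ to the strip $\{0<\Im z<1\}$ near $\infty$.

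The key quantity is the modulus of the annuli $A_r$ obtained from $\Omega$. For $0<r$ small, let $\Gamma_r$ be the family of curves in $\Omega$ joining $K_1$ to $K_2$ and lying in the round annulus $A(0;r,r_0)$; more precisely, use the quadrilateral $Q_r=\Omega\cap A(0;r,r_0)$ with ``vertical sides'' on $K_1,K_2$. Under $\Phi$, and using quasisymmetry of $\Phi$ near $0$ via Theorem \ref{q:qc_qs} applied after inversion, $\Phi(A(0;r,r_0))$ is comparable to a region $\{R_0\lesssim |z|\lesssim R(r)\}$. The strip piece there is a rectangle of width $\asymp R(r)$ and height $1$. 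So $\mod$ of curves joining the two lines inside it is $\asymp R(r)\to\infty$. Quasisymmetry gives $R(r)\gtrsim r^{-\alpha}$ for some $\alpha>0$, so this modulus grows at least polynomially in $1/r$, and by quasi-invariance of modulus so does $\mod\Gamma_r$.

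Next we bound $\mod\Gamma_r$ above using the dynamics. Let $\lambda=f'(0)$ with $|\lambda|>1$. By Koenigs linearization, $f$ is conformally conjugate near $0$ to $z\mapsto\lambda z$ via a map $h$ with $h(0)=0$. Conjugating, we may assume $f(z)=\lambda z$ and that $K_1,K_2$ (and hence $\Omega$ near $0$, after possibly replacing $f$ by $f^2$ so that sides are preserved) are locally invariant under $z\mapsto \lambda z$. Both conjugations are conformal near $0$ and change moduli only by bounded factors at small scales. Take the fundamental annulus $F=A(0;|\lambda|^{-1}\rho,\rho)$. By invariance, $\Omega\cap A(0;|\lambda|^{-n}\rho,\rho)$ is the union of $n$ images $\lambda^{-k}(\Omega\cap F)$. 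Let $\rho_0$ be an admissible metric for the curves joining $K_1$ to $K_2$ in $\Omega\cap F$ that also lie in a neighborhood of $\bar F$; it can be built by hand, for instance with $\rho_0=1/\delta$ on a suitable region. Pulling back by $z\mapsto\lambda^{-k}z$ produces scale-invariant metrics, and their maximum over $k$ is admissible for $\Gamma_{|\lambda|^{-n}\rho}$. The total mass is then $\lesssim n\asymp\log(1/r)$.

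This contradicts the polynomial lower bound. The main obstacle is the admissibility bookkeeping. A curve joining $K_1$ to $K_2$ may wander across several fundamental annuli, so a single copy of $\rho_0$ need not suffice. We handle this by taking $\rho=\sum_k\lambda^{-k}$-pullbacks of $\rho_0$, which is still conformally invariant with mass $\asymp n$. We then check that any such curve has $\rho$-length at least $1$. The reason is that $K_1$ and $K_2$ are at mutual distance comparable to $|z|$ in $F$ by compactness and invariance. So $\rho_0=c/|z|$ restricted to $\Omega$ works, giving mass $\int_{A(0;r,\rho)}c^2|z|^{-2}\asymp\log(1/r)$ directly. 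Indeed $\dist(z,K_2)\ge c|z|$ for $z\in K_1$, by invariance under $z\mapsto\lambda z$ and the fact that $K_1\cap K_2=\{0\}$ on the compact set $K_1\cap\bar F$. Any curve from $K_1$ to $K_2$ starting at $z$ therefore has $\int c^{-1}|w|^{-1}ds\gtrsim1$ after adjusting $c$.
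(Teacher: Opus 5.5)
Your argument is correct, but it follows a different route from the paper. Both proofs rest on the same geometric fact, obtained from the Koenigs linearization, forward local invariance and compactness of a fundamental annulus: for $z_1\in K_1$ and $z_2\in K_2$ near $0$ one has $|z_1-z_2|\gtrsim|z_1|$. In other words, the curves meet non-tangentially in a scale-invariant way. The two proofs differ in how they transport the tangency of the image circles back to $K_1,K_2$.

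\emph{The paper.} It first reduces to the linearized arcs $J_1,J_2$ via Theorem~\ref{theorem:rickman} and Lemma~\ref{lemma:tangent_local}. It then picks $w_1\in\R$ and $w_2$ on the tangent circle with $|w_2|/|w_1|$ bounded and $|w_1-w_2|/|w_1|\to 0$. Local quasisymmetry at $0$ pulls this back to $|z_1-z_2|=o(|z_1|)$, which contradicts the separation estimate.

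\emph{Your proof.} You compare growth rates of $\mod\Gamma_r$. The strip picture, H\"older continuity of $1/\Phi$ at $0$ and quasi-invariance of modulus give $\mod\Gamma_r\gtrsim r^{-1/K}$. The metric $C|w|^{-1}$ on $A(0;r,r_0)$ together with the separation estimate gives $\mod\Gamma_r\lesssim\log(1/r)$.

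\emph{Comparison.} Your route works with the global map $\Phi$ and the original curves. The linearization is only used to prove the separation estimate, which transfers back because the Koenigs map is bi-Lipschitz at $0$. So you bypass the reduction through Lemma~\ref{lemma:tangent_local}, which rests on the deep Theorem~\ref{theorem:tangent}. You also make the obstruction quantitative: logarithmic versus power growth of the conformal gap. The paper's argument is shorter and needs only three-point distortion rather than modulus estimates.

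\emph{Cleanup for the write-up.}
\begin{itemize}
\item Discard the fundamental-annulus construction. As you noticed, the maximum of the rescaled copies of $\rho_0$ is not admissible. Also, forward local invariance of $K_1,K_2$ does not literally make $\Omega\cap A(0;|\lambda|^{-n}\rho,\rho)$ a union of rescaled copies of $\Omega\cap F$.
\item Use only $\rho=C|w|^{-1}$ on $A(0;r,r_0)$. A curve of $\Gamma_r$ starting at $z\in K_1$ must leave $B(z,c|z|)$, on which $|w|\le(1+c)|z|$. Hence its $\rho$-length is at least $Cc/(1+c)$.
\item In proving $\dist(z,K_2)\ge c|z|$, you must also rescale the nearby point of $K_2$. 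It stays on $K_2$ by forward invariance of $K_2$, provided $c<|\lambda|-1$ so that the intermediate iterates remain in the domain of invariance.
\item For the lower bound, justify the inclusion $\{R_0<|w|<c'r^{-1/K}\}\subset\Phi(A(0;r,r_0))$. It holds because $\Phi(\{|z|\ge r_0\})$ is the bounded complementary region of $\Phi(\{|z|=r_0\})$, while $\infty=\Phi(0)$ lies in $\Phi(\{|z|<r\})$. The vertical segments of the strip in this range then form a subfamily of $\Phi(\Gamma_r)$ with modulus $\gtrsim r^{-1/K}$.
\end{itemize}
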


Here we say that \textit{$K_i$ is locally invariant under $f$ near $0$} if there exists a neighborhood $U$ of $0$ such that $f(K_i\cap U)\subset K_i$.

\begin{proof}
By the Koenigs linearization theorem \cite{Milnor:dynamics}*{Theorem 8.2} we can conformally conjugate $f$ near $0$ to the map $g(z)=\lambda z$, where $\lambda=f'(0)$, so $|\lambda|> 1$. The Jordan curves $K_1,K_2$ contain Jordan arcs having $0$ as an interior point that correspond under the conjugating map to some Jordan arcs $J_1,J_2$. The arcs $J_1,J_2$ are locally invariant under $g$ in the sense that there exists a ball $B(0,r)$ such that $g(J_i\cap B(0,r))\subset J_i$ for $i=1,2$. If there exists a quasiconformal map of $\widehat \C$ mapping $K_1,K_2$ to circles, then $K_1,K_2$ would be quasicircles, so $J_1,J_2$ would be quasiarcs by the analyticity of the Koenigs linearizing map. The quasiarcs $J_1,J_2$ can be extended to quasicircles by Theorem \ref{theorem:rickman}. Thus, by Lemma \ref{lemma:tangent_local} it suffices to show that no quasiconformal map of $\widehat \C$ maps $J_1$ and $J_2$ onto two circular arcs. 

Suppose that there exists a quasiconformal map $\phi$ of $\widehat \C$ such that $\phi(0)=0$, $\phi(J_1)\subset \R$, and $\phi(J_2)$ is an arc of a circle $|z-Ri|=R$ in the upper half-plane that is tangent to the real line at $0$. By Theorem \ref{q:qc_qs}, if $r$ is chosen to be sufficiently small so that $\phi^{-1}(\infty)\notin \bar B(0,r)$, then $\phi$ is $\eta$-quasisymmetric in $B(0,r)$ for some distortion function $\eta$. Let $w_1=x\in \phi(J_1)\cap (0,R)$ and $w_2=x+iy\in \phi(J_2)$ so that $y=R-\sqrt{R^2-x^2}= \frac{x^2}{2R}+O(x^4)$ as $x\to 0^+$. Note that for small $x$ we have $w_1,w_2\in \phi(B(0,r))$,
\begin{align*}
    1\leq \frac{|w_2|}{|w_1|} \leq \sqrt{2}\quad \text{and}\quad \frac{|w_1-w_2|}{|w_1|}= \frac{x}{2R}+O(x^3).
\end{align*}
Let $z_i=\phi^{-1}(w_i)\in J_i\cap B(0,r)$, $i=1,2$. Since $\phi$ is $\eta$-quasisymmetric and $\phi(0)=0$, we have 
\begin{align}\label{lemma:repelling_invariant:1}
     \eta^{-1}(1)\leq \frac{|z_2|}{|z_1|}\leq 1/\eta^{-1}(1/\sqrt{2})\quad \text{and} \quad \frac{|z_1-z_2|}{|z_1|}=o(1)\quad \text{as $x\to 0$}.
\end{align}
By the first inequalities, there exists $c_0\geq 1$ that depends only on $\lambda$ and $\eta$ such that for some $n\in \N\cup \{0\}$ we have
$$ \frac{r}{c_0}\leq |\lambda^nz_i|<r \quad \text{for $i=1,2$}.$$
Note that the point $z_i'=\lambda^nz_i$ lies in $J_i'\coloneqq \{z\in J_i: r/c_0\leq |z|< r\}$ for $i=1,2$. Since the sets $J_1',J_2'$ have disjoint closures, we have $|z_1'-z_2'|\geq d_0$ for some constant $d_0>0$ independent of $z_1,z_2$. This implies that 
$$\frac{|z_1-z_2|}{|z_1|} \geq \frac{d_0}{r}.$$
This contradicts \eqref{lemma:repelling_invariant:1} as $x\to 0^+$.
\end{proof}

\subsection{Preliminaries in parabolic dynamics}

We include some preliminaries on parabolic fixed points, following Milnor \cite{Milnor:dynamics}*{Section 10}. Consider a holomorphic map $f$ defined in a neighborhood of $0$ such that $f(0)=0$, $\lambda =f'(0)$ is a $k$-th root of unity, and $f^{\circ k}$ is not the identity map. Then $0$ is called a \textit{parabolic fixed point} of $f$. Consider the case that $\lambda=1$, so $f$ has an expansion of the form 
$$f(z)=z+az^{n+1}+O(z^{n+2})= z(1+az^n+O(z^{n+1}))$$
where $a\neq 0$, $n\in \N$. The number $n+1$ is called the \textit{multiplicity} of the parabolic fixed point $0$. A map $f$ with the above expansion is also called a \textit{parabolic germ} at $0$.

A complex number $v$ is called a \textit{repulsion} (resp.\ \textit{attraction}) \textit{vector} for $f$ at $0$ if $nav^n=+1$ (resp.\ $-1$). Thus, there are $n$ attraction and $n$ repulsion vectors. We restrict $f$ to a smaller neighborhood of $0$ if necessary, so that it is univalent. Note that $$f^{-1}(z)= z-az^{n+1}+O(z^{n+2}),$$
so $0$ is a parabolic fixed point of $f^{-1}$ and the attraction vectors for $f^{-1}$ are precisely the repulsion vectors for $f$ and vice versa.

If an orbit $z_k=f(z_{k-1})$, $k\in \N$, converges to $0$ as $k\to\infty$, then it does so in the direction of one attracting vector $v$, in the sense that $v=\lim_{k\to\infty} \sqrt[n]{k}z_k$; an equivalent statement is that, for any sector $S$ at $0$ that contains $v$ in its interior, the point $z_k$ lies in $S$ for all sufficiently large $k\in \N$. See \cite{Milnor:dynamics}*{Lemma 10.1}.

In fact, for each attracting direction $v$ we can specify a large open set, called an \textit{attracting petal}, that is attracted to $0$ under iteration. Specifically, an open set $\mathcal P$ on which $f$ is defined and is univalent is called an attracting petal for $f$ for the attracting direction $v$ if $f(\mathcal P)\subset \mathcal P$ and an orbit $z_k=f(z_{k-1})$, $k\in \N$, converges to $0$ from the direction $v$ if and only if the orbit is eventually absorbed by $\mathcal P$. A \textit{repelling petal} for $f$ for a repelling direction $v$ is by definition an attracting petal for $f^{-1}$ for the direction $v$. 

There are several petals for $f$, but one can specify particular petals with good geometry as follows. Suppose for simplicity that $v=1$ is a repulsion vector for $f$, so $na=1$; for the general case we can conjugate $f$ by a rotation and a scaling to achieve this normalization. Consider the map $\phi(z)=-1/z^n$, which maps conformally the sector $\Delta=\{re^{i\theta}: r>0,\,\, |\theta|<\pi/n\}$ onto the slit plane $\C\setminus [0,\infty)$. Also, let $\psi=\phi^{-1}$. For $R>0$ consider the left half-plane $\mathbb H_{<-R}=\{x+iy\in \C: x<-R\}$. If $R$ is sufficiently large, then $\mathcal P=\psi( \mathbb H_{<-R})$ is a repelling petal for $f$ for the direction $v$. This is proved in \cite{Milnor:dynamics}*{p.~108}. Note that $\partial \mathcal P$ is a smooth curve except at $0$. Moreover, it can be shown that $\mathcal P$ is a quasidisk. See Figure \ref{fig:repelling_petal} for an illustration. Given any sector $S$ at $0$ that contains $v$ in its interior we can find arbitrarily small repelling petals contained in $S$. Furthermore, there exist heart-shaped petals of angle arbitrarily close to the angle of the sector $\Delta$, as follows from the argument in \cite{Milnor:dynamics}*{p.~112}. We obtain the following consequence.

\begin{figure}
    \centering
    \begin{tikzpicture}
        \draw[dashed] (0,0)--(3,2);
        \draw[dashed] (0,0)--(3,-2);
        \draw[fill=black!20] (0,0) to[out=30, in=90] (3,0) to[out=-90, in=-30] (0,0);
        \node at (2,0) {$\mathcal P$};
        \node at (2.5,1.3) {$\Delta$};

        \begin{scope}[shift={(7,0)}]
        \draw[dashed] (-2,-2)--(2,2);
        \draw[dashed] (-2,2)--(2,-2);
        \draw[fill=black!20] (0,0) to[out=45, in=160] (1.5,0.5) to[out=-20, in=40] (1.7,0) to[out=-40, in=20] (1.5,-0.5) to[out= -160, in=-45] (0,0);
        \draw[fill=black!20, rotate=90] (0,0) to[out=45, in=160] (1.5,0.5) to[out=-20, in=40] (1.7,0) to[out=-40, in=20] (1.5,-0.5) to[out= -160, in=-45] (0,0);
        \draw[fill=black!20, rotate=180] (0,0) to[out=45, in=160] (1.5,0.5) to[out=-20, in=40] (1.7,0) to[out=-40, in=20] (1.5,-0.5) to[out= -160, in=-45] (0,0);
        \draw[fill=black!20, rotate=270] (0,0) to[out=45, in=160] (1.5,0.5) to[out=-20, in=40] (1.7,0) to[out=-40, in=20] (1.5,-0.5) to[out= -160, in=-45] (0,0);
        \node at (1,0) {$\mathcal P$};
        \node at (2.5,1.3) {$\Delta$};    

        \draw[thick, blue, fill=blue!30] (-2,-2)to[out=135, in=180] (-1.5,-0.7) to[out=0,in=-135](0,0) to[out=-135, in=90] (-0.7,-1.5) to[out=-90, in=315] (-2,-2);
        \node at (-1.5,-1.5) {$U$};        
        \end{scope}
        
    \end{tikzpicture}
    \caption{Left: A repelling petal that is a quasidisk. Right: A Jordan region that is disjoint from all (arbitrarily small) repelling petals must have a cusp.}
    \label{fig:repelling_petal} 
\end{figure}

\begin{lemma}\label{lemma:avoid_petals}
    Let $f$ be a parabolic germ at $0$. Let $U\subset \widehat \C$ be a Jordan region with $0\in \partial U$ such that $U$ is disjoint from all attracting (resp.\ repelling) petals. 
    \begin{enumerate}[label=\normalfont(\arabic*)]
        \item Then $U$ is not a quasidisk. Moreover, each arc of $\partial U$ that contains $0$ in its interior is not a quasiarc.
        \item\label{lemma:avoid_petals:2} Suppose that $g$ is a holomorphic map defined in a neighborhood of a closed Jordan region $\bar W$ such that $g|_W$ is injective, $g(W)\subset U$, and $0\in \partial g(W)$. Then each arc $A\subset \partial W$ with $0$ lying in the interior of $g(A)$ is not a quasiarc.
    \end{enumerate}
\end{lemma}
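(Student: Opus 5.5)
Treat the repelling case; the attracting case follows by replacing $f$ with $f^{-1}$, whose repelling petals are the attracting petals of $f$. The proof rests on one geometric fact: $U$ is disjoint from the arbitrarily small heart-shaped repelling petals. By the discussion above, for every $\epsilon>0$ and every repulsion vector $v$ there are repelling petals at $0$ in the direction $v$ whose opening angle at $0$ is at least $2\pi/n-\epsilon$. Since there are $n$ such petals, spaced evenly around $0$, they fill every direction at $0$ except small sectors of angle $O(\epsilon)$ around the $n$ attraction vectors. Because petals exist inside arbitrarily small neighborhoods and $U$ avoids all of them, the following holds: for each $\epsilon>0$ there is $r_\epsilon>0$ such that
$$U\cap B(0,r_\epsilon)\subset \bigcup_{w}\{z: |\arg z-\arg w|<\epsilon\},$$
where $w$ ranges over the attraction vectors. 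So near $0$, the part of $U$ (and of $\partial U$) is confined to sectors of arbitrarily small angle around the attracting directions. This is the cusp phenomenon in Figure \ref{fig:repelling_petal}.

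For part (1), take an arc $A\subset\partial U$ containing $0$ in its interior, and let $z_k,w_k\in A$ be points on the two sides of $0$ with $|z_k|=|w_k|=\rho_k\to 0$. There are two cases.
\begin{itemize}
\item Suppose both $z_k$ and $w_k$ lie in the thin sector around the same attraction vector. Then $|z_k-w_k|=o(\rho_k)$, since the sector angle goes to $0$. Every subarc of $A$ joining them contains $0$ and therefore has diameter at least $\rho_k$. This contradicts the three-point condition \eqref{condition:ahlfors}.
\item Suppose they lie in different sectors. Then $U$ near $0$ must still contain an open region bounded by these two sides of $\partial U$. This region lies in the union of thin disjoint sectors and is connected near $0$ (we can use a small circle crossing and the Jordan property). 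Hence the two boundary sides near $0$ must lie in a single sector, which returns us to the first case.
\end{itemize}
The main obstacle is this topological step: showing that the two sides of $\partial U$ at $0$ approach within the same thin sector. To handle it, I would argue that a Jordan region with $0\in\partial U$ has a component of $U\cap B(0,r)$ whose closure contains $0$, and that this component lies in one sector. Its boundary near $0$ consists of pieces of $\partial U$ from both sides, since otherwise $0$ would be an interior accessible point of only one side. This gives the cusp, so $A$ is not a quasiarc. Then $U$ is not a quasidisk by Theorem \ref{theorem:quasidisk_quasicircle}, because a quasicircle's subarcs are quasiarcs.

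For part (2), note that $g$ is holomorphic and injective on a neighborhood of $\bar W$ (we shrink the neighborhood and use the injectivity on $W$ together with continuity). By Lemma \ref{lemma:koebe_qm}, $g|_A$ is quasi-M\"obius onto $g(A)$. If $A$ were a quasiarc, then $g(A)$ would be a quasiarc, since quasi-M\"obius images of quasiarcs are quasiarcs (via Theorem \ref{theorem:rickman} and \cite{Bonk:uniformization}). However, $g(A)\subset \partial g(W)$, and $g(W)$ is a Jordan region contained in $U$, so $g(W)$ is also disjoint from all repelling petals. Applying part (1) to $g(W)$ shows that $g(A)$ is not a quasiarc. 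This contradiction proves (2).
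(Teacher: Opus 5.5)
Part (1) of your proposal is fine in outline. Part (2) has a genuine gap.

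You assert that $g$ is holomorphic and injective on a neighborhood of $\bar W$ ``by injectivity on $W$ and continuity.'' This is false: injectivity on the open set $W$ does not rule out critical points of $g$ on $\partial W$. For example, $z\mapsto z^m$ is injective on a sector of opening $2\pi/m$ but is $m$-to-$1$ at its vertex.

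Suppose the point $a_0\in A$ with $g(a_0)=0$ is such a critical point. Then Lemma~\ref{lemma:koebe_qm} gives nothing near $a_0$, so you cannot conclude that $g(A)$ is a quasiarc. Moreover, without injectivity on $\bar W$ there is no reason for $g(W)$ to be a Jordan region, so part (1) cannot be applied to it.

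This branched case is not a technicality; it is exactly how the lemma is used later in the paper:
\begin{itemize}
\item in Proposition~\ref{proposition:component_dependent} under alternative (2);
\item in the necessity part of Theorem~\ref{theorem:uniformization_general};
\item in the necessity part of Theorem~\ref{thm:quasidiskgf_appendix}.
\end{itemize}
In each of these, $g$ is an iterate $f^{\circ l}$ that may be locally $m$-to-$1$ at $a_0$ with $m\geq 2$.

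The missing idea is to localize at $a_0$ and pull the cusp back, rather than push the quasiarc forward. Write $g=\phi^m$ near $a_0$, with $\phi$ conformal and $\phi(a_0)=0$. Near $0$, $g(W)$ lies in sectors of arbitrarily small angle $\theta$, and the preimage of such a sector under $z\mapsto z^m$ is a union of sectors of angle $\theta/m$. Hence $\phi(W)$ near $0$ is confined to sectors of angle $\theta/m$. Your cusp argument from part (1) now applies to $\phi(A)$, and since $\phi$ is bi-Lipschitz near $a_0$, $A$ is not a quasiarc. This is the paper's proof, and it contains part (1) as the special case $g=\id$, $m=1$.

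The topological step you flag in part (1), that both sides of the arc lie in the same thin sector, is handled cleanly by Carath\'eodory's theorem. Let $\varphi$ be a Riemann map onto the Jordan region and $\zeta$ the preimage of the cusp point. Then $\varphi(\D\cap B(\zeta,t))$ is a connected set near the cusp point that avoids it, so it lies in a single thin sector. Its closure contains the initial pieces of both sides of the arc.
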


In fact, if $U$ is as in the statement, then it must have a cusp at $0$, so $\partial U$ is not a quasicircle and $U$ is not a quasidisk by Theorem \ref{theorem:quasidisk_quasicircle}. See Figure \ref{fig:repelling_petal} for an illustration.

\begin{proof}
 We justify only the second part. Consider $W,A$ as in the statement. There exists a point $a_0$ that is an interior point of $A$ such that $g(a_0)=0$. Suppose that  $g$ is locally $m$-to-$1$ at $a_0$ for some $m\geq 1$ and suppose that $A$ is a quasiarc. By assumption, the intersection of a small neighborhood of $0$ with $U$ is contained in arbitrarily small sectors $Z$ at $0$. There exists a conformal map $\phi$ in a neighborhood of $a_0$ such that $\phi(a_0)=0$ and $g(z)=\phi(z)^m$. Thus, we see that there exists a small ball $B(0,r)$ and a quasiarc $J\subset B(0,r)\cap \phi(A)$ with endpoints in $\partial B(0,r)$ and with $0\in J$ such that $z\mapsto z^m$ sends a component $\Omega$ of $B(0,r)\setminus J$ univalently into a sector $Z$ at $0$ of angle $\theta$, which can be taken to be arbitrarily small; see Figure \ref{fig:quasiarc_repelling}. Thus, $\Omega$ must be contained in a sector of angle $\theta/m$ at $0$. If $\theta$ is small enough, we obtain a contradiction to the fact that $J$ is a quasiarc. 
\end{proof}

\begin{figure}
        \centering
        \begin{tikzpicture}
            \begin{scope}[shift={(0,4.5)}]
            \clip (0,-4.5) circle (1cm);
            \draw[line width=1.2pt,blue ] (-2,-4)--(-1,-4.5)[rounded corners=7]--(-0.7,-4.1) node[xshift=0.1cm, yshift=0.2cm]{$\color{black}A$}--(0,-4.5)--(0.7,-4.8)--(1,-4.5)[rounded corners=0]--(5,-1)--cycle;
            \end{scope}
            \fill (0,0) circle (1.5pt) node[below] {$a_0$};
            \draw [->] (1.5,0)--(2.2,0) node[above,pos=.5] {$g$};

            \def\a{1};\def\b{0.01};
            \draw[domain=0:360, samples=100, variable=\t]
                plot ({5+(\a*(1 - cos(\t)))*cos(\t) }, {(\a*(1 - cos(\t)))*sin(\t)} );
            \draw[fill=black!20] (7,0) to[out=270, in =0](5,0) to[out=0, in=90] (7,0);
            \begin{scope}
                \clip (5,-1) rectangle (6,1);
                \draw[blue, line width=1.2pt] (7,0) to[out=270, in =0](5,0) to[out=0, in=90] (7,0);
            \end{scope}
            \node at (6.3,0) {$\scriptstyle U$};
            \fill (5,0) circle (1.5pt) node[below] {$0$};

            \begin{scope}[shift={(0,1)}]
            \draw[->] (0,-2.5)--(0,-3) node[left, pos=.5] {$\phi$};
            \draw[dashed] (0,-4.5) circle (1cm);
            \node at (1.5,-5.2) {$B(0,r)$};
            \begin{scope}
            \clip (0,-4.5) circle (1cm);
            \draw[line width=1.2pt,blue , fill=black!20] (-2,-4)--(-1,-4.5)[rounded corners=7]--(-0.7,-4.1) node[xshift=0.1cm, yshift=0.1cm]{$\color{black}J$}--(0,-4.5)--(0.7,-4.8)--(1,-4.5)[rounded corners=0]--(5,-1)--cycle;
            \end{scope}
            \fill (0,-4.5) circle (1.5pt) node[below] {$0$};
            \end{scope}
            \draw[->] (1.5,-2.5)-- (2.5,-2) node[pos=.5, right, yshift=-0.1cm] {$z\mapsto z^m$};
        \end{tikzpicture}
        \caption{A quasiarc $A$ cannot be mapped by a holomorphic map $g$ as in Lemma \ref{lemma:avoid_petals} \ref{lemma:avoid_petals:2} onto an arc of $\partial U$ that contains $0$ in its interior.}
        \label{fig:quasiarc_repelling}
    \end{figure}

Now, if $f'(0)=e^{2\pi i p/q}$, where $p/q\in \mathbb Q$ is in lowest terms, then the multiplicity of the parabolic fixed point $0$ is by definition the multiplicity of $0$ as a fixed point of $f^{\circ q}$, which satisfies $(f^{\circ q})'(0)=1$. Finally, if $0$ is a periodic point of $f$ with period $p$, then $0$ is a parabolic periodic point if $0$ is a parabolic fixed point of $f^{\circ p}$.

If $f$ is a rational map and $0$ is a parabolic fixed point with $f'(0)=1$, for each attraction vector $v$ there exists a \textit{parabolic basin of attraction}, i.e., the set of all points whose orbit converges to $0$ in the direction $v$. The \textit{immediate basin} of $0$ in the direction $v$ is the unique component of the parabolic basin of attraction that maps into itself under $f$. 

\begin{lemma}\label{lemma:preserve_parabolic}
    Let $f$ be a rational map, $0$ be a parabolic fixed point of $f$ with $f'(0)=1$, and $U$ be a Jordan region that is an immediate basin of $0$ corresponding to an attraction vector $v$. Then there exist arbitrarily small repelling petals $P_1,P_2$ corresponding to repulsion vectors that are adjacent to $v$ and there exist non-overlapping Jordan arcs $E_{i}\subset \partial U\cap (P_i\cup \{0\})$ such that $g(E_i)\subset E_i$ for $i=1,2$, where $g$ is the local branch of $f^{-1}$ that fixes $0$.
\end{lemma}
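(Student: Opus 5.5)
The plan is to read off the required arcs from the boundary dynamics of the immediate basin. I will use a Fatou coordinate on an attracting petal, together with the fact that $\partial U$ is a Jordan curve on which $f$ acts as a circle covering map.

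First, the setup. Since $U$ is a Jordan region and $f(U)=U$, the map $f|_{\partial U}\colon \partial U\to\partial U$ is a covering of the circle of some degree $d\geq 1$, and $0$ is a fixed point of it. Parametrize $\partial U$ by a homeomorphism $\gamma\colon \mathbb S^1\to \partial U$ with $\gamma(1)=0$. The fixed point $0$ splits small arcs of $\partial U$ through $0$ into two one-sided germs $\sigma_1,\sigma_2$ emanating from $0$.

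Next, I show that each $\sigma_i$ leaves $0$ inside a repelling petal. The attracting petal $\mathcal P_v$ for $v$ lies in $U$. Points of $\partial U$ near $0$ lie in the Julia set, so they are not in any attracting petal; in particular none of them lies in $\mathcal P_v$. Near $0$, the complement of the union of all attracting and repelling petals is just $\{0\}$, by the Leau--Fatou flower theorem (Milnor, Section 10). A point of $\partial U$ near $0$ lying in some other attracting petal $\mathcal P_{v'}$ with $v'\neq v$ is excluded, because $U$ is a component of the basin for $v$ and $\bar U$ cannot reach into the basin of $v'$. Hence near $0$ each germ $\sigma_i$ lies in the union of repelling petals. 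Since $\sigma_i$ is connected and $U$ accumulates at $0$ only from the direction $v$, it must lie in the repelling petals adjacent to $v$. These are the petals $P_1,P_2$ for the two repulsion vectors neighboring $v$. I assign $\sigma_1$ to $P_1$ and $\sigma_2$ to $P_2$ by orientation: $U$ contains a germ of the sector around $v$, and it is bounded on the two sides by $\sigma_1$ and $\sigma_2$. The petals can be taken arbitrarily small, since we only truncate $\sigma_i$.

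Then I construct the invariant arcs. Let $g$ be the branch of $f^{-1}$ fixing $0$; it is an attracting parabolic germ on each $P_i$ and maps $P_i$ into itself. Because $f|_{\partial U}$ is a local homeomorphism at $0$ that fixes $0$ and preserves the cyclic orientation, it maps each one-sided germ $\sigma_i$ onto a germ of $\partial U$ on the same side. Therefore $g$ maps small initial subarcs of $\sigma_i$ into $\sigma_i$. Pick a point $z_i\in\sigma_i\cap P_i$ close to $0$ and let $E_i$ be the subarc of $\sigma_i$ from $0$ to $z_i$. Then $g(E_i)$ is an arc of $\partial U$ from $0$ to $g(z_i)$ on the same side.

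The main obstacle is to show $g(E_i)\subset E_i$, that is, that $g(z_i)$ lies between $0$ and $z_i$ along $\sigma_i$. For this I would use that $f$ restricted to the germ $\sigma_i$ is an orientation-preserving homeomorphism of a half-open interval fixing the endpoint $0$. If it has no fixed points other than $0$ close to $0$, then it is monotone on the interval. In that case $g$ moves points toward $0$, because $g$-orbits in $P_i$ converge to $0$, which rules out the opposite direction. Other fixed points of $f$ on $\sigma_i$ cannot accumulate at $0$, since fixed points of a rational map are isolated. So after shrinking, $g$ strictly decreases the parameter along $\sigma_i$, and $g(E_i)\subset E_i$ follows. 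The arcs $E_1$ and $E_2$ lie on opposite sides of $0$, so they are non-overlapping. Finally, shrinking $z_i$ toward $0$ keeps $E_i$ inside the arbitrarily small petal $P_i$.
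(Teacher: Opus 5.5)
Your outline follows the paper's skeleton. The flower theorem puts the two germs of $\partial U$ at $0$ into the repelling petals adjacent to $v$; $g$ preserves $\partial U$ near $0$; and convergence of $g$-orbits in the petal determines the direction of motion along $E_i$. The genuine difference is how you prevent $g$ from exchanging the two sides of $\partial U$ at $0$. You use that $f|_{\partial U}$ is an orientation-preserving circle covering (conjugate to a Blaschke product via the Carath\'eodory extension of a Riemann map of $U$). Hence $g$ maps each one-sided germ $\sigma_i$ into itself, uniformly in the multiplicity. The paper instead splits into cases:
\begin{itemize}
\item For multiplicity at least $3$, the petals $P_1,P_2$ are disjoint, so there is nothing to check.
\item For multiplicity $2$, it shrinks $P=P_1=P_2$ so that $\partial U\setminus\bar P\neq\emptyset$. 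It takes the components $A_1,A_2$ of $\partial U\cap P$ through $E_1,E_2$, with far endpoints $a_1,a_2\in\partial P$. It then uses that $g$ preserves the cyclic order of $0,a_1,a_2$ to get $g(A_i)\subset A_i$.
\end{itemize}
Your argument is cleaner, but it only gives invariance of short initial arcs near $0$. In the multiplicity-$2$ case, the paper obtains invariance of the whole components $A_i$, and its orbit argument relies on exactly that.

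This shows up in your final step. The sentence ``$g$-orbits in $P_i$ converge to $0$, which rules out the opposite direction'' does not close the argument as written. Suppose $g$ pushed points of $\sigma_i$ away from $0$. Their $g$-orbits would leave your short arc, and when the multiplicity is $2$, nothing you have established prevents such an orbit from returning to $0$ along $\sigma_{3-i}$, which lies in the same petal. Such an orbit still converges to $0$, so there is no contradiction yet.

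The simplest repair is to use forward $f$-orbits, which never leave the arc. If $g$ increased the parameter along $\sigma_i$ near $0$, then $f$ would decrease it. The $f$-orbit of a point of $\sigma_i$ would then stay on $\sigma_i$ and decrease monotonically to $0$. By Lemma~10.1 in Milnor's book, such an orbit is eventually absorbed by an attracting petal, which lies in the Fatou set. This contradicts $\partial U\subset J(f)$ and the invariance of $J(f)$.

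With this repair, the appeal to isolated fixed points is also unnecessary, since $g$ has no fixed point in $P_i$ at all. It suffices to compare $g(E_i)$ with $E_i$ at the single endpoint $z_i$, as the paper does.
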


\begin{proof}
    Note that $U$ is a Fatou component and $\partial U$ is contained in the Julia set \cite{Milnor:dynamics}*{Lemma 10.5}. Thus, $\partial U$ cannot intersect any petal corresponding to an attraction vector for $0$. By the parabolic flower theorem \cite{Milnor:dynamics}*{Theorem 10.7}, there exist arbitrarily small petals corresponding to the attraction and repulsion vectors of $f$ whose union, together with $0$, gives a neighborhood of $0$. Also, these petals are Jordan regions cyclically ordered and each attracting petal intersects exactly two repelling petals, except in the case that the parabolic multiplicity is $2$, where there exist only two petals, an attracting one and a repelling one. Since $U$ cannot intersect any attracting petal other than the one corresponding to $v$, we conclude that $\partial U$ contains two non-overlapping arcs $E_1,E_2$ contained in repelling petals $P_1,P_2$, respectively, which correspond to vectors adjacent to $v$. Moreover, since $f(\partial U)=\partial U$, if $g$ is the local branch of $f^{-1}$ that fixes $0$, then $g$ maps each arc $E\subset \partial U$ near $0$ onto an arc $g(E)\subset \partial U$.
    
    If the multiplicity of $0$ is not $2$, then $P_1,P_2$ are disjoint attracting petals of $g$. For $i\in \{1,2\}$, each point of $E_i$ converges to $0$ under iteration of $g$ within the petal $P_i$. Note that $g(E_i)$ is an arc in $\partial U\cap (P_i\cup \{0\})$ with an endpoint at $0$ and either it contains $E_i$ or it is contained in $E_i$. If $g(E_i)\supsetneq E_i$ then the endpoint $z_0$ of $E_i$ that is different from $0$ satisfies $g^{\circ k}(z_0) \in (\partial U\cap P_i)\setminus E_i$ for each $k\in \N$. However, $E_i$ is a neighborhood of $0$ in $\partial U\cap P_i$, so $\lim_{k\to\infty}g^{\circ k}(z_0)\neq 0$, a contradiction. We conclude that $g(E_i)\subset E_i$, as desired. 

    Next, suppose that the multiplicity of $0$ is $2$. Then $P_1=P_2\eqqcolon P$. By shrinking $P$ if necessary, we assume that $g$ is injective in $\bar P$ and $\partial U\setminus  \bar P\neq \emptyset$, so $\partial U \cap  P$ has at least two components $A_i\supset E_i\setminus \{0\}$, $i=1,2$. Note that $A_i$ is an arc in $\partial U \cap P$ with an endpoint at $0$ and another endpoint $a_i\in \partial P$. Hence, $g(A_1)\subset A_1$ and $g(A_2)\subset A_2$ or $g(A_1)\subset A_2$ and $g(A_2)\subset A_1$. The map $g\colon \bar P\to g(\bar P)$ induces a bijection between the set $\{0,a_1,a_2\}\subset \partial P$ and $\{0,g(a_1),g(a_2)\}\subset g(\partial P)$. Since $g$ is orientation-preserving, it preserves the cyclic order of the points $0,a_1,a_2$. From this it follows that $g(a_1)\in \bar {A_1}$ and $g(a_2)\in \bar {A_2}$. Therefore, $g(A_1)\subset A_1$ and $g(A_2)\subset A_2$. Arguing as in the previous paragraph, for $i=1,2$ we see that $g(E_i)\subset E_i$, otherwise, the endpoint of $E_i$ that is different from $0$ would not converge to $0$ under iteration of $g$. 
\end{proof}

\subsection{Tangent quasicircles in parabolic dynamics}\label{section:tangent_quasicircles}

We state the main theorem of this section.

\begin{theorem}\label{theorem:germ_tangent_modification}
    Let $K_1,K_2\subset \widehat \C$ be quasicircles such that $K_1\cap K_2=\{0\}$. Let $f$ be a parabolic germ at $0$ such that
    \begin{enumerate}[label=\normalfont(\arabic*)]
        \item for $j=1,2$ there exists an attraction (resp.\ repulsion) vector $v_j$, where $v_1\neq v_2$, and a petal $P_j$ for the direction of $v_j$ and
        \item for $i=1,2$ the set $K_i\cap (P_j\cup \{0\})$ contains a Jordan arc $E_{ij}$ with an endpoint at $0$ that satisfies $f(E_{ij})\subset E_{ij}$ (resp.\ $f^{-1}(E_{ij})\subset E_{ij}$).
    \end{enumerate} 
    Then there exists a quasiconformal map $\Phi\colon \widehat \C\to \widehat \C$ that maps $K_1$ and $K_2$ onto two tangent circles. 
\end{theorem}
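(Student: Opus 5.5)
We reduce to a local model near $0$ and straighten everything with Fatou coordinates. Replacing $f$ by $f^{-1}$ if necessary (repelling petals of $f$ are attracting petals of $f^{-1}$), we may assume that the petals $P_1,P_2$ are repelling and that $f^{-1}(E_{ij})\subset E_{ij}$. Let $U_1,U_2$ be the components of $\widehat\C\setminus K_1$ and $\widehat\C\setminus K_2$ that are disjoint from each other; they are quasidisks with $\bar U_1\cap \bar U_2=\{0\}$.

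Near $0$, the Jordan curve $K_i$ consists of two germs of arcs ending at $0$. The arcs $E_{i1},E_{i2}$ lie in distinct petals, so they are exactly these two germs. Consequently, in a small disk around $0$:
\begin{itemize}
\item $K_1\cup K_2$ lies in $P_1\cup P_2\cup\{0\}$;
\item the region between $K_1$ and $K_2$ that is not contained in $U_1\cup U_2$ is a thin region squeezed inside the petals near $0$.
\end{itemize}
By Lemma \ref{lemma:tangent_local}, it therefore suffices to produce a quasiconformal map $\phi$ from a neighborhood of $0$ onto a neighborhood of $0$ that sends $U_1,U_2$ locally to the corresponding pieces of a model pair $U_1',U_2'$ which is known to be circularizable.

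\textbf{Straightening in one petal.} On each repelling petal $P_j$ take a Fatou coordinate $\alpha_j$. It conjugates $f^{-1}$ to the translation $w\mapsto w-1$ on a left half-plane. The image $\alpha_j(E_{ij}\setminus\{0\})$ is a curve $\Gamma_{ij}$ invariant under $w\mapsto w-1$ and tending to $-\infty$. Such a curve is the union of translates of a compact fundamental arc, so it lies in a horizontal strip of bounded height. Since $K_i$ is a quasicircle and $\alpha_j$ is conformal near the fundamental arc, $\Gamma_{ij}$ is a periodic quasiarc. The curves $\Gamma_{1j}$ and $\Gamma_{2j}$ are disjoint, translation invariant, and have bounded vertical separation. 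We then build a translation-equivariant quasiconformal map $\Theta_j$ of a left half-plane that sends $\Gamma_{1j},\Gamma_{2j}$ to horizontal lines $\Im w=0$ and $\Im w=1$. To do this, we construct $\Theta_j$ on a fundamental strip $\{-1\le \Re w\le 0\}$, which is a compact quadrilateral problem solved by Rickman's Theorem \ref{theorem:rickman} and Proposition \ref{proposition:quasidisk_extension}, and then extend it by equivariance.

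\textbf{The model.} Pulling back by the model coordinate $w\approx -1/(naz^n)$ (the pullback is bi-Lipschitz up to errors controlled by the asymptotics of $\alpha_j$), the lines in the two petals become explicit analytic curves through $0$. After an inversion sending $0$ to $\infty$, these are two unbounded curves. They are straight lines near $\infty$ when $n=1$. For general $n$, the gap between them is comparable to a power $(1+|x|)^{-p}$ with $p\ge 1$ (the exponent is determined by $n$ and the chosen directions; I have not carried out this computation). This model pair is circularizable by Theorem \ref{theorem:tangent:graph} applied with $f(x)=(1+|x|)^{-p}$, together with a composition with $z\mapsto z^{1/n}$-type adjustments on sectors. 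Alternatively, one can use Lemma \ref{lemma:chordarc} with a chord-arc midline.

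\textbf{Gluing.} The local map $\phi$ is defined as $\alpha_j'^{-1}\circ\Theta_j\circ\alpha_j$ on each petal, where $\alpha_j'$ are the model coordinates. Outside the petals, on the remaining sectors (attracting directions and any other repelling petals), we need quasiconformal interpolation. There $K_1\cup K_2$ is absent near $0$, so we only need to match boundary values. Because the gluing boundaries are smooth petal boundaries and all maps are equivariant, I would do this in logarithmic or Fatou coordinates, where it reduces to extending periodic bi-Lipschitz boundary maps across strips.

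\textbf{Main obstacle.} I expect the main difficulty to be twofold:
\begin{itemize}
\item the global gluing of $\phi$ across the sectors complementary to $P_1\cup P_2$;
\item verifying that the model configuration, for arbitrary multiplicity $n+1$ and arbitrary distinct directions $v_1\ne v_2$, actually satisfies the hypotheses of Theorem \ref{theorem:tangent:graph}.
\end{itemize}
I would first check the model using Lemma \ref{lemma:chordarc}, since a translation-invariant midline gives a natural chord-arc curve.
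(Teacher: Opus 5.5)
Your proposal is a plan rather than a proof. The two steps that carry the content are left open, as you note yourself. The fallback you suggest for one of them fails, and the opening reduction is wrong.

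\emph{Reduction.} Replacing $f$ by $f^{-1}$ cannot make both petals repelling when one is attracting and the other repelling. The theorem allows this mixed case, and it is the only possible case when $n=1$ (there is one attraction and one repulsion vector), which you nevertheless use in your model. In the mixed case the given petals may overlap, so defining $\phi$ petal by petal is not well defined until you pass to disjoint subpetals.

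\emph{Model.} Pulling back $\Im w=0,1$ through $w\approx -1/(naz^n)$ gives curves at distance $\asymp|z|^{n+1}$, i.e.\ a gap $\asymp|x|^{-(n-1)}$ after inversion. The exponent depends only on $n$, not on the directions. For $n\ge 2$ the region between the curves pinches to zero width at $\infty$. So no $K$-chord-arc curve $J$ with $N_{1/K}(J)$ inside that region exists, and Lemma \ref{lemma:chordarc} does not apply. The uniform strip you see in the Fatou coordinate cannot be used instead, since that coordinate lives on a petal, not on a neighborhood of the tangency point. What is needed is Theorem \ref{theorem:tangent:graph} with the function $(1+|x|)^{-(n-1)}$, after straightening the corner of the model curve at $0$ when $v_2\neq -v_1$.

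\emph{Gluing.} A quasiconformal filling of a complementary sector exists only if the boundary maps on its two sides have compatible behavior at the vertex $0$. For instance, $r\mapsto r$ on one side and $r\mapsto r^2$ on the other admits no quasiconformal extension; compare moduli of quadrilaterals in the log-polar half-strip. To get compatibility you must control $\Theta_j$ as $|\Im w|\to\infty$: your fundamental strip $\{-1\le\Re w\le 0\}$ is not compact, and equivariance alone permits twisting. One option is to take $\Theta_j=\id$ outside a horizontal strip containing $\Gamma_{1j},\Gamma_{2j}$, so that near $\partial P_j$ the map $\phi$ is $(\alpha_j')^{-1}\circ\alpha_j$. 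You would then have to show that this map is $z(1+o(1))$ with derivative control, despite the logarithmic term in $\alpha_j$. None of this is carried out.

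The idea you are missing is the paper's reduction, Lemma \ref{lemma:germ_tangent_reduction}. A quasiconformal map of $\widehat\C$ that is conformal on the component $D$ of $\widehat\C\setminus K_1$ containing $K_2$ sends $K_1$ to $\R\cup\{\infty\}$. Schwarz reflection then turns the conjugated map into a genuine holomorphic parabolic germ preserving $\R$, with the petals along $\pm\R$. Its multiplicity may differ from that of $f$, which is harmless. After that there is no model and no gluing. After $z\mapsto 1/z$ one only needs $L^{-1}(1+|x|)^{-(n-1)}\le y\le L(1+|x|)^{-(n-1)}$ on the image of $K_2$, and Theorem \ref{theorem:tangent:graph} finishes.

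The paper proves the underlying two-sided strip bound in the approximate coordinate $z^n/(-na)$ by Koebe distortion for the iterates of $G$. Your periodicity observation in an exact Fatou coordinate is a legitimate substitute for that step. It says that an invariant arc disjoint from the invariant real axis lies in $\{\tau_1\le\Im w\le\tau_2\}$ with $\tau_1>0$. You would need to transfer these bounds through the transition map to $z^n/(-na)$, which is real on $\R$ with derivative tending to $1$. Combined with the reduction lemma, this would give a complete proof.
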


The petals $P_1,P_2$ are allowed to be of the same type, or one can be attracting and the other repelling. See Figures \ref{fig:germ3} and \ref{fig:germ2} for an illustration of the assumptions. We prove a preliminary statement for invariant Jordan arcs.

\begin{lemma}\label{lemma:arc_invariant_general}
    Let $E$ be a Jordan arc with an endpoint at $0$ and $f$ be a homeomorphism from $E$ onto a subarc $f(E)$ of $E$ such that $0$ is the unique fixed point of $f$. Then $\bigcap_{k=0}^\infty f^{\circ k}(E)=\{0\}$. If, in addition, $f$ is a parabolic germ at $0$, then there exists an attraction vector $v$ for $f$ such that for any attracting petal $P$ for the direction $v$ we have $f^{\circ k}(E)\subset P\cup \{0\}$ for all sufficiently large $k\in \N$.
\end{lemma}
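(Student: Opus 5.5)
Parametrize $E$ by a homeomorphism $\gamma\colon[0,1]\to E$ with $\gamma(0)=0$. Since $f$ maps $E$ homeomorphically onto a subarc of $E$ and fixes $0$, the conjugate $h=\gamma^{-1}\circ f\circ\gamma$ is a homeomorphism of $[0,1]$ onto a subinterval of $[0,1]$ with $h(0)=0$. Because the image is a subarc containing $0$ and $h$ is injective and continuous, $h$ is increasing and $h([0,1])=[0,h(1)]$. By uniqueness of the fixed point, $h(t)\neq t$ for $t\in(0,1]$, and $h(1)\leq 1$ forces $h(t)<t$ on $(0,1]$ by the intermediate value theorem. The sets $f^{\circ k}(E)$ then correspond to the nested intervals $[0,h^{\circ k}(1)]$. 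The sequence $h^{\circ k}(1)$ decreases to a limit $t_*$, which is a fixed point of $h$ by continuity, so $t_*=0$. This gives $\bigcap_k f^{\circ k}(E)=\{0\}$. In particular $\diam f^{\circ k}(E)\to 0$, since $f^{\circ k}(E)=\gamma([0,h^{\circ k}(1)])$ and $\gamma$ is uniformly continuous.

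For the second statement, assume $f$ is a parabolic germ. Choose $k_0$ so large that $f^{\circ k_0}(E)$ lies in a small neighborhood $N$ of $0$ on which the parabolic flower theorem \cite{Milnor:dynamics}*{Theorem 10.7} applies. That is, $N\setminus\{0\}$ is covered by finitely many attracting and repelling petals, and every point of $N$ whose forward orbit stays in $N$ either equals $0$ or converges to $0$ along some attraction vector. Replace $E$ by $E'=f^{\circ k_0}(E)$, which is still invariant and lies in $N$. Every point $z\in E'\setminus\{0\}$ has its whole forward orbit in $E'\subset N$ and converges to $0$ by the first part, so by \cite{Milnor:dynamics}*{Lemma 10.1} it converges along some attraction vector $v(z)$.

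The main point is to show that $v(z)$ is the same for all $z$. Fix $z_0\in E'\setminus\{0\}$ and let $D$ be the subarc of $E'$ from $z_0$ to $f(z_0)$. This is a compact arc avoiding $0$, and $E'\setminus\{0\}=\bigcup_{k\geq 0}f^{\circ k}(D)$, because $h^{\circ k}(t_0)\downarrow 0$ where $t_0$ is the parameter of $z_0$. Every point of the connected set $D$ converges to $0$ under iteration, so each point of $D$ lies in the union of the parabolic basins, each of which is open. The local basins for distinct attraction vectors are disjoint. Hence the sets $\{z\in D: v(z)=v_i\}$ partition $D$ into relatively open sets, and by connectedness a single vector $v$ occurs on all of $D$. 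To make openness precise, I would use that a point converging along $v_i$ eventually enters a fixed attracting petal $P_i$, and that entering $P_i$ at time $m$ is an open condition. Since $f^{\circ k}(D)$ consists of forward images of points of $D$, the vector is $v$ on all of $E'\setminus\{0\}$.

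Finally, fix any attracting petal $P$ for $v$. Each point of $D$ enters $P$ at some finite time and then stays in $P$, since $f(P)\subset P$. The set of points that have entered $P$ by time $m$ is open and increasing in $m$, so compactness of $D$ gives a uniform $m_0$ with $f^{\circ m}(D)\subset P$ for all $m\geq m_0$. Therefore $f^{\circ m}(E')=\{0\}\cup\bigcup_{k\geq m}f^{\circ k}(D)\subset P\cup\{0\}$ for $m\geq m_0$, which gives the claim with $k=k_0+m$. I expect the main obstacle to be the middle step: justifying cleanly that the direction of convergence is locally constant along $D$. The petal-entrance formulation above is the route I would try first.
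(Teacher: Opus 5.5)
Your proposal is correct and follows the paper's proof: you reduce to an increasing self-map of $[0,1]$ whose iterates of the endpoint decrease to the unique fixed point, and then use Milnor's Lemma 10.1 with a connectedness argument to get a single attraction vector, decomposing the arc into the fundamental pieces $f^{\circ k}(D)$ (the paper's $E(k)$); your compactness argument spells out the uniform entry into $P$ that the paper only asserts. One small fix: take $z_0$ to be the endpoint of $E'$ other than $0$, not an arbitrary point of $E'\setminus\{0\}$, since otherwise $E'\setminus\{0\}=\bigcup_{k\geq 0}f^{\circ k}(D)$ fails; the preliminary passage to $E'\subset N$ is harmless but unnecessary.
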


A consequence of this lemma is that for each sector $S$ at $0$ containing the attraction vector $v$ in its interior the arc $E$ has a subarc $F\subset S\cup \{0\}$ with an endpoint at $0$. Because of this, we say that the arc $E$ is \textit{asymptotic} to the vector $v$.

\begin{proof}
    For the first part, by conjugating $f$ with a homeomorphism between $E$ and $[0,1]$, we may suppose that $E=[0,1]$, $f(E)=[0,a_1]$ for some $a_1\in (0,1)$ and $f\colon [0,1]\to [0,a_1]$ is an increasing homeomorphism having $0$ as the unique fixed point. Let $a_0=1$ and $a_k=f(a_{k-1})$, $k\in \N$. Then $a_{k+1}<a_{k}$ and $f^{\circ k}([0,1])=[0,a_k]$ for $k\in \N\cup \{0\}$. Thus, $\{a_k\}_{k\in \N}$ converges to a point of $[0,1]$ that is necessarily a fixed point of $f$. By assumption $0$ is the only such point. Thus, $\bigcap_{k=0}^\infty f^{\circ k}([0,1])=\{0\}$, as desired.

    Next, suppose that $f$ is a parabolic germ at $0$. By the first part, each point of $E$ is attracted to $0$ under iteration of $f$. By \cite{Milnor:dynamics}*{Lemma 10.1}, each point of $E$ has a neighborhood that is attracted to $0$ along one of the attracting directions of $f$. By the connectedness of $E$, all points of $E$ are attracted along the same attraction vector $v$. Let $P$ be an arbitrary attracting petal in the direction of $v$. Let $0,z_0$ be the endpoints of $E$. For $k\in \N$, let $z_k=f^{\circ k}(z_0)$ and $E(k)$ be the arc of $E$ from $z_{k-1}$ to $z_k$. Note that $f(E_k)=E_{k+1}$ for $k\in \N$. By the above, there exists $N\in \N$ such that $E(k)\subset P$ for $k\geq N$. Since $E=\bigcup_{k=1}^\infty E(k)\cup\{0\}$, it follows that $f^{\circ k}(E)\subset P\cup \{0\}$ for $k\geq N$. 
\end{proof}

Next, we reduce the proof of Theorem \ref{theorem:germ_tangent_modification} to the case that $K_1=\R\cup \{\infty\}$.

\begin{lemma}\label{lemma:germ_tangent_reduction}
    Let $f,K_1,K_2$ be as in the assumptions of Theorem \ref{theorem:germ_tangent_modification}. Then there exists a quasiconformal map $\psi\colon \widehat \C\to \widehat \C$ and a parabolic germ $g$ at $0$ such that $\psi(K_1)=\R\cup \{\infty\}$ and $g,\psi(K_1),\psi(K_2)$ satisfy the assumptions of Theorem \ref{theorem:germ_tangent_modification}.
\end{lemma}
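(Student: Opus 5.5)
The plan is to replace $f$ by a model germ $g$ in the same conjugacy class near $0$, and to build $\psi$ first locally near $0$ and then globally. Let $n+1$ be the multiplicity of $f$. Using the Fatou coordinates of $f$ on the petals $P_1,P_2$, I will construct a quasiconformal map $\psi_0$ on a neighborhood of $0$ that conjugates $f$ to $g$ on the relevant arcs and sends $K_1$ near $0$ into $\R$. Then I will extend $\psi_0$ globally and straighten the rest of $K_1$ without touching a neighborhood of $0$. I describe the attracting case; the repelling case follows by replacing $f$ with $f^{-1}$, and mixed types are handled petal by petal. By Lemma \ref{lemma:arc_invariant_general}, $E_{1j}$ and $E_{2j}$ are both asymptotic to $v_j$ and eventually lie in any petal for $v_j$. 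Since $K_1$ is a Jordan curve through $0$, its two arcs $E_{11}$ and $E_{12}$ leave $0$ along the distinct directions $v_1$ and $v_2$.

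\textbf{Step 1 (the model).} I take a model germ $g$ of the same multiplicity $n+1$ whose two chosen attracting directions are $1$ and $-1$. If $n$ is even, I can take $g(z)=z(1+z^n)^{-1/n}$. In general I conjugate such a model by a quasiconformal ``angular stretch'' $re^{i\theta}\mapsto re^{i\sigma(\theta)}$, where $\sigma$ is piecewise linear and increasing. Such a map is quasiconformal, so the result is still quasiconformally conjugate to a parabolic germ, which is all that Theorem \ref{theorem:germ_tangent_modification} uses downstream. With this stretch I can move $v_1$ and $v_2$ to antipodal positions while keeping the remaining directions cyclically ordered.

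\textbf{Step 2 (local construction in the petals).} Let $\Phi_j$ be a Fatou coordinate on $P_j$, so that $\Phi_j\circ f=\Phi_j+1$ and $\Phi_j(P_j)$ contains a right half-plane. Because $f(E_{1j})\subset E_{1j}$, the image $\Phi_j(E_{1j})$ is a translation-invariant tail of a curve. It projects to a closed curve $\gamma_1$ in the cylinder $\C/\Z$, and likewise $E_{2j}$ projects to a closed curve $\gamma_2$. These are disjoint Jordan curves, essential in the cylinder. They are quasicircles because $K_i$ are quasicircles and $\Phi_j$ is conformal with bounded distortion on fundamental domains.

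\begin{enumerate}[label=\normalfont(\roman*)]
\item I straighten $\gamma_1$ to $\R/\Z$ by a quasiconformal self-map of the cylinder, keeping $\gamma_2$ on one side of it.
\item I lift this map to a $1$-periodic quasiconformal map $h_j$ of a half-plane with $h_j(z+1)=h_j(z)+1$.
\item I set $\psi_0=\Psi_j^{-1}\circ h_j\circ\Phi_j$ on $P_j$, where $\Psi_j$ is the Fatou coordinate of $g$ at $\pm1$, normalized so that $\Psi_j^{-1}(\R)$ is the real ray.
\end{enumerate}

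On $P_j$ this map conjugates $f$ to $g$ and sends $E_{1j}$ to a real segment. It sends $E_{2j}$ to a $g$-invariant arc inside a $g$-petal.

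\textbf{Step 3 (filling in near $0$).} Next I extend $\psi_0$ to a full neighborhood of $0$. The complement of the two (shrunken) petals near $0$ consists of two sector-like regions bounded by petal boundary arcs. On these regions I interpolate quasiconformally, for example in the coordinate $-1/z^n$, by maps that are bi-Lipschitz on dyadic annuli. This gives a homeomorphism that is quasiconformal near $0$ with $\psi_0(0)=0$. The hypotheses of Theorem \ref{theorem:germ_tangent_modification} only involve the arcs $E_{ij}$ and petals. Therefore $g$, $\psi_0(E_{ij})$ and the petals $\psi_0(P_j)$ satisfy them, after passing to tails $f^{\circ k}(E_{ij})$ and shrinking.

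\textbf{Step 4 (globalizing).} I restrict $\psi_0$ to a small disk and extend it to a quasiconformal map of $\widehat\C$, exactly as in the proof of Lemma \ref{lemma:tangent_local} (Theorem \ref{q:qc_qs}, Theorem \ref{theorem:quasidisk_quasicircle} and Proposition \ref{proposition:quasidisk_extension}). Now $\psi_0(K_1)$ is a quasicircle that coincides with $\R$ near $0$ and is disjoint from $\psi_0(K_2)\setminus\{0\}$. I take a quasicircle $\Gamma$ agreeing with $\psi_0(K_1)$ near $0$ and then choose a quasiconformal map $\omega$ of $\widehat\C$ that is the identity on a neighborhood of $0$ with $\omega(\psi_0(K_1))=\R\cup\{\infty\}$. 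I build $\omega$ by composing a Riemann map of the complementary quasidisks with the conformal structure near $0$ glued in, then extending (Proposition \ref{proposition:quasidisk_extension}). Setting $\psi=\omega\circ\psi_0$ finishes the proof, since near $0$ nothing has changed.

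\textbf{Main obstacle.} I expect Steps 2–3 to be the hard part. One must ensure that the periodic straightening $h_j$ and the interpolation on the sectors fit together into a single quasiconformal map near $0$ with uniformly bounded dilatation. The danger is that distortion accumulates toward $0$ across infinitely many fundamental domains. My first attempt is to make every piece commute with the dynamics, or at least be uniformly bi-Lipschitz in the coordinate $-1/z^n$, so that the bound comes from a single fundamental domain.
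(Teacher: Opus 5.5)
Your Step 1 does not deliver what the lemma asserts. The lemma, and the hypothesis of Theorem~\ref{theorem:germ_tangent_modification}, require $g$ to be a parabolic germ, i.e.\ holomorphic with $g(z)=z+az^{n+1}+O(z^{n+2})$. Conjugating a model by the angular stretch $re^{i\theta}\mapsto re^{i\sigma(\theta)}$ destroys holomorphy. Because you insist on keeping the multiplicity $n+1$, this is unavoidable when $n$ is odd and $P_1,P_2$ are both attracting: a holomorphic germ has $n$ equally spaced attraction vectors, so $\pm1$ cannot both be attracting.

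Your justification, that quasiconformal conjugacy to a parabolic germ ``is all that Theorem~\ref{theorem:germ_tangent_modification} uses downstream'', is false. That proof uses three things that need holomorphy:
\begin{itemize}
\item the expansion with $a<0$, which comes from $g$ being real on $\R$;
\item the coordinate $\phi(z)=z^n/(-na)$, to get $G(w)=w+1+o(1)$;
\item Koebe distortion for the conformal iterates $G^{\circ k}$ into $\mathbb H_{>R}\cap\mathbb H$, to bound $\Im w$ from above and below on $\phi(J^+)$.
\end{itemize}
A germ that is merely quasiconformally conjugate to a parabolic one is the setting of Theorem~\ref{theorem:qr_germ}, whose proof relies on this very lemma, so your reasoning would be circular.

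Within your scheme, the repair is to drop ``same multiplicity''. Your $\psi_0$ only needs to conjugate on $P_1\cup P_2$, so you can use $z(1+z^2)^{-1/2}$ (attracting at $\pm1$), its inverse, or $z/(1+z)$ (attracting at $1$, repelling at $-1$). Then, however, the petal conjugacies behave like $z\mapsto z^{n/m}$ rather than like bi-Lipschitz maps, and Step~3 must accommodate this. As written, Step~3 is only a sketch of exactly the step you identify as the main obstacle. It can be completed: choose the cylinder straightening to be the identity near the ends, so that along $\partial P_j$ the map $\psi_0=\Psi_j^{-1}\circ\Phi_j$ is conformal with power-type asymptotics, and then interpolate in logarithmic coordinates. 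But this is the real content of your argument, and it is missing.

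For comparison, the paper avoids models, Fatou coordinates, interpolation and globalization entirely.
\begin{itemize}
\item Let $D$ be the complementary component of $K_1$ with $K_2\subset\bar D$. Let $\psi$ be a quasiconformal map of $\widehat\C$ that is conformal on $D$, with $\psi(D)=\mathbb H$ and $\psi(0)=0$ (Proposition~\ref{proposition:quasidisk_extension}).
\item Since $f$ locally preserves $K_1$ and the side $D$, the map $g=\psi\circ f\circ\psi^{-1}$ is holomorphic in $\mathbb H$ near $0$ and real on $\R$. By Schwarz reflection it is a holomorphic germ.
\item The condition $0<g(x)<x$ on $\psi(E_{11})$ gives $0\le g'(0)\le 1$.
\item In the mixed case, the behavior on $\psi(E_{12})$ gives $g'(0)\ge1$.
\item In the doubly attracting case, a repelling petal of $f$ between $v_1$ and $v_2$ lies in $D$, so $g$ is not attracting. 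Hence $g'(0)=1$ in both cases.
\item Finally one checks that no repelling direction of $g$ separates $\psi(E_{11})$ from $\psi(E_{21})$.
\end{itemize}
The resulting $g$ is conformally conjugate to $f$ on a full one-sided neighborhood of $0$. Its multiplicity generally differs from that of $f$, which shows that matching multiplicities was never needed.
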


\begin{proof}
    By replacing $f$ with $f^{-1}$, we may suppose that $P_1$ is an attracting petal corresponding to an attracting vector $v_1$. For $i=1,2$ consider a Jordan arc $E_{i1}\subset K_i\cap (P_1\cup \{0\})$ with an endpoint at $0$ such that $f(E_{i1})\subset f(E_{i1})$. Since $f$ is a homeomorphism in $P_1$ and each point of $P_1$ is attracted to $0$ under iteration, we conclude that for each $z\in E_{i1}\setminus \{0\}$ the point $f(z)$ lies in the open subarc of $E_{i1}$ from $0$ to $z$.
    
    Let $D$ be the Jordan region whose boundary is $K_1$ and $K_2\subset \bar D$. By Theorem \ref{theorem:quasidisk_quasicircle}, $D$ is a quasidisk. By Proposition \ref{proposition:quasidisk_extension} there exists a quasiconformal map $\psi$ of $\widehat \C$ such that $\psi(K_1)=\R\cup \{\infty\}$, $\psi(0)=0$, $\psi(D)=\mathbb H$, and $\psi|_D$ is conformal. Here $\mathbb H$ denotes the upper half-plane. We conjugate $f$ with $\psi$ to obtain a holomorphic map $g$ in a neighborhood of $0$ in $\mathbb H$. Note that $g$ extends continuously to the real line near $0$ and maps it into itself. Thus, by Schwarz reflection we may extend $g$ to a holomorphic map in a neighborhood of $0$ that fixes $0$.

    Without loss of generality, suppose that $E_{11}$ is mapped under $\psi$ to an interval $(0,s_0)$. By the behavior of $f$ in $E_{11}$, we conclude that $0<g(x)<x$ for each $x\in(0,s_0)$. This implies that $0\leq g'(0)\leq 1$.  Note that the petal $P_2$ for $f$ is either attracting or repelling. By assumption, for $i=1,2$ there exists an arc $E_{i2}\subset K_i\cap (P_2\cup \{0\})$ with properties analogous to those of $E_{i1}$ above. Let $(-t_0,0)=\psi(E_{12})$. We claim that $g$ is a parabolic germ at $0$. If $P_2$ is a repelling petal, then we have $g(x)<x$ for each $x\in (-t_0,0)$, which implies that $g'(0)\geq 1$. Thus, $g'(0)=1$. Clearly $g$ is not the identity map, so it is a parabolic germ at $0$. 
    
    Next, suppose that $P_2$ is an attracting petal. Fix $j\in \{1,2\}$. By Lemma \ref{lemma:arc_invariant_general}, if $S_j$ is an arbitrary sector at $0$ that contains $v_j$ then $f^{\circ k}(E_{ij})\subset S_j\cup \{0\}$ for all sufficiently large $k\in \N$ and for $i=1,2$. Thus, the arc $E_{ij}$ is asymptotic to the attracting vector $v_j$. Since $f$ is parabolic, there exists a repulsion vector $v$ that is between the attraction vectors $v_1,v_2$ and a repelling petal $P$ for the direction $v$ that is disjoint from small sectors $S_j$, $j=1,2$, as above. In particular, we may find a sufficiently small repelling petal $P$ as above that is contained in $D$. That is, each point of $P$ is attracted to $0$ under iteration of $f^{-1}$. This proves that there are points of $\mathbb H$ that are attracted to $0$ under iteration of $g^{-1}$. If $|g'(0)|<1$, then all points in a neighborhood of $0$ would be attracted to $0$ under iteration of $g$ by \cite{Milnor:dynamics}*{Lemma 8.1}, a contradiction. Thus, the only remaining case is that $g'(0)=1$. Hence, $g$ is again a parabolic germ.

    It remains to prove that the conditions of Theorem \ref{theorem:germ_tangent_modification} remain true for $g$. Let $F_{11}=(0,s_0)=\psi(E_{11})$ and $F_{21}=\psi(E_{21})$, which satisfy $g(F_{i1})\subset F_{i1}$ for $i=1,2$ by the dynamics of $f$. Clearly, each point of $F_{11}$ is attracted to $0$ in the direction of the positive real axis. We claim that the same is true for $F_{21}$. By the dynamics of $f$, each point of $F_{21}$ is attracted to $0$ under iteration of $g$. By Lemma \ref{lemma:arc_invariant_general} there exists an attraction vector $v\in \bar{\mathbb H}$ for $g$ such that $F_{21}$ is asymptotic to $v$. If $v$ is not in the direction of the positive real axis, then there exists a repulsion direction between the positive real axis and the vector $v$. In particular, there exists a repelling petal $Q$ for $g$ in $\mathbb H$ that lies between (in the sense of cyclic order) $F_{11}$ and $F_{21}$. This implies that there exists a repelling petal $\psi^{-1}(Q)$ for $f$ that is between $E_{11}$ and $E_{21}$. Note that $E_{11}$ and $E_{21}$ are asymptotic to $v_1$, in view of Lemma \ref{lemma:arc_invariant_general}. Thus, if $Q$ is sufficiently small, then $\psi^{-1}(Q)$ is necessarily contained in a small sector that contains $v_1$ and that does not contain repulsion vectors of $f$. So, points of $\psi^{-1}(Q)$ cannot be attracted to $0$ in the direction of a repulsion vector of $f$ under iteration of $f^{-1}$. This is a contradiction. Thus, the claim is proved. Now, if we consider any attracting petal $Q_1$ for $g$ for the direction of the positive real axis, then by replacing $F_{i1}$ with an iterate, in view of Lemma \ref{lemma:arc_invariant_general}, we may have $F_{i1}\subset Q_1\cup \{0\}$ for $i=1,2$. In the same way we argue for $F_{i2}=\psi(E_{i2})$, $i=1,2$.
    \end{proof}

    \begin{proof}[Proof of Theorem \ref{theorem:germ_tangent_modification}]
        By Lemma \ref{lemma:germ_tangent_reduction}, we may assume that $K_1=\R\cup \{\infty\}$. Without loss of generality suppose that $K_2$ is in the closure of the lower half-plane. Since the real line $\R$ is locally preserved under $f$ near $0$, we conclude that $P_1$ and $P_2$ are parabolic petals in the direction of the positive and negative real axis, respectively. By replacing $f$ with its inverse if necessary, we may assume that $P_1$ is an attracting petal in the direction of the positive real axis. Thus, $f$ has an expansion at $0$ of the form
        $$f(z)=z+az^{n+1}+O(z^{n+2}),$$
        where $a<0$ and $n\in \N$.  We conjugate $f$ by $z\mapsto 1/z$ to the parabolic germ $g$ at $\infty$ with expansion
        $$g(z)=z-\frac{a}{z^{n-1}}+O(z^{-n}).$$
        
        Let $J$ be the image of $K_2$ under $z\mapsto 1/z$. Then $J$ contains an arc $J^+$ with an endpoint at $\infty$ that is invariant and is attracted to $\infty$ by $g$ in the direction of the positive real axis. Similarly, $J$ contains an arc $J^-$ with an endpoint at $\infty$ that is invariant and is attracted to $\infty$ by $g$ or $g^{-1}$ in the direction of the negative real axis. In order to prove that there exists a quasiconformal homeomorphism of $\widehat \C$ that maps $\R$ and $J\setminus \{\infty\}$ to parallel lines, by Theorem \ref{theorem:tangent:graph} it suffices to prove that there exists $L\geq 1$ such that
        \begin{align}\label{theorem:germ_tangent:inclusion}
        J\setminus \{\infty\}\subset \{(x,y)\in \R^2:  L^{-1}(1+|x|)^{-n+1}\leq y\leq L(1+|x|)^{-n+1}\}.
        \end{align}
        It suffices to show the inclusion for the arcs $J^+$ and $J^-$. We focus on $J^+$.
        
        Let $w = \phi(z) = z^n/(-na)$, which maps the sector $\Delta=\{re^{i\theta}: r>0,\,\, |\theta|<\pi/n\}$ conformally onto the slit plane $\C\setminus (-\infty,0]$.  We set
        $$G(w)= \phi \circ g\circ \phi^{-1}(w) = w+1+o(1)$$ 
        as $w\to\infty$. The calculation of the expansion can be found in \cite{Milnor:dynamics}*{(10.2), p.~107}.  Let $R>0$ be sufficiently large so that 
        \begin{align}\label{eqn:parabolicestimate}
            |G(w)-w-1|<1/4. 
        \end{align}
        In particular, if $w\in \mathbb H_{>R}$, then $\Re(G(w))>\Re(w)+3/4$, so $G(\mathbb H_{>R})\subset \mathbb H_{>R}$. Note that for any $k\in\N$ the map $G^{\circ k}\colon \mathbb H_{>R}\to \mathbb H_{>R}$ is a conformal embedding. Since the real axis is locally invariant under $G$ near $\infty$, we may consider an even larger $R>0$ so that $G^{-1}(\mathbb H_{>R}\cap \R)\subset \R$. This implies that $G(\mathbb H_{>R}\cap \mathbb H)\subset \mathbb H_{>R}\cap \mathbb H$.

        Since $J^+$ is invariant under $g$ and is attracted to $\infty$ in the direction of the positive real axis, by replacing it with an iterate if necessary (see Lemma \ref{lemma:arc_invariant_general}), we may assume that $J^+\subset \Delta$. Moreover, each arbitrarily small sector at $0$ that contains the positive real axis contains some iterate of $J^+$ under $g$. Thus, the same is true for iterates of $\phi(J^+)$ under $G$ (given that $\phi$ is a power map and maps small sectors to small sectors). Thus, by passing to an iterate of $J^+$ under $g$, we may assume that $\phi(J^+)\subset \mathbb H_{>R}$. 

        Let $p_0\in \R\cap \mathbb H_{>R}$ and  $p_k=G(p_{k-1}) = G^{\circ k}(p_0)$, $k\in \N$. Also, let $q_0 \in \phi(J^+)$ and $q_k=G(q_{k-1}) = G^{\circ k}(q_0)$, $k\in \N$. For $k\in \N$ denote by $J^+(k)$ the subarc of $\phi(J^+)$ from $q_{k-1}$ to $q_k$. Consider a ball $B$ such that $\bar B\subset \mathbb H_{>R}$ and $[p_0,p_1]\cup J^+(1) \subset B$.  Note that there exists $t>1$ such that $tB\subset \mathbb H_{>R}$. By Koebe's distortion theorem \cite{Pommerenke:conformal}*{Theorem 1.3}, we have
        \begin{align}\label{theorem:germ:k1}
            |G^{\circ k}(w_1)- G^{\circ k}(w_2)| \simeq  |(G^{\circ k})'(p_0)|\cdot  |w_1-w_2|
        \end{align}
        for all distinct $w_1,w_2\in B$ and $k\in \N$. Thus, for each $w\in J^+(1)$ and $k\in \N$ we have
        $$\frac{|p_k -  G^{\circ k}(w)|}{|p_k-p_{k+1}|} \simeq  \frac{|p_0-w|}{|p_0-p_1|}.$$
        By compactness, we have $|p_0-w|\simeq 1$ for $w\in J^+(1)$, so 
        $$\frac{|p_k -  G^{\circ k}(w)|}{|p_k-p_{k+1}|} \simeq 1$$
        for all $k\in \N$ and $w\in J^+(1)$. Also, by {\eqref{eqn:parabolicestimate}}, we have 
        \begin{align}\label{theorem:germ:k2}
            3/4\leq |p_k-p_{k+1}| \leq 5/4 
        \end{align}
        for all $k\in \N\cup\{0\}$. Thus, 
        $$ |p_k-G^{\circ k}(w)| \simeq 1$$
        for all $k\in \N$ and $w\in J^+(1)$. This implies that $\Im (w)\lesssim 1$ for all $w\in J^+(k)$, $k\in \N$.

        \begin{figure}
        \centering
        \begin{tikzpicture}
            \begin{scope}
                \draw (0,0)--(3,0);
                \draw[domain=0:3, smooth, variable=\x]
    plot ({\x}, {1/(\x+1)*(1+0.1*sin(6*\x r))+0.1});
                \draw[->] (1,-0.5) to[out=-120, in=180] (1.5,-1) to[out=0,in=-60] (2,-0.5);
                \node[below] at (1.5,-1) {$g$};
                \node at(1,0.8) {$\scriptstyle J^+$};
                \draw[dashed,domain=0:3, smooth, variable=\x]
    plot ({\x}, {0.5/(\x+1)});
    \draw[dashed,domain=0:3, smooth, variable=\x]
    plot ({\x}, {2/(\x+1)});
            \end{scope}
               
                \draw[->] (3.3,0)--(4.2,0) node[below,pos=0.5] {$\phi$};
                
            \begin{scope}[shift={(4.5,0)}]
                \draw (0,0)--(3,0);
                \draw[domain=0:3, smooth, variable=\x]
    plot ({\x}, {0.5+0.4*sqrt(\x)*(1+0.1*sin(6*\x r))});
                \draw[->] (1,-0.5) to[out=-120, in=180] (1.5,-1) to[out=0,in=-60] (2,-0.5);
                \node[below] at (1.5,-1) {$G$};
                \node at(1.5,0.8) {$\scriptstyle\phi(J^+)$};
                \draw[dashed] (0,0.4)--(3,0.4);
                \draw[dashed] (0,1.4)--(3,1.4);
            \end{scope}
        
        \end{tikzpicture}
        \caption{The map $g$ is conjugate to $G$ under $\phi$. The set $\phi(J^+)$ lies between two horizontal lines so $J^+$ is close to the graph of a negative power function.}
        \label{fig:asymptotics}
    \end{figure}

    Next, we prove that $\Im (w)$ is also bounded from below. By \eqref{theorem:germ:k1} and \eqref{theorem:germ:k2}, we have $|(G^{\circ k})'(p_0)|\simeq 1$ for all $k\in \N$. By Koebe's distortion theorem \cite{Pommerenke:conformal}*{Theorem 1.3} we have $|(G^{\circ k})'(w)|\simeq 1$ for all $k\in \N$ and for all $w\in B$. Let $r>0$ such that $B(w,r)\subset \mathbb H_{>R}\cap \mathbb H$ for each $w\in J^+(1)$.  Let $w\in J^+(1)$ be arbitrary. We apply Koebe's distortion theorem \cite{Pommerenke:conformal}*{Corollary 1.4} to the conformal map $G^{\circ k}|_{B(w,r)}$, which takes values in $\mathbb H_{>R}\cap \mathbb H$, and we obtain 
    $$\dist(G^{\circ k}(w), \R) \geq \dist(G^{\circ k}(w) , \partial G^{\circ k}(B(w,r))) \simeq r |(G^{\circ k})'(w)|\simeq 1$$
    for every $k\in \N$.  This proves that $\dist(J^+(k),\R)\gtrsim 1$ for each $k\in \N$. Thus, $\dist(\phi(J^+),\R)\gtrsim 1$, as desired.
    
    Therefore, $\phi(J^+)$ is between two horizontal lines $\Im(w)=\tau_1$ and $\Im(w)=\tau_2$, where $0<\tau_1<\tau_2$. If $\tau\in \R$ is fixed and  $\Im (\phi (x+iy))=\tau$, note that as $x\to \infty$ we have $y=o(x)$. A direct computation shows that 
        $$y=\frac{ -a \tau }{x^{n-1}}(1+o(1)).$$
    This proves that there exists $L\geq 1$ such that
        $$ J^+\subset \{(x,y)\in \R^2:  L^{-1} (1+|x|)^{-n+1} \leq y\leq L (1+|x|)^{-n+1}\},$$
    as illustrated in Figure \ref{fig:asymptotics}.
    The proof is complete.
    \end{proof}

\begin{proof}[Proof of Theorem \ref{theorem:qr_germ}]
    Suppose that there exists a quasiconformal map $\phi\colon \widehat \C\to \widehat \C$ that maps $K_1\setminus \{0\}$ and $K_2\setminus \{0\}$ to lines parallel to the real axis. Let $g(z)=z+1$ and $f=\phi^{-1}\circ g\circ \phi(z)$. Then $f$ satisfies the assumptions in the statement. 

    Conversely, by conjugating $f$ near $0$ with a quasiconformal map $\psi$ as in the statement, we can reduce to the case that $f$ is a parabolic germ. Note that the conjugacy $\psi$ is only defined on subarcs $K_1',K_2'$ of $K_1,K_2$ rather than on the entire curves, so we invoke Theorem \ref{theorem:rickman} to extend $\psi(K_i')$, $i=1,2$ to suitable quasicircles. By Theorem \ref{theorem:germ_tangent_modification}, there exists a quasiconformal map of $\widehat \C$ that maps these quasicircles to tangent circles. Finally, by Lemma \ref{lemma:tangent_local} we obtain the desired conclusion for the initial quasicircles $K_1,K_2$.
\end{proof}

\section{Tangent Fatou components}

In this section we provide sufficient conditions so that two Fatou components of a geometrically finite rational map that have a common boundary point can be quasiconformally mapped to two disks. 

\subsection{Component dependent bounds}
We first establish a general lemma about holomorphic germs. 

\begin{lemma}\label{lemma:lift}
    Let $f_1,f_2$ be holomorphic germs at $0$ such that $f_1(z)=az^m+O(z^{m+1})$ and $f_2(z)=\lambda z+bz^{n+1}+O(z^{n+2})$ for some $a,\lambda \neq 0$ and $m,n\in \N$. 
    \begin{enumerate}[label=\normalfont(\arabic*)]
        \item\label{lemma:lift:existence} There exists a holomorphic germ $h$ at $0$ such that $f_2\circ f_1=f_1\circ h$ and $h'(0)^m=\lambda$. 
        \item\label{lemma:lift:choices} There are exactly $m$ choices for $h$, one for each local branch of $f_1^{-1}$ so that $h=f^{-1}_1\circ f_2\circ f_1$.
        \item\label{lemma:lift:1} If $\lambda=1$, then we may take
    $$h(z)=z+ c z^{nm+1}+O(z^{nm+2}) \quad \text{as $z\to 0$},$$
    where $c\neq 0$ if and only if $b\neq 0$.
    \end{enumerate}
    \end{lemma}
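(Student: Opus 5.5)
The plan is to normalize $f_1$ to a pure power, so that the lift becomes an explicit $m$-th root. Since $f_1(z)=az^m(1+O(z))$, we can write $f_1(z)=\phi(z)^m$, where $\phi(z)=\alpha z(1+O(z))^{1/m}$ is a local conformal map with $\alpha^m=a$ and a holomorphic branch of the $m$-th root near $1$. It then suffices to construct $\tilde h$ with $f_2(w^m)=\tilde h(w)^m$ and to set $h=\phi^{-1}\circ\tilde h\circ\phi$. Indeed,
$$f_1\circ h=(\phi\circ h)^m=\tilde h(\phi)^m=f_2(\phi^m)=f_2\circ f_1 .$$

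For \ref{lemma:lift:existence}, I would write $f_2(u)=\lambda u(1+u\,k(u))$ with $k$ holomorphic; when $\lambda=1$ one has $u\,k(u)=(b/\lambda)u^n+O(u^{n+1})$. Then
$$f_2(w^m)=\lambda w^m\bigl(1+w^m k(w^m)\bigr),$$
and we take
$$\tilde h(w)=\mu w\bigl(1+w^mk(w^m)\bigr)^{1/m},$$
where $\mu^m=\lambda$ and the root is the principal branch near $1$. This $\tilde h$ is holomorphic with $\tilde h'(0)^m=\lambda$. Conjugating by $\phi$ preserves the derivative at $0$, so $h'(0)^m=\lambda$.

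For \ref{lemma:lift:choices}, I would argue uniqueness up to roots of unity. If $h_1,h_2$ are two germs with $f_1\circ h_i=f_2\circ f_1$, then after conjugating by $\phi$ we get $\tilde h_1^m=\tilde h_2^m$. The quotient $\tilde h_1/\tilde h_2$ extends holomorphically across $0$, because both vanish to first order there (each has nonzero derivative at $0$, by \ref{lemma:lift:existence}). A holomorphic function whose values are $m$-th roots of unity is constant, so the quotient equals some $\zeta$ with $\zeta^m=1$. This gives exactly $m$ lifts, $\phi^{-1}\circ(\zeta\tilde h)\circ\phi$. They are precisely the compositions $f_1^{-1}\circ f_2\circ f_1$ with the local branches of $f_1^{-1}$, since those branches are $\phi^{-1}(\zeta\,\cdot^{1/m})$ defined on the appropriate sectors and extend holomorphically by the computation above.

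For \ref{lemma:lift:1}, with $\lambda=1$ we choose $\mu=1$. Then
$$\tilde h(w)=w\Bigl(1+\tfrac{b}{m}w^{nm}+O(w^{nm+1})\Bigr)=w+\tfrac bm w^{nm+1}+O(w^{nm+2}).$$
If $b=0$, then $f_2=\id$ (or we simply mean the $O$-term is absent), $\tilde h=\id$, and $h=\id$, so $c=0$.

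The remaining and main point is to check that conjugation by $\phi$ preserves the form $z+cz^{nm+1}+\dots$ with $c\neq0$. Write $\phi(z)=\alpha z+\dots$. If $\tilde h(w)=w+\beta w^{N+1}+O(w^{N+2})$, then a standard computation gives
$$\phi^{-1}\circ\tilde h\circ\phi(z)=z+\beta\alpha^{N}z^{N+1}+O(z^{N+2}).$$
This holds because $\phi(z)+\beta\phi(z)^{N+1}$ pulled back by $\phi^{-1}$ changes only higher-order terms, the linear part of $\phi^{-1}$ being $1/\alpha$. Hence $c=\frac bm\alpha^{nm}=\frac{b\,a^n}{m}$, which is nonzero if and only if $b\neq0$.
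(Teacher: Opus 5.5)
Your proof is correct and follows essentially the same route as the paper. Both factor $f_1=\phi^m$, take the $m$-th root $\tilde h$ of $f_2(w^m)$, and set $h=\phi^{-1}\circ\tilde h\circ\phi$. Both obtain (2) from $\tilde g^m=\tilde h^m$ up to an $m$-th root of unity, and both get $c=\frac{b}{m}\phi'(0)^{nm}$ by comparing expansions.

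There are two small slips, neither of which affects the conclusion. First, $b=0$ does not force $f_2=\id$; your general conjugation formula with $\beta=0$ already gives $c=0$, so the separate $b=0$ remark should simply be dropped. Second, in (2) the simple zero of an arbitrary lift $\tilde h_2$ does not follow from (1). It should be deduced from $\tilde h_2^m=\tilde h_1^m$, since $\tilde h_1$ has a simple zero at $0$.
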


\begin{proof}
    The map $f_1$ is locally $m$-to-$1$ near $0$, so there exists a conformal map $\phi$ in a neighborhood of $0$ that fixes $0$ and satisfies $f_1(z)=\phi(z)^m$. Next, we have
    $$f_2(z^m)= \lambda z^m+ bz^{m(n+1)}+O(z^{m(n+2)})= \lambda z^m \left( 1+ \frac{b}{\lambda}z^{mn}+O(z^{m(n+1)})\right).$$
    Since this map is locally $m$-to-$1$ near $0$, there exists a conformal map $\widetilde h$ in a neighborhood  of $0$ that fixes $0$ and satisfies $f_2(z^m)= \widetilde h(z)^m$; there are $m$ choices of $\widetilde h$, differing by multiplication with an $m$-th root of unity and we fix one such map. Thus, 
    $$\widetilde h(z)=\mu z \left( 1+ \frac{b}{\lambda}z^{mn}+O(z^{m(n+1)})\right)^{1/m}= \mu z+\frac{\mu b}{m\lambda} z^{nm+1}+O(z^{nm+2})$$
    for a choice $\mu$ of the $m$-th root of $\lambda$. Finally, we set $h=\phi^{-1}\circ \widetilde h\circ \phi$, which satisfies $h'(0)=\mu$. Observe that $h$ satisfies the claimed identity $f_2\circ f_1=f_1\circ h$. This proves \ref{lemma:lift:existence}.

    If $z\mapsto z^{1/m}$ is a branch of the $m$-th root defined in an appropriate open set not containing $0$, then $f_1^{-1}(z)=\phi^{-1}(z^{1/m})$ is a branch of $f_1^{-1}$. Let $g=f_1^{-1}\circ f_2\circ f_1$ and $\widetilde g= \phi\circ g\circ \phi^{-1}$, defined in appropriate open sets. We have
    $$\widetilde g(z)^m= \phi( g(\phi^{-1}(z)))^m= f_1( g(\phi^{-1}(z)))= f_2( f_1(\phi^{-1}(z)))=f_2(z^m)= \widetilde h(z)^m.$$
    This implies that $\widetilde h$ differs from $\widetilde g$ by multiplication with an $m$-th root of unity. So, $\widetilde g$ and $g$ may be extended in a neighborhood of $0$. Thus, the chosen branch of $f_1^{-1}$ yields a map $g$ satisfying \ref{lemma:lift:existence}, as desired. 
    
    Next, suppose that $\lambda=\mu=1$. We have
    \begin{align*}
       \phi(z+\delta)-\phi(z)-\phi'(z)\delta&= \int_{[z,z+\delta]}(\phi'(\zeta) -\phi'(z))\, d\zeta\\ &= \int_{[z,z+\delta]} \int_{[z,\zeta]}\phi''(s)\, ds\, d\zeta =O(\delta^2).
    \end{align*}
    We expand $h(z)=z+ cz^{l}+O(z^{l+1})$, where $c\neq 0$. For $\delta=h(z)- z$ we have
    \begin{align*}
       \phi(h(z))= \phi( z+\delta) &= \phi( z)+\phi'( z)\delta+ O(\delta^2)\\
       &= \phi(z)+(\phi'(0)+O(z))(cz^l +O(z^{l+1}))+O(z^{2l}). 
    \end{align*}
    Moreover, 
    \begin{align*}
        \widetilde h(\phi(z))= \phi(z)+ \frac{b}{m} \phi(z)^{nm+1}+O(z^{nm+2}).
    \end{align*}
    Thus, 
    $$\frac{b}{m}\phi(z)^{nm+1}+O(z^{nm+2})= \phi'(0)c z^l+ O(z^{l+1}).$$
    If $b\neq 0$, it follows that $l=nm+1$ and $\frac{b}{m}\phi'(0)^{nm+1}=\phi'(0)c$. If $b=0$, then $l>nm+1$. This proves \ref{lemma:lift:1}.
\end{proof}

Each Fatou component of a rational map $f$ is pre-periodic by Sullivan's theorem \cite{Milnor:dynamics}*{Appendix F}. We denote the pre-period of a Fatou component $\Omega$ by $l_\Omega$. That is, $l_{\Omega}$ is the smallest number $n\in \N\cup \{0\}$ such that $f^{\circ n}(\Omega)$ is periodic. For the convenience of the reader, we include a well-known result on geometrically finite rational maps. 

\begin{lemma}\label{lemma:no_infinite}
    Let $f$ be a geometrically finite rational map.
    \begin{enumerate}[label=\normalfont(\arabic*)]
        \item\label{lemma:no_infinite:0} If $U,V$ are distinct Fatou components such that $V$ is simply connected and fixed, and $a_0\in \bar U\cap \bar V$ is a fixed point, then $f(U)\neq V$. 
        \item\label{lemma:no_infinite:00} If $U,V$ are distinct Fatou components such that $V,f(V)$ are Jordan regions and $a_0\in \bar U\cap \bar V$ is not a critical point, then $f(U)\neq f(V)$. 
        \item\label{lemma:no_infinite:1} If a periodic point lies in the boundary of a simply connected Fatou component $U$ of $f$, then $U$ is periodic. 
        \item\label{lemma:no_infinite:2} Each pre-periodic point lies in the boundaries of at most finitely many simply connected Fatou components.
    \end{enumerate}
\end{lemma}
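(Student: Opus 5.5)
We prove the four parts in order. Parts \ref{lemma:no_infinite:1} and \ref{lemma:no_infinite:2} reduce to part \ref{lemma:no_infinite:0} together with finiteness facts. Two general facts are used throughout.
\begin{enumerate}[label=\normalfont(\alph*)]
    \item If $W$ is a simply connected Fatou component, then $f(W)$ is simply connected. Indeed, $f\colon W\to f(W)$ is proper of some degree $d$ with $c$ critical points, so Riemann--Hurwitz gives $1=\chi(W)=d\chi(f(W))-c$. Hence $\chi(f(W))=(1+c)/d>0$, which forces $\chi(f(W))=1$.
    \item By Sullivan's theorem, every Fatou component is eventually periodic, and there are only finitely many periodic Fatou components.
\end{enumerate}

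\emph{Part \ref{lemma:no_infinite:0}.} First, $a_0$ is not a critical point. A fixed critical point would be superattracting and hence lie in the Fatou set, whereas $a_0\in\partial V$ lies in the Julia set. So $f$ is a local homeomorphism near $a_0$; let $g$ be the local branch of $f^{-1}$ fixing $a_0$. Call the germs at $a_0$ of components of $V\cap B(a_0,r)$ whose closures contain $a_0$ the \emph{accesses} of $V$ at $a_0$. Via the Riemann map, $f|_V$ is conjugate to a finite Blaschke product. The accesses of $V$ at $a_0$ correspond to boundary points whose radial limit is $a_0$, and these are periodic. I would invoke the standard fact that a periodic boundary point of a simply connected invariant component is the landing point of only finitely many periodic internal rays (as in Milnor). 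This gives that $V$ has finitely many accesses at $a_0$. Since $f$ is injective near $a_0$ and $f(V)=V$, $f$ maps accesses of $V$ injectively to accesses of $V$, so it permutes them. Therefore $g$ maps the accesses of $V$ bijectively onto themselves. Now suppose $f(U)=V$. Then some access of $U$ at $a_0$ is mapped by $f$ into an access $A$ of $V$, so that access of $U$ is contained in $g(A)$. But $g(A)$ is itself an access of $V$, so $U$ meets $V$, contradicting disjointness. I expect the main obstacle to be this step: making the notion of access precise and proving finiteness. The Blaschke-model argument is what I would try first.

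\emph{Part \ref{lemma:no_infinite:00}.} Take a small disk $N$ around $a_0$ on which $f$ is injective. Because $V$ is a Jordan region, $\partial V\cap N$ contains an arc through $a_0$. Its image is an arc of the Jordan curve $\partial f(V)$ through $f(a_0)$, and this arc locally separates $f(N)$. Since $f(V)$ is a Jordan region, there is a smaller neighborhood $N'$ of $f(a_0)$ with $f(V)\cap N'\subset f(N\cap V)$. Points of $U$ close to $a_0$ map into $N'$. If $f(U)=f(V)$, these images would lie in $f(N\cap V)$, contradicting the injectivity of $f$ on $N$ together with $U\cap V=\emptyset$.

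\emph{Part \ref{lemma:no_infinite:1}.} Let $p\in\partial U$ be periodic, and suppose $U$ is not periodic. Pass to an iterate $F=f^{\circ N}$ that fixes $p$ and fixes every periodic Fatou component; $F$ is still geometrically finite. Let $W=F^{\circ (k-1)}(U)$, where $k$ is minimal with $V:=F^{\circ k}(U)$ periodic. Then $W$ is not periodic while $F(W)=V$ is fixed, so $W\neq V$. Both are simply connected by fact (a), and $p=F^{\circ (k-1)}(p)\in\partial W$ and $p=F(p)\in\partial V$. This contradicts part \ref{lemma:no_infinite:0} applied to $F$.

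\emph{Part \ref{lemma:no_infinite:2}.} Let $q$ be a pre-periodic point with $f^{\circ n}(q)=p$ periodic. If $q\in\partial U$ for a simply connected Fatou component $U$, then $p\in\partial f^{\circ n}(U)$, and $f^{\circ n}(U)$ is simply connected by fact (a). By part \ref{lemma:no_infinite:1}, $f^{\circ n}(U)$ is periodic, and by fact (b) there are only finitely many periodic components. Each of them has only finitely many components in its $f^{\circ n}$-preimage, since $f^{\circ n}$ has finite degree. Hence only finitely many simply connected components $U$ can have $q$ on their boundary.
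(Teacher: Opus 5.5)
Your proposal follows essentially the same route as the paper. In part (1) you use finiteness of accesses plus local injectivity, so that $g$ maps the $V$-side germs at $a_0$ back into $V$. The paper does this with prime ends and crosscuts, importing local connectivity of $\partial V$ and finiteness of prime ends at $a_0$ from Tan--Yin; that is exactly the input you flag as your main obstacle, and it is also what makes your identification of accesses with radial-limit points legitimate. Part (2) is the same local argument using that $V$ and $f(V)$ are Jordan regions, and parts (3)--(4) are reductions to (1) plus finiteness of periodic components. Your version of (3) is slightly more careful than the paper's: you choose the iterate $F$ and the last non-periodic component $W$ explicitly, and you use Riemann--Hurwitz instead of connectedness of $\partial f(U)$ to get simple connectivity. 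You also justify why $a_0$ is not critical, which the paper leaves implicit.
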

\begin{proof}
    By \cite{LeiYongcheng:local_connectivity}*{Lemma 2.1}, $\partial V$ is locally connected. By \cite{TanYin96long}*{Corollary 3.3}, the point $a_0$ has finitely many prime ends in $V$. In particular, there exist finitely many cross cuts bounding disjoint regions $Z_1,\dots,Z_k \subset U$ whose union is a neighborhood $Z$ of $a_0$ in $V$. Note that $a_0$ cannot be a critical point. So, by taking $Z$ sufficiently small, we assume that $f$ is injective in $Z$. This implies that the regions $f(Z_1),\dots,f(Z_k)$ are disjoint and are in correspondence with the prime ends of $a_0$; one can see this by conformally conjugating $f|_V$ to a Blaschke product in $\D$. Therefore, $f(Z)$ is a neighborhood of $a_0$ in $V$. Since $f$ is injective near $a_0$, no point of $U$ near $a_0$ can be mapped to a point of $f(Z)$. Since $f(Z)$ is a neighborhood of $a_0$ in $V$, no point of $U$ near $a_0$ can be mapped to a point of $V$. Thus  $f(U)\neq V$ and \ref{lemma:no_infinite:0} is true.

    Next, we prove \ref{lemma:no_infinite:00}. By the above argument, given that $a_0,f(a_0)$ correspond to a unique prime end in $V,f(V)$, respectively, we see that there exists a neighborhood $Z$ of $a_0$ in $V$ that is mapped to a neighborhood $f(Z)$ of $a_0$ in $f(V)$. Now we argue as above. Since $f$ is injective near $a_0$, no point of $U$ near $a_0$ can be mapped to a point of $V$ near $a_0$. Thus, $f(U)\neq f(V)$. 
    
    We prove \ref{lemma:no_infinite:1}. Suppose that $U$ is a strictly pre-periodic Fatou component and $a_0\in \partial U$ is a periodic point. By replacing $f$ with an iterate, suppose that $a_0$ is fixed and $V=f(U)$ is fixed. Since $f(\partial U)=\partial V$ and $\partial U$ is connected, the set $\partial V$ is connected, so $V$ is simply connected. However, this contradicts \ref{lemma:no_infinite:0}, so \ref{lemma:no_infinite:1} holds. Now we justify \ref{lemma:no_infinite:2}. By part \ref{lemma:no_infinite:1} and the finiteness of periodic Fatou components, each periodic point lies in the boundary of at most finitely many simply connected Fatou components. Thus, each pre-periodic point has the same property. 
\end{proof}

We now establish the main result of the section. 

\begin{proposition}\label{proposition:component_dependent}
    Let $f$ be a geometrically finite rational map and $U, V$ be Fatou components whose orbits $\{f^{\circ n}(U)\}_{n\geq 0}$, $\{f^{\circ n}(V)\}_{n\geq 0}$ consist of Jordan regions. Suppose that there exists an open set $W$ such that $W\cap \bar U\cap \bar V=\{a_0\}$ and the sets $\partial U\cap \bar W$ and $\partial V\cap \bar W$ are quasiarcs. Assume that one of the following alternatives holds.
    \begin{enumerate}[label=\normalfont(\arabic*)]
        \item\label{proposition:component_dependent:1} $a_0$ is mapped to a parabolic periodic cycle of multiplicity $3$ by an iterate of $f$ that is a local homeomorphism near $a_0$.
        \item\label{proposition:component_dependent:2} $a_0$ is mapped to a parabolic periodic cycle of multiplicity $2$ by an iterate of $f$ that is locally two-to-one at $a_0$.
    \end{enumerate}
    Then there exists a quasiconformal map $\phi$ of $\widehat \C$ that maps $\partial U\cap \bar W$ and $\partial V\cap \bar W$ onto two circular arcs that are tangent to each other.
\end{proposition}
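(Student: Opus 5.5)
We reduce to the local statement of Theorem~\ref{theorem:germ_tangent_modification} by lifting the parabolic dynamics at the cycle back to $a_0$. Let $F=f^{\circ N}$ be the iterate in the hypotheses, with $b=F(a_0)$ a parabolic periodic point. Let $g=f^{\circ pq}$ be the germ at $b$ with $g(b)=b$ and $g'(b)=1$; here $p$ is the period of $b$ and $e^{2\pi i\cdot(\text{rotation number})}$ has order $q$. We also pass to a further iterate so that $g$ fixes every periodic Fatou component whose boundary contains $b$.

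By Lemma~\ref{lemma:no_infinite}~\ref{lemma:no_infinite:1}, the components $F(U)$ and $F(V)$ are such periodic Jordan components. Lemma~\ref{lemma:no_infinite}~\ref{lemma:no_infinite:00} applied along the orbit shows that in case \ref{proposition:component_dependent:1} they are distinct. We then apply Lemma~\ref{lemma:lift} with $f_1=F$ (so $m=1$ or $m=2$) and $f_2=g$ (so $\lambda=1$). This gives a germ $h$ at $a_0$ with $F\circ h=g\circ F$, and by Lemma~\ref{lemma:lift}~\ref{lemma:lift:1}, $h(z)=z+cz^{nm+1}+\dots$ with $c\neq0$.
\begin{itemize}
\item In case \ref{proposition:component_dependent:1}, $n=2$ and $m=1$, so $h=F^{-1}\circ g\circ F$ is a parabolic germ of multiplicity $3$.
\item In case \ref{proposition:component_dependent:2}, $n=1$ and $m=2$, so again $nm+1=3$ and $h$ is a parabolic germ of multiplicity $3$. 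We choose the branch of $F^{-1}$ in Lemma~\ref{lemma:lift}~\ref{lemma:lift:choices} so that $h$ preserves the local pieces of $U$ and $V$ at $a_0$. This is possible because $F$ maps the two-sided neighborhood of $a_0$ onto a neighborhood of $b$ and the deck involution swaps sides.
\end{itemize}
In both cases $h$ has two attraction and two repulsion vectors, and they alternate.

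Next we produce the invariant arcs. Since $\partial F(U)$ is $g$-invariant, the same argument as in Lemma~\ref{lemma:preserve_parabolic} applies; it uses the parabolic flower theorem and the cyclic-order argument for the endpoint. It shows that each of the two germs of arc of $\partial F(U)$ at $b$ lies eventually in a single petal of $g$, and that a subarc is invariant under $g$ or under $g^{-1}$ accordingly. Here we use that these arcs are quasiarcs, being images of the quasiarcs $\partial U\cap\bar W$ under $F$ (locally power maps composed with conformal maps). By Lemma~\ref{lemma:avoid_petals} they cannot be squeezed between petals in a cusp. Pulling back by the chosen branch of $F^{-1}$, each half-arc of $\partial U$ and of $\partial V$ at $a_0$ contains a subarc $E$ lying in a petal of $h$ with $h(E)\subset E$ or $h^{-1}(E)\subset E$. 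By Lemma~\ref{lemma:arc_invariant_general}, each such arc is asymptotic to one attraction or repulsion vector of $h$.

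The main obstacle is the combinatorial step needed to meet hypotheses (1)--(2) of Theorem~\ref{theorem:germ_tangent_modification}. We must show that the four half-arcs at $a_0$ are grouped as required: one half-arc of $\partial U$ and one of $\partial V$ are asymptotic to a common vector $v_1$, the other two are asymptotic to a common vector $v_2$, and $v_1\neq v_2$. The regions $U$, $V$ and the two complementary gaps between them occupy four consecutive sectors at $a_0$, and there are exactly four petal directions, which alternate in type. We would argue as follows.
\begin{itemize}
\item Suppose the two half-arcs bounding a gap were asymptotic to different vectors. Then a whole petal of $h$ would lie in that gap, so the gap would contain a full petal.
\item On the other hand, $U$ (respectively $V$) is itself a Fatou component whose $F$-image is attracted to, or lies adjacent to, $b$. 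Hence each of $U$ and $V$ must contain a petal: by Lemma~\ref{lemma:avoid_petals}, a quasidisk cannot avoid all petals of both types.
\item Counting, $U$ and $V$ then absorb at least two petal directions, and the two gaps cannot absorb the remaining ones in a way compatible with alternation unless each gap is pinched onto a single direction.
\end{itemize}
This forces each gap to be asymptotic to a single vector, so the two gaps give $v_1\neq v_2$. If this counting turns out to be delicate, the fallback is to check the configuration in the model $w=-1/(2z^2)$ coordinates, where the petals become half-planes.

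Finally, we extend the quasiarcs $\partial U\cap\bar W$ and $\partial V\cap\bar W$ to quasicircles $K_1,K_2$ with $K_1\cap K_2=\{a_0\}$, using Theorem~\ref{theorem:rickman}. We then apply Theorem~\ref{theorem:germ_tangent_modification} to $h$, $K_1$, $K_2$, which yields the desired quasiconformal map $\phi$ sending the two arcs onto tangent circular arcs.
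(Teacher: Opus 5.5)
Your overall architecture is the paper's: lift $g$ through $F$ by Lemma~\ref{lemma:lift} to a multiplicity-$3$ germ $h$ at $a_0$, produce invariant boundary arcs, extend them by Theorem~\ref{theorem:rickman}, and apply Theorem~\ref{theorem:germ_tangent_modification}. However, the step you yourself flag as the crux is not proved.

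\textbf{The gap.} You explicitly allow the four half-arcs at $a_0$ to be asymptotic to \emph{attraction} vectors of $h$, and to be $h$-invariant rather than $h^{-1}$-invariant. With that possibility left open, your first bullet is false. A gap bounded by half-arcs asymptotic to an adjacent attraction/repulsion pair (at $90^\circ$ from each other) contains no full petal. The closing ``counting compatible with alternation'' and the claim $v_1\neq v_2$ are also not actually argued.

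\textbf{The missing observation.} It is the one that drives Lemma~\ref{lemma:preserve_parabolic}. We have $\partial U,\partial V\subset J(f)$, whereas attracting petals of $h$ are mapped by $F$ into the parabolic basin of $b$ and therefore lie in the Fatou set. Hence every half-arc lies in a \emph{repelling} petal of $h$, and the invariant subarcs satisfy $h^{-1}(E)\subset E$.

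\textbf{Completing your counting.} Make your second bullet precise as follows. If $U$ missed all attracting petals of $h$, then Lemma~\ref{lemma:avoid_petals} applied to $h$ would contradict the quasiarc hypothesis on $\partial U\cap\bar W$. An attracting petal that meets the Fatou component $U$ is contained in it. So $U$ and $V$ contain petals of the two distinct attraction vectors. Since the four half-arcs can only go to the two repulsion vectors $r_1,r_2$, each of $\partial U,\partial V$ has one half-arc at $r_1$ and one at $r_2$. This is exactly hypothesis (1)--(2) of Theorem~\ref{theorem:germ_tangent_modification}, with both petals repelling.

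\textbf{How the paper avoids counting.} It first shows that $F(U)$ and $F(V)$ are immediate basins of $b$; otherwise Lemma~\ref{lemma:avoid_petals}\ref{lemma:avoid_petals:2} contradicts the quasiarc hypothesis. Lemma~\ref{lemma:preserve_parabolic} then directly gives $g^{-1}$-invariant boundary arcs in both repelling petals adjacent to each basin's attracting vector.
\begin{itemize}
\item In case~\ref{proposition:component_dependent:1}, $F(U)\neq F(V)$ are the two immediate basins, and both repelling petals are adjacent to each of them.
\item In case~\ref{proposition:component_dependent:2}, $F(U)=F(V)$ and both arcs lie in the unique repelling petal $P$ of $g$. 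The two components of $F^{-1}(P)$ are the repelling petals of $h$, and each contains one half-arc of $\partial U$ and one of $\partial V$.
\end{itemize}

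\textbf{A second error.} Your justification that the downstairs arcs are quasiarcs is wrong in case~\ref{proposition:component_dependent:2}. There $F(\partial U\cap\bar W)$ is an arc with a cusp at $b$, because a multiplicity-$2$ immediate basin has one, so it is not a quasiarc. The quasiarc hypothesis must be used at $a_0$, either via Lemma~\ref{lemma:avoid_petals} for $h$ as above, or via Lemma~\ref{lemma:avoid_petals}\ref{lemma:avoid_petals:2} as the paper does.
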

\begin{proof}
    Let $l$ be the pre-period of $a_0$ and let $p$ be a multiple of the period of $b_0=f^{\circ l}(a_0)$ so that $f^{\circ p}$ fixes the attracting directions at the parabolic point $b_0$, i.e., $(f^{\circ p})'(b_0)=1$. We apply Lemma \ref{lemma:lift} for the holomorphic germs $f_1=f^{\circ l}$ and $f_2=f^{\circ p}$ at $a_0$. In both alternative assumptions the map $f_1=f^{\circ l}$ is locally $m$-to-$1$ at $a_0$ and the map $f_2=f^{\circ p}$ has a parabolic fixed point at $b_0$ with multiplicity $n+1$ for some $n,m\in \N$. By Lemma \ref{lemma:lift} we obtain a parabolic germ $h$ at $a_0$ with multiplicity $nm+1$ such that $f_2\circ f_1=f_1\circ h$. Under assumption \ref{proposition:component_dependent:1}, we have $m=1$ and $n=2$, so the multiplicity of the parabolic point $a_0$ of $h$ is $3$. Under assumption \ref{proposition:component_dependent:2}, we have $m=2$ and $n=1$, so the multiplicity of the parabolic point $0$ of $h$ is again $3$.

    Under either assumption, we claim that $f_1(U)$ is an immediate basin of the parabolic point $b_0$. If not, then $f_1(U)$ is disjoint from all attracting petals at $b_0$. By Lemma \ref{lemma:avoid_petals}, any arc of  $f_1(\partial U)$ that contains $b_0$ in its interior is not a quasiarc and $\partial U\cap W$ is not a quasiarc, a contradiction. Similarly, $f_1(V)$ is an immediate basin of $b_0$.     
    
    Suppose \ref{proposition:component_dependent:1} holds. The map $f_1$ is a local homeomorphism near $a_0$ and we have $h=f_1^{-1}\circ f_2\circ f_1$, where $f_1^{-1}$ is a local inverse branch of $f_1$ that maps $b_0$ to $a_0$.  Given that $a_0$ is not a critical point of $f_1$, by Lemma \ref{lemma:no_infinite} \ref{lemma:no_infinite:00} the sets $f_1(U)$ and $f_1(V)$ are distinct Fatou components each contained in one of the two parabolic basins of the map $f_2$ for the point $b_0$. By Lemma \ref{lemma:preserve_parabolic} each of the boundaries $f_1(\partial U)$ and $f_1(\partial V)$ contains two non-overlapping arcs with a common endpoint at $b_0$, contained in opposite repelling petals of $f_2$, and each of these arcs is mapped into itself under a branch of $f_2^{-1}$. It follows that each of $\partial U,\partial V$ contains two non-overlapping arcs with a common endpoint at $a_0$ contained in opposite repelling petals of $h$, and each of these arcs is mapped into itself under $h^{-1}$. By Theorem \ref{theorem:rickman}, we can extend the quasiarcs $\partial U\cap \bar W$ and $\partial V\cap \bar W$ to quasicircles $K_1,K_2$ as in Theorem \ref{theorem:germ_tangent_modification}. Therefore, by Theorem~\ref{theorem:germ_tangent_modification}, there exists a quasiconformal map of $\widehat \C$ that maps $\partial U\cap \bar W$ and $\partial V\cap \bar W$ onto two circular arcs, as desired. 

    Next, we assume that \ref{proposition:component_dependent:2} holds. Then $f_1$ is locally $2$-to-$1$ at $a_0$. Consider a small repelling petal $P$ of $f_2$ at $b_0$ that is a Jordan region. By Lemma \ref{lemma:preserve_parabolic}, $f_1(\partial U)\cap P$ contains two non-overlapping arcs with an endpoint at $b_0$ and each of these arcs is mapped into itself under a branch of $f_2^{-1}$. The set $f_1^{-1}(P)$ has two components $P_1,P_2$, and in the closure of each of them $f_1$ topologically conjugates $h$ to $f_2$. Thus, $P_1,P_2$ are repelling petals of $h$. Moreover, each of $\partial U,\partial V$ contains two non-overlapping arcs with a common endpoint at $a_0$, contained in opposite repelling petals of $h$, and each of these arcs is mapped into itself under $h^{-1}$. Therefore, we are in the setting of Theorem \ref{theorem:germ_tangent_modification} as before.    
\end{proof}

\subsection{Component independent bounds}

If $U\subset \widehat \C$ is a hyperbolic region, we denote by $\rho_U$ the hyperbolic density in $U$. Also, recall that $\Hyp$ is the upper half-plane. 

\begin{proposition}\label{prop:comparisonhyperbolicmetric}
    Let $V,Q \subset \Hyp$ be Jordan regions such that $V\subset Q$ and the sets
    $$
    I = \partial V \cap \R  \,\,\,\text{and}\,\,\, J= \partial Q \cap \R
    $$
    are closed intervals so that $\dist(I,\R\setminus J)>0$. Assume that ${Q \setminus \bar V}$ is a quadrilateral, whose vertical sides are the closures of the two components of $J\setminus I$, and  $\mod (Q\setminus \bar V) \geq K^{-1}$ for some $K\geq 1$. Then 
    $$\rho_{\mathbb H}(z)\leq \rho_Q(z)\leq C(K) \rho_{\mathbb H}(z)\,\,\, \text{for all $z\in V$.} $$
\end{proposition}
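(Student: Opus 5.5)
The lower bound $\rho_{\Hyp}\le \rho_Q$ on $V$ is immediate from the Schwarz--Pick lemma (monotonicity of the hyperbolic metric), because $Q\subset \Hyp$. For the upper bound I would use the standard Koebe comparison for simply connected domains, $\rho_Q(z)\le 2/\dist(z,\partial Q)$ (with the same normalization in which $\rho_{\Hyp}(z)=1/\Im z$, up to an absolute factor). It then suffices to prove that
$$\dist(z,\partial Q)\ge c(K)\,\Im z\quad\text{for all } z\in V.$$
Write $y=\Im z$ and let $w\in\partial Q$ realize $\delta=\dist(z,\partial Q)$. If $w\in J\subset\R$, then $\delta\ge y$ and we are done. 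So the remaining case is $w\in \partial Q\cap\Hyp$ with $\delta<y/4$, say, and the goal is to bound $\log(y/\delta)$ in terms of $K$.

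I would use the conjugate curve family of the quadrilateral. By hypothesis, the family of curves in $Q\setminus\bar V$ joining the two vertical sides (the closures of the components of $J\setminus I$) has modulus at least $K^{-1}$. Hence the conjugate family $\Gamma$ has modulus at most $K$, where $\Gamma$ consists of curves in $Q\setminus\bar V$ joining the ``horizontal'' sides $\partial V\cap\Hyp$ and $\partial Q\cap \Hyp$ (the latter together with the endpoints of $J$). The hypothesis $\dist(I,\R\setminus J)>0$ guarantees that this is a genuine nondegenerate quadrilateral, with $\partial V\cap \Hyp$ and $\partial Q\cap\Hyp$ disjoint in $\Hyp$.

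I would then produce many curves of $\Gamma$ on the circles $C_r=\{|\zeta-z|=r\}$ for $\delta<r<y/2$; these circles lie in $\Hyp$. Each $C_r$ meets $\bar V$: $V$ is connected, contains $z$ (the center) and accumulates on $I\subset\R$, which lies outside $C_r$. Each $C_r$ also meets $\partial Q\cap\Hyp$: this set is an arc from $w$, which is inside $C_r$, to the endpoints of $J$ on $\R$, which are outside. The topological step is to extract from $C_r$ a subarc lying in $Q\setminus\bar V$ with one endpoint on $\partial V$ and the other on $\partial Q\cap\Hyp$. Starting at a point of $\partial V\cap C_r$ and moving along $C_r$ away from $\bar V$, we stay in $Q$, since $\partial V\cap\Hyp\subset Q$, until we reach either $\partial Q$ or $\bar V$ again. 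Since $C_r$ meets both sets, a minimal subarc between a point of $\bar V$ and a point of $\partial Q\cap\Hyp$ has interior avoiding both, and its interior must lie in $Q\setminus \bar V$. I expect making this step clean to be the main obstacle, because the circle could a priori exit $\bar Q$; the point is that it can only exit through $\partial Q\cap\Hyp$, which is exactly an endpoint allowed for $\Gamma$.

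Finally, for any $\rho$ admissible for $\Gamma$ we have $\int_{C_r}\rho\,ds\ge1$ for every $r\in(\delta,y/2)$. By Cauchy--Schwarz, $\int\rho^2\ge \frac{1}{2\pi}\log\frac{y}{2\delta}$. Thus
$$K\ge\mod\Gamma\ge \tfrac{1}{2\pi}\log\tfrac{y}{2\delta},$$
which gives $\delta\ge \tfrac12 e^{-2\pi K}y$. Combined with the Koebe comparison, this yields $\rho_Q(z)\le C(K)\rho_{\Hyp}(z)$ on $V$.
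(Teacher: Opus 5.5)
Your argument is correct, but it takes a different route from the paper.

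\textbf{The paper's route.} The paper doubles $Q$ along $J$ and $V$ along $I$. The resulting annulus $Q'\setminus\bar{V'}$ has modulus at least $(2K)^{-1}$ and separates $\{z_0,\bar z_0\}$ from $\{w_0,\infty\}$. Teichm\"uller's module theorem then gives $\dist(z_0,\partial Q)\ge \delta(K)^{-1}\Im z_0$.

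\textbf{Your route.} You stay in $\Hyp$ and pass to the conjugate family of the quadrilateral, whose modulus is at most $K$. You then run an explicit length--area estimate on the circles $|\zeta-z|=r$, $\delta<r<y/2$, each of which contains an arc of that family.

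\textbf{Comparison.} Your version is self-contained: it needs no reflection and no extremal-ring theorem. It also yields an explicit constant, namely $\rho_Q\le 4e^{2\pi K}\rho_{\Hyp}$ on $V$. The price is the arc-extraction step, which the paper avoids because the separation property of the doubled annulus is immediate.

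\textbf{The arc-extraction step is clean.} Take a minimal subarc of $C_r$ with one endpoint in $\bar V$ and one in $\partial Q$, whose interior avoids both sets. Its endpoint in $\bar V$ cannot lie in $V$, since $V$ is open and the interior of the subarc avoids $\bar V$. So that endpoint lies in $\partial V\cap\Hyp\subset Q$. The interior of the subarc is connected and avoids $\partial Q$, hence lies in $Q$, and therefore in $Q\setminus\bar V$.

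\textbf{One small correction.} The disjointness of $\partial V\cap\Hyp$ and $\partial Q\cap\Hyp$ (equivalently $\partial V\cap\Hyp\subset Q$) does not follow from $\dist(I,\R\setminus J)>0$. That hypothesis only makes the vertical sides nondegenerate. The disjointness comes from the assumption that $Q\setminus\bar V$ is a quadrilateral, hence connected. A common point of the two arcs in $\Hyp$ would split $Q\setminus\bar V$ into two pieces, one adjacent to each vertical side.
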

\begin{proof}
    The lower bound follows from the Schwarz lemma; see \cite{BeardonMinda:hyperbolic}*{Theorem 8.1}. To prove the upper bound, we use the estimate (see \cite{BeardonMinda:hyperbolic}*{Theorem 8.2})
    \begin{align}\label{prop:comparison:ineq}
        \rho_Q(z) \leq \frac{2}{\dist(z, \partial Q)}, \,\,\, z\in Q.
   \end{align}
    We write $\partial Q = J \cup \Gamma$, where $\Gamma = \overline{\partial Q \cap \Hyp}$. Let  $Q'$  be the double of $Q$ along $J$ and $V'$ be the double of $V$ along $I$. Let $A = Q' \setminus \bar {V'}$, which is an annulus. Since $\mod (Q\setminus \bar V)\geq K^{-1}$, we have 
    $$\mod A\geq (2K)^{-1}$$
    by the discussion in Section \ref{section:modulus}.  Let $z_0 \in V$ and $w_0 \in \partial Q'$ so that 
    $$|w_0 - z_0| = \dist(z_0, \partial  Q') = \dist(z_0, \Gamma).$$ 
    Note that $A$ separates $z_0, \overline{z_0}$ from $w_0, \infty$.
    By Teichm\"uller's module theorem \cite{LehtoVirtanen:quasiconformal}*{Section II.1.3}, there is an increasing function $\Psi\colon (0,\infty) \to (0,\infty)$ so that
    $$
    \frac{1}{2K}\leq \mod A \leq \Psi\left(\frac{|w_0-z_0|}{|z_0-\overline{z_0}|}\right) = \Psi\left(\frac{\dist(z_0, \Gamma)}{2\Im(z_0)}\right).
    $$
    Therefore, there exists $\delta = \delta(K)\geq 1$ so that $\dist(z_0, \Gamma) \geq \delta^{-1}\Im(z_0)$. Also, note that $\dist(z_0,\partial Q)=\dist(z_0,\Gamma)$ or $\dist(z_0,\partial Q)=\Im(z_0)$. Thus, by \eqref{prop:comparison:ineq},
    $$
    \rho_Q(z_0) \leq \frac{2}{\dist(z_0, \partial Q)} \leq \frac{2\delta }{\Im(z_0)} = 2\delta \rho_{\Hyp}(z_0).
    $$
    This proves the upper bound.
\end{proof}

Next, we state the main result of the section.  

\begin{proposition}\label{proposition:circularizable_gf}
    Let $f$ be a geometrically finite rational map and $U, V$ be Fatou components whose forward orbits consist of Jordan regions. There exists a constant $L=L(f)\geq 1$ such that if $\bar U\cap \bar V\neq \emptyset$ and the pair $U,V$ is quasiconformally circularizable, then it is $L$-quasiconformally circularizable. 
\end{proposition}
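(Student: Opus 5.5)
Here is the plan. The idea is to reduce every touching pair to one of finitely many \emph{model} pairs near periodic parabolic points. The reduction uses Koebe distortion and the relative hyperbolic metric of Theorem~\ref{theorem:tangent}. The constant $L$ will then depend only on the finitely many models.

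\textbf{Step 1: setup.} Suppose $U,V$ are circularizable and $\bar U\cap\bar V=\{a_0\}$. Lemma~\ref{lemma:room3} and the circularizability rule out a third component at $a_0$. By Lemma~\ref{lemma:repelling_invariant} and Proposition~\ref{proposition:component_dependent} (its necessity counterpart), $a_0$ is eventually mapped to a parabolic cycle. This happens under an iterate $f^{\circ l}$ that is either a local homeomorphism at $a_0$ (the multiplicity $3$ case) or locally two-to-one (the multiplicity $2$ case). Here $l\le l_U,l_V$ plus a bounded amount.

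\textbf{Step 2: finitely many models.} Let $U'=f^{\circ l}(U)$ and $V'=f^{\circ l}(V)$. These are immediate parabolic basins, or a single basin in the two-to-one case, at a periodic parabolic point $b_0$. There are only finitely many such pairs and configurations up to the choice of branch. So it suffices to produce uniform constants for the pull-back from a fixed model.

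\textbf{Step 3: quasi-M\"obius control of the pull-back.} The germ $f^{\circ l}$ near $a_0$ has a univalent inverse branch, or a square-root-type branch, defined on a definite neighborhood $N$ of $b_0$ in the model. The neighborhood $N$ is chosen to avoid the postcritical set except possibly at $b_0$; geometric finiteness allows this, since the postcritical set accumulates only on attracting and parabolic cycles. In the two-to-one case I would first lift by $z\mapsto z^2$ in the model coordinate. Lemma~\ref{lemma:koebe_qm} then makes the inverse branch uniformly quasi-M\"obius on a compact subset of $N$. The remaining task is to compare relative hyperbolic metrics: $d_{V,U}$ near $a_0$ against $d_{V',U'}$ near $b_0$. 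For this I would use Proposition~\ref{prop:comparisonhyperbolicmetric}. After uniformizing $U^*$ to $\Hyp$ with $a_0\mapsto\infty$, a quadrilateral $Q$ around $V$ of modulus bounded below, depending only on the model, has hyperbolic density comparable to that of $\Hyp$. This gives $d_{V,U}\simeq$ a localized metric, which transports under the conformal branch with bounded distortion.

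\textbf{Step 4: away from $a_0$.} The parts of $\partial U,\partial V$ away from $a_0$ are uniform quasicircles by Proposition~\ref{proposition:quasidisk_uniform}. Combining this with Step 3 shows the identity in Theorem~\ref{theorem:tangent} is $\eta$-quasi-M\"obius with $\eta$ depending only on $f$, which gives the uniform $L$.

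\textbf{Main obstacle.} The hard step is Step 3: making the quadrilateral modulus bound in Proposition~\ref{prop:comparisonhyperbolicmetric} independent of the depth $l$. I would first try to choose $Q$ as the pull-back of a fixed quadrilateral in the model. Modulus is conformally invariant under the univalent branch, so the bound holds automatically. The difficulty is then to check that the pulled-back region still separates the global geometry, so that the hyperbolic metric of $U^*$ is comparable to that of the pulled-back local region.
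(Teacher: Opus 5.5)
Your overall strategy is the right kind: reduce to finitely many models, transport $d_{V,U}$ by Koebe, and compare hyperbolic metrics with Proposition~\ref{prop:comparisonhyperbolicmetric} using a pulled-back quadrilateral. The gap is that you normalize at the wrong level, and this breaks Steps 3 and 4.

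Your models live at the periodic parabolic basins, and there the key claim of Step 3 is false. Every cycle of immediate parabolic basins contains a critical point whose orbit converges to the parabolic cycle. So $b_0$ is an accumulation point of the postcritical set, and no neighborhood $N$ of $b_0$ avoids it. Your own justification says the postcritical set accumulates on parabolic cycles, which is exactly where $b_0$ sits.

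Worse, the orbits of $U$ and $V$ may pass through components containing critical points. A periodic cycle of basins always contains one, and $U$ may enter its cycle long before time $l$. So $f^{\circ l}$ is in general not univalent on any neighborhood of $\bar{V}$. At best you get a univalent branch on a small disk about $b_0$. Its pull-back is a neighborhood of $a_0$ that need not contain $V$ or be comparable to it in size.

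This matters because Theorem~\ref{theorem:tangent} is global. The quasi-M\"obius condition concerns all quadruples in $\partial V\setminus\{a_0\}$, and $d_{V,U}$ is built from the hyperbolic metric of all of $U^*$. Step 4 does not bridge the local and global pictures. Uniform quasicircle bounds for $\partial U,\partial V$ give no control of $d_{V,U}$ against $\chi$: two uniform quasidisks touching at one point need not be circularizable at all. They also give no control of quadruples with points both near and far from $a_0$. You would need at least a uniform relative separation of $\partial V$ from $\partial U$ away from $a_0$, plus a quantitative gluing, and neither is provided.

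The missing idea is to stop pushing forward at the right moment.
\begin{itemize}
\item Choose $k_0$ bounding the pre-period of every simply connected Fatou component whose closure contains a critical point; this is possible by Lemma~\ref{lemma:no_infinite}\ref{lemma:no_infinite:2}. Assume $l_U\le l_V$, dispose of $l_V<k_0$ by finiteness, and otherwise apply $f^{\circ m}$ with $m=l_V-k_0$.
\item Then $V_m$ is non-periodic of pre-period exactly $k_0$. So $\bar{V_m}$ contains no parabolic point (Lemma~\ref{lemma:no_infinite}\ref{lemma:no_infinite:1}) and no postcritical point, and has a neighborhood $N$ at positive distance from the postcritical set.
\item Hence $f^{\circ m}$ is conformal from a neighborhood $N'$ of the \emph{whole} $\bar{V}$ onto $N$, and there are only finitely many such $V_m$.
\item For the finitely many arcs $J\subset\partial W\cap N$ with $l_W\le k_0$ touching $\bar{V_m}$ at one point, fix a quasidisk $Q\subset N$ such that $R=Q\setminus\bar{V_m}$ is a quadrilateral with vertical sides on $J$. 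Pulling back gives $Q'$ with $\mod R'=\mod R$.
\item Proposition~\ref{prop:comparisonhyperbolicmetric} then yields $\rho_{U^*}\simeq\rho_{Q'}$ on $V$. Here $a_0$ goes to a finite point of $\R$, so $I$ is a single point; it should not go to $\infty$ as you propose.
\end{itemize}
This is exactly what resolves your ``main obstacle.'' Because the vertical sides lie on $\partial U$, doubling across $\partial U$ produces an annulus separating $z,\overline{z}$ from $\partial Q'$.

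The argument then closes as follows. The map $f^{\circ m}$ is an isometry from $d_{V,(Q')^*}$ to $d_{V_m,Q^*}$. Theorem~\ref{theorem:tangent} applies with uniform constants to the finitely many pairs $V_m,Q^*$, which are circularizable by Lemma~\ref{lemma:tangent_local}. Lemma~\ref{lemma:koebe_qm} controls $(f^{\circ m})^{-1}$ on $\partial V_m$.

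This route never needs $f^{\circ m}$ to be injective on $U$ and never identifies the dynamics at $a_0$, so your Step 1 is unnecessary. It also absorbs the two-to-one case: any critical point on the orbit of $a_0$ occurs only after time $m$, inside the finite model set.
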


The proof uses the characterization of tangent quasicircles given in Theorem~\ref{theorem:tangent}, which says roughly that the quasiconformal circularizability of the pair $U,V$ depends on the hyperbolic geometry of $U^*=\widehat \C\setminus \bar U$, and on its relation with the Euclidean or chordal metric on $\partial V$. The main idea in the proof is to blow up conformally $V$ to a large Fatou component and to use the invariance under conformal maps of hyperbolic and Euclidean geometry, thanks to Koebe's distortion theorem. An obstacle that we have to overcome is that iterates of $f$ will be conformal in a neighborhood of $\bar V$, but not in $U^*$. Hence, the hyperbolic geometry of $U^*$ is not preserved. Instead, we replace $U^*$ with a smaller Jordan region $Q'$ that contains $V$ and in which $f$ is conformal. So, iterates of $f$ preserve the hyperbolic geometry of $Q'$. Finally, we rely on Proposition \ref{prop:comparisonhyperbolicmetric} to compare the hyperbolic metrics of $Q'$ and $U^*$.

    \begin{proof}
    Since critical points on the Julia set are pre-periodic, by Lemma \ref{lemma:no_infinite} \ref{lemma:no_infinite:2}, there are at most finitely many simply connected Fatou components that contain a critical point on their boundary. Choose $k_0\geq 1$ so that whenever a critical point lies in the closure of a simply connected Fatou component $\Omega$, the pre-period of $\Omega$ is at most $k_0$. Let $U,V$ be two disjoint Fatou components as in the statement such that $\bar U\cap \bar V=\{a_0\}$. Without loss of generality, we assume that $l_U \leq l_V$. If $l_V<k_0$, note that there are finitely many pairs $U,V$ with $l_U\leq l_V<k_0$. Thus, if $U,V$ are quasiconformally circularizable, then they are $L$-quasiconformally circularizable for some uniform $L\geq 1$. Now, we suppose that $l_V \geq  k_0$ and let $m=l_V-k_0$.  We set $V_m=f^{\circ m}(V)$, $U_m=f^{\circ m}(U)$, and $a_m=f^{\circ m}(a_0)$. Note that $l_{V_m}=k_0$ and $l_{U_m}\leq k_0$. As there are only finitely many Fatou components of pre-period $k_0$, by Theorem \ref{theorem:tangent} it suffices to prove that the map 
    \begin{align}\label{proposition:circularizable_gf:claim}
        \id: (\partial V\setminus \{a_0\}, d_{V, U}) \rightarrow (\partial V \setminus \{a_0\}, \chi)\,\,\, \text{is $\eta$-quasi-M\"obius}
    \end{align}
    for some $\eta$ depending only on $V_m$. In the following arguments, we use the term ``uniformly" to signify that a statement holds without dependence on $U,V,m$, but only on $V_m$ and on some constructions given below that depend on $V_m$.

    By assumption, $V_m$ is a Jordan region. Since $V_m$ is not periodic, $\overline{V_{m}}$ contains no parabolic periodic points by Lemma \ref{lemma:no_infinite} \ref{lemma:no_infinite:1}. Thus, $\bar{V_m}$ is disjoint from the accumulation points of the post-critical set. Recall that the post-critical set is the closure of the post-critical orbit ${\bigcup_{c\in \operatorname{Crit}(f)}\bigcup_{n\geq 1}f^{\circ n}(c)}$.  By the choice of $k_0$, $\bar {V_m}$ has a positive distance from the post-critical set. Consider a  Jordan region $N\supset \bar{V_{m}}$ so that $\bar N$ is disjoint from the post-critical set of $f$ and $N$ does not contain the entire boundary of any other Fatou component of pre-period at most $k_0$. Let $W$ be a Jordan region Fatou component with pre-period $l_W\leq k_0$, so $\partial W\not\subset N$. Due to the local connectivity of $\partial W$, at most finitely many components of $N\cap \partial W$ can intersect $\bar{V_m}$. Let $J\subset \partial W$ be a component of $N\cap \partial W$ that intersects $\bar {V_m}$ in exactly one point $z_0$ and is a quasiarc, as in Figure \ref{fig:quadrilateral}. Using Rickman's theorem (Theorem \ref{theorem:rickman}), we can construct a quasicircle in $N$ that contains non-overlapping subarcs $I_1,I_2$ of $J$ with $z_0\in I_1\cap I_2$, intersects $\bar {V_m}$ only at $z_0$, and separates $\partial N$ from $V_m$. Let $Q$ be the quasidisk bounded by this quasicircle and which contains $V_m$. Consider the quadrilateral $R=Q\setminus \bar {V_m}$ with vertical sides $I_1,I_2$; see Figure \ref{fig:quadrilateral}. With this process, to each of the finitely many pairs $V_m,W$, where $l_{V_m}=k_0$ and $l_W\leq k_0$, and to each of the finitely many arcs $J$ as above we assign a quasidisk $Q$ and a quadrilateral $R=Q\setminus \bar {V_m}$ with $\bar R\subset N$.

    \begin{figure}
        \centering
        \begin{tikzpicture}
            \begin{scope}
            
                \begin{scope}
                \clip (0,-2) rectangle (2.5,2);
                \draw[rounded corners=8, fill=black!20] (4,-2)--(2,-2)--(0.7,-0.58)--(2,0)--(2,1)--(0.8,0.6)--(0,1.4)--(0,1.9)--(1.5,2)--(4,1.5)--cycle;
                \node at (1.8,-1) {$W$};
                \end{scope}

                \begin{scope}
                \clip (0,0) circle (1.5cm);
                \clip (-1,-1) rectangle (2,0.5);
                \draw[rounded corners=8, red, line width=1.5pt] (4,-2)--(2,-2)--(0.7,-0.58)--(2,0)--(2,1)--(0.8,1.2)--(0,1.4)--(0,2)--(4,1.5)--cycle;
                \end{scope}
    
                \fill (0.85,-0.55) circle (1.5pt) node[left] {$z_0$};
                \node at (1.2,-0.1) {$J$};
                \draw (0,0) circle (1cm) node {$V_m$};
                \draw (0,0) circle (1.5cm);
                \node at (-1.4,-1.2) {$\partial N$}; 
            \end{scope}

            \begin{scope}[shift={(6,0)}, scale=1.5]
            
                \draw[blue,line width=1pt, fill=blue!20]  (-17.5:1.25) arc[start angle=-17.5, end angle=318.5, radius=1.25];
                \node at (1.05,-0.3) {$\scriptstyle I_1$};
                \node at (0.8,-0.8) {$\scriptstyle I_2$};
                \draw[blue, line width=1pt, fill=white] (0,0) circle (1cm) node {\color{black}$V_m$};
                \draw (0,0) circle (1.5cm);
            
                \begin{scope}
                \clip (0,0) circle (1.25cm);
                \clip (-1,-1) rectangle (2,0.5);
                \draw[rounded corners=8, blue, line width=1pt, fill=white] (4,-2)--(2,-2)--(0.72,-0.58)--(2,0)--(2,1)--(0.8,1.2)--(0,1.4)--(0,2)--(4,1.5)--cycle;
                \end{scope}
                \fill (0.85,-0.55) circle (1.5pt) node[left] {$z_0$};
                \node at (-1.4,-1.1) {$\partial N$}; 
                \node at (0,1.13) {$\scriptstyle R$};
            \end{scope}
        \end{tikzpicture}
        \caption{The construction of a quadrilateral $R$ corresponding to the triple $V_m,W,J$. The set $\partial Q=\{z_0\}\cup (\partial R\setminus \partial V_m )$ is the ``outer" boundary curve of $R$.}\label{fig:quadrilateral}
    \end{figure}

    Since $\bar N$ is disjoint from the post-critical set, there exists a Jordan region $N'\supset \bar V$ such that $f^{\circ m}\colon  N' \rightarrow N$ is conformal, and in fact it extends to a conformal map from a neighborhood of $\bar{N'}$ onto a neighborhood of $\bar N$. Thus, a component of $\partial U_m\cap N$ is an arc $J$ such that $J\cap \bar {V_m}=\{a_m\}$. The pair $U,V$ is quasiconformally circularizable, so $\partial U$ is a quasicircle. Since $f^{\circ m}$ is bi-Lipschitz in $\bar {N'}$, the set $J$ is a quasiarc. Thus, the triple $V_m,U_m,J$ corresponds to a quasidisk $Q$ and a quadrilateral $R$ among the ones constructed above; see Figure~\ref{fig:quadrilateral}. Let $Q',R'$ be their preimages under $f^{\circ m}$. By conformality, we have $\mod R'=\mod R$.

    Since the pair $U,V$ is quasiconformally circularizable and $f^{\circ m}$ is conformal in a neighborhood of $a_0$, by Lemma \ref{lemma:tangent_local} we see that the pair $V_m,Q^*$, where $Q^*=\widehat \C\setminus \bar Q$, is quasiconformally circularizable with uniform constants, given that we have constructed a finite collection of quasidisks like $Q$.
    
    By Proposition~\ref{prop:comparisonhyperbolicmetric}, for $z \in V$, the hyperbolic metric $\rho_{U^*}(z)|dz|$ is comparable to $\rho_{Q'}(z)|dz|$, depending only on $\mod R$, and therefore with uniform constants. Hence, 
    $$
    \id\colon (\partial V\setminus \{a_0\}, d_{V, U}) \to (\partial V \setminus \{a_0\}, d_{V, (Q')^*})
    $$ is uniformly bi-Lipschitz; recall the definition of $d_{V,U}$ from Section \ref{section:uniformization}. On the other hand, since the map $f^{\circ m}\colon  Q' \rightarrow Q$ is conformal, the map 
    $$
    f^{\circ m}\colon (\partial V\setminus \{a_0\}, d_{V, (Q')^*}) \to (\partial V_{m} \setminus \{a_{m}\}, d_{V_m, Q^*})
    $$
    is an isometry. Since the pair $V_m,Q^*$ is quasiconformally circularizable, by Theorem~\ref{theorem:tangent} the map 
    $$\id\colon  (\partial V_{m}\setminus \{a_{m}\}, d_{V_{m}, Q^*}) \to (\partial V_{m}\setminus \{a_{m}\} , \chi)$$
    is uniformly quasi-M\"obius. By Lemma \ref{lemma:koebe_qm}, which is an implication of Koebe's distortion theorem, the conformality of the map $f^{\circ m}\colon N' \to N$ implies that 
    $$
    (f^{\circ m})^{-1}\colon (\partial V_{m} \setminus \{a_{m}\} , \chi ) \to (\partial V\setminus \{a_0\}, \chi)
    $$
    is  uniformly quasi-M\"obius. Altogether, the composition of the above maps is
    $$\id\colon (\partial V\setminus \{a_0\}, d_{V, U}) \rightarrow (\partial V\setminus \{a_0\}, \chi)$$ 
    and therefore it is uniformly quasi-M\"obius, which proves the desired \eqref{proposition:circularizable_gf:claim}.
    \end{proof}

\section{Quasidisk Fatou components}

\begin{proposition}\label{proposition:quasidisk_uniform}
    Let $f$ be a geometrically finite rational map and $U$ be a Fatou component whose forward orbit consists of Jordan regions. There exists a constant $L=L(f)\geq 1$ such that if $U$ is a quasidisk, then it is an $L$-quasidisk.
\end{proposition}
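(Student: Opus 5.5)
The argument is a blow-up argument modeled on the proof of Proposition~\ref{proposition:circularizable_gf}, with Theorem~\ref{theorem:quasidisk_quasicircle} and the Ahlfors three-point condition \eqref{condition:ahlfors} replacing Theorem~\ref{theorem:tangent}. Since critical points in the Julia set are pre-periodic, Lemma~\ref{lemma:no_infinite}~\ref{lemma:no_infinite:2} shows that only finitely many simply connected Fatou components contain a critical point in their closure. I choose $k_0\geq 1$ larger than the pre-period of each of them. There are only finitely many Fatou components of pre-period at most $k_0$. Among these, the ones that are quasidisks are $L_0$-quasidisks for some $L_0=L_0(f)$, simply by taking the maximum. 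So it suffices to treat a quasidisk $U$ with $l_U>k_0$. Set $m=l_U-k_0$ and $U_m=f^{\circ m}(U)$, which has pre-period exactly $k_0$.

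First I show that $U_m$ is itself a quasidisk. Since $U_m$ is not periodic, Lemma~\ref{lemma:no_infinite}~\ref{lemma:no_infinite:1} shows that $\bar{U_m}$ contains no parabolic periodic point. Hence $\bar{U_m}$ avoids the accumulation set of the post-critical set. By the choice of $k_0$, no critical point lies in $\bar{U_j}$ for $j\geq m$; if $\bar{U_m}$ contained a point $f^{\circ i}(c)$ of the post-critical set, then $\bar{f^{\circ -i}\text{-preimage}}$ component would contain $c$, giving a strictly pre-periodic component with a critical point on its closure and pre-period larger than $k_0$, which is impossible. So $\bar{U_m}$ has positive distance from the post-critical set. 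I then fix a Jordan region $N\supset\bar{U_m}$ with $\bar N$ disjoint from the post-critical set.

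Every branch of $f^{-m}$ on $N$ is then conformal, and extends conformally to a neighborhood of $\bar N$. The branch $g$ with $g(U_m)=U$ maps $N$ onto a Jordan region $N'\supset\bar U$. Conversely, $f^{\circ m}$ restricted to a neighborhood of $\bar U$ is a conformal embedding, so $\partial U_m=f^{\circ m}(\partial U)$ is a quasicircle, being a conformal image of a quasicircle on a compact set. Thus $U_m$ is a quasidisk, hence an $L_0$-quasidisk.

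Now I apply Lemma~\ref{lemma:koebe_qm} to the conformal embedding $g$, defined on a fixed neighborhood $\widetilde N$ of $\bar N$, with compact set $E=\partial U_m\subset N$. This makes $g\colon(\partial U_m,\chi)\to(\partial U,\chi)$ $\eta$-quasi-M\"obius, with $\eta$ depending only on $\widetilde N$ and $\partial U_m$. Since there are finitely many $U_m$, this $\eta$ is uniform and in particular independent of $m$. By the remark after the definition of quasicircles (from \cite{Bonk:uniformization}*{Proposition~4.4}), quasi-M\"obius images of $L_0$-quasicircles are $L_1$-quasicircles with $L_1=L_1(L_0,\eta)$. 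So $\partial U$ is an $L_1$-quasicircle, and Theorem~\ref{theorem:quasidisk_quasicircle} gives that $U$ is an $L$-quasidisk with $L=L(f)$.

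The main obstacle is making sure that the distortion does not depend on $m$, since the maps $g$ vary with $U$. Lemma~\ref{lemma:koebe_qm} handles this, because its constant depends only on the domain and the compact set and not on the particular embedding. This requires that $g$ be defined on the same fixed neighborhood of $\bar N$ for every $m$, and that is exactly what the choice of $N$ away from the post-critical set guarantees.
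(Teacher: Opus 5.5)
Your proposal is correct and follows the paper's proof essentially step for step: reduce to components of pre-period above a threshold $k_0$ via Lemma~\ref{lemma:no_infinite}, push forward to one of the finitely many components $U_m$ of pre-period $k_0$, whose closures avoid the post-critical set, and apply Lemma~\ref{lemma:koebe_qm} to the inverse branch on a fixed neighborhood of $\bar{U_m}$ to transfer a uniform quasicircle constant back to $\partial U$. One slip is harmless: ``no critical point lies in $\bar{U_j}$ for $j\geq m$'' should read $0\leq j\leq m$, but that clause is never used, since your post-critical-set argument (garbled in notation but correct in idea) carries that step.
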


\begin{proof}
    By Lemma \ref{lemma:no_infinite} \ref{lemma:no_infinite:2}, there are finitely many simply connected Fatou components $\Omega$ whose closures contain critical points. We choose $k_0\geq 1$ so that whenever a critical point of $f$ lies in the closure of a simply connected Fatou component $\Omega$, the pre-period $l_\Omega$ of $\Omega$ is at most $k_0$. There are finitely many Fatou components $U$ with $l_U< k_0$. Thus, if $U$ is a quasidisk, then it is an $L$-quasidisk for some uniform $L\geq 1$. 
    
    Next, we suppose that $l_U\geq k_0$. Let $V=f^{\circ m}(U)$ for $m=l_U-k_0\geq 0$, so $l_V=k_0$. By assumption, $V$ is a Jordan region. We will show that $U$ is a uniform quasidisk, depending only on $V$, but not on $m$. This is sufficient, since there are finitely many Fatou components $V$ with $l_V=k_0$. Since $V$ is not periodic, $\bar V$ does not contain any parabolic periodic points by Lemma \ref{lemma:no_infinite} \ref{lemma:no_infinite:1}. Thus, $\bar{V}$ is disjoint from the accumulation points of the post-critical set. By the choice of $k_0$, $\bar {V}$ has a positive distance from the post-critical set. Consider a  Jordan region $N\supset \bar{V}$ that is disjoint from the post-critical set of $f$. 
    
    The map $g=f^{\circ m}\colon U\to V$ is conformal since $V$ is simply connected and $g$ has no critical points.  Moreover,  $g^{-1}$ extends conformally to the neighborhood $N$ of $\bar V$. Since $U$ is a quasidisk, $\partial U$ is a quasicircle by Theorem \ref{theorem:quasidisk_quasicircle}. The fact that $g$ is a diffeomorphism implies that $\partial V$ also satisfies the Ahlfors condition and is a quasicircle. There are only finitely many Fatou components of pre-period $k_0$, so $\partial V$ is a uniform quasicircle. By Lemma \ref{lemma:koebe_qm}, the map $g^{-1}\colon (\partial  V, \chi)\to (\partial U, \chi)$ is uniformly quasi-M\"obius. Thus, $\partial U$ is a uniform quasicircle, being the quasi-M\"obius image of a quasicircle. Hence, $U$ is a uniform quasidisk.  
\end{proof}

Finally, we include a characterization of Fatou components of geometrically finite rational maps that are quasidisks. The proof is similar to the proof of \cite{LuoNtalampekos:piecewise_qc}*{Theorem 1.7} and is presented in Appendix \ref{appendix}.

\begin{proposition}\label{prop:quasidiskgf}
    Let $f$ be a geometrically finite rational map, and let $U$ be a Fatou component whose orbit $\{f^{\circ n}(U)\}_{n\geq 0}$ consists of Jordan regions.
    Then $U$ is a quasidisk if and only if
    \begin{enumerate}[label=\normalfont(\arabic*)]
        \item\label{prop5:quasidisk:pereq:1} for each $V \in \{f^{\circ n}(U)\}_{n\geq 0}$, $\partial V$ contains at most one parabolic periodic point $a_V$, in which case $V$ is an immediate basin of $a_V$ and
        \item\label{prop5:quasidisk:pereq:2}  if $a_V$ has parabolic multiplicity $2$, then no point $z\in \partial U$ is mapped to $a_V$ by an iterate of $f$ that is a local homeomorphism near $z$. 
    \end{enumerate}
    \end{proposition}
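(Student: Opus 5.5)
The plan is to reduce to a local problem at finitely many special boundary points. The reduction itself is by conformal welding, following \cite{LuoNtalampekos:piecewise_qc}*{Theorem 1.7}. Since $U$ is a Jordan region, $\partial U$ is a quasicircle (equivalently, $U$ is a quasidisk by Theorem \ref{theorem:quasidisk_quasicircle}) exactly when the Ahlfors condition \eqref{condition:ahlfors} holds near every point of $\partial U$. At points $z\in\partial U$ whose forward orbit avoids parabolic cycles and critical points, $\partial U$ is locally the conformal pullback of a boundary arc of some $f^{\circ n}(U)$ that stays a definite distance from the post-critical set. This is the Koebe argument used in Proposition \ref{proposition:quasidisk_uniform}, combined with Lemma \ref{lemma:koebe_qm}. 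Hence by compactness the question localizes to the finitely many periodic components in the orbit of $U$, together with the points of $\partial U$ that eventually land on parabolic cycles or critical points.

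Step 1 treats a periodic component $V$ in the orbit. Replace $f$ by $F=f^{\circ p}$ with $F(V)=V$, and conjugate $F|_V$ by a Riemann map to a Blaschke product $B$ on $\D$. Since $f$ is geometrically finite, $B|_{\mathbb S^1}$ is a circle covering whose only non-expanding points are the parabolic points corresponding to parabolic points on $\partial V$. Next compare with a model covering map on the exterior side: $F$ restricted near $\partial V$ from $\widehat\C\setminus\bar V$ gives a second circle covering via the exterior Riemann map. Then $\partial V$ is a quasicircle iff the welding homeomorphism conjugating the interior and exterior circle coverings is quasisymmetric. By the results of \cite{LuoNtalampekos:piecewise_qc}, quasisymmetric conjugacy of such coverings holds iff the parabolic points match with the same local type on both sides.

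For necessity of \ref{prop5:quasidisk:pereq:1}, argue as follows. If $a\in\partial V$ is parabolic and $V$ is not an immediate basin of $a$, then $V$ avoids all attracting petals at $a$, and Lemma \ref{lemma:avoid_petals} shows $\partial V$ is not a quasicircle. Pulling back by Lemma \ref{lemma:avoid_petals} \ref{lemma:avoid_petals:2}, the same holds for $U$. If $\partial V$ contains two parabolic points while $V$ is an immediate basin of one of them, the previous case applies to the other point, since an immediate basin belongs to a single parabolic point. For sufficiency at a single parabolic $a_V$ that is the immediate basin point, Lemma \ref{lemma:preserve_parabolic} gives invariant arcs in the two adjacent repelling petals. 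Conjugating by Fatou coordinates (as in the proof of Theorem \ref{theorem:germ_tangent_modification}, via $\phi(z)=z^n/(-na)$), the boundary near $a_V$ lies between horizontal lines in the Fatou coordinate. Hence $\partial V$ makes a nonzero angle $\pi/n$ (for multiplicity $n+1$) between two petals at $a_V$, and is a quasiarc there.

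Step 2 handles preimages of $a_V$ and proves \ref{prop5:quasidisk:pereq:2}; I expect this to be the main obstacle. Let $z\in\partial U$ with $f^{\circ k}(z)=a_V$, where $f^{\circ k}$ is locally $m$-to-$1$ at $z$. The boundary of $f^{\circ k}(U)$ near $a_V$ consists of two arcs with interior angle $\pi/n$ (a cusp-free corner). Pulling back by a local $m$-th root, the corner at $z$ has angle $\pi/(nm)$ relative to the pulled-back Fatou coordinate. Since this is still positive, $\partial U$ is a quasiarc near $z$, except when $U$ sits on the wrong side. Concretely, when $n=1$ (multiplicity $2$) and $m=1$, the component $U$ at $z$ is a preimage lying in the repelling side. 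The local picture of $U$ at $z$ is then the image under a local homeomorphism of a non-immediate-basin region at $a_V$, which avoids all attracting petals. Lemma \ref{lemma:avoid_petals} \ref{lemma:avoid_petals:2} then gives a cusp, proving necessity of \ref{prop5:quasidisk:pereq:2}. When $m\ge2$, the doubling of the angle opens the cusp, and Lemma \ref{lemma:lift} produces a parabolic germ $h$ at $z$ of multiplicity $nm+1\ge3$ whose petals contain invariant arcs of $\partial U$; the Fatou-coordinate estimate above then shows that $\partial U$ is a quasiarc at $z$.

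The delicate point is making this local-type bookkeeping rigorous for all preimages simultaneously, i.e.\ uniformly over infinitely many $z$. For this I would use Lemma \ref{lemma:no_infinite} to control the finitely many critical and periodic configurations, and then apply the Koebe blow-up uniformity exactly as in Proposition \ref{proposition:quasidisk_uniform}.
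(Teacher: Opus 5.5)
Your argument for the necessity of \ref{prop5:quasidisk:pereq:1} is the paper's. However, the local picture you use at a parabolic point is wrong, and this invalidates your necessity argument for \ref{prop5:quasidisk:pereq:2} and the angle count in Step~2. If $a_V$ has multiplicity $n+1$, the two boundary arcs of the immediate basin $V$ lie in the two repelling petals adjacent to its attracting direction (Lemma~\ref{lemma:preserve_parabolic}). So $V$ has angle $2\pi/n$ at $a_V$, and $\widehat\C\setminus\bar V$ has angle $2\pi-2\pi/n$. For $n=1$ there is only one repelling petal, both arcs are asymptotic to the same repelling direction, and the complement is a zero-angle cusp. Thus the immediate basin of a multiplicity-$2$ point is itself not a quasidisk; it does not have a ``cusp-free corner of angle $\pi/n$''. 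Your case ``$U$ sits on the repelling side, a non-immediate-basin region'' is one that \ref{prop5:quasidisk:pereq:1} already excludes. The real obstruction behind \ref{prop5:quasidisk:pereq:2} is that $f^{\circ k}(U)$ \emph{is} the immediate basin. Its complementary cusp (apply Lemma~\ref{lemma:avoid_petals} to $\widehat\C\setminus\bar V$, which meets no attracting petal) is carried to $z$ by the local homeomorphism. Likewise, for $m\ge 2$ the basin angle $2\pi/n$ is divided by $m$, so the complementary angle $2\pi-2\pi/(nm)$ is positive exactly when $nm\ge 2$.

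For sufficiency there is a genuine gap. Compactness localizes only when you pull back a curve already known to be a quasicircle through a fixed iterate with finitely many critical points; it cannot create that property. Once a parabolic point lies on $\partial f^{\circ l}(U)$, its iterated preimages are dense in $\partial U$. Proposition~\ref{proposition:quasidisk_uniform} assumes $U$ is a quasidisk, so it gives you no uniformity. This is decisive when $a_V$ has multiplicity $2$ and $U$ is strictly pre-periodic. Then $V=f^{\circ l}(U)$ is not a quasidisk, so there is nothing to pull back. You would need $\partial V$ to be uniformly a quasiarc away from $a_V$ near all its (critical) preimages, and your plan does not supply this.

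The missing idea in the paper is to weld on $\partial U$ itself. One chooses a covering $S\colon\partial U\to\partial U$ with $f\circ S=f\circ f$ and compares its interior and exterior circle models through Markov partitions. Condition~\ref{appendix:M2} comes from the hyperbolic-metric neighborhoods of Lemma~\ref{lem:expandprep}. Condition~\ref{prop5:quasidisk:pereq:2} forces the lifted germ at each vertex over $a_V$ to have multiplicity $1+k\ge 3$, so every such vertex is symmetrically parabolic on both sides. Theorem~\ref{thm:appendix-circle-extension} then gives quasisymmetry of the welding. When no multiplicity-$2$ point occurs, $f^{\circ l}(U)$ is a quasidisk by the periodic case, and Proposition~\ref{proposition:rational_quasidisk} pulls it back.

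Your Step~1 is also incomplete as stated. The exterior circle map is not ``$f$ restricted to $\widehat\C\setminus\bar V$'', since $f$ does not preserve that set. Its existence with the conformality required in~\ref{appendix:M2} is the content of Lemma~\ref{lem:expand}. That lemma builds a forward-invariant continuum $Q\subset\bar V$ touching $\partial V$ in finitely many points and uses the hyperbolic metric of $\widehat\C\setminus(Q\cup P(f))$.
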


\begin{remark}\label{remark:prop:quasidisk}
    Note that if $U$ is a quasidisk that is an immediate basin of a parabolic periodic point $a_0$, then condition \ref{prop:quasidisk:pereq:2} implies that the multiplicity of $a_0$ is at least $3$. 
    See Figure~\ref{fig:JuliaSet} for an illustration of the non-local homeomorphism condition.
\end{remark}    

\begin{figure}
    \centering
    \begin{tikzpicture}
    \node[opacity=.4] at (0,0){\includegraphics[width=0.8\linewidth]{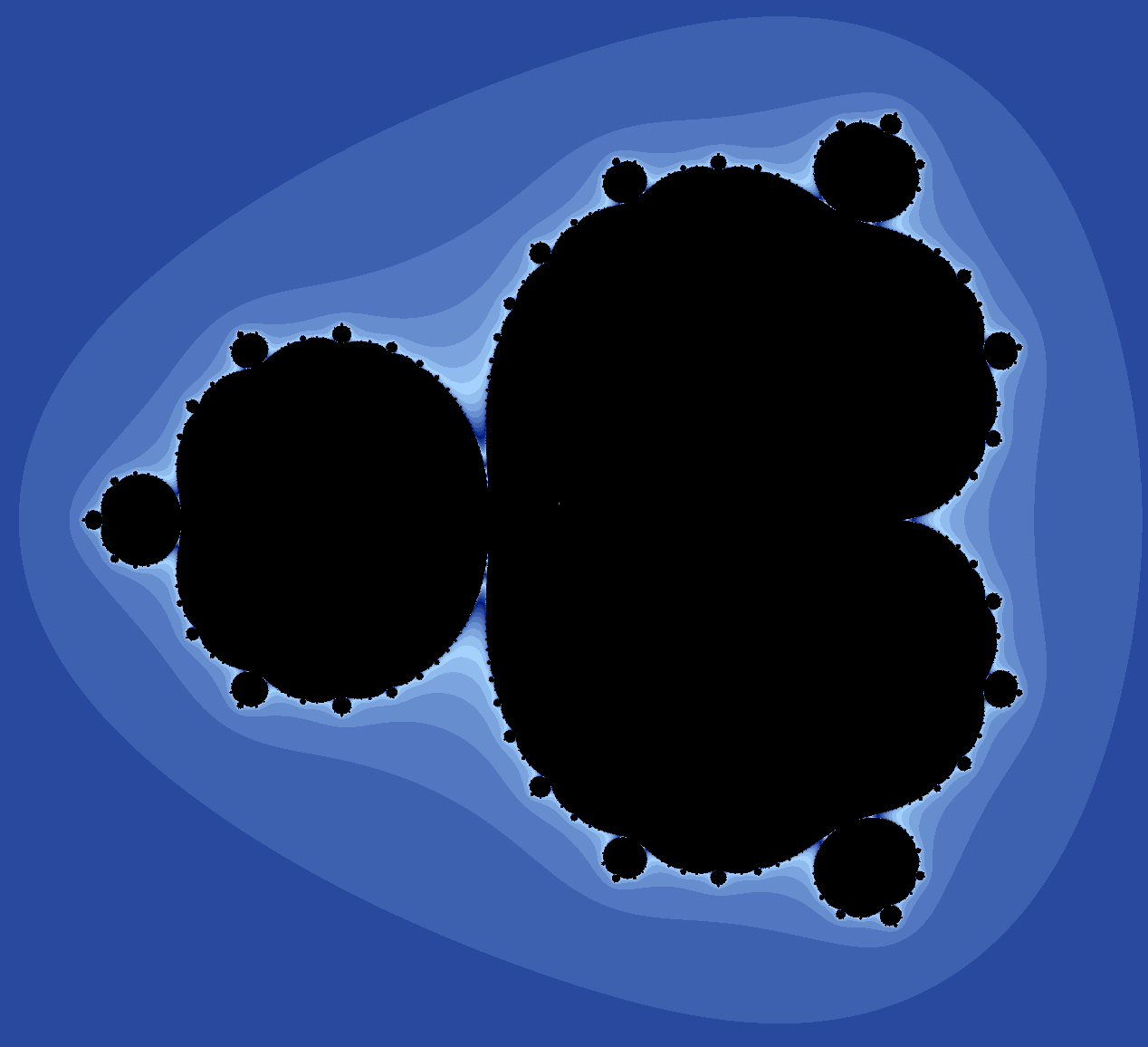}};
    \fill[white] (2.9,0.02) circle (1.5pt) node[left] {\color{white}$a_0$};
    \end{tikzpicture}
    \caption{The Julia set of $z(z+1)^2$. Each bounded Fatou component, except for the one that contains the cusp at $a_0=0$, is a quasidisk. The unique cusp point $a_0$ is a parabolic fixed point of multiplicity $2$. Each point $a\neq a_0$ that is eventually mapped to $a_0$ is mapped there by an iterate of $f$ that is locally $2$-to-$1$.}
    \label{fig:JuliaSet}
\end{figure}

\section{Separated Fatou components}

\begin{proposition}\label{proposition:separated_modulus}
    Let $f$ be a geometrically finite rational map and $U, V$ be Fatou components whose orbits $\{f^{\circ n}(U)\}_{n\geq 0}$, $\{f^{\circ n}(V)\}_{n\geq 0}$ consist of Jordan regions. Then there exists a constant $L = L(f)\geq 1 $ such that if $\overline{U} \cap \overline{V} = \emptyset$, then
    $$
        \mod A(\bar U, \bar V) \geq L^{-1}.
    $$
\end{proposition}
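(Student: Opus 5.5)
We follow the blow-up scheme of Propositions \ref{proposition:circularizable_gf} and \ref{proposition:quasidisk_uniform}. By Lemma \ref{lemma:no_infinite} \ref{lemma:no_infinite:2}, only finitely many simply connected Fatou components have critical points in their closures. Fix $k_0\geq 1$ such that each such component has pre-period at most $k_0$. Since $\bar U\cap \bar V=\emptyset$ and both regions are Jordan, $A(\bar U,\bar V)$ is a genuine annulus of positive modulus. Hence any of the finitely many pairs with $l_U,l_V<k_0$ contributes a uniform lower bound. By symmetry, assume $l_U\le l_V$ and $l_V\ge k_0$. Set $m=l_V-k_0$, $V_m=f^{\circ m}(V)$ and $U_m=f^{\circ m}(U)$, so that $l_{V_m}=k_0$. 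As in Proposition \ref{proposition:quasidisk_uniform}, $\bar{V_m}$ contains no parabolic points by Lemma \ref{lemma:no_infinite} \ref{lemma:no_infinite:1}, so it has positive distance from the post-critical set. We may therefore choose a Jordan region $N\supset \bar{V_m}$ with $\bar N$ disjoint from the post-critical set. There is then a Jordan region $N'\supset\bar V$ such that $f^{\circ m}\colon N'\to N$ is conformal. Since there are finitely many candidates for $V_m$, it suffices to bound $\mod A(\bar U,\bar V)$ from below in terms of $V_m$ and $N$ alone.

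The key observation is that $\mod A(\bar U,\bar V)\geq \mod A(\widehat\C\setminus N',\bar V)\cap\{\text{annuli avoiding }\bar U\}$, so we need an annulus in $N'\setminus(\bar U\cup\bar V)$ around $\bar V$ with controlled modulus. We pull such an annulus back conformally from $N\setminus \bar{V_m}$. Two cases arise. First, suppose $\bar{U_m}\cap \bar{V_m}=\emptyset$. If $U_m\not\subset N$, we distinguish whether $\partial U_m$ enters a fixed neighborhood $N_0\Subset N$ of $\bar{V_m}$. Suppose it does not, for instance when $N_0\cap U_m=\emptyset$. Then $N_0\setminus\bar{V_m}$ contains a round-ish annulus of modulus depending only on $V_m$ and $N_0$. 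Its preimage under $f^{\circ m}$ separates $\bar V$ from $\partial N'$ and avoids $U$, except possibly where parts of $U$ lie inside $N'$. These parts are preimages of components of $U_m\cap N$ under the branch, and since the annulus avoids $U_m$ its preimage avoids $U$. This gives the bound.

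Now suppose $\partial U_m$ comes close to $\bar{V_m}$, or $\bar{U_m}\cap\bar{V_m}\neq\emptyset$; the latter can occur when $f^{\circ m}$ is not injective on $\bar U\cup\bar V$ globally. This is the main obstacle: the closeness is not controlled by finiteness alone, because $U_m$ ranges over infinitely many components. To handle it, note that only finitely many Fatou components $W$ of pre-period $\le k_0$ (a finite set containing $U_m$, since $l_{U_m}\le l_{V_m}=k_0$) can matter. For each of them we consider the finitely many components of $\partial W\cap N$ meeting $\bar{V_m}$, using local connectivity as in the proof of Proposition \ref{proposition:circularizable_gf}. For each configuration $(V_m,W)$, the set $N\setminus(\bar{V_m}\cup \bar W)$ near $\bar{V_m}$ either contains a separating annulus around $\bar{V_m}$ of definite modulus (when $\bar W\cap\bar{V_m}=\emptyset$), or $W$ touches $V_m$ at finitely many points. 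In the touching case, the component of $\bar U\cap N'$ that is relevant is the preimage of a component of $U_m\cap N$ whose closure does not meet $\bar{V_m}$ after pullback, because $\bar U\cap\bar V=\emptyset$. Since the branch $(f^{\circ m})^{-1}$ on $N$ is injective, $U$ corresponds to a branch disjoint from the touching point. Thus we must choose, for each touching point $z_0$, the alternative preimage structure: near $z_0$ we cut out a small disk $D_{z_0}$ and build a separating annulus in $N\setminus(\bar{V_m}\cup(\bar W\setminus D_{z_0}))$ that passes through $D_{z_0}$. We then argue that the relevant piece of $f^{\circ m}(U\cap N')$ lies in $\bar W$ away from $D_{z_0}$. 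Finitely many such constructions give a uniform modulus $\mu>0$. Pulling back by the conformal $f^{\circ m}$ preserves modulus and yields an annulus separating $\bar V$ from $\bar U$, so $\mod A(\bar U,\bar V)\ge\mu$. We expect the hardest step to be justifying that $U\cap N'$ avoids the pulled-back annulus near touching points; we would first try to handle it by tracking prime ends at $z_0$ and using injectivity of $f^{\circ m}$ near $\bar V$, as in Lemma \ref{lemma:no_infinite} \ref{lemma:no_infinite:00}.
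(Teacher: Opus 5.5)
Your reduction is the same as the paper's. You take $l_U\le l_V$ and $l_V\ge k_0$, push forward by $g=f^{\circ m}$ to $V_m$ with $l_{V_m}=k_0$, use the conformal branch $g\colon N'\to N$, and note that $U_m$ lies in the finite set of components $W$ with $l_W\le k_0$. Your remark that ``$U_m$ ranges over infinitely many components'' is therefore wrong. What finiteness does not control is \emph{which} pieces of $U_m\cap N$ lift into $U$ under $g^{-1}$.

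The case $\bar{U_m}\cap\bar{V_m}=\emptyset$ is fine: choose $N_0$ disjoint from the finitely many such $\bar W$. The touching case, however, is not proved, as you acknowledge yourself. Your sketch relies on the unjustified claim that $\bar W\cap\bar{V_m}$ is finite. It also asserts that the relevant part of $f^{\circ m}(U\cap N')$ stays away from $D_{z_0}$. That fails unless $D_{z_0}$ is chosen using a finiteness property you never establish: arcs of $\partial W\cap N$ that do not touch $\bar{V_m}$ can pass arbitrarily close to $z_0$, and their $g$-preimages may well lie in $\partial U$. In that case your annulus meets $g(\partial U\cap N')$ and its pull-back does not separate.

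The missing idea is to argue with components of $\partial W\cap N$ rather than with touching points.
\begin{itemize}
\item Shrink $N$ so that it contains $\partial W$ for no $W\neq V_m$. If $U_m=V_m$, then $U\cap N'=\emptyset$ and there is nothing to prove.
\item The components of $\partial W\cap N$ are open arcs with endpoints on $\partial N$, and their diameters tend to $0$. Hence only finitely many of them meet a fixed $N_1$ with $\bar{V_m}\subset N_1\Subset N$.
\item Shrinking $N_1$ (there are finitely many $W$), you may assume that every component of $\partial W\cap N$ meeting $N_1$ also meets $\bar{V_m}$.
\end{itemize}
Let $A=N_1\setminus\bar{V_m}$ and $A'=g^{-1}(A)$. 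Suppose $A'\cap U\neq\emptyset$. Points of $A'$ near $\partial V$ lie outside $\bar U$, so connectedness gives a component $J'$ of $\partial U\cap N'$ that meets $A'$. Moreover $J'\cap\bar V=\emptyset$.

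The image $g(J')$ is a full component of $\partial U_m\cap N$. To see this, lift by $g^{-1}$ a one-sided collar, inside the Jordan domain $U_m$, of the arc of $\partial U_m\cap N$ containing $g(J')$. The lifted collar lies in a single Fatou component, and it meets $U$ near a point of $J'$, so it lies in $U$. This is where your prime-end idea belongs. Thus $g(J')$ is a component of $\partial U_m\cap N$ that meets $N_1$ but misses $\bar{V_m}=g(\bar V)$, contradicting the choice of $N_1$.

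Hence $A'$ separates $\bar U$ from $\bar V$, and $\mod A(\bar U,\bar V)\ge\mod A'=\mod A$, where $A$ comes from a finite list. This treats touching and non-touching $U_m$ uniformly, with no disks $D_{z_0}$ and no count of contact points.
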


Recall that $A(\bar U,\bar V)$ is the annulus whose complementary components are $\bar U$ and $\bar V$.

\begin{proof}
    By Lemma \ref{lemma:no_infinite} \ref{lemma:no_infinite:2} there exists a number $k_0\geq 1$ so that whenever a critical point lies in the closure of a simply connected Fatou component $\Omega$, the pre-period $l_\Omega$ of $\Omega$ is at most $k_0$. Without loss of generality, we assume that $l_U \leq l_V$. Since there are only finitely many Fatou components of pre-period at most $k_0$, we may assume that $l_V \geq  k_0$. For $m=l_V-k_0$ we set $V_m=f^{\circ m}(V)$ and $U_m=f^{\circ m}(U)$. Then $l_{V_m}=k_0$ and $l_{U_m}\leq k_0$. 
    
    As in the proof of Proposition \ref{proposition:circularizable_gf}, there exists a Jordan region $N\supset \bar {V_m}$ so that $\bar N$ is disjoint from the post-critical set of $f$ and $N$ does not contain the boundary of any Fatou component of pre-period at most $k_0$. Let $W$ be a Jordan region Fatou component with pre-period $l_W\leq k_0$, so $\partial  W\not\subset N$. Due to the local connectivity of $\partial W$, if $N\cap \partial W$ has infinitely many components, their diameters must converge to $0$. Thus, if $N_1$ is a Jordan region with $\bar {V_m}\subset N_1\subset \bar{N_1}\subset N$, then at most finitely many components of $N\cap \partial W$ can intersect $N_1$. By shrinking $N_1$, we can ensure that the only components of $N\cap \partial W$ that intersect $N_1$, if any, are the ones that also intersect $\bar {V_m}$. Let $A=N_1\setminus \bar {V_m}$. With this process, to each of the finitely many pairs $V_m,W$, where $l_{V_m}=k_0$ and $l_W\leq k_0$, we associate an annulus $A$, so that if a component of $N\cap \partial W$ intersects $A$, then it also intersects $\bar {V_m}$. See Figure \ref{fig:annulus} for an illustration.

    \begin{figure}
        \centering
        \begin{tikzpicture}
                \draw[blue, line width=1pt, fill=blue!20] (0,0) circle (1.2cm); 
                \begin{scope}
                \clip (0,-2) rectangle (2.5,2);
                \draw[rounded corners=8, fill=black!20] (4,-2)--(2,-2)--(1,-0.58)--(2,0)--(2,1)--(0.5,0.55)--(0,1.4)--(0,1.9)--(1.5,2)--(4,1.5)--cycle;
                \node at (2.2,-1.2) {$W$};
                \end{scope}

                \begin{scope}
                \clip (0,0) circle (1.9cm);
                \draw[rounded corners=8, red, line width=1pt] (4,-2)--(2,-2)--(1,-0.58)--(2,0)--(2,1)--(0.5,0.55)--(0,1.4)--(0,1.9)--(1.5,2)--(4,1.5)--cycle;
                \end{scope}

                \draw[blue, line width=1pt] (0,0) circle (1.2cm); 
                \draw[blue, line width=1pt, fill=white] (0,0) circle (0.8cm) node {$\color{black}V_m$};
                \node at (1.5,-0) {$J$};
                
                \draw (0,0) circle (1.9cm);
                \node at (-2,-1.2) {$\partial N$};
                \node at (-1.5,0) {$\scriptstyle \partial N_1$};
                \node at (0,-1) {$\scriptstyle A$};
        \end{tikzpicture}
        \caption{The annulus $A$ associated to the pair $V_m,W$ has the property that each component of $N\cap \partial W$ is either disjoint from $A$ or it intersects $\bar {V_m}$. Shown are two components of $N\cap \partial W$.  This figure illustrates the possibility that $\overline{V}\cap \overline{U} = \emptyset$, but $\overline{f^{\circ m}(V)}\cap\overline{f^{\circ m}(U)} \neq \emptyset$.}\label{fig:annulus}
    \end{figure}

    There exists a Jordan region $N'\supset \bar V$ such that $g=f^{\circ m}\colon N'\to N$ is conformal. Let $N_1'=g^{-1}(N_1)$ and $A'= g^{-1}(A)$. Since $g\colon A' \to A$ is conformal, we have $\mod A' = \mod A$. 
    
    We claim that the annulus $A'$ is disjoint from $U$. Suppose instead that $A'\cap U\neq\emptyset$. Since $\bar U\cap \bar V=\emptyset$, we have $A'\setminus U\neq \emptyset$. By the connectedness of $A'$, we obtain $\partial U\cap A'\neq \emptyset$. Thus, there exists a component $J'$ of $\partial U\cap N'$ such that $J'\cap A'\neq \emptyset$. Note that $J'\cap \bar{V}=\emptyset$. This implies that there exists a component $J=g(J')$ of $N\cap \partial U_m$ such that $J\cap A\neq \emptyset$ and $J\cap \bar{V_m}=\emptyset$. This contradicts the properties of the annulus $A$. Therefore, $A'\cap U=\emptyset$. 
    
    It follows that $A'$ separates $\bar U$ and $\bar V$, so $\mod A(\bar U, \bar V) \geq \mod A' =\mod A$. Since $A$ is an annulus among a finite collection of annuli constructed above (corresponding to the various pairs $V_m,W$), we see that $\mod A(\bar U,\bar V)$ is uniformly bounded from below, as desired. 
\end{proof}

\begin{proposition}[\cite{Herron:uniform}*{Corollary 2.4}]\label{proposition:separated_qc}
    Let $L\geq 1$. Let $U,V\subset \widehat \C$ be $L$-quasidisks such that $\bar U\cap \bar V=\emptyset$ and $\mod A (\bar U,\bar V)\geq L^{-1}$. Then there exists a $K(L)$-quasi\-conformal map of $\widehat \C$ that maps $U$ and $V$ onto disks.
\end{proposition}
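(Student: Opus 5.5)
The plan is to reduce to a standard normalized configuration in which two round disks are already present, and then handle the annulus between them by a quasiconformal change of variables.

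First, use a Möbius transformation to send $U$ to the unit disk $\D$. Being Möbius, this does not alter the quasidisk constant, and it does not alter the modulus of $A(\bar U,\bar V)$.

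Next, let $\phi$ be the conformal map from the complement $\widehat\C\setminus\bar{\D}$ onto the complement of $\bar V$, normalized so that $\phi(\infty)=\infty$ (after also placing $V$ so that it contains $\infty$). The cleaner route here is to apply Proposition~\ref{proposition:quasidisk_extension} to $U$ and to $V$ separately.

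Concretely, there is an $L$-quasiconformal map $\psi_1$ of $\widehat\C$ that sends $U$ to $\D$. Since quasidisk constants are quantitatively preserved, $\psi_1(V)$ is a $K_1(L)$-quasidisk. Moreover, because modulus is quasi-invariant, the modulus of $A(\bar\D,\overline{\psi_1(V)})$ is at least $1/(LK_1)$.

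In this normalized setting the task is to produce a quasiconformal map that is the identity on $\bar\D$ and sends $\psi_1(V)$ onto a disk, with control depending only on $L$. I would carry this out as follows.

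1. By a Möbius map preserving $\D$, normalize so that $\psi_1(V)$ contains $\infty$. The modulus lower bound, via Teichmüller's module theorem, then forces $\dist(\partial\psi_1(V),\bar\D)$ to be comparable to $\diam$ of the complement of $\psi_1(V)$. After rescaling, the complement of $\psi_1(V)$ lies between circles $|z|=1+\delta$ and $|z|=R$ with $\delta,R$ depending only on $L$.

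2. Take an $L'$-quasiconformal map $\psi_2$ of $\widehat\C$ that sends $\psi_1(V)$ onto $\{|z|>R'\}$ and is the identity on $\{|z|\le 1+\delta/2\}$. To build it, first take a quasiconformal map of the exterior quasidisk onto a round exterior. This is normalized, so by compactness of normalized $K$-quasiconformal families it is uniformly controlled, and it sends $\{|z|=1+\delta/2\}$ to a uniform quasicircle that is uniformly separated from $\{|z|=R'\}$. Then interpolate on the annulus between $\{|z|\le 1+\delta/2\}$ and that quasicircle using Proposition~\ref{proposition:quasidisk_extension} applied to a quasiconformal map between two uniform quasiannuli.

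3. Compose to get $\psi_2\circ\psi_1$.

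The main obstacle is step 2, the uniform interpolation across the annulus. I would handle it by a normal-family argument. If no uniform $K(L)$ existed, take a sequence of normalized configurations with the constant blowing up. Uniform quasidisks with bounded-below separating modulus form a compact family in the Hausdorff topology, by compactness of normalized $K$-quasiconformal maps. Each limit configuration is again a pair of disjoint quasidisks with positive separation, and for it some quasiconformal circularizing map exists. That map is obtained from an annulus $A(\bar U,\bar V)$ conformally equivalent to a round annulus $\{1<|z|<e^{2\pi M}\}$, combined with quasiconformal extension across each quasicircle boundary via Proposition~\ref{proposition:quasidisk_extension}. Composing that map with quasiconformal maps close to the identity yields uniform bounds for nearby configurations, which is a contradiction.

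Said more directly: map the annulus $A(\bar U,\bar V)$ conformally onto a round annulus $\{1<|z|<r\}$ with $\log r\ge 2\pi/L$. Its boundary correspondences are quasisymmetric circle maps with constants depending only on $L$. Extend each across the corresponding quasidisk, using Beurling–Ahlfors or Proposition~\ref{proposition:quasidisk_extension}, to maps onto $\{|z|<1\}$ and $\{|z|>r\}$.

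The delicate point is the uniform quasisymmetry of the boundary correspondence when the modulus is small. This is exactly where the lower bound $L^{-1}$ on the modulus is used, and I would prove it by the compactness argument above.
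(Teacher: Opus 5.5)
The paper gives no proof of this statement; it quotes Herron's Corollary~2.4. Judged on its own, your proposal has a genuine gap, and it sits at exactly the step you call delicate.

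\textbf{Where the argument breaks.} The architecture in your ``said more directly'' paragraph is sound. You map $A=A(\bar U,\bar V)$ conformally onto $\{1<|z|<r\}$ by $g$, extend $g|_{\partial U}$ and $g|_{\partial V}$ across $U$ and $V$, and glue along $\partial U\cup\partial V$; quasicircles are removable, so the glued map is quasiconformal. But the whole content of the statement is that $g|_{\partial U}$ and $g|_{\partial V}$ are quasi-M\"obius with data depending only on $L$. The compactness argument you offer for this does not go through, for two reasons.
\begin{enumerate}
\item Its decisive step says that composing the circularizing map of a limit pair $(U,V)$ with ``quasiconformal maps close to the identity'' handles nearby pairs $(U_n,V_n)$. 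This needs maps of bounded dilatation sending $U_n$ onto $U$ and $V_n$ onto $V$ \emph{simultaneously}. Normal-family arguments give such maps for each component separately, for instance $\phi\circ\phi_n^{-1}$ for $U_n$, but not for the pair. Producing one for the pair is the very problem you are solving.
\item Only a lower bound on $\mod A$ is assumed, so normalized limits can degenerate: one complementary component may collapse to a point or escape to $\infty$. The claim that every limit configuration is again a pair of disjoint quasidisks therefore needs a separate treatment of large moduli.
\end{enumerate}
The preliminary steps have related defects. No M\"obius map sends a general quasidisk onto $\D$. The bound $R=R(L)$ in step~1 fails for large $\mod A$. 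Proposition~\ref{proposition:quasidisk_extension} concerns quasidisks, so it does not supply the annular interpolation in step~2, and that interpolation is itself equivalent to the statement.

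\textbf{How to close the gap.} The missing estimate has a short direct proof using tools already in the paper.
\begin{enumerate}
\item Let $\beta\colon\widehat\C\setminus\bar U\to\widehat\C\setminus\bar\D$ be conformal with $\beta(v_0)=\infty$ for some $v_0\in V$, and let $w\in\beta(\bar V)$ minimize $|w|$. The annulus $\beta(A)$ has modulus at least $L^{-1}$ and separates $\pm w/|w|$ from $w,\infty$. Teichm\"uller's module theorem, used as in the proof of Proposition~\ref{prop:comparisonhyperbolicmetric}, then gives $\beta(\bar V)\subset\{|z|\ge 1+c\}$ with $c=c(L)>0$.
\item By Schwarz reflection, $g\circ\beta^{-1}$ extends to a conformal embedding of the fixed annulus $\{(1+c)^{-1}<|z|<1+c\}$. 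By Lemma~\ref{lemma:koebe_qm} it is $\eta(L)$-quasi-M\"obius on $\mathbb S^1$.
\item By Proposition~\ref{proposition:quasidisk_extension}, $\beta^{-1}$ extends to a $K(L)$-quasiconformal map of $\widehat\C$. So $\beta|_{\partial U}$ is uniformly quasi-M\"obius, and hence so is $g|_{\partial U}$.
\item The same argument works at $\partial V$, using a Riemann map of $\widehat\C\setminus\bar V$ onto $\D$ that sends a point of $U$ to $0$.
\item Let $\phi$ be $L$-quasiconformal with $\phi(\D)=U$. Then $g\circ\phi|_{\mathbb S^1}$ is a uniformly quasi-M\"obius circle homeomorphism. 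After normalizing by M\"obius automorphisms of $\D$, the Beurling--Ahlfors extension gives the required map on $U$, and likewise on $V$.
\end{enumerate}
This route needs no upper bound on $\mod A$ and no compactness. The lower bound on the modulus is used only to produce the collar of width $c$.
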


\begin{corollary}\label{corollary:separated_quasidisk}
    Let $f$ be a geometrically finite rational map and $U, V$ be Fatou components whose forward orbits consist of Jordan regions. Then there exists a constant $K=K(f)\geq 1$ such that if $U,V$ are quasidisks with $\bar U\cap \bar V=\emptyset $, then the pair $U,V$ is $K$-quasiconformally circularizable. 
\end{corollary}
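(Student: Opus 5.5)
The corollary follows by combining Proposition \ref{proposition:quasidisk_uniform}, Proposition \ref{proposition:separated_modulus} and Proposition \ref{proposition:separated_qc}. The only work is to check that all constants depend on $f$ alone.

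First, let $U,V$ be Fatou components whose forward orbits consist of Jordan regions, and suppose $U$ and $V$ are quasidisks. Proposition \ref{proposition:quasidisk_uniform} gives a constant $L_1=L_1(f)\geq 1$ such that both $U$ and $V$ are $L_1$-quasidisks. This constant does not depend on the particular components.

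Next, assume in addition that $\bar U\cap\bar V=\emptyset$. Proposition \ref{proposition:separated_modulus} gives a constant $L_2=L_2(f)\geq 1$ with $\mod A(\bar U,\bar V)\geq L_2^{-1}$.

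Now set $L=\max\{L_1,L_2\}$. Then $U$ and $V$ are $L$-quasidisks, since any $L_1$-quasidisk is an $L$-quasidisk when $L\geq L_1$. Also $\mod A(\bar U,\bar V)\geq L^{-1}$. Proposition \ref{proposition:separated_qc} therefore produces a $K(L)$-quasiconformal map of $\widehat\C$ sending $U$ and $V$ onto disks. Since $\bar U\cap\bar V=\emptyset$, the image disks have disjoint closures, so their boundaries are distinct. Hence the pair is $K$-quasiconformally circularizable with $K=K(L)$, and $K$ depends only on $f$.

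There is no real obstacle here. The only points to check are the bookkeeping above: merging the two constants into a single $L$, and noting that disjoint closures give disks with distinct boundaries, as the definition of circularizability requires.
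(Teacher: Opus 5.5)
Your proposal is correct and follows essentially the same route as the paper: uniform quasidisk constants from Proposition~\ref{proposition:quasidisk_uniform}, a uniform modulus lower bound from Proposition~\ref{proposition:separated_modulus}, and then Proposition~\ref{proposition:separated_qc}. The only differences are that you merge the two constants into $L=\max\{L_1,L_2\}$ where the paper writes $K(L_1,L_2)$, and that you explicitly check the image disks have distinct boundaries; both are fine.
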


\begin{proof}
    By Proposition \ref{proposition:separated_modulus}, we have $\mod A(\bar U,\bar V)\geq L_1^{-1}$ for some $L_1=L_1(f)\geq 1$. Also, $U$ and $V$ are $L_2$-quasidisks for some $L_2=L_2(f)\geq 1$ by Proposition \ref{proposition:quasidisk_uniform}. Finally, by Proposition \ref{proposition:separated_qc} there exists a $K(L_1,L_2)$-quasiconformal map of $\widehat \C$ that maps $U$ and $V$ to disks. 
\end{proof}

\section{Proofs of main results}\label{section:proofs}

The next result allows us to turn some Fatou components of a geometrically finite rational map into disks with a quasiconformal map of the sphere. 
\begin{theorem}\label{theorem:uniformization_general}
    Let $f$ be a geometrically finite rational map whose Julia set is not a Jordan curve.
    Let $\{U_i\}_{i\in I}$ be a collection of at least two Fatou components of $f$ whose forward orbits consist of Jordan regions. The following are equivalent.
    \begin{enumerate}[label=\normalfont(\arabic*)]
        \item\label{theorem:uniformization_general:1} (Quasiconformal Uniformization) There exists a quasiconformal map $g$ of $\widehat \C$ that maps $U_i$ to a disk for each $i\in I$ and is $1$-quasiconformal in $\widehat \C\setminus \bigcup_{i\in I}U_i$.
        \item\label{theorem:uniformization_general:2} (Dynamical criterion) 
        For each $i\in I$ the Fatou component $U_i$ is a quasidisk and if $\bar {U_i}\cap \bar{U_j}\neq \emptyset$ for some $i\neq j$ then $\bar {U_i}\cap \bar{U_j}$ consists of a single point $a_0$ that satisfies one of the following alternatives.
        \begin{itemize}
            \item $a_0$ is mapped to a parabolic periodic cycle of multiplicity $3$ by an iterate of $f$ that is a local homeomorphism near $a_0$.
            \item $a_0$ is mapped to a parabolic periodic cycle of multiplicity $2$ by an iterate of $f$ that is locally two-to-one near $a_0$.
        \end{itemize}
    \end{enumerate}
     Moreover, in this case the map $g|_{\widehat \C\setminus \bigcup_{i\in I}U_i}$ is unique up to post-composition with conformal automorphisms of $\widehat \C$.
\end{theorem}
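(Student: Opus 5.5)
The plan is to reduce both directions to Theorem \ref{theorem:schottky}: condition \ref{theorem:uniformization_general:1} is equivalent to \emph{uniform} pairwise quasiconformal circularizability of the pairs $U_i,U_j$. The hypothesis of Theorem \ref{theorem:schottky} that $\bar{U_i}\cap\bar{U_j}$ has at most one point is automatic under \ref{theorem:uniformization_general:1}, since images of the $U_i$ are disks with distinct boundaries. Under \ref{theorem:uniformization_general:2} it is part of the hypothesis. The uniqueness statement then follows directly from the last sentence of Theorem \ref{theorem:schottky}.

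\textbf{(2)$\Rightarrow$(1).} Fix $i\neq j$. If $\bar{U_i}\cap\bar{U_j}=\emptyset$, Corollary \ref{corollary:separated_quasidisk} gives $K(f)$-circularizability. Otherwise $\bar{U_i}\cap\bar{U_j}=\{a_0\}$. Both $\partial U_i,\partial U_j$ are quasicircles, so for a small disk $W$ around $a_0$ the arcs $\partial U_i\cap\bar W$ and $\partial U_j\cap\bar W$ are quasiarcs meeting only at $a_0$. Proposition \ref{proposition:component_dependent} then yields a quasiconformal map sending these arcs to tangent circular arcs. Lemma \ref{lemma:tangent_local} upgrades this to quasiconformal circularizability of the pair $U_i,U_j$: take the images of $U_i,U_j$ under that map and compare them, near the point of tangency, with two tangent disks. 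Proposition \ref{proposition:circularizable_gf} makes the constant uniform, $L(f)$. Theorem \ref{theorem:schottky} then gives $g$.

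\textbf{(1)$\Rightarrow$(2).} Each $U_i$ is a quasidisk, being the preimage of a disk, and pairwise intersections are single points $a_0$. Fix such a pair $U=U_i$, $V=U_j$ and the contact point $a_0$. On the Julia set, $a_0$ is eventually periodic, because geometrically finite maps have periodic points or preimages of critical/parabolic points there, with boundary points of Jordan Fatou components landing on periodic cycles. I would argue this via Lemma \ref{lemma:no_infinite}: contact points of eventually periodic Jordan components map to contact points of periodic ones, and there are finitely many such. Let $b_0=f^{\circ l}(a_0)$, periodic, and let $m$ be the local degree of $f^{\circ l}$ at $a_0$. Now split into cases.

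\emph{Repelling $b_0$.} Lift via Lemma \ref{lemma:lift} to a germ $h$ at $a_0$ with $h'(0)^m=\lambda$, $|\lambda|>1$, under which $\partial U,\partial V$ are locally invariant. This needs the local invariance of $\partial f^{\circ l}(U)$ near $b_0$, which holds after passing to an iterate fixing the components, using Lemma \ref{lemma:no_infinite}\ref{lemma:no_infinite:0}. Lemma \ref{lemma:repelling_invariant} then contradicts circularizability. One must also rule out $m\ge 2$ combined with a repelling $b_0$; I expect the same lemma handles it, since the lift is still repelling.

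\emph{Parabolic $b_0$.} Let the multiplicity be $n+1$. Lemma \ref{lemma:avoid_petals} forces $f^{\circ l}(U),f^{\circ l}(V)$ to be immediate basins, since the boundary arcs are quasiarcs. Lemma \ref{lemma:lift} then gives a lifted parabolic germ $h$ of multiplicity $nm+1$, for which $\partial U,\partial V$ contain invariant arcs in repelling petals, by Lemma \ref{lemma:preserve_parabolic}. Two tangent circles leave room for exactly one invariant pair of petals, and the Fatou components occupy the attracting directions. Lemma \ref{lemma:room3}, together with counting the basins of $h$ that meet at $a_0$, should force $nm+1=3$. Each attracting direction of $h$ carries a Fatou component touching $a_0$, and three or more such components would violate Lemma \ref{lemma:room3} via the union of components. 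Thus $(m,n)=(1,2)$ or $(2,1)$. The case $m\ge 3$ gives multiplicity at least $4$, and is excluded the same way.

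The main obstacle is this counting step. Lemma \ref{lemma:room3} concerns pairwise-disjoint quasidisks sharing a boundary point, and the extra basins might not be among the $U_k$, or might not be quasidisks. I would first try to show that all basins of $h$ at $a_0$ are quasidisks, by pulling back via Proposition \ref{prop:quasidiskgf}. Failing that, I would use Proposition \ref{prop:quasidiskgf} together with the cusp argument of Lemma \ref{lemma:avoid_petals} directly on the quasicircles $\partial U,\partial V$, which must approach through adjacent petals.
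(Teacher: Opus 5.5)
Your direction (2)$\Rightarrow$(1) and the uniqueness statement match the paper's argument: Corollary~\ref{corollary:separated_quasidisk}, then Proposition~\ref{proposition:component_dependent} with Lemma~\ref{lemma:tangent_local}, then Proposition~\ref{proposition:circularizable_gf} and Theorem~\ref{theorem:schottky}. The converse is not complete.

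\textbf{The counting step.} This is where you get stuck, and the missing idea is simple: the third quasidisk in Lemma~\ref{lemma:room3} does not have to be a Fatou component. Write $f_1=f^{\circ l}$ and $f_2=f^{\circ p}$, with $p$ a multiple of the period of $b_0$. Once $f_1(U_i),f_1(U_j)$ are immediate basins, $f_1$ conjugates $h$ to $f_2$ inside $U_i$ and $U_j$ near $a_0$. So each of $U_i,U_j$ contains an attracting petal of $h$ and absorbs nearby points along its own direction. If $nm+1\geq 4$, take a small attracting petal $P$ of $h$ for a third attracting direction. It is a quasidisk, it is disjoint from $U_i\cup U_j$, and $a_0\in\partial P$. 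Lemma~\ref{lemma:room3} applied to $U_i,U_j,P$ then gives the contradiction.

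Your plan of showing that the other basins at $a_0$ are quasidisks via Proposition~\ref{prop:quasidiskgf} does not work in general. Those components need not have forward orbits consisting of Jordan regions, and they can fail to be quasidisks because of cusps elsewhere on their boundaries. Lemma~\ref{lemma:room3}, however, needs global quasidisks. You should also exclude $nm+1=2$ explicitly; the paper does this with the cusp from Lemma~\ref{lemma:avoid_petals}.

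\textbf{Pre-periodicity of $a_0$.} Your justification does not hold up. Boundary points of Jordan Fatou components are not eventually periodic in general. Lemma~\ref{lemma:no_infinite} only shows that, if the orbit of $a_0$ avoids critical points, the images of $U_i,U_j$ stay distinct and touch along the orbit of $a_0$. Nothing shows that two periodic Jordan components have finitely many contact points, and their closures may a priori share more than the image of $a_0$, even though $\bar{U_i}\cap\bar{U_j}=\{a_0\}$. The paper instead proves Lemma~\ref{lem:isolatedpre-periodic}, which uses that $a_0$ is an \emph{isolated} point of $\bar{U_i}\cap\bar{U_j}$. If the orbit avoids critical and parabolic points, McMullen's theorem puts $a_0$ in the radial Julia set. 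The conformal blow-ups $f^{\circ n}\colon\Omega_n\to B(a_n,r)$, combined with a compactness argument in the hyperspace of $\partial\D\times\partial\D$, then force the orbit $\{a_n\}$ to be finite.

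\textbf{Repelling case with local degree $m\geq 2$.} The issue is not whether ``the lift is still repelling''. Lemma~\ref{lemma:repelling_invariant} needs a single germ under which \emph{both} $\partial U_i$ and $\partial U_j$ are locally invariant. Of the $m$ lifts, the one preserving $\partial U_i$ might a priori permute the lifted arcs of $\partial U_j$. The paper chooses $h=f_1^{-1}\circ f_2\circ f_1$ with the branch adapted to $U_i$. It then invokes Lemma~\ref{lemma:lift:arcs_invariant}: the induced permutation of the lifts is a cyclic rotation, hence trivial once it fixes one lift. This gives invariance of $\partial U_j$ as well, and only then does Lemma~\ref{lemma:repelling_invariant} apply.
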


We establish some general preliminary statements. 

\begin{lemma}\label{lemma:lift:arcs_invariant}
    Let $\phi(z)=z^d$, $d\geq 2$. Let $f,h$ be orientation-preserving homeomorphisms in neighborhoods of $\bar \D$ that fix $0$ such that $h$ is a lift of $f$ under $\phi$.
    \begin{enumerate}[label=\normalfont (\arabic*)]
        \item\label{lemma:lift:arc} Let $E\subset \D$ be a Jordan arc with an endpoint at $0$ and an endpoint at $\partial \D$ that is locally invariant near $0$ under $f$.
        \item Suppose that some lift $\widetilde E$ of $E$ is locally invariant near $0$ under $h$.
    \end{enumerate}
    Then every lift of $E$ is locally invariant near $0$ under $h$. The same conclusion holds for the lifts of any arc $F$ satisfying \ref{lemma:lift:arc} such that $E\cap F=\{0\}$.
\end{lemma}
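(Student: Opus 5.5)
The plan is to use the deck group of $\phi$. Let $\omega=e^{2\pi i/d}$ and $R(z)=\omega z$. The lifts of $E$ under $\phi$ are the $d$ arcs $R^j(\widetilde E)$, $j=0,\dots,d-1$. These arcs meet only at $0$ and are cyclically ordered around $0$; they are Jordan arcs because $\phi$ is injective on each sector-like piece of $\phi^{-1}(E\setminus\{0\})$. Since $h$ is a lift of $f$, we have $\phi\circ h=f\circ\phi$. Hence $\phi\circ h\circ R=f\circ \phi\circ R=f\circ\phi=\phi\circ h$, so $h\circ R=R^{k(z)}\circ h$ pointwise. By continuity and connectedness of a punctured neighborhood of $0$, the exponent is a constant $k$, so $h\circ R=R^k\circ h$ near $0$. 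Since $h$ is a homeomorphism, $z\mapsto R^k$ must permute the sheets bijectively. Therefore $k$ is coprime to $d$, and the map $R^j\mapsto R^{k}$-conjugation describes how $h$ permutes the lifts: $h(R^j\widetilde E)\subset R^{jk'}\cdot(\text{lift})$ near $0$.

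Next I show $k=1$ using orientation. Near $0$, the map $h$ sends the germ of $\phi^{-1}(E)$ into itself, because $f(E)\subset E$ near $0$ implies $\phi(h(\cdot))\in E$. So $h$ induces a permutation $\sigma$ of the $d$ germs of lifts, given by $\sigma(j)=jk+c$ for some constant $c$ coming from $h(\widetilde E)\subset R^{c}\widetilde E$. By hypothesis (2), $c=0$, so $\sigma(j)=jk \bmod d$. An orientation-preserving homeomorphism fixing $0$ preserves the cyclic order of finitely many disjoint arcs emanating from $0$. The only maps $j\mapsto jk$ that preserve cyclic order on $\Z/d$ are translations, so $jk\equiv j+c'$; evaluating at $j=0$ gives $c'=0$, and then $k\equiv1$. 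Hence $\sigma=\mathrm{id}$, and every lift $R^j\widetilde E$ is mapped by $h$ into a lift germ equal to itself. Since $h(R^j\widetilde E\cap B)$ is connected, contains points near $0$, and lies in $\phi^{-1}(E)$, it lies in $R^j\widetilde E$ for a small ball $B$. This gives local invariance.

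For the arc $F$: having established $h\circ R=R\circ h$ near $0$, take a lift $\widetilde F$. By the same reasoning, $h$ permutes the germs of lifts of $F$ by some translation $j\mapsto j+c_F$. To show $c_F=0$, use the cyclic order of the $2d$ arcs formed by the lifts of $E$ and $F$ together; these are pairwise disjoint away from $0$ because $E\cap F=\{0\}$. Each $R^j\widetilde E$ is fixed by $h$, and $\widetilde F$ lies in the sector between two consecutive lifts of $E$. The image $h(\widetilde F)$ must lie in the same sector by preservation of cyclic order, and that sector contains exactly one lift of $F$. Hence $c_F=0$.

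The main obstacle is rigor in the cyclic-order step: making sense of "germs of arcs at $0$" and their cyclic order when the arcs may spiral. I would handle it by passing to a small disk where the arcs are crosscut-like from $0$ to the boundary circle, and reading off the cyclic order from their endpoints on a small circle after a homeomorphic normalization.
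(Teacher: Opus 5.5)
Your proposal is correct and follows essentially the paper's argument. The shared core is this: $h$ induces a permutation of the $d$ lifts of $E$, orientation-preservation forces that permutation to be a cyclic translation, and the invariant lift $\widetilde E$ makes it the identity. The lifts of $F$ are then pinned down by their interleaving with the now-fixed lifts of $E$.

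Your deck-group relation $h\circ R=R^k\circ h$ is extra bookkeeping the paper does without. It does let you drop the not-quite-true claim that $R^j\widetilde E\cap B$ is connected. For a round ball $B$ centered at $0$ it gives $h(R^j\widetilde E\cap B)=R^{jk}h(\widetilde E\cap B)\subset R^{jk}\widetilde E$ directly.
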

\begin{proof}
    The arc $E$ has $d$ preimages $E_1,\dots,E_d$ under $\phi$. Each $E_i$ is a Jordan arc with endpoints at $0$ and at some point $z_i \in  \partial \D$. The arcs are disjoint except at $0$, which is their common endpoint. We assume that the points $z_i\in \partial \D$, $i\in \{1,\dots,d\}$, have the counter-clockwise order. Note that $h$ acts transitively on these arcs in the sense that there exists a neighborhood $U$ of $0$ such that for each $i\in \{1,\dots,d\}$ there exists a unique $j\in \{1,\dots,d\}$ such that $h(E_j\cap U)\subset E_i$. Hence, $h$ induces a bijection $\tilde h\colon \Z_d\to \Z_d$ with the property that  $\tilde h(j)=i$ if $h(E_j\cap U)\subset E_i$. Since $h$ is an orientation-preserving homeomorphism, the map $\tilde h$ must be a translation. By assumption, a lift of $E$ is locally invariant under $h$ near $0$. This implies that $\tilde h$ has a fixed point, so it is the identity map. Thus, every lift of $E$ is locally invariant under $h$.  

    Next, consider an arc $F$ satisfying \ref{lemma:lift:arc} such that $E\cap F=\{0\}$. Then $F$ has $d$ lifts each of which is between two lifts of $E$ and $h$ acts transitively on the lifts of $F$. Since each lift of $E$ is locally invariant near $0$ under $h$, and $h$ is an orientation-preserving homeomorphism, we see that each lift of $F$ is also locally invariant near $0$ under $h$. 
\end{proof}

\begin{lemma}\label{lem:isolatedpre-periodic}
    Let $f$ be a geometrically finite rational map. If $U_1,U_2$ are simply connected Fatou components of $f$, then every isolated point of $\bar {U_1}\cap \bar {U_2}$ is pre-periodic.  
\end{lemma}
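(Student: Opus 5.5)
Let $a_0$ be an isolated point of $\bar{U_1}\cap\bar{U_2}$. We may assume $U_1\neq U_2$, and we may replace $U_1,U_2$ by forward images. Indeed, $f$ maps $\bar{U_i}$ onto $\bar{f(U_i)}$, and $a_0$ is pre-periodic if and only if $f^{\circ n}(a_0)$ is. The plan is to push forward until both components are periodic and $a_0$ has become a periodic point. This uses two facts. First, by Sullivan's theorem there are only finitely many periodic Fatou components. Second, isolated intersection points between two fixed Fatou components can only be periodic points on their boundaries, by a local-degree/counting argument.

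\textbf{Step 1 (reduction to periodic components).} Choose $n$ with $f^{\circ n}(U_1)$ and $f^{\circ n}(U_2)$ periodic. Put $b=f^{\circ n}(a_0)$, which lies in $\bar{f^{\circ n}(U_1)}\cap\bar{f^{\circ n}(U_2)}$.

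If $f^{\circ n}(U_1)=f^{\circ n}(U_2)=V$, we argue differently. The point $a_0$ is isolated in $\partial U_1\cap\partial U_2$, so locally near $a_0$ the two components sit in distinct ``sectors'' of the complement of the Julia set. Since $f^{\circ n}$ maps both onto the same component near $a_0$, the map $f^{\circ n}$ cannot be injective near $a_0$. This is the argument of Lemma \ref{lemma:no_infinite} \ref{lemma:no_infinite:00}: local connectivity of $\partial V$ from \cite{LeiYongcheng:local_connectivity}, plus finiteness of prime ends. Hence some $f^{\circ k}(a_0)$, $k<n$, is a critical point in the Julia set. For a geometrically finite map such critical points are pre-periodic, and we are done. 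Otherwise the forward images are distinct periodic components for all larger iterates. Passing to an iterate $F=f^{\circ p}$, both are fixed: $F(V_1)=V_1$, $F(V_2)=V_2$, with $V_1\neq V_2$ simply connected.

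\textbf{Step 2 (the set $\bar{V_1}\cap\bar{V_2}$ of two fixed components).} We need $b$ to be pre-periodic, and first we need $b$ to be isolated in $\bar{V_1}\cap\bar{V_2}$. If the relevant iterate of $f$ is a local homeomorphism near $a_0$, isolatedness is preserved. Otherwise a critical point is on the orbit and we are again done.

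Now use local connectivity of $\partial V_i$ and conjugate $F|_{V_i}$ to a Blaschke product $B_i$ on $\D$, extending continuously to Carathéodory loops $\sigma_i\colon\partial\D\to\partial V_i$ with $F\circ\sigma_i=\sigma_i\circ B_i$. Because $V_1\neq V_2$ are disjoint simply connected domains, the connected components of $\sigma_i^{-1}(\bar{V_1}\cap\bar{V_2})$ are controlled as follows. Only finitely many points of $\partial\D$ can map to a given point, by \cite{TanYin96long}. Moreover, a common boundary point corresponds to a prime end of each component, and by planarity distinct common points are linearly ordered on both circles. The set $X=\sigma_1^{-1}(\bar{V_1}\cap\bar{V_2})$ is closed and forward invariant under $B_1$, and $B_1$ is expanding on $\partial\D$ (or parabolic-expanding in the geometrically finite case). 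The planarity ordering forces $B_1$ to act on the circularly ordered set $X$ injectively, preserving cyclic order. An expanding covering map of degree $\geq 2$ can do this only on a finite set, or a set with finitely many isolated points each of which is periodic. Therefore the isolated points of $X$ are periodic, and $b$ is periodic, so $a_0$ is pre-periodic.

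\textbf{Main obstacle.} Step 2 is the delicate part, namely making the planarity/cyclic-order argument rigorous. I would first try to adapt the standard argument for ``two fixed Fatou components meet in finitely many points, all periodic''. The idea is that an injective, cyclic-order-preserving self-map of a closed subset of the circle that is the restriction of an expanding map must have every isolated point eventually periodic. Such a point's orbit stays isolated and is confined to a gap structure with finitely many complementary arcs of length bounded below. Failing that, I would fall back on using geometric finiteness directly to bound the number of common boundary points.
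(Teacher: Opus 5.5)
\textbf{Step 2 has a genuine gap, and it is the step that carries the content of the lemma.}

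You claim that planarity forces $B_1$ to act injectively and cyclic-order-preservingly on $X=\sigma_1^{-1}(\bar{V_1}\cap\bar{V_2})$. This is false. For $f(z)=z^2$ with $V_1=\D$ and $V_2=\widehat\C\setminus\bar\D$, one has $X=\partial\D$, and $B_1(z)=z^2$ is two-to-one on $X$. In general, two fixed components can share many boundary points that $F$ identifies. The next principle you invoke is only asserted, and you flag it yourself as the main obstacle: that an expanding (or parabolic) circle covering acting injectively and order-preservingly on a closed invariant set has only periodic isolated points. So the proposal contains no argument for the key fact.

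The missing idea is to exploit geometric finiteness along the orbit of $a_0$, not just the pre-periodicity of critical points. The paper argues as follows. If the orbit of $a_0$ avoids critical and parabolic points, McMullen's theorem puts $a_0$ in the radial Julia set. Hence there are arbitrarily small neighborhoods $\Omega_n$ of $a_0$ that $f^{\circ n}$ maps conformally onto balls $B(a_n,r)$ of a fixed radius. The paper then tracks the sets $J_{i,n}\subset\partial\D$ of prime ends of the fixed components $f(U_i)$ at $a_n$ that come from the accesses of $U_i$ at $a_0$. Uniform continuity of the Carath\'eodory maps gives $\delta>0$ such that the pairs $J_{1,n}\times J_{2,n}$ are $\delta$-separated in the Hausdorff metric whenever $a_n\neq a_m$. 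Compactness then forces the $a_n$ to take only finitely many values.

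\textbf{Your reductions also fail, because $U_1,U_2$ are only assumed simply connected, not Jordan.}

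First, Lemma \ref{lemma:no_infinite} \ref{lemma:no_infinite:00} needs the Jordan hypothesis. Suppose the image point has several prime ends in the common image component. Then a locally injective iterate can send the access from $U_1$ and the access from $U_2$ to two different accesses of that same component. So $f^{\circ n}(U_1)=f^{\circ n}(U_2)$ does not force a critical point on the orbit, and you give no argument for this case.

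Second, a local homeomorphism need not preserve isolatedness. Points of $\bar{V_1}\cap\bar{V_2}$ may accumulate at $b$ through accesses of $V_1$ or $V_2$ at $b$ other than those that pull back into $U_1$ and $U_2$. Such points do not pull back into $\bar{U_1}\cap\bar{U_2}$, so $b$ can fail to be isolated even though $a_0$ is.

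The paper avoids both problems. It allows $f(U_1)=f(U_2)$, and it works with pairs of access sets $J_{1,n}\times J_{2,n}$ in $\partial\D\times\partial\D$ rather than with points of the Julia set.
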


\begin{proof}
    By conjugating $f$ with a M\"obius transformation, we assume that $\infty$ lies in the Fatou set. Let $a_0\in \bar {U_1}\cap \bar {U_2}$ be an isolated point. If the orbit of $a_0$ contains critical points or parabolic points, then it is pre-periodic since $f$ is geometrically finite. 
    Suppose that the orbit of $a_0$ does not contain critical points or parabolic points. By Sullivan's theorem, the components $U_1,U_2$ are eventually periodic. By passing to an iterate, we assume that $f(U_1)$ and $f(U_2)$ are fixed Fatou components, not necessarily distinct. Then by \cite{McMullen:Hausdorff_GF}*{Theorem 6.5} the point $a_0$ lies in the \textit{radial Julia set} of $f$. By definition, this means that there exists $r>0$ such that for every $\varepsilon>0$ there exists a neighborhood $\Omega$ of $a_0$ and $n\in \N$ such that $\diam(\Omega)<\varepsilon$ and the map 
    $$f^{\circ n}\colon \Omega\to B(f^{\circ n}(a_0),r)$$
    is conformal. By assumption, if $\Omega$ is sufficiently small, then $a_0$ is the unique point of $\bar {U_1}\cap \bar {U_2}$ in $\Omega$. Also, by shrinking $r$ we may assume that $\diam f(U_i)>2r$, so $f(U_i)$ is not contained in $B(f^{\circ n}(a_0),r)$ for $i=1,2$.

    By \cite{LeiYongcheng:local_connectivity}*{Lemma 2.1}, $U_1,U_2$ and $f(U_1),f(U_2)$ have locally connected boundary. Let $I$ be the infinite set of indices $n\in \N$ such that there exists a neighborhood $\Omega_n$ of $a_0$ with $\Omega_n\cap \bar {U_1}\cap \bar {U_2}=\{a_0\}$ and the map $f_n=f^{\circ n}\colon \Omega_n\to B(a_n,r)$ is conformal, where $a_n=f^{\circ n}(a_0)$. Consider conformal maps $\phi_i\colon \D\to U_i$, $i=1,2$, which extend continuously to the boundaries by Carath\'eodory's theorem. For $n\in I$ and $i=1,2$, let $P_{i,n}\subset \partial \D$ be the union of the components of $\phi_i^{-1}(\Omega_n\cap \partial U_i)$ that intersect $\phi_i^{-1}(a_0)$. Also, let $\psi_i\colon \D\to f(U_i)$, $i=1,2$, be conformal maps. Let $i\in \{1,2\}$. Consider the conformal map $g_i=\psi_i^{-1}\circ f_n\circ \phi_i$ defined in a neighborhood of $P_{i,n}$ in $\D$. Note that $g_i$ extends continuously to $P_{i,n}$ and let $Q_{i,n}=g(P_{i,n})$. Each component of $Q_{i,n}$ intersects $\psi_i^{-1}(a_n)$ and is mapped by $\psi_i$ to a curve that passes through $a_n$ and has endpoints on $\partial B(a_n,r)$; see Figure \ref{fig:isolated}. By the uniform continuity of $\psi_i$ in $\bar \D$, there exists a uniform constant $\delta>0$ such that for $J_{i,n}=\psi_i^{-1}(a_n)\cap Q_{i,n}$ we have
    \begin{align}\label{lem:isolated:delta}
        \dist(J_{i,n}, \partial \D\setminus Q_{i,n})\geq \delta.
    \end{align}

    \begin{figure}
        \centering
        \begin{tikzpicture}
            \draw[fill=black!20] (0,1.5) circle (1cm) node {$\D$};
            \draw[fill=black!20] (0,-1.5) circle (1cm) node {$\D$};
            \draw[line width=1pt,red] (0,0.5) arc[start angle=-90, end angle=-30, radius=1];
            \draw[line width=1pt,red] (-1,1.5) arc[start angle=180, end angle=90, radius=1];
            \draw[line width=1pt,blue] (-1,-1.5) arc[start angle=180, end angle=60, radius=1];

            \draw[->] (1.2,1.5)--(1.8,1.5) node[above, pos=0.5] {$\phi_1$};
            \draw[->] (1.2,-1.5)--(1.8,-1.5) node[above, pos=0.5] {$\phi_2$};

            \draw[fill=black!20] (3,1) circle (1cm);\node at (3,1.5) {$U_1$};
            \draw[fill=black!20] (3,-1) circle (1cm);\node at (3,-1.5){$U_2$};
            \draw[fill=white] (3,0)to[out=110,in=180] (3,1.3) to[out=0,in=70](3,0);
            \draw[dashed] (3,0) circle (1cm); \node at (4.4,0) {$\Omega_n$};
            \begin{scope}
                \clip (3,0) circle (1cm);
                \draw[line width=1pt,red] (3,1) circle (1cm);
                \draw[line width=1pt,red] (3,0)to[out=110,in=180] (3,1.3) to[out=0,in=70](3,0);
            \end{scope}
            \draw[line width=1pt,blue] (3,0) arc[start angle=90, end angle=150, radius=1];
            \draw[line width=1pt,blue] (3,0) arc[start angle=90, end angle=30, radius=1];
            \fill (3,0) circle (1.5pt) node[below] {$a_0$};

            \draw[->] (5,0)--(5.6,0) node[above,pos=.5] {$f_n$};

            \begin{scope}[xshift=4cm]
            \draw[fill=black!20] (3,1) circle (1cm);\node at (3,1.5) {$f(U_1)$};
            \draw[fill=black!20] (3,-1) circle (1cm);\node at (3,-1.5){$f(U_2)$};
            \draw[fill=white] (3,0)to[out=90,in=30] (2.3,1.3) to[out=-150,in=90](3,0);
            \draw[fill=white] (3,0)to[out=90,in=150] (3.7,1.3) to[out=-30,in=90](3,0);
            \draw[dashed] (3,0) circle (1cm); \node at (4.7,0) {$B(a_n,r)$};
        
            \begin{scope}
                \clip (3,0) circle (1cm);
                \draw[line width=1pt,red] (3,0) arc[start angle=-90, end angle=-150, radius=1];
                \draw[line width=1pt,red] (3,0)to[out=90,in=30] (2.3,1.3) to[out=-150,in=90](3,0);
                \draw[line width=1pt,red] (3,0)to[out=90,in=150] (3.7,1.3);
            \end{scope}
            
            \draw[line width=1pt,blue] (3,0) arc[start angle=90, end angle=150, radius=1];
            \draw[line width=1pt,blue] (3,0) arc[start angle=90, end angle=30, radius=1];
            \fill (3,0) circle (1.5pt) node[below] {$a_n$};
            \end{scope}
            
            \draw[->] (8.8,1.5)--(8.2,1.5) node[above, pos=0.5] {$\psi_1$};
            \draw[->] (8.8,-1.5)--(8.2,-1.5) node[above, pos=0.5] {$\psi_2$};

            \begin{scope}[xshift=10cm]
            \draw[fill=black!20] (0,1.5) circle (1cm) node {$\D$};
            \draw[fill=black!20] (0,-1.5) circle (1cm) node {$\D$};
            \draw[line width=1pt,red] (0,0.5) arc[start angle=-90, end angle=-30, radius=1];
            \draw[line width=1pt,red] (-1,1.5) arc[start angle=180, end angle=90, radius=1];
            \draw[line width=1pt,blue] (-1,-1.5) arc[start angle=180, end angle=60, radius=1];
            \end{scope}
        \end{tikzpicture}
        \caption{The sets $P_{i,n}$ and their images $Q_{i,n}=g(P_{i,n})$.}
        \label{fig:isolated}
    \end{figure}
    
    If a point $b\in \partial f(U_i)$ satisfies $\psi_i^{-1}(b) \cap  Q_{i,n}\neq \emptyset$ for both $i=1,2$, then $\phi_i^{-1}(f_n^{-1}(b))\cap P_{i,n}\neq \emptyset$, so $f_n^{-1}(b)\cap \bar {U_i}\neq \emptyset$ for both $i=1,2$. Thus, $a_0=f_n^{-1}(b)$ and $a_n=b$. We conclude that $a_n$ is the unique point of $\partial f(U_i)$ that satisfies $\psi_i^{-1}(a_n) \cap  Q_{i,n}\neq \emptyset$ for both $i=1,2$. In particular, if $a_m\neq a_n$, then for some $i\in \{1,2\}$ we have $\psi_i^{-1}(a_m)\subset \partial \D\setminus Q_{i,n}$, so by \eqref{lem:isolated:delta} we have     
    $$\dist(J_{i,n},J_{i,m}) \geq \delta \quad \text{for $i=1$ or $i=2$.}$$
    Consider the space $X$ of compact subsets of $\partial \D\times \partial \D$ equipped with the Hausdorff metric. Then for $n,m\in I$ with $a_n\neq a_m$, any two points 
    $$J_{1,n}\times J_{2,n},\,\, J_{1,m}\times J_{2,m}\in X$$
    have distance at least $\delta$. Since $X$ is compact, we conclude that the set $\{a_n:n\in I\}$ is finite. Thus, there exist $n\neq m$ such that $f^{\circ n}(a_0)=f^{\circ m}(a_0)$ and $a_0$ is pre-periodic, as desired.   
\end{proof}

\begin{proof}[Proof of Theorem~\ref{theorem:uniformization_general}]
We first prove the sufficiency. Suppose that $U_i$ is a quasidisk for each $i\in I$, as in \ref{theorem:uniformization_general:2}. By Theorem \ref{theorem:schottky}, it suffices to prove that for every pair of distinct $i,j$ there exists a uniformly quasiconformal homeomorphism of $\widehat \C$ that maps $U_i$ and $U_j$ to disks. If $\bar{U_i}\cap \bar{U_j}=\emptyset$, then the conclusion follows from Corollary \ref{corollary:separated_quasidisk}. Suppose that $\bar{U_i}\cap \bar{U_j}=\{a_0\}$. By the assumptions, combined with Proposition \ref{proposition:component_dependent} and Lemma \ref{lemma:tangent_local}, we see that the pair $U_i,U_j$ is quasiconformally circularizable. By Proposition \ref{proposition:circularizable_gf},  the pair $U_i, U_j$ is uniformly quasiconformally circularizable, as desired. The last uniqueness statement follows from the uniqueness part of Theorem \ref{theorem:schottky}.

We now prove the necessity. Suppose that there exists a quasiconformal map $g$ of $\widehat \C$ such that $g(U_i)$ is a disk for each $i\in I$, as in \ref{theorem:uniformization_general:1}. It is obviously necessary that $U_i$ is a quasidisk for each $i\in I$. Also, if $\bar{U_i}\cap \bar{U_j}\neq \emptyset$ for some $i\neq j$, then $\bar{U_i}\cap \bar{U_j}$ consists of a single point $a_0$, given that the Julia set is not a Jordan curve.

By Lemma~\ref{lem:isolatedpre-periodic}, $a_0$ is pre-periodic. Let $b_0$ be a periodic point in the orbit of $a_0$. Observe that the point $b_0$ must be repelling or parabolic; indeed, there are no Cremer points for geometrically finite rational maps \cite{Milnor:dynamics}*{Theorem 11.17}.
Let $l$ be the pre-period of $a_0$ and $p$ be a multiple of the period of $b_0=f^{\circ l}(a_0)$ so that $f^{\circ p}$ fixes all simply connected Fatou components that contain $b_0$ in their boundary; there are finitely many such Fatou components by Lemma \ref{lemma:no_infinite} \ref{lemma:no_infinite:1}. Define $f_1=f^{\circ l}$, $f_2=f^{\circ p}$. By Lemma \ref{lemma:lift} there exists a parabolic germ $h$ at $a_0$ such that $f_2\circ f_1=f_1\circ h$.

Suppose that $b_0$ is repelling for $f_2$, so by Lemma \ref{lemma:lift}, $a_0$ is repelling for $h$. Given that $f_1(U_i)$ and $f_1(U_j)$ are Jordan regions that are fixed under $f_2$, we see that $f_2$ locally preserves their boundaries near $b_0$. Suppose first that $f_1=f^{\circ l}$ is locally injective near $a_0$. Then we see directly that $h$ locally preserves $\partial U_i$ and $\partial U_j$ near $a_0$. Now, suppose that $f_1$ has a critical point at $a_0$. If $f_1^{-1}$ is a branch of the inverse of $f_1$ that maps a neighborhood of $b_0$ in $f_1(U_i)$ onto a neighborhood of $a_0$ in $U_i$, then we may choose $h$ so that $h=f_1^{-1}\circ f_2\circ f_1$; see Lemma \ref{lemma:lift} \ref{lemma:lift:choices}. Thus, we may have that $h$ locally preserves $\partial U_i$ near $a_0$. By Lemma \ref{lemma:lift:arcs_invariant}, $\partial U_j$ is also locally invariant under $h$ near $a_0$. In both cases, by applying Lemma \ref{lemma:repelling_invariant} we see that no quasiconformal map of the sphere can map $U_i$ and $U_j$ to two disks, a contradiction.

Therefore, $b_0$ is a parabolic point for $f_2$. Let $m\geq1$ be the local degree of $f_1$ at $a_0$ and $n+1\geq 2$ be the parabolic multiplicity of the point $b_0$ for the map $f_2$. Then the parabolic multiplicity of $a_0$ for the map $h$ is $nm+1$ by Lemma \ref{lemma:lift} \ref{lemma:lift:1}. We have $nm+1=2$ if and only if $f_1$ is locally injective near $a_0$ and the multiplicity of $b_0$ for the map $f_2$ is $2$. In this case, any Fatou component that contains $b_0$ in its boundary, including the immediate basin, has a cusp; see Lemma \ref{lemma:avoid_petals}. Therefore, the same is true for $a_0$ by the local injectivity of $f_1$. Thus, $U_i$ and $U_j$ are not quasidisks, a contradiction. We conclude that the parabolic multiplicity of $a_0$ for $h$ is strictly greater than $2$. 

By Lemma \ref{lemma:avoid_petals} and since $U_i,U_j$ are quasidisks, we see that $f_1(U_i)$ and $f_1(U_j)$ are immediate basins of $b_0$, not necessarily distinct. Note that in a neighborhood of $a_0$ in each of the sets $U_i$ and $U_j$ the map $f_1$ conjugates $h$ to $f_2$. Thus, $U_i,U_j$ contain attracting petals $P_i,P_j$, respectively, of $h$ at $a_0$. In addition, points of $U_i,U_j$ near $a_0$ are absorbed under iteration of $h$ by $P_i,P_j$, respectively. If $nm+1\geq 4$, then $h$ has at least $3$ attracting directions, so there exists a third attracting direction and a corresponding petal $P$  that is disjoint from $U_i$ and $U_j$ and can be taken to be a quasidisk. By assumption, there exists a quasiconformal map of the sphere that sends $U_i$ and $U_j$ to tangent disks. This contradicts Lemma \ref{lemma:room3}.  Therefore, $nm+1 = 3$. This implies that $m = 2$ and $n=1$ or $m=1$ and $n=2$, as required in the two alternatives of Theorem \ref{theorem:uniformization_general} \ref{theorem:uniformization_general:2}.
\end{proof}

\begin{proof}[Proof of Theorem \ref{theorem:rational_uniformization}]
    Suppose that there exists a quasiconformal map $g$ of $\widehat \C$ that maps each Fatou component of $f$ onto a disk. Condition \ref{r:jordan} is immediate. Each Fatou component is a quasidisk. By Proposition \ref{prop:quasidiskgf} and Remark \ref{remark:prop:quasidisk}, if $f$ has a parabolic periodic point $a_0$, then its multiplicity must be at least $3$. On the other hand, if the multiplicity is strictly greater than $3$, then at least three Fatou components intersect at $a_0$, which is a contradiction. Thus \ref{r:parabolic3} holds. If $a_0$ lies in the boundary of a Fatou component $U$, then $U$ is periodic by Lemma \ref{lemma:no_infinite} \ref{lemma:no_infinite:1} and $U$ is an immediate basin of $a_0$ by Proposition \ref{prop:quasidiskgf}. This establishes condition \ref{r:parabolic_basins} of Theorem \ref{theorem:rational_uniformization}. Finally, suppose that $U,V$ are Fatou components with $\bar U\cap \bar V\neq \emptyset$. Since the Julia set of $f$ is not a Jordan curve, we have $\partial U\neq \partial V$. Since $g(U)$ and $g(V)$ are tangent disks, $\bar U\cap \bar V$ must consist of a single point $a_0$. By Theorem \ref{theorem:uniformization_general}, the point $a_0$ is mapped to a parabolic periodic cycle of multiplicity $3$ by an iterate of $f$ that is a local homeomorphism near $a_0$, as required in condition \ref{r:parabolic_tangent} of Theorem \ref{theorem:rational_uniformization}.

    Conversely, suppose that conditions \ref{r:jordan}, \ref{r:parabolic3}, \ref{r:parabolic_basins}, \ref{r:parabolic_tangent} hold and that the Julia set of $f$ is not homeomorphic to $\mathbb S^1$ or $\mathbb S^2$. If there exists only one Fatou component, then this component is a Jordan region by \ref{r:jordan}, so its complement, i.e., the Julia set, has non-empty interior. However, this forces the Julia set to be the entire sphere \cite{Milnor:dynamics}*{Corollary 4.11}, a contradiction. Thus, there exist at least two Fatou components. 
    
    Our goal is to verify the assumptions of Theorem \ref{theorem:uniformization_general} \ref{theorem:uniformization_general:2}. By assumption \ref{r:parabolic3}, there are no parabolic points of multiplicity $2$. Let $U$ be a Fatou component of $f$. By \ref{r:parabolic_basins}, the orbit of $\partial U$ contains the orbit of at most one parabolic periodic point $a$, in which case an immediate basin of $a$ is in the orbit of $U$. Thus, Proposition \ref{prop:quasidiskgf} applies and $U$ is a quasidisk. By condition \ref{r:parabolic_tangent}, if $U,V$ are distinct Fatou components with $\bar U\cap \bar V\neq \emptyset$, then the intersection $\bar U\cap \bar V$ consists of a single point $a_0$ that is not critical and it is eventually mapped to a parabolic periodic cycle of multiplicity $3$. Note that $f(U)$ and $f(V)$ are Fatou components that are Jordan regions by \ref{r:jordan}, and they are distinct by Lemma \ref{lemma:no_infinite} \ref{lemma:no_infinite:00}, given that $a_0$ is not a critical point. Applying condition \ref{r:parabolic_tangent} to the pair $f(U),f(V)$, we see that $f(a_0)$ is the unique point in the intersection $\bar{f(U)}\cap \bar{f(V)}$ and $f(a_0)$ is not a critical point. Hence, inductively, the entire forward orbit of $a_0$ is not critical. Hence, $a_0$ is mapped to a parabolic periodic cycle of multiplicity $3$ by an iterate of $f$ that is a local homeomorphism near $a_0$, as in the assumptions of Theorem \ref{theorem:uniformization_general}. 
    
    We now apply Theorem \ref{theorem:uniformization_general}, which implies the existence of a quasiconformal homeomorphism $g\colon \widehat \C\to \widehat \C$ that maps each Fatou component of $f$ onto a disk and is $1$-quasiconformal on the Julia set $J(f)$. Since $J(f)$ has measure zero \cite{McMullen:Hausdorff_GF}*{Corollary 6.2}, the $1$-quasiconformality condition holds vacuously for any such quasiconformal map $g$. By the last part of Theorem \ref{theorem:uniformization_general} we conclude that $g|_{J(f)}$ is unique up to post-composition with M\"obius transformations. 
\end{proof}

The proof of Theorem \ref{theorem:polynomial_uniformization} is obtained by exactly the same argument as above, applied to bounded Fatou components only.

\begin{proof}[Proof of Proposition \ref{prop:julia_jordan}]
    Suppose that the Julia set of $f$ is a Jordan curve. Then the Fatou components of $f$ can be either attracting or parabolic; see the discussion after Theorem 1.5 in \cite{LuoNtalampekos:piecewise_qc}. This implies that $f$ has no critical points on $J(f)$. Let $U$ be a Fatou component. By Proposition \ref{prop:quasidiskgf}, $U$ is a quasidisk if and only if either $f$ has no parabolic periodic points or $f$ has a parabolic periodic point of multiplicity at least $3$. Since there are only two Fatou components the multiplicity of parabolic points is at most $3$; see \cite{Milnor:dynamics}*{Section 10}. Finally, note that $f$ has no parabolic periodic points if and only if it is hyperbolic, given that it has no critical points on the Julia set \cite{Milnor:dynamics}*{Theorem 19.1}.
\end{proof}

\begin{proof}[Proof of Corollary \ref{corollary:unicritical}]
    Suppose that $f$ is a unicritical geometrically finite polynomial. If there is no bounded Fatou component, the statement is vacuously true, so we assume that there exists a bounded Fatou component. All bounded periodic Fatou components are either hyperbolic or parabolic. Thus, the unique (finite) critical point of $f$ is contained in the Fatou set and there are no critical points on the Julia set. 
    
    Suppose that there exists a quasiconformal map of $\C$ that maps each bounded Fatou component of $f$ onto a disk. Suppose first that $J(f)$ is not homeomorphic to $\mathbb S^1$.  By Theorem \ref{theorem:polynomial_uniformization}, conditions \ref{r:parabolic3} and \ref{r:parabolic_tangent} hold. Suppose that $f$ has no parabolic points. By \ref{r:parabolic_tangent}, we conclude that bounded Fatou components have disjoint closures. Since there are no critical points on the Julia set, $f$ is hyperbolic \cite{Milnor:dynamics}*{Theorem 19.1}. Now, if $J(f)$ is homeomorphic to $\mathbb S^1$, by Proposition \ref{prop:julia_jordan}, either $f$ has a parabolic periodic point of multiplicity $3$ or $f$ is hyperbolic, as desired. 

    Conversely, suppose that $f$ is hyperbolic and all bounded Fatou components have disjoint closures. If $J(f)$ is homeomorphic to $\mathbb S^1$, then by Proposition \ref{prop:julia_jordan} there exists a quasiconformal map $g$ of $\C$ that maps the bounded Fatou component of $f$ onto a disk. If $J(f)$ is not homeomorphic to $\mathbb S^1$, then conditions \ref{r:parabolic3}, \ref{r:parabolic_basins}, \ref{r:parabolic_tangent} are vacuously true, so by Theorem \ref{theorem:polynomial_uniformization} there exists a quasiconformal map $g$ of $\C$ that maps each bounded Fatou component of $f$ onto a disk. 
    
    Finally, suppose that $f$ has a parabolic periodic point of multiplicity $3$. Since $f$ is unicritical, there is a unique periodic cycle of bounded Fatou components, all of which are immediate basins of this parabolic point; see \cite{Milnor:dynamics}*{Theorem 10.15}. Hence, conditions \ref{r:parabolic3} and \ref{r:parabolic_basins} hold. By Lemma \ref{lem:evenpara} below, condition \ref{r:parabolic_tangent} holds. Thus, by Theorem \ref{theorem:polynomial_uniformization} we conclude that there exists a quasiconformal map $g\colon \C\to \C$ that maps each bounded Fatou component of $f$ to a disk. 
\end{proof}

\begin{lemma}\label{lem:evenpara}
    Let $f$ be a unicritical polynomial with a parabolic periodic point of multiplicity at least $3$. Then every contact point between Fatou components is eventually mapped to the parabolic cycle.
\end{lemma}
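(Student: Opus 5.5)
We must show: for $f(z)=z^d+c$ with a parabolic cycle of multiplicity at least $3$, every point $a_0\in\bar U\cap\bar V$, where $U\neq V$ are bounded Fatou components, is eventually mapped to the parabolic cycle. (The basin of $\infty$ is excluded, since we only consider bounded components.) The plan is to push the contact point forward until both components lie in the periodic cycle, and then use the structure of the parabolic flower.

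First, Lemma~\ref{lem:isolatedpre-periodic} should make $a_0$ pre-periodic: bounded Fatou components are Jordan regions by \cite{RoeschYin:boundary_fatou}, and their closures meet in an isolated point. To see isolation, note that $\bar U\cap\bar V$ is a subset of two Jordan curves bounding disjoint regions, and for polynomials a non-trivial arc of intersection is impossible because the basin of $\infty$ is dense near $J(f)$.

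Second, the critical point $0$ lies in the Fatou set, so no iterate of $f$ has a critical point on $J(f)$. Lemma~\ref{lemma:no_infinite}~\ref{lemma:no_infinite:00} then shows that $f^{\circ k}(U)\neq f^{\circ k}(V)$ for all $k$, with $f^{\circ k}(a_0)\in \overline{f^{\circ k}(U)}\cap \overline{f^{\circ k}(V)}$. Replacing $a_0$ by a large iterate, we may assume that $a_0$ is periodic and that $U,V$ are periodic. By \cite{Milnor:dynamics}*{Theorem 10.15}, the only cycle of bounded periodic components is the cycle of immediate basins of the parabolic cycle $\{b_0,\dots\}$.

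It remains to show that a periodic point $a_0$ on the common boundary of two distinct periodic basins $U,V$ must be parabolic. Suppose instead that $a_0$ is repelling. Pass to an iterate $F=f^{\circ p}$ that fixes $a_0$, $U$ and $V$. Conjugate $F|_U$ and $F|_V$ to parabolic Blaschke products via Riemann maps. The point $a_0$ then corresponds to a fixed point of each boundary map, distinct from the parabolic fixed point, and is therefore repelling on the circle. Since $\infty$ is also adjacent to the boundary, the repelling point $a_0$ would be the landing point of external rays. Those rays, together with the two basins, would separate the plane, and this is where I would use unicriticality.

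\emph{The main obstacle is this third step.} I would attempt it via Douady--Hubbard ray combinatorics. The component $U_1$ containing the critical value is separated from the other components of the cycle by the two external rays landing at the root point $b$ on $\partial U_1$. A repelling contact point $a_0$ between $U$ and $V$ in the cycle would produce a second cut point with the same separating property. Pulling this cut point back along the cycle, using that $f$ has a single critical point, would force two distinct periodic orbit portraits in the wake, contradicting Milnor's orbit-portrait theorem that the characteristic wake contains only one portrait of that period. Consequently every contact point between components of the cycle is in the parabolic orbit, and pulling back along the first two steps gives the lemma.

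Alternatively, one could use the parabolic flower. Multiplicity at least $3$ means at least two petals at $b_0$, each lying in a distinct immediate basin. Contacts between basins in the cycle are then only at points of the parabolic cycle, since components of the cycle touch only at the roots, which are parabolic points. This follows from the tree-like limb structure of the filled Julia set for unicritical maps.
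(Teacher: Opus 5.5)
Your reduction matches the paper: you push the contact point forward with Lemma~\ref{lemma:no_infinite}~\ref{lemma:no_infinite:00} until $a_0$, $U$ and $V$ are periodic, and you use that the bounded periodic components form a single cycle of parabolic basins. The gap is the third step, which is the entire content of the lemma and which you do not prove.

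\begin{itemize}
\item The orbit-portrait sketch rests on claims you neither make precise nor verify: ``a second cut point with the same separating property'', ``two distinct periodic orbit portraits in the wake'', and a uniqueness statement about the characteristic wake that is not a theorem in the form you quote.
\item The ``alternative'' only restates the conclusion (``components of the cycle touch only at the roots'') by appeal to an unspecified limb structure.
\item A local analysis at a repelling $a_0$, as in your Blaschke product paragraph, cannot close the gap. Periodic Fatou components can touch at repelling periodic points when there are several critical points, so unicriticality has to enter globally.
\item Separately, your isolation argument in the first step is insufficient. Ruling out arcs does not make the intersection of two Jordan curves discrete; it could be a Cantor set.
\end{itemize}

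The missing idea is a counting argument that uses only the transitivity coming from the single cycle, which you already quoted.

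\begin{itemize}
\item \emph{Setup.} Let $U_1,\dots,U_k$ be the bounded periodic components and $a_1,\dots,a_l$ their mutual contact points. Form the bipartite graph on these vertices, joining $U_i$ to $a_j$ when $a_j\in\partial U_i$.
\item \emph{It is a forest.} A cycle would give a Jordan curve in $\bigcup_i U_i\cup\{a_1,\dots,a_l\}$ whose bounded complementary region meets two Fatou components. That region then contains Julia points, hence points of $\mathcal A(\infty)$, which is impossible since $\mathcal A(\infty)$ is connected, unbounded and disjoint from the curve.
\item \emph{Fix for your first step.} The same enclosing argument, applied to two common points of $\bar U\cap\bar V$, shows that $\bar U\cap\bar V$ is a single point. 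Pre-periodicity of $a_0$ then follows directly: if $f^{\circ n}(U)$ and $f^{\circ n}(V)$ are periodic with common period $q$, both $f^{\circ n}(a_0)$ and $f^{\circ (n+q)}(a_0)$ are the unique point of $\overline{f^{\circ n}(U)}\cap\overline{f^{\circ n}(V)}$. You do not need Lemma~\ref{lem:isolatedpre-periodic}.
\item \emph{Each $U_i$ has a parabolic contact point.} Multiplicity at least $3$ means the parabolic point on $\partial U_i$ lies on the boundary of at least two distinct immediate basins. So it is a contact point adjacent to $U_i$, and the graph has edges.
\item \emph{Counting.} Contact-point vertices have valence at least $2$, so every leaf is a component vertex. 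The map $f$ induces a graph automorphism that is transitive on the component vertices, so they all have the same valence, and that valence is therefore $1$.
\item \emph{Conclusion.} Each $U_i$ has exactly one contact point, namely its parabolic point. Hence every periodic contact point is parabolic.
\end{itemize}
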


\begin{proof}
    Since $f$ is a geometrically finite polynomial, all bounded Fatou components are Jordan regions \cite{RoeschYin:boundary_fatou}. Let $U,V$ be bounded Fatou components such that $\bar U\cap \bar V\neq \emptyset$. Since $f$ is a polynomial, the Julia set is the boundary of the basin $\mathcal A(\infty)$, so $\bar U\cap \bar V$ consists of a single point $a_0$, which is not critical. Since there are no critical points on the Julia set, for each $n\in \N$ the components $f^{\circ n}(U)$ and $f^{\circ n}(V)$ are distinct and touch at $f^{\circ n}(a_0)$; see Lemma \ref{lemma:no_infinite} \ref{lemma:no_infinite:00}. Since $U$ and $V$ are pre-periodic, the point $a_0$ is pre-periodic. Thus, by passing to iterates, it suffices to show that every periodic contact point between two periodic Fatou components is parabolic. 
        
    Let $\{U_1,\dots, U_k\}$ be the collection of bounded periodic Fatou components, and let $\{a_1,\dots, a_l\}$ be the collection of all contact points between $U_i$ and $U_j$, $i\neq j$. We show that $\{a_1,\dots,a_l\}$ is exactly the parabolic cycle. Since $f$ is unicritical, $\{U_1,\dots, U_k\}$ form a single orbit; see \cite{Milnor:dynamics}*{Theorem 10.15}. In particular, $f$ permutes the collection $\{U_1,\dots, U_k\}$ and $f^{\circ k}(U_i)=U_i$ for each $i\in \{1,\dots,k\}$. Moreover, $f$ maps each contact point to a contact point and $f^{\circ k}$ fixes contact points. Therefore $f$ is a bijection on $\{a_1,\dots,a_l\}$.

    We consider a combinatorial bipartite graph $\Gamma$ with two types of vertices $V_1, V_2$: vertices in $V_1$ are in correspondence with components in $\{U_1,\dots,U_k\}$ and vertices in $V_2$ are in correspondence with contact points $\{a_1,\dots, a_l\}$. A vertex $v\in V_1$ is connected to $w\in V_2$ if the corresponding contact point for $w$ is on the boundary of the component for $v$. By construction, $\Gamma$ is a finite union of trees, and the map $f$ induces a natural graph isomorphism $f_*\colon \Gamma \rightarrow \Gamma$. By the above, $f_*$ is a permutation on $V_1$.

    Let $T$ be a component of $\Gamma$. We claim that $T$ contains exactly one vertex in $V_2$ that corresponds to a point in the parabolic periodic cycle. Indeed, let $A = V_1 \cap T$ and $B = V_2 \cap T$. Since each vertex in $V_2$ has valence at least $2$, every endpoint of $T$ is in $V_1$. Since $f_*$ acts transitively on $V_1$, all vertices of $A$ have the same valence. Thus $A$ is exactly the set of endpoints of $T$. Since $T$ is bipartite, there is a unique vertex $b\in B$ which connects to every vertex in $A$, i.e., $T$ is a star. 
    Since the components corresponding to the vertices of $A$ are parabolic and the parabolic multiplicity is at least $3$, the contact point corresponding to $b$ is parabolic. 
\end{proof}

\begin{proof}[Proof of Proposition \ref{corollary:degree2}]
    Suppose that $f(z) = z^d+c$ is hyperbolic, and the closures of any two distinct bounded Fatou components are disjoint. Let $\Omega_1,\dots, \Omega_p$ be the unique cycle of periodic Fatou components of $f$. Then there exist disjoint open sets $V_i$, $i\in \{1,\dots,p\}$, and open sets $U_i$, $i\in \{1,\dots,p\}$ such that $\Omega_i \Subset U_i \Subset V_i$ and the map $f^{\circ p}\colon U_i \to V_i$ is polynomial-like. By the straightening theorem \cite{CarlesonGamelin:Dynamics}*{Theorem VI.1.1}, this polynomial-like map is hybrid equivalent to a unicritical polynomial with an attracting fixed point whose immediate basin contains the critical point. Thus, the corresponding straightening parameter lies in the main hyperbolic component of the Multibrot set. Since the neighborhoods $V_i$, $i\in \{1,\dots,p\}$, are chosen pairwise disjoint, the associated renormalization is primitive. Thus, $c$ is contained in the main hyperbolic component of a primitive Multibrot copy. See \cite{DudkoLyubichMLCFeigenbaum}*{\S 2.2.3--2.2.4} and \cite{MilnorOrbitPortraits}*{\S~6} for the discussion for the Mandelbrot set, and the case for Multibrot set is similar.
        
    Conversely, suppose that $c$ lies in the main hyperbolic component of a primitive Multibrot copy. Then the distinct periodic bounded Fatou components have disjoint closures. We claim that the same is true for all bounded Fatou components. Suppose, to the contrary, that two distinct bounded Fatou components $U, V$ have intersecting closures.
    Since $f$ is hyperbolic, the critical point lies in the Fatou set. Therefore, the images $f^{\circ n}(U), f^{\circ n}(V)$ remain distinct bounded Fatou components whose closures meet at a point (Lemma \ref{lemma:no_infinite} \ref{lemma:no_infinite:00}). Since every bounded Fatou component is eventually periodic, this contradicts the disjointedness of the closures of distinct periodic bounded Fatou components. Hence all bounded Fatou components have pairwise disjoint closures.

    Suppose $f(z) = z^d +c$ has a parabolic periodic point of multiplicity $3$. Let $a$ be a parabolic periodic point with period $p$. Since $f$ is unicritical, $f$ has only one periodic cycle of bounded Fatou components. So the two attracting basins of the parabolic periodic point are contained in the same cycle. Thus, $(f^{\circ p})'(a) = -1$. In the family of unicritical polynomials the multiplier gives a branched covering map of a hyperbolic component to $\D$
    \cite{MilnorOrbitPortraits}*{Theorem~6.5 and Remark~6.4}. Thus the period-doubling points correspond to exactly those with multiplier $-1$. Therefore, $c$ is a period-doubling bifurcation point.
    
    Conversely, suppose $c$ is a period-doubling bifurcation point. Then there exists a parabolic periodic point $a$ of period $p$ with $(f^{\circ p})'(a) = -1$. Therefore, there are $2n$ attracting basins at $a$ for some $n\in \N$; see \cite{Milnor:dynamics}*{Lemma 10.4}. Since $f$ is unicritical, there is a unique cycle of attracting basins; see \cite{Milnor:dynamics}*{Theorem 10.15}. Thus, there are exactly two attracting basins at $a$, interchanged by $f^{\circ p}$. In particular, the parabolic multiplicity is $3$. 
\end{proof}

\appendix
\section{Necessary and sufficient conditions for quasidisks}
\label{appendix}

We establish necessary and sufficient conditions so that a Fatou component of a geometrically finite rational map is a quasidisk.

\begin{theorem}[Proposition~\ref{prop:quasidiskgf}]\label{thm:quasidiskgf_appendix}
    Let $f$ be a geometrically finite rational map, and let $U$ be a Fatou component whose orbit $\{f^{\circ n}(U)\}_{n\geq 0}$ consists of Jordan regions.
    Then $U$ is a quasidisk if and only if
    \begin{enumerate}[label=\normalfont(\arabic*)]
        \item\label{prop:quasidisk:pereq:1} for each $V \in \{f^{\circ n}(U)\}_{n\geq 0}$, $\partial V$ contains at most one parabolic periodic point $a_V$, in which case $V$ is an immediate basin of $a_V$ and 
        \item\label{prop:quasidisk:pereq:2}  if $a_V$ has parabolic multiplicity $2$, then no point $z\in \partial U$ is mapped to $a_V$ by an iterate of $f$ that is a local homeomorphism near $z$. 
    \end{enumerate}
    \end{theorem}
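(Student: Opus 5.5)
We reduce to the case of a periodic component and then treat its boundary locally, one boundary point at a time. First, the reduction. Let $V=f^{\circ l_U}(U)$ be the periodic component in the orbit of $U$ and set $g=f^{\circ l_U}\colon U\to V$, a proper map between Jordan regions. For necessity: if $U$ is a quasidisk, then near each non-critical point of $\partial U$ the map $g$ is bi-Lipschitz, so every boundary arc of $V$ avoiding the finitely many critical values is a quasiarc. Near a critical value of $g$ (a point of $\partial V$ that is the image of a critical point on $\partial U$) we argue by the power-map normal form $z\mapsto z^m$, as in Lemma~\ref{lemma:avoid_petals}\ref{lemma:avoid_petals:2}. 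For sufficiency we go the other way. If $V$, and hence every component in its cycle, is a quasidisk, we pull the quasicircle property back to $\partial U$ locally. At a non-critical point of $\partial U$, Koebe distortion (Lemma~\ref{lemma:koebe_qm}) transfers the Ahlfors three-point condition \eqref{condition:ahlfors}. At a critical point $z\in\partial U$ of local degree $m$, the arc of $\partial U$ through $z$ is one of the $m$ branches of the $m$-th root of an arc of $\partial V$ through $g(z)$, and we use the prime-end picture from the proof of Lemma~\ref{lemma:no_infinite}.

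The $m$-th root of a quasiarc through $0$ is a quasiarc precisely when the arc makes a nonzero angle at $0$ in the quasiconformal sense. Concretely, the two half-arcs must not be tangent to each other along a cusp. Quasiarcs through repelling periodic points are fine, because of self-similarity under Koenigs linearization. Cusps only arise at parabolic points. This is exactly where condition \ref{prop:quasidisk:pereq:2} enters. If $a_V$ has multiplicity $2$, then $V$ has a cusp at $a_V$ (Lemma~\ref{lemma:avoid_petals}). Pulling back by a local homeomorphism keeps the cusp, whereas pulling back by a map of local degree $2$ opens the cusp into a quasiarc, with angle $\pi$ becoming $\pi/2$ in the rescaled picture. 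In the degree-$2$ case we verify this using the Fatou coordinate $w=-1/z$, in which the immediate basin boundary lies between two horizontal lines, as in the proof of Theorem~\ref{theorem:germ_tangent_modification}.

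For the periodic case, fix $V$ periodic with period $p$ and a Jordan boundary. The plan follows \cite{LuoNtalampekos:piecewise_qc}*{Theorem 1.7}. Conjugate $f^{\circ p}|_V$ by a Riemann map to a Blaschke product $B$ on $\D$. When $V$ is an attracting basin we replace $B$ by $z\mapsto z^d$, and when $V$ is parabolic we replace it by a parabolic model Blaschke product with a single parabolic fixed point on $\partial\D$. The quasidisk property of $V$ is then equivalent to the existence of a quasisymmetric conjugacy between $f^{\circ p}|_{\partial V}$, read through the exterior, and the model circle covering. We realize this through conformal welding. The boundary dynamics of $f^{\circ p}$ on $\partial V$, seen from $V$ and from $V^*$, are two expanding or parabolic circle coverings. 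A homeomorphism conjugating them is quasisymmetric if and only if the two coverings have matching parabolic structure at corresponding periodic points. That is, parabolic points must correspond to parabolic points with the same number of petals on each side; otherwise the conjugacy is only a homeomorphism.

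Now we check condition \ref{prop:quasidisk:pereq:1}. A parabolic point on $\partial V$ for which $V$ is \emph{not} the immediate basin forces a cusp by Lemma~\ref{lemma:avoid_petals}. If there are two parabolic points, the model from inside can have only one parabolic fixed point, so there is a mismatch. If there is exactly one parabolic point, the inner side is parabolic with one petal, and the outer side sees repelling petals of $f^{\circ p}$ on both sides. These match the model, with multiplicity $\geq3$ needed so that the two boundary arcs lie in distinct repelling petals, as in Lemma~\ref{lemma:preserve_parabolic}. In the multiplicity-$2$ case $V$ itself has a cusp, and this is consistent because \ref{prop:quasidisk:pereq:2} is then vacuous for $V$ only through degree-$2$ preimages. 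Finally, a quasisymmetric conjugacy yields a quasiconformal extension by Beurling--Ahlfors, hence $V$ is a quasidisk.

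The main obstacle is this periodic step: showing that the two circle coverings induced on $\partial V$ by the inside and the outside are quasisymmetrically conjugate, including the case where the outside of $V$ is not invariant. We would handle this by replacing the outside with a local model near each periodic point of $\partial V$, using finiteness of the periodic points that are critical or parabolic, and citing the piecewise-quasiconformal gluing results of \cite{LuoNtalampekos:piecewise_qc}.
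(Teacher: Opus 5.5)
Your necessity argument and your periodic-case strategy essentially match the paper. The necessity side rests on the cusp from Lemma~\ref{lemma:avoid_petals} and on the observation that local homeomorphisms preserve the multiplicity-$2$ cusp. The periodic case is conformal welding of the interior Blaschke model against an exterior piecewise-conformal model, with quasisymmetry coming from matching hyperbolic/parabolic vertex types. The gap is in sufficiency for a strictly pre-periodic $U$ whose periodic image $V$ has a parabolic point $a_V$ of multiplicity $2$ on its boundary, which is exactly the case condition~\ref{prop:quasidisk:pereq:2} is designed for.

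Here $V$ is not a quasidisk, so ``pull the quasicircle property back from $V$'' is unavailable. Your welding argument also cannot be run for $V$. At $a_V$ the interior level-$n$ arcs shrink polynomially (parabolic). From the exterior, the thin cusp is a horizontal strip in Fatou coordinates, so the arcs shrink exponentially. This is a type mismatch.

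Your local picture at critical preimages of $a_V$ (the degree-$k$ pullback opens the cusp) is the right heuristic. Two small corrections: the interior angle $2\pi$ becomes $2\pi/k$, and $k\ge 2$ is arbitrary, not just $2$. But the argument presupposes two facts you never prove. First, that each of the two half-arcs of $\partial V$ at $a_V$ is a quasiarc. Second, that $\partial V$ is locally a quasiarc away from the cycle of $a_V$; you need this at every point of $\partial U$ not lying over $a_V$.

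The strip estimate from the proof of Theorem~\ref{theorem:germ_tangent_modification} only locates the two halves. Upgrading it to quasiarc regularity needs a quasiarc fundamental segment, i.e.\ the very regularity that is missing. Nor can you obtain that regularity separately. Small arcs of $\partial V$ near a critical preimage of $a_V$ reach large scale only by passing through the cusp geometry at $a_V$, so the two problems are intertwined. Your remark that~\ref{prop:quasidisk:pereq:2} is ``vacuous for $V$ only through degree-2 preimages'' does not address this.

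Relatedly, your dichotomy for $m$-th roots is misstated. A quasiarc through $0$ always lifts to quasiarcs under $z\mapsto z^m$ (lift the straightening map). The trouble at $a_V$ is that $\partial V$ is not a quasiarc there at all.

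The paper avoids $V$ entirely. It takes a lift $S\colon\partial U\to\partial U$ of $f|_{\partial V}$ with $f\circ S=f\circ f$. It then welds the Markov maps of $U$ itself, $F=\phi^{-1}\circ S\circ\phi$ and $G=\psi^{-1}\circ S\circ\psi$, with~\ref{appendix:M2} supplied by Lemma~\ref{lem:expandprep}.

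By condition~\ref{prop:quasidisk:pereq:2}, every point of $\partial U$ over $a_V$ has local degree $k\ge 2$. By Lemma~\ref{lemma:lift}, the lifted germ there is parabolic of multiplicity $1+k\ge 3$, with an attracting direction on each side of $\partial U$. That vertex is therefore symmetrically parabolic for both $F$ and $G$, and Theorem~\ref{thm:appendix-circle-extension} makes the welding of $U$ quasisymmetric directly.

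The missing idea is this: the critical pullback that opens the cusp must be built into the circle dynamics on $\partial U$. Applying it afterwards to a curve whose regularity you do not control does not work.
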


    In particular, condition~\ref{prop:quasidisk:pereq:2} implies that if $a_V$ has multiplicity $2$, then $U$ cannot be a periodic component; see Figure~\ref{fig:JuliaSet}.

\subsection{Quasiconformal extension of circle maps}
Let $T\colon\mathbb S^1\to\mathbb S^1$ be an orienta\-tion-preserving covering map. A \emph{Markov partition} $\mathcal P(T;E)$ consists of a finite forward invariant set
$$
E=\{e_0,\dots,e_r\}\subset\mathbb S^1
$$
in cyclic order and the closed arcs $A_j$ from $e_j$ to $e_{j+1}$, where $e_{r+1}=e_0$, such that $T$ is injective on $\operatorname{int}(A_j)$.
Equivalently, if $T(\operatorname{int}(A_j))\cap\operatorname{int}(A_k)\neq\emptyset$, then $\operatorname{int}(A_k)\subset T(\operatorname{int}(A_j))$; see \cite{LuoNtalampekos:piecewise_qc}*{Definition 3.1}.

We set $b_{jk}=1$ if $T(A_j)\supset A_k$ and $b_{jk}=0$ otherwise. A word
$w=(j_1,\dots,j_n)$ is \emph{admissible} if
$b_{j_1j_2}=\cdots=b_{j_{n-1}j_n}=1$. The corresponding arc is defined
recursively by
$$
A_{j_1\dots j_n}
=\overline{(T|_{\operatorname{int}(A_{j_1})})^{-1}
(\operatorname{int}(A_{j_2\dots j_n}))}.
$$
Equivalently, if $E_1=E$ and $E_n=T^{-(n-1)}(E)$, then the arcs $A_w$ with
$|w|=n$ are the closures of the components of $\mathbb S^1\setminus E_n$;
we call them the \emph{arcs of level $n$}.

We say that $T$ is \emph{expansive} if there is $\delta>0$ such that, for
every two distinct $x,y\in\mathbb S^1$, one has
$|T^{\circ n}(x)-T^{\circ n}(y)|>\delta$ for some $n\geq 0$.
For $e\in E$, the symbol $e^+$ (respectively, $e^-$) denotes the access to
$e$ from the positively (respectively, negatively) oriented side. The access
$e^+$ is \emph{hyperbolic} with multiplier $\lambda>1$ if there is $L\geq1$
such that the following holds. Whenever $I_1,I_2$ are consecutive arcs of level $n$
on that side, $e$ is an endpoint of $I_1$, and $I_2$ is the arc farther
from $e$, then
$$
L^{-1}\lambda^{-n}\leq\diam I_i\leq L\lambda^{-n},
\qquad i=1,2.
$$
The access $e^+$ is \emph{parabolic} with exponent $N\in\mathbb N$ if
$$
\frac{L^{-1}}{n^{1/N}}\leq\diam I_1\leq\frac{L}{n^{1/N}},
\qquad
\frac{L^{-1}}{n^{1+1/N}}\leq\diam I_2
\leq\frac{L}{n^{1+1/N}}.
$$
The definitions on the negative access $e^-$ are analogous. A vertex $e\in E$ is
\emph{symmetrically hyperbolic} if both accesses are hyperbolic with the same
multiplier, and it is \emph{symmetrically parabolic} if both accesses are
parabolic with the same exponent. These definitions are given in
\cite{LuoNtalampekos:piecewise_qc}*{Section 3.5}.

We now formulate one of the main extension results from \cite{LuoNtalampekos:piecewise_qc}. Consider the following hypotheses. We set $T_j=T|_{\operatorname{int}(A_j)}$, $j\in\{0,\dots,r\}$.
\begin{enumerate}[label=\normalfont(M\arabic*)]
    \item\label{appendix:M1} Each of the two accesses of every vertex in $E$ is
    hyperbolic or parabolic.
    \item\label{appendix:M2} For every $j\in \{0,\dots,r\}$ there are open neighborhoods $U_j$
    of $\operatorname{int}(A_j)$ and $V_j$ of
    $T_j(\operatorname{int}(A_j))$ in $\mathbb C$ such that $T_j$ extends to
    a homeomorphism from $U_j$ onto $V_j$, the sets $U_j$ are pairwise disjoint, and
    $$
    \bigcup_{k:\,b_{jk}=1}U_k\subset V_j.
    $$
    \item\label{appendix:M3} Define $U_{jk}=T_j^{-1}(U_k)$ for admissible
    pairs and define $U_w$ recursively in the same way for admissible words.
    There is $K\geq1$ such that, for every admissible $w$ and every
    $0\leq m\leq |w|$, the restriction
    $T^{\circ m}|_{U_w}$ is $K$-quasiconformal.
\end{enumerate}

\begin{theorem}[\cite{LuoNtalampekos:piecewise_qc}*{Theorem 4.1}]
\label{thm:appendix-circle-extension}
    Let $F,G\colon\mathbb S^1\to\mathbb S^1$ be expansive covering maps with Markov partitions that satisfy \ref{appendix:M1}--\ref{appendix:M3}. Suppose that $h\colon \mathbb S^1\to \mathbb S^1$ is a topological conjugacy between $F$ and $G$ that carries the Markov partition of $F$ onto that of $G$. Assume, moreover, that each vertex of the Markov partition of $F$ is symmetrically hyperbolic or symmetrically parabolic, and that $h$ preserves these types. Then the conjugacy $h$ is quasisymmetric.
\end{theorem}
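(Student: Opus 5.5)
The plan is to show that $h$ distorts relative sizes of adjacent arcs only boundedly, using the combinatorial structure of the Markov partitions. The key point is that $h$ carries each arc $A_w$ of $F$ to the arc $A_w$ of $G$ with the same admissible word $w$. Write $I\sim J$ if $I,J$ are closed arcs with disjoint interiors sharing an endpoint and $L^{-1}\leq \diam I/\diam J\leq L$. By the standard three-point criterion on $\mathbb S^1$, it suffices to find $C$ with the following property: whenever $I\sim J$ with respect to $F$-geometry, then $h(I)$ and $h(J)$ have diameters comparable within a factor $C$. Moreover, we only need to check this for arcs built from partition arcs.

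\textbf{Step 1 (bounded distortion under iteration).} Fix an admissible word $w$ with $|w|=n$ and write $w=j_1w'$. By \ref{appendix:M3}, the map $T^{\circ(n-1)}|_{U_w}$ is $K$-quasiconformal onto $U_{j_n}$ (when $n\geq 2$). Condition \ref{appendix:M2} gives a definite collar: $\bar{U_{k}}$ for $b_{jk}=1$ lies in $V_j$. Hence Theorem \ref{q:qc_qs}, applied on a ball of definite relative size, shows that $T^{\circ m}$ is $\eta$-quasisymmetric on $A_w$ together with its neighbours inside $U_w$, with $\eta$ depending only on $K$ and the finitely many level-one configurations. Consequently, ratios of diameters of subarcs of $A_w$ of a fixed relative level are comparable to the corresponding ratios at level one. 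Since there are finitely many level-one configurations, the same holds for $G$.

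\textbf{Step 2 (geometry at common endpoints).} Let $x$ be the common endpoint of two adjacent level-$n$ arcs $I,I'$. If $x\notin\bigcup_k T^{-k}(E)$ is replaced by the generic case where $I\cup I'$ lies in a single $U_v$ of a lower level, then Step 1 pushes the pair forward to a bounded level. There the ratio $\diam I/\diam I'$ is bounded above and below, uniformly for both $F$ and $G$. In the remaining case, $x$ is mapped by $T^{\circ k}$ (with $T^{\circ k}$ $K$-qc near $x$ by \ref{appendix:M3}) onto a vertex $e\in E$, and the pair is mapped onto the two arcs of level $n-k$ adjacent to $e$ on opposite sides. This is where symmetry is used. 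Because $e$ is symmetrically hyperbolic, both sides have diameters of order $\lambda^{-(n-k)}$; because $e$ is symmetrically parabolic, both sides have diameters of order $(n-k)^{-1/N}$. Either way the two arcs are comparable, and Step 1 transports this comparability back to $I,I'$. Since $h$ preserves the types, the same comparability holds for $h(I),h(I')$. \ref{appendix:M1} ensures that every vertex falls into one of these two cases.

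\textbf{Step 3 (nesting, and the main obstacle).} It remains to compare arcs of different levels, for example a level-$(n+1)$ child inside its level-$n$ parent. In the hyperbolic and generic cases the ratio is bounded away from $0$ uniformly, so level-$n$ arcs form a uniformly ``doubling'' family on both sides, and quasisymmetry follows by the usual argument. Given $x$ and $t$, pick the level $n$ at which the partition arc through $x$ has size about $t$, and note that its neighbours are comparable by Step 2.

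The hard part is the parabolic accesses. Near a preimage of a parabolic vertex, a level-$n$ arc contains children whose diameters decay like $k^{-1-1/N}$, so partition arcs of a single level are \emph{not} comparable to their neighbours at scale. I would handle this by replacing the partition near each parabolic vertex with ``parabolic blocks''. At $e$ these are the unions of the arcs $I_2$ of levels in $[2^j,2^{j+1})$; the parabolic estimates show that these blocks, and the complementary arc adjacent to $e$, have comparable diameters to their neighbours. The blocks are defined purely combinatorially, so $h$ maps blocks to blocks. Because the exponent $N$ is the same on the $G$ side—or, more precisely, because only the two-sided symmetry within each map matters, with each side's exponent fixed—the doubling estimates hold on both sides with constants depending on $N$ and $L$.

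Pulling back by the $K$-qc maps of Step 1 yields a combinatorially defined, uniformly doubling family of arcs for $F$ and $G$ that is exchanged by $h$. From this I would deduce the three-point quasisymmetry inequality. Expansiveness ensures that these arcs shrink to points, so that the family generates the topology; the check of bounded overlap at transitions between the hyperbolic and parabolic regimes I expect to be the most delicate bookkeeping.
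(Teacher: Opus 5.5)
This statement is not proved in the paper; it is quoted from \cite{LuoNtalampekos:piecewise_qc}, so I am judging your outline on its own. The overall strategy is reasonable: reduce to a combinatorial doubling family exchanged by $h$, and use dyadic blocks near parabolic vertices. But the proposal has a genuine gap.

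\textbf{Steps 1--2.} The distortion principle of Step~1 does not hold. Condition \ref{appendix:M2} gives no definite collar. The $U_j$ are pairwise disjoint neighbourhoods of the \emph{open} arcs $\operatorname{int}(A_j)$, so they pinch at the vertices, and $\bigcup_{b_{jk}=1}U_k\subset V_j$ is not compact near the endpoints of $T(A_j)$. Also, the same-level neighbours of $A_w$ lie in other, disjoint sets $U_{w'}$, not in $U_w$.

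What \ref{appendix:M3} and Theorem~\ref{q:qc_qs} actually give is weaker. Take $|w|=k$ with last letter $j$. The $K$-quasiconformal inverse of $T^{\circ k}|_{U_w}\colon U_w\to V_j$ is uniformly quasisymmetric on a definite disk about each of the finitely many vertices in $T_j(\operatorname{int}A_j)$. So comparability can be pulled back only for configurations whose push-forward stays inside such a disk. Configurations reaching the vertices that bound $T(A_j)$ are controlled only by \ref{appendix:M1}, and at parabolic accesses that control contradicts your Step~2.

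Here is a concrete counterexample. Let $e$ have a parabolic access of exponent $N$, and let $I=I_1^{(n)}$, $I'=I_2^{(n)}$ be the consecutive level-$n$ arcs on that side, with $e$ an endpoint of $I$. Their common endpoint lies in $E_n$, so it is eventually mapped to a vertex. Under that iterate, $I$ and $I'$ go to a pair of arcs adjacent at the vertex on opposite sides, which are comparable. Transporting this back, your Step~2 asserts $\diam I\asymp\diam I'$. The definition of a parabolic access instead gives $\diam I/\diam I'\asymp n$. This is exactly the non-comparability you point out in Step~3, so Steps~2 and~3 are inconsistent.

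\textbf{Step 3.} Hence the whole argument rests on Step~3, which is only announced. To complete it you would need to do three things.
\begin{enumerate}
\item[(a)] Define the blocks properly. Unions of the arcs $I_2^{(m)}$ over consecutive $m$ need not be contiguous. The natural pieces are the fundamental domains $\overline{I_1^{(m-1)}\setminus I_1^{(m)}}$, grouped dyadically, together with dyadic groupings of the arcs inside them.
\item[(b)] Transport this family to every preimage of every parabolic vertex at every scale, using only the restricted pull-back estimate above.
\item[(c)] Prove that the resulting multi-scale family is uniformly doubling for both $F$ and $G$, and that $h$ exchanges the two families. Here a point may be close to a pre-parabolic point at one scale and to a pre-hyperbolic point at a finer one. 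The constants should depend only on $K$, $L$ and the (possibly different) multipliers and exponents of $F$ and $G$.
\end{enumerate}
Only then does the three-point estimate follow. Items (b)--(c) are what you call the delicate bookkeeping, and they are the substance of the theorem. As it stands, the proposal does not prove it.
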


A similar extension result for piecewise analytic rather than piecewise quasiconformal maps (as in \ref{appendix:M3}) appears in \cite{LyubichMerenkovMukherjeeNtalampekos:David}. Moreover, this result requires $F$ and $G$ to extend analytically in small neighborhoods of the endpoints of the corresponding Markov partitions, so the notions of hyperbolicity and parabolicity are defined naturally through the power series expansion. However, in our setting such extensions are not available, so we apply instead the more general Theorem \ref{thm:appendix-circle-extension}.

\subsection{Preparation for the proof of Theorem \ref{thm:quasidiskgf_appendix}}
We need the following two variations of \cite{LuoNtalampekos:piecewise_qc}*{Lemma 7.1}.

\begin{lemma}[Periodic case]\label{lem:expand}
     Let $f$ be a geometrically finite rational map and $U$ be a fixed Fatou component. Suppose that $U$ is a Jordan region and $\partial U$ contains at most one parabolic point $a$, in which case $a$ has parabolic multiplicity at least $3$ and  $U$ is an immediate basin of $a$.
     If $P(f) \cap \partial U \neq \emptyset$, then define
     $$
    \mathcal{A} \coloneqq P(f) \cap \partial U\quad  \text{and} \quad 
    \mathcal{B}\coloneqq f^{-1}(\mathcal{A}) \cap \partial U.
    $$
    Otherwise, let $a\in \partial U$ be a repelling fixed point and define 
    $$
    \mathcal{A} \coloneqq \{a\} \quad \text{and} \quad  \mathcal{B}\coloneqq f^{-1}(\mathcal{A}) \cap \partial U.
    $$
    Let $\{\gamma_i\}$ and $\{I_j\}$  be the components of $\partial U \setminus \mathcal{A}$ and $\partial U \setminus \mathcal{B}$, respectively.
    Then there exist open neighborhoods $N_i$ of $\gamma_i$ and $D_j$ of $I_j$ so that
    \begin{itemize}
        \item (Disjointedness) the sets $\{D_j\}$ are pairwise disjoint,
        \item (Containment) if $I_j \subset \gamma_k$, then $D_j \subset N_k$, 
        \item (Conformality) if $f(I_j) = \gamma_i$, then $f\colon D_j \rightarrow N_{i}$ is conformal.
    \end{itemize}
\end{lemma}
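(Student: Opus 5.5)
The plan is to construct the neighborhoods by hand, following \cite{LuoNtalampekos:piecewise_qc}*{Lemma 7.1}. The basic mechanism is that $f$ has no critical points near the open arcs $I_j$. The two issues are controlling the geometry near the finitely many points of $\mathcal A$ and $\mathcal B$, and keeping the pieces disjoint.

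First, the combinatorics. $\mathcal A$ is finite: either $P(f)\cap\partial U$ is finite, since $f$ is geometrically finite and $\partial U$ meets the post-critical set only at finitely many points, or $\mathcal A=\{a\}$. Also $f(\mathcal A)\subset\mathcal A$, because $P(f)$ is forward invariant and $f(\partial U)=\partial U$. Hence $\mathcal A\subset\mathcal B$ and $\mathcal B$ is finite. Since $U$ is a fixed Jordan region, $f|_{\partial U}$ is a covering of the circle. Each $I_j$ is an open arc on which $f$ is injective, and $f(I_j)=\gamma_i$ for some $i$. Critical points on $\partial U$ lie in $f^{-1}(P(f))\cap \partial U\subset\mathcal B$, so $f$ is locally injective on a neighborhood of each closed arc minus its endpoints.

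Second, I choose the $N_i$. Since $\gamma_i\cap P(f)=\emptyset$ and $P(f)$ is closed, I take $N_i$ to be a thin open neighborhood of the open arc $\gamma_i$ that avoids $P(f)$ and is simply connected, so $N_i\cup\gamma_i$ looks like a strip pinched at the endpoints, which lie in $\mathcal A$. Near an endpoint $e\in\mathcal A$ I take $N_i$ to lie in a sector-like region adjacent to $\gamma_i$. Near a repelling point this uses Koenigs linearization. Near the parabolic point $a$ (multiplicity $\ge 3$, $U$ an immediate basin) it uses the arcs of $\partial U$ lying in the repelling petals adjacent to $U$, by Lemma \ref{lemma:preserve_parabolic}. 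Next, for each $j$ with $f(I_j)=\gamma_i$, the branch of $f^{-1}$ along $\gamma_i$ that sends $\gamma_i$ onto $I_j$ extends to a conformal map on the simply connected domain $N_i$, which avoids $P(f)$. I define $D_j$ as the image of $N_i$ under this branch; then $f\colon D_j\to N_i$ is conformal by construction. Distinct inverse branches have disjoint images, so the $D_j$ with the same image $\gamma_i$ are pairwise disjoint. Since every $D_j$ is the image of some branch on some $N_i$, the $D_j$ are in fact pairwise disjoint once the $N_i$ are disjoint and no two branches collide.

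Third, containment: if $I_j\subset\gamma_k$, I need $D_j\subset N_k$. Away from the endpoints this holds if $N_i$ is thin enough, since the inverse branches are uniformly continuous on compact subarcs. The main obstacle is the behavior near points of $\mathcal B$ and, above all, near $\mathcal A$, where shrinking thickness must be balanced against the local dynamics.

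At a repelling fixed point of $\mathcal A$, I would linearize and choose $N_k$ near $e$ as a cone-type neighborhood of $\gamma_k$ invariant under the contracting inverse branch. Then $D_j$, the pullback near $e$, lies inside $N_k$ automatically. At the parabolic point $a$, I would choose $N_k$ near $a$ inside the repelling petal containing the arc of $\gamma_k$; the inverse branch fixing $a$ maps this petal into itself, which gives containment there. Multiplicity $\ge3$ guarantees that the two arcs of $\partial U$ at $a$ lie in distinct repelling petals, so $N_k$ can be taken disjoint for the two sides. At interior points of $\gamma_k$ lying in $\mathcal B\setminus\mathcal A$, the arcs $I_j$ meet at a point $b$, and the neighborhoods $D_j$ are images of pinched strips, so they touch only at $b$, which is excluded. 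I would shrink the $N_i$ by a fixed factor away from the endpoints and argue, by finiteness of $\mathcal A$ and $\mathcal B$, that all these finitely many local constraints can be met simultaneously. Disjointness of the $D_j$ then follows from disjointness of the $N_i$ together with injectivity of $f$ on each $D_j$.
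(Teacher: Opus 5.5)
\textbf{Gap: the containment bullet.} Your definition of $D_j$ as the image of $N_i$ under an inverse branch, and the resulting disjointness and conformality, are fine once the $N_i$ are disjoint, simply connected and avoid $P(f)$. The problem is containment.

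Write $g_j$ for the branch of $f^{-1}$ with $g_j(\gamma_i)=I_j$. You need $g_j(N_i)\subset N_k$ for every $j$ at once, and this system is self-referential: the same sets appear as domains and as targets. Already when $\mathcal A=\{a\}$ there is a single arc $\gamma_1=\partial U\setminus\{a\}$, and every $I_j$ maps onto it. So you need $g_j(N_1)\subset N_1$ for all branches simultaneously, i.e.\ $N_1$ must be backward invariant.

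``Thin enough'' plus uniform continuity of $g_j$ cannot give this, because shrinking $N_i$ also shrinks the targets $N_k$. A neighborhood of uniform Euclidean or spherical thickness $\delta$ fails as soon as $|f'|<1$ somewhere on $\partial U$: there $g_j(N_1)$ has thickness about $\delta/|f'|>\delta$. Nothing excludes this, and it depends on the coordinates. The constraints also propagate along orbits. The middle of $\gamma_k$ is constrained by the shape of $N_i$ near its endpoints, and so on, so your closing sentence about meeting finitely many local constraints simultaneously does not settle anything. The same issue appears at the parabolic point: lying inside a backward-invariant petal does not by itself give $g(N_k)\subset N_k$ for the particular set $N_k$. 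Your vertex analysis also covers only fixed vertices. In general $\mathcal A$ may contain points of higher period, strictly pre-periodic points and critical points, where the cones must be chosen compatibly along orbits.

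\textbf{What is missing: a metric in which every inverse branch is non-expanding,} so that $\epsilon$-neighborhoods of one fixed size work. The paper obtains this from hyperbolic geometry.
\begin{enumerate}
\item It builds a continuum $Q\subset\bar U$ with $f(Q)\subset Q$ and $Q\cap\partial U=\mathcal A$. This is an attracting petal or invariant disk joined to dynamically compatible arcs landing at the points of $\mathcal A$.
\item It sets $X=\widehat\C\setminus(Q\cup P(f))$ and $Y=\widehat\C\setminus f^{-1}(Q\cup P(f))\subset X$. Then $f\colon Y\to X$ is a covering, hence a local hyperbolic isometry, and the inclusion $Y\hookrightarrow X$ is $1$-Lipschitz by Schwarz--Pick.
\item With $N_i=N_{\epsilon,X}(\gamma_i)$ and $D_j$ the component of $f^{-1}(N_i)$ containing $I_j$, containment is automatic:
\[
D_j=N_{\epsilon,Y}(I_j)\subset N_{\epsilon,X}(I_j)\subset N_{\epsilon,X}(\gamma_k)=N_k .
\]
\item The continuum $Q$ forces $\rho_X(z)\gtrsim 1/|z-v|$ near each $v\in\mathcal A$. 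Hence for small $\epsilon$ these neighborhoods are sector-like at the vertices and lie in pairwise disjoint simply connected domains $H_i\subset X$.
\end{enumerate}
Step 4 is the only place where local analysis at the vertices (Koenigs coordinates, petals, pull-backs) is needed, and your local pictures fit there. Without such a contracting metric, the containment bullet is not established.
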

\begin{proof}
    Let us sketch the proof of the lemma, since it is similar to the proof of \cite{LuoNtalampekos:piecewise_qc}*{Lemma 7.1}. After appropriate normalization, we assume that $\overline{U} \subset\C$.

    We first claim that there exists a continuum $Q \subset \overline{U}$ so that 
    $$
    f(Q) \subset Q \text{ and } Q \cap \partial U = \mathcal{A}.
    $$
    Let $Q_0 \subset \overline{U}$ be a closed attracting petal if $U$ is parabolic or a small forward invariant disk about its attracting fixed point if $U$ is (super-)attracting.  
    Choose arcs $E_x \subset \overline{U}$ landing at the vertices of $x\in \mathcal{A}$ that are dynamically compatible in the sense that 
    \begin{itemize}
        \item if $x$ is the parabolic fixed point, then $E_x \subset Q_0$ and $f(E_x) \subset E_x$ and 
        \item otherwise, $E_{f(x)} \subset f(E_x)$.
    \end{itemize}
    This is possible by considering the Koenigs coordinate at repelling periodic points and taking pull-backs at pre-periodic points. Connect the interior endpoints of these arcs to $Q_0$ by compact arcs in $U$. Let $T$ be their union with $Q_0$. Compatibility near the vertices ensures that $f(T)\setminus T$ is contained in a compact set $C\subset U$. Since $Q_0$ attracts compact subsets of $U$, all sufficiently high forward images of $C$ lie in $Q_0$. Therefore, the set
$$
Q=\overline{\bigcup_{n\ge0}f^{\circ n}(T)}
$$
is a compact forward-invariant continuum and $Q\cap\partial U=\mathcal A$. This completes the proof of the claim.

    Let $X\coloneqq \widehat\C \setminus (Q \cup P(f))$ and $Y \coloneqq \widehat\C \setminus f^{-1}(Q \cup P(f))$. For $\epsilon>0$ and a component $\gamma_i$ of $\partial U\setminus \mathcal A$ we set
    $$
    N_{i, \epsilon} \coloneqq N_{\epsilon, X}(\gamma_i) = \{z\in X:h_X(z,\gamma_i)<\epsilon\}.
    $$
    Recall that $h_X$ denotes the hyperbolic metric on $X$. We claim that for sufficiently small $\epsilon$, $N_{i, \epsilon}$ is contained in some  simply connected domain $H_i \subset X$ such that the domains in $\{H_i\}$ are pairwise disjoint.
    
    Since $Q$ is a non-degenerate continuum through each vertex $v \in \mathcal{A}$, by comparison and Koebe's estimate (see \cite{BeardonMinda:hyperbolic}*{Section 8}), we have
    \begin{equation}\label{eq:estimatenearv}
    \rho_X(z) \geq \rho_{\widehat\C\setminus Q}(z)\gtrsim \frac{1}{|z-v|} \quad \text{for $z\in X$ near $v$.}
    \end{equation}
    We construct $H_i$ as follows. Let $v$ be an endpoint of $\gamma_i$. We first construct two arcs landing at $v$ that locally bound a sector containing $\gamma_i$ such that the $h_X$-distance of the two arcs to $\gamma_i$ is bounded from below. If $v$ is a repelling periodic point, construct the two arcs of $H_i$ in a Koenigs fundamental annulus and iterate them by the inverse return map. By compactness, there is a uniform lower bound for the distance between the two arcs in the metric $|dz|/|z-v|$. So, in view of \eqref{eq:estimatenearv} the distance between each of the two arcs and $\gamma_i$ is bounded from below in the $h_X$-metric. If $v$ is a parabolic fixed point, the existence of the arcs follows from the construction given in \cite{LuoNtalampekos:piecewise_qc}*{Lemma 7.1}; see \cite{LuoNtalampekos:piecewise_qc}*{Figure 7.1}. If $v$ is a strictly pre-periodic point, then the two arcs are constructed by pull-backs. We can connect the arcs at the two endpoints of $\gamma_i$ to obtain a closed curve $\alpha$ which bounds a simply connected domain $H_i \subset X$ and the $h_X$-distance between $\alpha$ and $\gamma_i$ is bounded from below. By choosing disjoint sectors at shared vertices, we may construct the regions $\{H_i\}$ so that they are pairwise disjoint. The claim follows.

    We choose $\epsilon$ sufficiently small so that $N_i\coloneqq N_{i,\epsilon}$ is contained in $H_i$ for each $i$.  Suppose that $f(I_j) = \gamma_i$.  Let $D_j$ be the component of $f^{-1}(N_i)$ that contains $I_j$. Since $H_i$ is simply connected and avoids critical values, the map $f\colon D_j\rightarrow N_i$ is conformal.
    Since the neighborhoods $\{N_i\}$ are pairwise disjoint, the sets $\{D_j\}$ have the same property. Suppose that $I_j \subset \gamma_{k}$. Since $Q \cup P(f)$ is forward invariant, the map $f\colon Y \rightarrow X$ is a covering and the inclusion map $\iota\colon Y \rightarrow X$ is distance-decreasing by the Schwarz--Pick lemma.
    Thus,
    $$
    D_j = N_{\epsilon, Y}(I_j) \subset N_{\epsilon, X}(I_j) \subset N_{\epsilon, X}(\gamma_k) = N_k.
    $$
    This completes the proof.
\end{proof}

\begin{lemma}[Strictly pre-periodic case]\label{lem:expandprep}
     Let $f$ be a geometrically finite rational map, and $U$ be a non-periodic Fatou component such that $V = f(U)$ is a fixed Fatou component. Suppose that both $U, V$ are Jordan regions, $\partial U$ contains no parabolic point, and $\partial V$ contains a unique parabolic fixed point $a$ of parabolic multiplicity $2$ such that $V$ is an immediate basin of $a$. Assume further that $\deg_z(f^{\circ n})\ge2$ whenever $z\in\partial U$ and $f^{\circ n}(z)=a$ for some $n\in \N$.
     Let 
     \begin{align*}
    \mathcal{A}' \coloneqq P(f) \cap \partial V, &\text{ and }
    \mathcal{B}'\coloneqq f^{-1}(\mathcal{A}') \cap \partial V\\
    \mathcal{A} \coloneqq f^{-1}(\mathcal{A}') \cap \partial U, &\text{ and }  \mathcal{B} \coloneqq f^{-1}(\mathcal{B}') \cap \partial U.
    \end{align*}
    Let $\{\gamma_i\}$ and $\{I_j\}$ be the components of $\partial U\setminus \mathcal A$ and $\partial U\setminus \mathcal B$, respectively.
    Let $S\colon \partial U \rightarrow \partial U$ be an orientation-preserving covering that is a lift of $f\colon \partial V \rightarrow \partial V$ under $f\colon \partial U \rightarrow \partial V$, i.e., $S$ satisfies
     $$
     f\circ S = f \circ f.
     $$
    Then there exist open neighborhoods $N_i$ of $\gamma_i$ and $D_j$ of $I_j$ so that
    \begin{itemize}
        \item (Disjointedness) the sets $\{D_j\}$ are pairwise disjoint,
        \item (Containment) if $I_j \subset \gamma_k$, then $D_j \subset N_k$,
        \item (Conformality) if $S(I_j) = \gamma_i$, then $S\colon D_j \rightarrow N_{i}$ is conformal.
    \end{itemize}
\end{lemma}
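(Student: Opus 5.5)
The plan is to run the same scheme as in the proof of Lemma \ref{lem:expand}, but to work on $V$ and then pull back to $U$ through $f$. The lift $S$ is then conformal wherever $f$ is locally injective. Concretely, $S=f^{-1}\circ f\circ f$ locally, with the branch of $f^{-1}$ determined by $S$. Away from $\mathcal A$ the point $f(z)$ avoids $P(f)$, so $f$ is a local homeomorphism there and all inverse branches exist.

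\textbf{Step 1.} First construct, for $V$, a forward-invariant continuum $Q'\subset \bar V$ with $f(Q')\subset Q'$ and $Q'\cap\partial V=\mathcal A'$. The construction is exactly the one in the proof of Lemma \ref{lem:expand}: take a closed attracting petal at the parabolic point $a$, add dynamically compatible arcs landing at the points of $\mathcal A'$, and take the closure of the forward orbit of the union. Note that $a\in\mathcal A'$ because $a\in P(f)$. Then set
$$X'=\widehat\C\setminus(Q'\cup P(f)),\qquad Y'=\widehat\C\setminus f^{-1}(Q'\cup P(f)).$$
The map $f\colon Y'\to X'$ is a covering, and the inclusion $Y'\subset X'$ is a contraction of hyperbolic metrics.

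\textbf{Step 2.} Next, pull everything back to $U$. Let $Q=f^{-1}(Q')\cap\bar U$, which meets $\partial U$ exactly in $\mathcal A$. Set $X=f^{-1}(X')$ and $Y=f^{-1}(Y')$ near $\bar U$, and define
$$N_i=N_{\epsilon,X}(\gamma_i),\qquad D_j=N_{\epsilon,Y}(I_j).$$
The identity $f\circ S=f\circ f$ lets $S\colon D_j\to N_i$ factor as $f$ (a covering $Y\to Y'$), then $f\colon Y'\to X'$, then the chosen inverse branch of $f$. If $f$ is unramified on these sets, all three are local isometries for the relevant hyperbolic metrics. Containment then follows from Schwarz--Pick as before, and disjointness follows once the $N_i$ are placed inside pairwise disjoint simply connected regions $H_i$. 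The $H_i$ are obtained by pulling back the corresponding regions for $\partial V$.

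\textbf{Step 3 (main obstacle).} The difficulty is near points $z\in\mathcal A$ that map to $a$. The parabolic point has multiplicity $2$, so in $V$ only one repelling direction is relevant, and the construction of the separating sector arcs at $a$ comes from \cite{LuoNtalampekos:piecewise_qc}*{Lemma 7.1}. The hypothesis $\deg_z(f^{\circ n})\ge 2$ is what guarantees that the two boundary arcs of $\partial U$ at $z$ come from distinct sides of the cusp. Pulling back by a locally branched map opens the cusp of $V$ at $a$ into a genuine angle. This makes the bound $\rho_X(w)\gtrsim 1/|w-z|$ valid near $z$, and it yields pairwise disjoint sectors $H_i$ at $z$, so the argument of Lemma \ref{lem:expand} goes through.

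I would handle this step explicitly: in local coordinates write $f^{\circ n}(w)=\phi(w)^m$ with $m\ge2$, which is Lemma \ref{lemma:lift}-type normalization. Then lift the petal picture at $a$ and verify that the two cusp arcs lift to arcs separated by a definite angle. Finally, check that $S$ is unbranched on each $D_j$. This holds because $S(I_j)=\gamma_i$ avoids the points of $\mathcal A$, and the critical points relevant to the choice of branch lie in $\mathcal A$.
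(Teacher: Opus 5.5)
Your architecture is the paper's. It uses an invariant continuum $Q'\subset\bar V$ with $Q'\cap\partial V=\mathcal A'$, the domains $X'$, $f^{-1}(X')$, $f^{-2}(X')$ (your $X,Y$ are the paper's $Y,Z$), $N_i,D_j$ as hyperbolic $\epsilon$-neighborhoods in the last two, Schwarz--Pick for containment, and the local model $w\mapsto w^k$ at vertices over $a$. But two steps fail as you state them.

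First, the $H_i$ cannot be obtained by pulling back disjoint regions built for $\partial V$. The parabolic-vertex construction used in Lemma~\ref{lem:expand} (from \cite{LuoNtalampekos:piecewise_qc}*{Lemma 7.1}) is also not available at $a$. Near $a$ the two arcs of $\partial V$ bound an exterior cusp of width $o(r)$ at distance $r$ from $a$. This cusp contains no points of $Q'\cup P(f)$, and there $\rho_{X'}(w)\lesssim 1/|w-a|$. So the $h_{X'}$-distance between the two arcs tends to $0$ at $a$, and their $\epsilon$-neighborhoods merge across the cusp for every $\epsilon$. This is why Lemma~\ref{lem:expand} excludes multiplicity $2$. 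The sectors must instead be built directly at $z\in\partial U$ inside $f^{-1}(X')$. Near $z$, $U$ is one of the $k$ preimage sectors of $V$, so for $k\ge 2$ its two boundary arcs meet at positive angles on both sides. That is the paper's claim, and your explicit computation in Step 3 is the correct version of it. However, you misidentify the role of the hypothesis. The bound $\rho\gtrsim 1/|w-z|$ comes from the lift of $Q'$ landing at $z$ and holds for every $k$. The two arcs also ``come from distinct sides of the cusp'' when $k=1$, yet then they bound a cusp and the argument breaks.

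Second, disjointness of the $D_j$ does not follow from disjointness of the $H_i$. Containment separates $D_j,D_{j'}$ only when $I_j,I_{j'}$ lie in different $\gamma_k$. Two consecutive $I_j,I_{j'}\subset\gamma_k$ share an endpoint in $\mathcal B\setminus\mathcal A$, which lies on $\gamma_k$ and hence inside $H_k$. The paper therefore also builds pairwise disjoint simply connected $L_j\subset f^{-2}(X')$ with $D_j\subset L_j$, using the same sector argument at the points of $\mathcal B$. So the degree hypothesis is needed at every vertex of the $I_j$ that is eventually mapped to $a$, not only at points of $\mathcal A$.

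The $L_j$ are also what make $S$ single-valued. On $L_j$, $S$ is the lift of $f^{\circ 2}|_{L_j}$ through the covering $f\colon f^{-1}(X')\to X'$. On $H_i$, $T=S^{-1}$ is the lift of $f|_{H_i}$ through the covering $f^{\circ 2}\colon f^{-2}(X')\to X'$. Both preserve hyperbolic length, so $S(D_j)\subseteq N_i$ and $T(N_i)\subseteq D_j$. Since $S\circ T=\id$ on $\gamma_i$ and $T\circ S=\id$ on $I_j$, the map $S\colon D_j\to N_i$ is a conformal bijection. Your ``chosen inverse branch'' together with local isometry does not by itself give a well-defined, bijective $S$ on $D_j$.
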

\begin{proof}
    We adapt the construction given in the proof of Lemma~\ref{lem:expand}.
    As in Lemma~\ref{lem:expand}, construct a continuum $Q\subset\overline V$ such that
    $$
        f(Q)\subset Q\quad\text{and}\quad Q\cap\partial V=\mathcal A'.
    $$
    We set
    $$
        X=\widehat{\mathbb C}\setminus(Q\cup P(f)),\qquad Y=f^{-1}(X),\qquad Z=f^{-2}(X).
    $$
    Forward invariance of $Q$ and $P(f)$ gives that $Z\subset Y$, and that the maps
    $$
        f\colon Y\longrightarrow X,\qquad f^{\circ 2}\colon Z\longrightarrow X
    $$
    are coverings. We equip these surfaces with their hyperbolic metrics. For $\epsilon>0$, let 
    $$
    N_{\epsilon, Y}(\gamma_i) = \{z\in X:h_Y(z,\gamma_i)<\epsilon\}, \quad N_{\epsilon, Z}(I_j) = \{z\in X:h_Z(z,I_j)<\epsilon\}.
    $$
    We claim that for all sufficiently small $\epsilon$, the neighborhood $N_{\epsilon, Y}(\gamma_i)$ (resp.\ $N_{\epsilon, Z}(I_j)$) is contained in a simply connected domain $H_i \subset Y$ (resp.\ $L_j \subset Z$) such that the sets $\{H_i\}$ (resp.\ $\{L_j\}$) are pairwise disjoint. The construction is the same as in Lemma~\ref{lem:expand} away from vertices eventually mapped to parabolic points. If a vertex $b$ is eventually mapped to $a$, let $k$ be the local degree of an iterate taking $b$ to $a$. By hypothesis $k\ge2$. Let $\gamma^\pm$ be the two arcs from the family $\{\gamma_i\}$ with an endpoint $b$. Then $\gamma^\pm$ are separated by a positive angle at $b$, so we can choose local sectors bounded by two arcs with $h_Y$-distance to $\gamma^{\pm}$ bounded from below. Indeed, in suitable local coordinates, $f$ takes the form $z\mapsto z^k$, so the preimage of $V$ near $b$ consists of $k$ components whose closures meet at $b$.
    Since $V$ has a cusp at $a$, the two boundary arcs of each component form an interior angle of $2\pi/k$ at $b$. This proves the claim. 

    Set $N_i=N_{\epsilon,Y}(\gamma_i)$ and $D_j = N_{\epsilon, Z}(I_j)$ for sufficiently small $\epsilon$ so that the previous claim holds. Then the sets $\{N_i\}$ are pairwise disjoint and so are the sets $\{D_j\}$. Suppose that $S(I_j)=\gamma_i$. Since $H_i$ and $L_j$ are simply connected, we may analytically extend $S\colon I_j \rightarrow \gamma_i$ and $T \coloneqq S^{-1}\colon \gamma_i \rightarrow I_j$ to maps
    \begin{align*}
        S\colon L_j\longrightarrow Y\quad \text{such that}\quad  f\circ S=f^{\circ 2}, \\
        T\colon H_i \longrightarrow Z\quad \text{such that} \quad f = f^{\circ 2} \circ T.
    \end{align*}
    By the invariance of the hyperbolic density under the coverings $f$ and $f^{\circ 2}$, we conclude that $S, T$ preserve hyperbolic lengths measured in $Z, Y$. Let $z\in D_j=N_{\epsilon,Z}(I_j)$. Choose a rectifiable path in $Z$ from a point of $I_j$ to $z$ with hyperbolic length less than $\epsilon$. Every point of this path lies in $D_j\subset L_j$, so its image under $S$ starts on $\gamma_i$ and has the same length. Hence $S(z)\in N_i$, proving $S(D_j)\subseteq N_i$. The same argument shows that $T(N_i) \subseteq D_j$. Since $S\circ T, T \circ S$ are identity on $\gamma_i$ and $I_j$ respectively, we conclude that $S(D_j) = N_i$ and the map $S\colon D_j \rightarrow N_i$ is conformal. Finally, if $I_j\subset\gamma_k$, the Schwarz--Pick lemma for the inclusion $Z\subset Y$ gives
    $$
    D_j= N_{\epsilon,Z}(I_j)\subset N_{\epsilon,Y}(I_j) \subset N_{\epsilon,Y}(\gamma_k)=N_k.
    $$
    This completes the proof.
\end{proof}

\begin{proposition}\label{proposition:rational_quasidisk}
    Let $f$ be a rational map and suppose that $U$ is a Jordan region, $f(U)$ is a quasidisk, and the map $f\colon U\to f(U)$ is proper. Then $U$ is a quasidisk.
\end{proposition}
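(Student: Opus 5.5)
The plan is to verify that $\partial U$ is a quasicircle by checking the Ahlfors condition \eqref{condition:ahlfors} locally near every point of $\partial U$, and then conclude with Theorem \ref{theorem:quasidisk_quasicircle}. Since $f\colon U\to V\coloneqq f(U)$ is proper, $f$ maps $\partial U$ into $\partial V$. Post-composing $f$ with a Möbius map, and pre-composing with one if needed, we may assume that $\bar U$ and $\bar V$ lie in $\C$ and that no pole of $f$ lies on $\bar U$.

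\emph{Local step.} Fix $z_0\in\partial U$, let $w_0=f(z_0)\in \partial V$, and let $m\geq 1$ be the local degree of $f$ at $z_0$. There is a conformal map $\phi$ near $z_0$ with $\phi(z_0)=0$ and $f(z)=w_0+\phi(z)^m$. Since $\partial V$ is a quasicircle, $\partial V$ near $w_0$ is a quasiarc. By Theorems \ref{theorem:quasidisk_quasicircle} and \ref{theorem:rickman}, together with Theorem \ref{q:qc_qs}, there is a quasiconformal map $h$ of a disk $B(0,r)$ with $h(0)=0$ that sends $(\partial V-w_0)\cap B(0,r)$ into $\R$ and $(V-w_0)\cap B(0,r)$ into the upper half-plane.

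I then lift $h$ through the power map. Let $H(\zeta)=h(\zeta^m)^{1/m}$, using the branch that is continuous on a small punctured disk. Such a branch exists because $h$ is an orientation-preserving homeomorphism fixing $0$, so $h(\zeta^m)$ winds $m$ times around $0$ along small circles. The map $H$ is a homeomorphism near $0$. It is $K$-quasiconformal off $0$, with the same dilatation bound as $h$, because it is locally a composition of $h$ with conformal maps. The point $0$ is removable, so $H$ is quasiconformal near $0$. Under $H\circ\phi$, the set $f^{-1}(\partial V)$ near $z_0$ is sent into the $2m$ rays at angles $k\pi/m$. The arc of $\partial U$ through $z_0$ is a Jordan arc contained in this set; since $U$ is connected and $\phi(U)$ lies in a union of sectors, this arc consists of exactly two consecutive rays, which bound one sector of angle $\pi/m$. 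Two rays meeting at an angle $\pi/m$ form a quasiarc. Pulling back by the quasiconformal map $H\circ\phi$, which is quasisymmetric on a smaller disk by Theorem \ref{q:qc_qs}, shows that $\partial U\cap B(z_0,\delta_{z_0})$ is an $L_{z_0}$-quasiarc for some $\delta_{z_0}>0$.

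\emph{Global step.} By compactness, finitely many balls $B(z_k,\delta_{z_k}/2)$ cover $\partial U$. Let $\delta$ be a Lebesgue number for this cover and let $L=\max_k L_{z_k}$. Take $z,w\in\partial U$. If $|z-w|<\delta$, both points lie in a single ball, and the local quasiarc property yields a subarc $E$ of $\partial U$ joining them with $\diam E\leq L|z-w|$. Here one must check that the relevant subarc stays inside the ball; this can be arranged by first shrinking the radii. If $|z-w|\geq\delta$, then either arc between them has diameter at most $\diam(\partial U)\leq (\diam(\partial U)/\delta)\,|z-w|$. Hence $\partial U$ is a quasicircle, and Theorem \ref{theorem:quasidisk_quasicircle} shows that $U$ is a quasidisk.

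The main technical point is the lifting step at a critical boundary point: one needs that $H$ is a well-defined homeomorphism and that $\partial U$ near $z_0$ is precisely the boundary of one sector. The rest is routine compactness.
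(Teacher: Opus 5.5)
Your proof is correct, but it takes a different route from the paper.

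\textbf{The paper's route.} It globalizes first. It takes a quasiconformal map $\phi$ of $\widehat\C$ sending $f(U)$ onto a disk. It then uses the measurable Riemann mapping theorem to find a quasiconformal $\psi$ with the same Beltrami coefficient as $\phi\circ f$. Hence $g=\phi\circ f\circ\psi^{-1}$ is rational and maps the Jordan region $\psi(U)$ properly onto a round disk. The local analysis is then purely analytic. Away from critical points, $\partial\psi(U)$ is locally an analytic arc. At a critical point, in a conformal coordinate where $g$ is a power map, it is two segments. Finally, $U=\psi^{-1}(\psi(U))$ is a quasidisk.

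\textbf{Your route.} You stay local. You straighten $\partial V$ near $w_0$ by a quasiconformal $h$ and lift it through $\zeta\mapsto\zeta^m$ to $H$ with $H(\zeta)^m=h(\zeta^m)$. This is in effect a hand-made local integration of the pulled-back Beltrami coefficient. It avoids both the measurable Riemann mapping theorem and the fact that $1$-quasiregular maps are holomorphic. The price is threefold:
\begin{enumerate}
\item the covering-space lift, where you should record the equivariance $H(e^{2\pi i/m}\zeta)=e^{2\pi i/m}H(\zeta)$, since that is what gives injectivity of $H$;
\item removability of the point $0$;
\item an explicit compactness argument, which the paper leaves implicit.
\end{enumerate}

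\textbf{One correction of reasoning.} Only one of the $m$ sectors of $f^{-1}(V)$ at $z_0$ lies in $U$, but the reason is that $\partial U$ is a Jordan curve, not that $U$ is connected. Any such sector meeting $U$ lies entirely in $U$ and contributes both of its sides to $\partial U$. A Jordan curve cannot contain four arcs issuing from $z_0$ that are disjoint away from $z_0$. Connectedness alone does not suffice. For instance, take $f(z)=z^3-3z$ and $V=\widehat\C\setminus\bar D(3,1)$. Then the connected, doubly connected set $f^{-1}(V)$ contains both sectors at the boundary critical point $z_0=-1$.
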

\begin{proof}
    Since $f(U)$ is a quasidisk, there exists a quasiconformal map $\phi$ of $\widehat \C$ that maps $f(U)$ onto a disk.  By the measurable Riemann mapping theorem \cite{AstalaIwaniecMartin:quasiconformal}*{Theorem 5.3.4}, there exists a quasiconformal map $\psi$ of $\widehat \C$ whose Beltrami coefficient is the same as that of $\phi\circ f$. Then the map $\phi\circ f\circ \psi^{-1}$ is rational (see \cite{AstalaIwaniecMartin:quasiconformal}*{Theorem 5.5.1}) and maps the Jordan region $\psi(U)$ onto a disk. 

    Therefore, by Theorem \ref{theorem:quasidisk_quasicircle}, it suffices to prove that if $g$ is a rational map and $J$ is a Jordan curve that is mapped onto a circle, then $J$ is a quasicircle. For this, it suffices to prove that the Ahlfors condition \eqref{condition:ahlfors} holds near critical points of $g$ on $J$. By precomposing $g$ with a conformal map near a critical point $z_0\in J$ and post-composing with a M\"obius transformation, we may assume that $g(z)=(z-z_0)^d$ in a neighborhood of $z_0$ for some $d\geq 2$ and $g$ maps an arc $A$ of $J$ that passes through $z_0$ onto a line segment passing through $0$. We observe that $A$ is the union of two non-overlapping segments with a common endpoint at $0$. Thus, the Ahlfors condition holds trivially on the arc $A$.
\end{proof}

\subsection{Proof of Theorem~\ref{thm:quasidiskgf_appendix}}   
    We first prove the necessity. Suppose that $U$ is a quasidisk. Let $l$ be the pre-period of $U$. We claim that if $a_0 \in \partial f^{\circ l}(U)$ is a parabolic periodic point, then $f^{\circ l}(U)$ must be an immediate basin of $a_0$. Indeed, suppose not. Since $f^{\circ l}(U)$ is not an immediate basin of $a_0$, $f^{\circ l}(U)$ is locally contained in the complement of the attracting petals of $a_0$ and it approaches $a_0$ in a repelling direction. By Lemma \ref{lemma:avoid_petals} \ref{lemma:avoid_petals:2}, $\partial U$ cannot be a quasicircle, a contradiction. Hence, condition \ref{prop:quasidisk:pereq:1} holds. For condition \ref{prop:quasidisk:pereq:2}, if the parabolic multiplicity of $a_0$ is $2$, then  $f^{\circ l}(U)$ has an inward cusp at $a_0$. Suppose for contradiction that there exists $z \in \partial U$ so that $f^{\circ n}(z) = a_0$ and $f^{\circ n}$ is locally injective at $z$. Then $U$ has an inward cusp at $z$, contradicting the fact that $U$ is a quasidisk. This proves condition~\ref{prop:quasidisk:pereq:2}.

    We now prove the sufficiency. The argument is a modification of \cite{LuoNtalampekos:piecewise_qc}*{Theorem 1.7}. Under the uniformization maps, the first return map on $\partial U$ induces two piecewise analytic maps on $\mathbb{S}^1$, one coming from a neighborhood inside $U$ and the other from a pinched neighborhood in $\widehat{\C}\setminus \overline{U}$. We will show that these two maps are quasisymmetrically conjugate.
    
    \smallskip 
    \noindent
    {\bf Case (A): $U$ is periodic.}
    By replacing $f$ with an iterate, we assume that the period of $U$ is equal to $1$. If $P(f) \cap \partial U \neq \emptyset$, then let 
    $$
    \mathcal{A} \coloneqq P(f) \cap \partial U \quad \text{and} \quad
    \mathcal{B}\coloneqq f^{-1}(\mathcal{A}) \cap \partial U.
    $$
    Otherwise, $U$ is a (super-)attracting Fatou component and $f$ is a covering map of degree at least $2$ on $\partial U$. In this case, we choose a repelling fixed point $a$ of $f$ on $\partial U$ and let 
    $$
    \mathcal{A} \coloneqq \{a\} \quad \text{and} \quad  \mathcal{B}\coloneqq f^{-1}(\mathcal{A}) \cap \partial U.
    $$
    Let $\phi\colon \D \rightarrow U$ be a conformal map. Then 
    $$
    F\coloneqq\phi^{-1} \circ f \circ \phi\colon \D \rightarrow \D
    $$ 
    is a Blaschke product. Let $\mathcal{B}_{F} \coloneqq \phi^{-1}(\mathcal{B})$. Then $\mathcal{P}(F; \mathcal{B}_{F})$ is a Markov partition for $F$ that satisfies conditions \ref{appendix:M1}, \ref{appendix:M2}, \ref{appendix:M3}, as follows from \cite{LuoNtalampekos:piecewise_qc}*{Theorem 1.4}.

    Let $\psi\colon \widehat\C\setminus \overline{\D} \rightarrow \widehat\C\setminus \overline{U}$ be a conformal map. Since $\partial U$ is a Jordan curve, $\psi$ extends to a homeomorphism of the closures. By Lemma~\ref{lem:expand}, there exists a neighborhood $N$ of $\partial U \setminus \mathcal{A}$ and a neighborhood $N' \subset f^{-1}(N) \cap N$ of $\partial U \setminus \mathcal{B}$ so that $f\colon N' \rightarrow N$ is conformal in each component of $N'$. 
    Thus, the map
    $$
    G\coloneqq \psi^{-1} \circ f \circ \psi
    $$ 
    is defined in a neighborhood of $\mathbb S^1 \setminus \mathcal{B}_{G}$ in $\widehat \C \setminus  \D$, where $\mathcal{B}_{G} = \psi^{-1}(\mathcal{B})$. By the Schwarz reflection principle, $G$ defines a piecewise analytic covering map on $\mathbb S^1$ with Markov partition $\mathcal{P}(G; \mathcal{B}_{G})$. Note that $G$ satisfies condition \ref{appendix:M2} by Lemma \ref{lem:expand}. Also, condition \ref{appendix:M3} is automatic.
    
    The map 
    $$
    h=\psi^{-1}\circ \phi \colon \mathbb S^1\to \mathbb S^1
    $$ 
    is a topological conjugacy between $F$ and $G$ that sends $\mathcal{B}_F$ to $\mathcal{B}_G$. In order to show that $U$ is a quasidisk, by the fundamental theorem of conformal welding \cite{AstalaIwaniecMartin:quasiconformal}*{Theorem 5.10.1}, it suffices to show that $h$ is quasisymmetric. This will follow from Theorem \ref{thm:appendix-circle-extension}, once we verify that $h$ maps each symmetrically hyperbolic (resp.\ parabolic) vertex of the Markov partition of $F$ to a symmetrically hyperbolic (resp.\ parabolic) vertex of the Markov partition associated to $G$. 
    
    Let $b \in \mathcal{B}_G$. We claim that $b$ is symmetrically hyperbolic (resp.\ parabolic) if and only if $h^{-1}(b)$ has the same property. Let $r$ be the pre-period of $b$ and $s$ be the period of $G^{\circ r}(b)$. The map $f_1=f^{\circ r}$ is locally $k$-to-$1$ near $\tilde b=\psi(b)$ for some $k\in \N$ and the map $f_2=f^{\circ s}$ is locally injective at $f_1(\widetilde b)$ and fixes that point. By Lemma \ref{lemma:lift} there exists a conformal map $w$ near $\widetilde b$ that fixes $\widetilde b$ and satisfies $f_2\circ f_1=f_1\circ w$. Moreover, $w$ may be taken to locally preserve $U$ and $\partial U$ near $\widetilde b$. Consider the map $W=\psi^{-1}\circ w\circ \psi$, extended via Schwarz reflection. Note that $W$ is a conformal map near $b$ that fixes $b$. Moreover, $G^{\circ (r+s)}=G^{\circ r}\circ W$ near $b$. This implies that for each $n\in \N$ the map $W$ sends arcs of level $n+s$ (of the Markov partition associated to $G$) near $b$ to arcs of level $n$ near $b$. 

    Suppose that $f_1(\widetilde b)$ is a repelling fixed point of $f_2$. By Lemma \ref{lemma:lift} the point $\widetilde b$ is a repelling fixed point of $w$. Thus, $b$ is a repelling fixed point of $W$. This implies that the lengths of consecutive arcs of level $n$ one of which has $b$ as an endpoint is comparable to $|W'(b)|^{-n/s}$; see \cite{LyubichMerenkovMukherjeeNtalampekos:David}*{Lemma 4.17} for an argument. Thus, $b$ is symmetrically hyperbolic. On the other side, the point $\phi^{-1}(f_1(\widetilde b))$ is a topologically repelling fixed point of the Blaschke product $F^{\circ s}$. This implies that $h^{-1}(b)=\phi^{-1}(\widetilde b)$ is a symmetrically hyperbolic point of the Markov partition of $F$, as desired.

    Next, suppose that $f_1(\widetilde b)$ is the parabolic fixed point $a$ of $f_2$. By assumption, the multiplicity of $a$ is $\nu\geq 3$.  By Lemma \ref{lemma:lift} the point $\widetilde b$ is a parabolic fixed point of $w$ with multiplicity 
    $$1+k(\nu-1)\geq \nu\geq 3.$$
    By \cite{LuoNtalampekos:piecewise_qc}*{Lemma 7.2}, this implies that $b$ is a parabolic fixed point of $W$. Thus, $b$ is symmetrically parabolic; see \cite{LyubichMerenkovMukherjeeNtalampekos:David}*{Lemma 4.18}. On the other side, the point $\phi^{-1}(f_1(\widetilde b))$ is a parabolic fixed point of $F^{\circ s}$ because it attracts some points of $\D$ but repels nearby points of $\mathbb S^1$. Therefore, $h^{-1}(b)$ is symmetrically parabolic.

\smallskip 
    \noindent{\bf Case (B): $U$ is strictly pre-periodic.}
    Let $l$ be the pre-period of $U$ and $p$ be the period of $f^{\circ l}(U)$. By replacing $f$ with an iterate, we may assume that $l=p=1$. By assumption, $f(U)$ is a Jordan region. The map $f\colon \partial U \rightarrow \partial f(U)$ is a covering map between Jordan curves. If $f(U)$ does not contain a parabolic point of multiplicity $2$ on its boundary, then by Case A it is a quasidisk. By Proposition~\ref{proposition:rational_quasidisk}, $U$ is also a quasidisk. Next, suppose that $\partial f(U)$ contains a unique parabolic point $a$, which has multiplicity $2$ and $f(U)$ is an immediate basin of $a$. Let 
    $$
    \mathcal{A}' \coloneqq P(f) \cap \partial f(U)\quad \text{and}\quad
    \mathcal{B}'\coloneqq f^{-1}(\mathcal{A}') \cap \partial f(U).
    $$
    Let $\mathcal{A} = f^{-1}(\mathcal{A}') \cap \partial U$ and $\mathcal{B} = f^{-1}(\mathcal{B}') \cap \partial U$.
    
    Consider any covering map $S\colon\partial U \longrightarrow \partial U$ with $f\circ S = f \circ f$. Let $\phi\colon \D \rightarrow U$ be a conformal map, which extends to a homeomorphism of the closures. Let 
    $$
    F\coloneqq \phi^{-1}\circ S \circ \phi \colon \mathbb{S}^1 \rightarrow \mathbb{S}^1,
    $$
    and let $\mathcal{B}_{F} \coloneqq \phi^{-1}(\mathcal{B})$. Similarly, let $\psi\colon  \widehat\C\setminus \overline{\D} \rightarrow \widehat\C\setminus \overline{U}$ be a conformal map, which extends to a homeomorphism of the closures. Let
    $$
    G\coloneqq \psi^{-1} \circ S \circ \psi\colon \mathbb{S}^1 \rightarrow \mathbb{S}^1,
    $$ 
    and let $\mathcal{B}_{G} = \psi^{-1}(\mathcal{B})$.
    By the Schwarz reflection principle, both $F$ and $G$ define a piecewise analytic covering map on $\mathbb S^1$ with Markov partitions $\mathcal{P}(F; \mathcal{B}_{F})$ and $\mathcal{P}(G; \mathcal{B}_{G})$ respectively. The map 
    $$
    h=\psi^{-1}\circ \phi \colon \mathbb S^1\to \mathbb S^1
    $$ 
    is a topological conjugacy between $F$ and $G$ that sends $\mathcal{B}_F$ to $\mathcal{B}_G$. As in Case A, we will show that $h$ is quasisymmetric. By Lemma~\ref{lem:expandprep}, $F$ and $G$ satisfy condition \ref{appendix:M2}; also \ref{appendix:M3} is immediate. It remains to show that $b \in \mathcal{B}_F$ is symmetrically hyperbolic (resp.\ parabolic) if and only if $h(b)$ has the same property.

    Let $r$ be the pre-period of $b$ and $s$ be the period of $F^{\circ r}(b)$ under $F$. We set $\widetilde{b} = \phi(b)$. Then $f^{\circ (1+r)}(\widetilde{b})$ is periodic. We claim that $b$ and $h(b)$ are symmetrically hyperbolic (resp.\ parabolic) if and only if $f^{\circ (1+r)}(\widetilde{b})$ is repelling (resp.\ parabolic). Let $f_1=f^{\circ (1+r)}$ and $f_2=f^{\circ s}$. The map $f_1$ is locally $k$-to-$1$ near $\widetilde b$ for some $k\in \N$ and the map $f_2$ is locally injective at $f_1(\widetilde b)$ and fixes that point. By Lemma \ref{lemma:lift} there exists a conformal map $w$ near $\widetilde b$ that fixes $\widetilde b$ and satisfies $f_2\circ f_1=f_1\circ w$. Moreover, $w$ may be taken to locally preserve $U$ and $\partial U$ near $\widetilde b$. Consider the map $W=\phi^{-1}\circ w\circ \phi$, extended via Schwarz reflection. Note that $W$ is a conformal map near $b$ that fixes $b$. Moreover, we have $f\circ S^{\circ (r+s)}=f\circ S^{\circ r}\circ w$. This implies that for each $n\in \N$ the map $W$ sends arcs of level $n+s$ (of the Markov partition associated to $F$) near $b$ to arcs of level $n$ near $b$. 
    
    Suppose that $f_1(\widetilde{b})$ is repelling. By Lemma \ref{lemma:lift}, the point $\widetilde b$ is a repelling fixed point of $w$. Thus, $b$ is a repelling fixed point of $W$ and thus it is symmetrically hyperbolic. Analogously, by considering the map $\psi^{-1}\circ w\circ \psi$, which has a repelling fixed point at $h(b)$, we see that $h(b)$ is symmetrically hyperbolic.

    Finally, suppose $f_1(\widetilde{b})$ is parabolic. Let $k$ be the local degree of $f_1$ at $\widetilde{b}$. By Lemma~\ref{lemma:lift}, $w$ has a parabolic fixed point at $\widetilde b$ of multiplicity
    $$
    1+k(\nu-1).
    $$
    Note that if $\nu \geq 3$, then $1+k(\nu-1) \geq \nu \geq 3$. If $\nu = 2$, then by condition~\ref{prop:quasidisk:pereq:2}, we have $k \geq 2$. Thus, $1+k(\nu-1) =1+k \geq 3$ in this case as well.  Since $V$ is an immediate basin of the parabolic fixed point $f_1(\widetilde{b})$ of $f_2$, and $w|_U$ is a lift $f_2|_V$, we conclude that $w$ has an attracting direction in $U$. Since the parabolic multiplicity is $\geq 3$, we conclude that $w$ has an attracting direction in $\widehat\C \setminus \overline{U}$ at $\widetilde{b}$. Moreover, $w$ is repelling along $\partial U$ at $\widetilde{b}$.
    By reflection, we conclude that $\phi^{-1}\circ w\circ \phi$ and $\psi^{-1}\circ w\circ \psi$ are parabolic with multiplicity $\geq 3$. Therefore, both $b$ and $h(b)$ are symmetrically parabolic.
    This proves the claim. 
\qed

\bibliography{biblio}
\end{document}